\newtheorem*{namedtheorem}{\theoremname}
\newcommand{\theoremname} {testing}
\newenvironment{named}[1]{\renewcommand{\theoremname}{#1}\begin{namedtheorem}}
{\end{namedtheorem}}
\newtheorem{thm}{Theorem}[section]
\newtheorem{remark}[thm]{Remark}
\newtheorem{problem*}{Problem}
\newtheorem{prop}[thm]{Proposition}
\newtheorem{lemma}[thm]{Lemma}
\newtheorem{condition}[thm]{Condition}
\newcommand{\bbZ}{\mathbb{Z}}
\newcommand{\bbR}{\mathbb{R}}
\newcommand{\bbC}{\mathbb{C}}
\newcommand{\into}{\longrightarrow}
\renewcommand{\dim}{\textup{dim}}
\newcommand{\Int}{\textup{Int\,}}
\renewcommand{\index}{\textup{index\,}}
\DeclareMathOperator{\dom}{dom}
\title[Morse complexifications]{Complexifications of Morse functions and the directed Donaldson-Fukaya category}
\author{Joe Johns}
\address{Columbia University, Mathematics Department, 
2990 Broadway, MC4403, New York, NY, 10027}
\email{jjohns@math.columbia.edu}
\begin{document}


\begin{abstract} Let $N$ be a closed four dimensional manifold which admits a self-indexing Morse function $f: N \into \bbR$ with only 3 critical values $0$,\,$2$,\,$4$, and a unique maximum and minimum. Let $g$ be a Riemannian metric on $N$
such that $(f,g)$ is Morse-Smale. We construct from $(N,f,g)$ a certain six dimensional exact symplectic manifold $M$, together with some exact Lagrangian spheres $V_4$,\,$V_2^j$,\,$V_0$ in $M$, 
$j=1, \ldots, k$. These spheres correspond to the critical points $x_4$,\,$x_2^j$,\,$x_0$ of $f$, where the subscript indicates the Morse index. (In a companion paper we explain how 
$(M, V_4,\{V_2^j\},V_0)$ is a model for  the regular fiber and vanishing spheres of 
the complexification of $f$, viewed as a Lefschetz fibration on the disk cotangent bundle 
$D(T^*N)$.) Our main result is a computation of the Lagrangian Floer homology groups 
$$HF(V_4,V_2^j),\, HF(V_2^j, V_0), \, HF(V_4,V_0)$$ 
and the triangle product
$$\mu_2: HF(V_4,V_2^j) \otimes HF(V_2^j,V_0) \into HF(V_4,V_0)$$
in terms of the Morse theory of $(N,f,g)$. 
The outcome is that the directed Donaldson-Fukaya category 
of $(M, V_4, \{V_2^j\}, V_0)$ is isomorphic to the flow category of $(N,f,g)$.
\end{abstract}

\maketitle
\tableofcontents
\vspace{-1cm}
\section{Introduction}\label{intro}

This paper is a natural sequel to \cite{JA}, although it does not rely on the results there. 
In that paper we consider the following problem. Suppose that $N$ is a real analytic manifold and $f: N \into \bbR$ is a real analytic Morse function. Then,
in local charts on $N$, $f$ is represented by some convergent power series with real coefficients; if we complexify these local power series to get complex analytic power series (with the same coefficients) we obtain a complex analytic map on the disk bundle of $T^*N$ of some small radius, 
$$f_\bbC:  D(T^*N) \into \bbC,$$
called the \emph{complexification} of $f$. In favorable circumstances, $f_{\bbC}$ can be regarded as a symplectic Lefschetz fibration.
 (Obviously this description 
is a little imprecise, but for us $f_\bbC$ will only be used for motivation;
one possibility for a precise version is given in \cite{HK}.) 

\begin{named}{Problem}\notag
Describe the generic fiber of $f_\bbC$ as a symplectic manifold $M$, and describe its vanishing spheres 
as Lagrangian spheres in $M$.
\end{named}
 Implicit in this problem is the more informal question:  
\emph{To what extent is the Morse theory of $f$ reflected in the complex geometry of $f_\bbC$?}
Problems of this type have been considered for some time in various fields, see for example \cite{K}, \cite{Lempert}, \cite{AMP}, \cite{A'C}. In fact we
draw a great deal of inspiration from the last mentioned paper, which solves the above problem when
$f: \bbR^2 \into \bbR$ is a real polynomial. 
\\
\newline 
Our approach in \cite{JA} is as follows. We first  take  a Riemannian metric $g$ such that $(f,g)$ is Morse-Smale and  we only assume $(N,f,g)$ is smooth, not necessarily real analytic. Then, given the corresponding handle decomposition of $N$, we construct an exact symplectic manifold $M$ of dimension $2\,\dim N-2$ together with some exact Lagrangian spheres $V_1, \ldots, V_m \subset M,$ one for each critical point of $f$. (See \S \ref{example} for the construction of $(M, V_1, \ldots, V_m)$ in the simple case $N = \bbR P^2$; see also \S \ref{fibersketch}, \ref{VCsimplesketch} for a sketch of the four dimensional case.)
Given $(M, V_1, \ldots, V_m)$, there is a unique (up to deformation) symplectic Lefschetz fibration $\pi :  E \into \bbC$, with regular fiber $M= \pi^{-1}(b)$ and vanishing spheres  $V_1, \ldots, V_m$ 
(see \cite[\S 16e]{S08}).
Theorem $A$ below shows that $(E,\pi)$ and $(D(T^*N), f_\bbC)$ have the same 
salient features. In this sense $(E,\pi)$ is a good model for $(D(T^*N), f_\bbC)$ and $(M, V_1, \ldots, V_m)$ is very likely a correct answer to the above problem. In any case, for this paper and other applications 
(see \cite{JA} and below) 
we need not use $f_\bbC$; instead we will always use $(E,\pi)$.

\begin{named}{Theorem A}[\cite{JA}] Assume $N$ is a smooth closed manifold and  $f:  N \into \bbR$ is self-indexing Morse function with either two, three, or four critical values: $\{0,n\}$, $\{0,n,2n\}$, or $\{0,n,n+1,2n+1\}$. Let $(M,V_1,  \ldots, V_m)$ be the data from the construction we discussed above (which depends in addition on a Riemannian metric $g$ on $N$ such that $(f,g)$ is Morse-Smale).  Let $\pi: E \into \bbC$ be the corresponding symplectic Lefschetz fibration 
with fiber $M$ and vanishing spheres $(V_1, \ldots, V_m)$.
Then, 
\begin{itemize}
\item  $N$ embeds in $E$ as an exact Lagrangian submanifold;
\item all the critical points of $\pi$ lie on $N$, and in fact $Crit(\pi) = Crit(f)$; and,
\item $\pi|N = f: N \into \bbR$ (up to reparameterizing $N$ and $\bbR$ by diffeomorphisms).
\end{itemize}
\end{named}

The hypotheses on $f$ ensure that the construction of $M$ is relatively easy. If there are five or more critical values then constructing $M$ becomes more complicated, so that is postponed for later treatment. 
In \cite{JA} we also sketch a proof that
$E$ is homotopy equivalent to $N$, and we explain why 
$E$ is expected to be conformally exact symplectomorphic to $D(T^*N)$.
\\
\newline 
In this paper we consider the simplest nontrivial case, where  $f$ is self-indexing and takes only three critical values $0$,\,$n$,\,$2n$, with a unique maximum and minimum. Furthermore, we focus on the case $\dim N=4$, firstly for the sake of concreteness  and secondly because the stock of examples is most rich in that dimension (see \cite{JA} and the references there). (Everything in this paper can be done in an arbitrary dimension $\dim N =2n$ in a completely analogous way, see  \S  \ref{0,n,2n} for a sketch.) Thus, we assume $f$ has critical points $x_4$,\,$x_2^j$,\,$x_0$, $j=1, \ldots, k$, where the subscript indicates the Morse index, and we choose a Riemannian metric $g$ such that $(f,g)$ is Morse-Smale. 
\\
\newline Using $(N,f,g)$ we will construct $M$ and  $V_4$,$V_2^j$,$V_0$, 
$j=1, \ldots, k$ in a self-contained way (see  \S  \ref{fiber} and  \S  \ref{bigsectionvanishingcycles}).  Then the purpose of this paper is to compute, 
for each $j$, the Lagrangian Floer homology groups  
\begin{gather}
 HF(V_4,V_2^j),\, HF(V_2^j, V_0), \,HF(V_4,V_0)  \label{FloerV}
\end{gather}
and the triangle product (defined by counting holomorphic triangles in $M$ with boundary on $V_4,V_2^j,V_0$): 
\begin{gather}\label{mu2}
 \mu_2: HF(V_4,V_2^j) \otimes HF(V_2^j,V_0) \into HF(V_4,V_0).
\end{gather}
\\
To carry out these calculations we do not need to know that 
 $M$ and  $V_4$, $V_2^j$, $V_0$ can be viewed as the fiber and vanishing spheres
of a Lefschetz fibration satisfying the conditions in Theorem $A$.
Nevertheless, this viewpoint  is very helpful for understanding \emph{why} 
we would want to compute (\ref{FloerV}) and (\ref{mu2}). 
Before discussing that, we 
first explain what the answer is; not surprisingly, it can be expressed
 nicely in terms of the Morse theory of $(N,f,g)$. 
Given $x,y \in Crit(f)$, let $Flow^{\,\circ}(x,y)$ be the space of unparameterized $(f,g)$-gradient trajectories from $x$ to $y$, and let $Flow(x,y)$ denote its compactification, which is obtained by allowing broken trajectories  
(possibly broken many times). This can be viewed a manifold with corners (see \cite{HWZ}) and for any $x,y, z \in Crit(f)$ with decreasing Morse indices, $Flow(x,y) \times Flow(y,z)$  embeds into $Flow(x,z)$ as a boundary face.
The \emph{flow category} of $(N,f,g)$, which first appeared in \cite{CJS},
is defined as follows:
\begin{itemize}
\item  The objects are the critical points of $f$.
\item The morphism space from $x$ to $y$ is $H_*(Flow(x,y))$.  
\item Composition $\mu^{Flow}: H_*(Flow(x,y)) \otimes H_*(Flow(y,z)) \into H_*(Flow(x,z))$ is induced by the inclusion $Flow(x,y) \times Flow(y,z)\subset  Flow(x,z)$ combined with the K\"{u}nneth isomorphism.
\end{itemize}

In our case it is not difficult to explicitly compute  the flow category of 
$(N,f,g)$ (see \S \ref{sectionFlow}). The main result of this paper of this 
paper is then as follows.

\begin{named}{Theorem B} Let $(N,f,g)$ and  $(M, V_4, \{V_2^j\}, V_0)$  be as 
above. Then  
\begin{gather*} 
HF(V_4,V_2^j) \cong H_*(Flow(x_4, x_2^j)), \phantom{bbb} HF(V_2^j, V_0) \cong H_*(Flow(x_2^j, x_0)), \\ 
HF(V_4,V_0) \cong H_*(Flow(x_4, x_0)),
\end{gather*}
and, under this correspondence, the triangle product 
$$\mu_2: HF(V_4,V_2^j) \otimes HF(V_2^j,V_0) \into HF(V_4,V_0)$$
coincides with the composition in the Flow category
$$\mu^{Flow}: H_*(Flow(x_4, x_2^j))\otimes H_*(Flow(x_2^j, x_0)) \into H_*(Flow(x_4, x_0)).$$
\end{named}
See  \S \ref{example} for an explanation of this theorem in the case $N = \bbR P^2$; see also \S \ref{FukFlowsketch} for a sketch of the four dimensional case.
\\
\newline Let us now return to the question of why we would want to compute 
(\ref{FloerV}) and (\ref{mu2}), and why Theorem $B$ is a useful answer.
To this end, we make a brief digression and
 explain  some of the results in Seidel's recent book 
\cite{S08}. 
One of the main results  \cite[Corollary 18.25 plus Proposition 18.14]{S08} 
says that, for any Lefschetz fibration $\pi: E \into \bbC$, the Lefschetz thimbles $\Delta_1, \ldots , \Delta_m \subset E$  in a certain sense \emph{generate} the whole Fukaya category $Fuk(E)$.
In other words, each exact Lagrangian submanifold
$L \subset E$ can be expressed as a certain algebraic combination of 
$\Delta_1, \ldots , \Delta_m \subset E$ (intuitively, $L$ is geometrically obtained from $\Delta_1, \ldots , \Delta_m \subset E$ by  surgery theoretic 
operations; see \cite{JA}). We will refer to this informally as the \emph{Seidel decomposition} of $L$.
This generating property highlights the importance of the sub-category 
with objects $\Delta_1, \ldots , \Delta_m$ and morphisms $CF(\Delta_i, \Delta_j)$.
Now, the Lefschetz thimbles only intersect one another 
in a fixed regular fiber $M$ at points in the
corresponding vanishing spheres $V_1, \ldots,  V_m \subset M$. Therefore, 
we expect $CF(\Delta_i, \Delta_j)$ (which is computed in $E$)  to be equal to
$CF(V_i,V_j)$ (which is computed in $M$). However, because $\Delta_1, \ldots, \Delta_m$ are not closed one needs to  choose some perturbation convention in the definition of  $CF(\Delta_i, \Delta_j)$. The standard choice involves a natural ordering on  $\Delta_1, \ldots , \Delta_m$ given by  the counter clock-wise ordering of the vanishing paths (see \cite[\S 2]{SH=HH} for further informal discussion about this). The upshot is that, in fact, $\Delta_1, \ldots, \Delta_m$ 
form a \emph{directed} sub-category, with morphisms 
\begin{gather} \label{directed}
CF(\Delta_i, \Delta_j) \cong   \left \{ \begin{matrix} CF(V_i, V_j) & \hbox{ if } i > j  \\
                                                                      \mathbb K  & \hbox{ if } i=j \\
                                                                      0  & \hbox{ if } i<j \end{matrix}
                                                                      \right.
\end{gather}
where $\mathbb K$ is the base ring and we think of $CF(\Delta_i, \Delta_i) \cong \mathbb K$ as generated by an identity element $e$. So, we can forget about the Lefschetz thimbles if we wish, and define the \emph{directed Fukaya category} of $(M, V_1,\ldots  ,V_m)$ to be the $A_\infty$ category with objects  
$V_1,\ldots  ,V_m$ and morphisms $Hom(V_i,V_j)$ given by the right-hand side of (\ref{directed}); this is denoted $Fuk^{\rightarrow}(M, V_1,\ldots  ,V_m)$, and it is also known as the Seidel-Fukaya category (see \cite{SI00, SH=HH,  S08} 
 for more about this). 
\\
\newline Thus, what we are computing in  Theorem $B$ is the
directed Fukaya category of $(M, V_4, \{V_2^j\}, V_0)$, but at the level of homology; we call this 
the \emph{directed Donaldson-Fukaya category}, and denote it 
$H(Fuk^{\rightarrow}(M, V_4, \{V_2^j\}, V_0))$. 
(Here there are only three levels in the ordering, corresponding to the three critical values  $0,2,4$; thus all $V_2^1, \ldots,  V_2^k$ are on equal footing.)
So we can re-formulate Theorem $B$ as follows:

\begin{named}{Theorem $B\,'$} The directed Donaldson-Fukaya category  
of $(M,V_4, \{V_2^j\}, V_0)$ 
is isomorphic to  the flow category of  $(N,f,g)$.
\end{named}
This result was essentially conjectured by Seidel in \cite[\S 8]{SII00}. For an arbitrary Lefschetz fibration it is not realistic to hope for an 
explicit computation of  (\ref{FloerV}) 
and (\ref{mu2})
because the regular fiber and  vanishing spheres are often hard to describe. 
But in our case
we have a simple explicit model $(M, V_4, V_2^j, V_0)$
for these, and the vanishing spheres $V_4, V_2^j, V_0 \subset M$ 
are also highly symmetric; this is what makes Theorem 
$B$ or $B'$ feasible.
\begin{remark} \label{chainlevel}
In our  situation, the directed Donaldson-Fukaya category, given by (\ref{FloerV}) and (\ref{mu2}), is actually very close to the chain level version.
This is because 
there are only two products: $\mu_1$ (the Floer differential), and $\mu_2$; all higher order products $\mu_3, \mu_4, \ldots $ are zero, because there are only three levels in 
the directed category.
 \end{remark}
Thus,  Theorem $B$ or $B'$ has a certain amount of intrinsic interest, as it 
provides a simple description of an important invariant attached to
some natural Lefschetz fibrations on  cotagent bundles 
(assuming $E \cong D(T^*N)$ in Theorem $A$ for simplicity). 
The answer  moreover reflects 
the expected close relationship between the complex geometry of 
$(E,\pi)$ and the Morse theory of $(N,f,g)$ (recall $\pi$ is 
a model for the complexification  of $f$).
In the larger scheme it fits in with many other results 
relating Floer theoretic invariants of cotangent bundles $T^*N$ to 
 more classical invariants of the base $N$, as in  
\cite{FloerMorse, FO, AbS, V, N, NZ}.
\\
\newline On a more practical level there are a couple of reasons why this 
explicit answer in terms of the flow category is of interest as well. 
The most immediate application we have in mind is to use Seidel's decomposition
(which we discussed above) in combination with Theorems $A$ and $B$ 
to study Lagrangian submanifolds in $T^*N$. One basic goal is to prove 
that any closed exact Lagrangian $L \subset T^*N$ is Floer theoretically equivalent to $N$.  This means in particular that $HF(L,L) \cong HF(N,N)$, so that $H^*(L) \cong H^*(N)$, and
$HF(L, T_x^*N) \cong HF(N, T_x^*N)$, so that $deg(L \into N) = \pm 1$.
Of course, results of this kind have been obtained for arbitrary spin manifolds $N$ in \cite{FSS, FSSB} and \cite{N} (for simply-connected $N$). We want to  consider a slightly different approach along the lines of the quiver-theoretic approach for the case $N = S^n$ in \cite{S04}. Because this approach is  
more explicit it is expected to  yield somewhat more refined results. 
For example, we  should be able  to remove one significant assumption on $L$, 
namely that it has vanishing Maslov class $\mu_L \in H^1(L)$. 
\\
\newline Here is a concrete example which illustrates how Theorem $B$ makes
Seidel's decomposition very explicit. Take $N = \bbC P^2$ with its standard handle-decomposition, corresponding to a Morse-Smale pair $(f,g)$, where $f$ has three critical points $x_0, x_2, x_4$, with Morse indices $0,2,4$.
Then the flow category of $(N,f,g)$ is relatively easy to  compute (see \S \ref{sectionFlow});
it is given by the following quiver with relations: 
\begin{gather}\label{Flowquiver}
\xymatrixcolsep{5pc}\xymatrix{x_4 \ar@/^0.25pc/[r]^{a_1} \ar@/_0.25pc/[r]_{a_0} \ar@/^1.5pc/[rr]^{c_1} \ar@/_1.5pc/[rr]_{c_0} & x_2 \ar@/^0.25pc/[r]^{b_1} \ar@/_0.25pc/[r]_{b_0} & x_0} \\
b_1a_1 =0, \,  b_0a_0 = c_0, \,  b_0a_1 - b_1a_0 = c_1 \notag
\end{gather}
By Theorem $A$, there is a Lefschetz fibration $(E,\pi)$ corresponding to $(N,f,g)$, which models the complexification of $f$ on $D(T^*N)$. By construction, $\pi$ comes with  an explicit regular fiber $M$ and vanishing spheres $V_0, V_2, V_4 \subset M$. 
Theorem $B$ or $B'$ says that the directed Donaldson Fukaya category of $(M, V_4, V_2, V_0)$ is represented 
by the exact same quiver (\ref{Flowquiver}), except we replace the labels $x_4, x_2, x_0$ on the vertices by $V_4, V_2, V_0$:
\begin{gather}\label{Fukquiver}
\xymatrixcolsep{5pc}\xymatrix{V_4 \ar@/^0.25pc/[r]^{a_1} \ar@/_0.25pc/[r]_{a_0} \ar@/^1.5pc/[rr]^{c_1} \ar@/_1.5pc/[rr]_{c_0} & V_2 \ar@/^0.25pc/[r]^{b_1} \ar@/_0.25pc/[r]_{b_0} & V_0} \\
b_1a_1 =0, \,  b_0a_0 = c_0, \,  b_0a_1 - b_1a_0 = c_1 \notag
\end{gather}
Now, let $L \subset T^*N$ be any closed exact Lagrangian submanifold.  Theorem $A$ also says  we have an exact Lagrangian embedding $N \subset E$. Consequently there is an exact Weinstein embedding $D(T^*N) \subset E$. By rescaling $L \leadsto \epsilon L$ by some small $\epsilon >0$, we get an exact Lagrangian embedding $L \subset E$. Now the Seidel decomposition applied to $L \subset E$ says that $L$ can be expressed in terms of the Lefschetz thimbles of $\pi$, say $\Delta_4, \Delta_2, \Delta_0 \subset E$. But as we discussed earlier, this is equivalent to saying 
that $L$ can be expressed in terms of the
directed Fukaya category of the corresponding vanishing spheres $V_4, V_2,V_0$, given by the quiver (\ref{Fukquiver}). In our  example this has the following concrete meaning:  
$L$  is represented by a certain \emph{quiver representation} of (\ref{Fukquiver}):
\begin{gather}\label{rep}
\xymatrixcolsep{5pc}\xymatrix{W_2 \ar@/^0.25pc/[r]^{A_1} \ar@/_0.25pc/[r]_{A_0} \ar@/^1.6pc/[rr]^{C_1} \ar@/_1.5pc/[rr]_{C_0} & W_1 \ar@/^0.25pc/[r]^{B_1} \ar@/_0.25pc/[r]_{B_0} & W_0} \\
B_1A_1 =0, \, B_0A_0 = C_0, \, B_0A_1 - B_1A_0= C_1 \notag
\end{gather}
Here, a quiver representation, such as (\ref{rep}), is just a choice of  
vector-spaces  $W_4$, $W_2$, $W_0$ at each vertex, and a choice of linear maps $A_0, A_1, B_0, B_1,C_0, C_1$ satisfying the given 
relations (the particular quiver representation (\ref{rep}) 
corresponding to $L$ 
is determined by $W_i = HF(L, \Delta_i)$ and the triangle products
$W_i \otimes HF(\Delta_i, \Delta_j) \into W_j$).
To show  $L$ is Floer theoretically equivalent to $N$ in  $T^*N$ is equivalent to showing that 
the representation (\ref{rep}) is necessarily isomorphic to the representation 
$$W_4 = W_2 = W_0 = \bbC, \, A_0= B_0 = C_0 = id, \, A_1= B_1 =C_1 =0.$$ 
(Of course, this is the representation corresponding to $N \subset T^*N$.)
The analogous problem for $N = S^n$ was solved in \cite{S04}. 
\\
\newline 
To round off the discussion we mention one other potential 
application which stems from the close relationship
between the  Flow category of $N$ 
and the category of constructible sheaves on $N$.
Our aim is to describe the relationship between two
 recent successful approaches for analyzing the Fukaya 
category of cotagent bundles (see \cite{FSSB} for a detailed comparison).
One approach, due to Fukaya, Seidel, and Smith  \cite{FSS}, is based on Lefschetz fibrations and Picard-Lefschetz theory, 
as in \cite{S08}. The other approach, due to Nadler and Zaslow  \cite{NZ, N},
 relates Lagrangian submanifolds in $T^*N$ to constructible sheaves on $N$,
  and is based on  the characteristic cycle construction of 
Kashiwara-Shapira \cite{KS}.
(There is also a third approach in progress, based on a
Leray-Serre type spectral sequence, see \cite{FSSB}, and above we 
discussed a fourth quiver-theoretic approach which expands on \cite{S04}.)
Now, if $N$ is real analytic, 
the  flow category of $N$ 
(more precisely, the derived category of the chain level version)
is expected to be equivalent to the derived category
of constructible sheaves on $N$ (constructible with respect to the stratification by unstable manifolds of $f$), see \cite[Remark 7.1]{SII00}. 
This conjecture of Seidel 
combined with  Theorem $B'$ (extended slightly to the chain
level as in remark \ref{chainlevel}) leads to a direct way of comparing 
the two viewpoints \cite{FSS} and \cite{NZ, N} described above. 
More precisely, we have the following 
(commutative) diagram, where each arrow is either an isomorphism or an 
equivalence (below  the $D$'s are for derived categories).
\begin{gather*}
\xymatrixcolsep{5pc}\xymatrix{ D(Fuk_S(T^*N)) \ar@{-->}[r]\ar[d]^{\text{Seidel decomposition}} & D(Fuk_{NZ}(T^*N)) \\
D(Fuk^\rightarrow(M,\{V_i\})) \ar[d]^{\text{Theorem } B'}   \\
 D(Flow(N,f,g))\ar[r]_{\text{Seidel conjecture}} & D(Constr(N)) \ar[uu]_{\text{Nadler-Zaslow}} }
\end{gather*}
Here, $Fuk_{NZ}(T^*N)$ and $Fuk_S(T^*N)$  denote two versions of the 
Fukaya category of $T^*N$, due to Nadler-Zaslow and Seidel respectively.
The difference lies in the choice of noncompact Lagrangians they allow
(see \cite{FSSB}).  It is believed that their derived categories
are equivalent, but to the author's knowledge no proof has been nailed down
yet (this equivalence is 
of interest in particular for applications to the homological
 mirror symmetry conjecture \cite{ZA,ZB}).
One consequence of the above diagram is this desired equivalence 
(the top dotted arrow), though it may be easier and more enlightening to 
find a direct correspondence.  Assuming the existence of 
such a natural correspondence, and replacing the dotted arrow by that, 
it is also interesting to ask if the above diagram commutes; the diagram
represents  a parallelism between Picard-Lefschetz theory and the 
characteristic cycle functor.

\subsection*{Overview and organization} In this paper our focus is on giving a precise proof of Theorem B (see Proposition \ref{generators} and Theorem \ref{fuk=flow}). The crucial 
point is that $V_4$,$V_2^j$,$V_0$ have a certain rotational symmetry, and this allows us to compute the moduli space of holomorphic triangles explicitly.
On the other hand, this means we  cannot perturb $V_4$,$V_2^j$,$V_0$ to make them transverse, as one usually does to compute Floer homology, and therefore the whole calculation must be done in the context of Morse-Bott Floer homology. In that theory
the  Lagrangian submanifolds are allowed to intersect in a Morse-Bott
fashion along submanifolds, and the Floer complex is generated by
cycles in the intersection components. We use Morse cycles as in
\cite{PSS}, \cite{Schwarz}, \cite{Fr}, since that is the cleanest
known approach. In the interest of brevity we do not treat signs and
gradings for Morse-Bott Floer homology in this paper. Consequently,
Theorem B is limited for the moment to $\bbZ/2$ coefficients and ungraded Floer groups.
\\
\newline
The paper is basically split into three
parts. The first part (\S \ref{example}--\ref{outline}) 
outlines the main lines of argument in two warm-up sections:
\S \ref{example} treats the simple case $N = \bbR P^2$, and \S \ref{outline}
 outlines the paper in detail in the case $N = \bbC P^2$. 
In \S \ref{sectionFlow} we compute the flow category of $(N,f,g)$
under the assumptions of Theorem $B$.
 From these (especially \S \ref{outline}) the reader can get a very good idea
of the proof of Theorem $B$, including all the pitfalls one has to watch for.
 The rest of the paper gives the complete details.
The second part (\S \ref{fiber}--\ref{sectiontriangleproduct}) contains  
the main line of  argument. Along the way we refer to
the third part (\S \ref{localstrips}--\ref{MorseBott}), which is
where the main technical work
is done: In  \S \ref{localstrips}  we prove 
that the relevant moduli spaces of holomorphic strips are 
in 1-1 correspondence with the gradient flow
lines of certain functions;  
in  \S  \ref{localtriangles} we explicitly describe  the moduli space of holomorphic triangles (inside a Weinstein neighborhood $D(T^*V_2^j) \subset M$ 
of each $V_2^j$).
In  \S  \ref{MorseBott} we provide an appendix which explains the definition and
main features of Morse-Bott Floer homology in some special cases
sufficient for our purposes.
 The paper is basically written in the intended order of reading, with
the exception of the last section, which should be
referred to as necessary. 
Some Floer theory notation and conventions are in sections \ref{conventions}, \ref{notation}. We use the notation $\nu^*K \subset T^*N$ for the conormal bundle of $K \subset N$.
\\
\newline \subsection*{Acknowledgements} The results of this paper are based on my Ph.D. thesis, carried out at the University of Chicago from 2003-2006 under the supervision of Paul Seidel. Of course, this paper owes much to him. Thanks also go to Fr\'{e}d\'{e}ric Bourgeois, Kenji Fukaya,  Kaoru Ono, and Helmut Hofer for help with Fredholm theory 
in Morse-Bott Floer homology, to Felix Schm\"{a}schke for
pointing out several notational mistakes in an earlier draft, 
and to Urs Frauenfelder for patiently answering many questions about 
Morse-Bott homology. An extra big
thanks goes to Peter Albers for frequent sanity checks and many
helpful discussions about Floer theory. 

\section{A simple example: $N = \bbR P^2$} \label{example}
To illustrate the main ideas of this paper, we first consider the two dimensional example $N = \bbR P^2$, where we take the self-indexing standard Morse function $f: \bbR P^2 \into \bbR$ which has 3 critical points $x_2,x_1,x_0$, with the standard flow lines as in figure \ref{RP2Morsefn}. In this section we will explain several things in this simple example. First, we explain  
a natural construction of the complexification of $f$ on the disk cotangent-bundle of $D(T^*\bbR P^2)$ from elementary algebraic geometry; it is quite different from the construction we give for $(E, \pi)$ in \cite[Theorem A]{JA}, but it gives a useful additional viewpoint. (In most examples this approach would not work.) After that we explain the  construction of $(M,V_2, V_1, V_0)$ (see figure \ref{RP2fiberVC}) using the topological method we will use in this paper (and in \cite{JA}). We also briefly explain how  the construction relates to Morse theory, and we explain roughly why the vanishing spheres are defined as they are. 
We note that $(M,V_2, V_1, V_0)$ is isomorphic to the regular fiber and vanishing spheres of algebreo-geometrically constructed Lefschetz fibration from before. Finally, we sketch the proof of Theorem $B$ in this simple low-dimensional case by inspecting $(M,V_2, V_1, V_0)$ and the flow lines of $f$ on $\bbR P^2$.

\subsection{An algebreo-geometric construction of the complexification} 

The complexification $f_\bbC: D(T^*\bbR P^2) \into \bbC$ can be viewed as the restriction of a 
well-known Lefschetz pencil  in algebraic geometry.
Namely, in  \cite[p. 19]{A} (or \cite[p.7,27]{SII00}, or \cite[p. 39]{AS}) one finds the example of a Lefschetz pencil on $\bbC P^2$ formed by two homogeneous degree 2 polynomials $\sigma_0, \sigma_1$ with real coefficients.
The base locus $B = \{\sigma_0 = \sigma_1 =0\}$ consists of four points. 
If we delete a neighborhood $U$ of the fiber at infinity $\{\sigma_0=0\}$ 
(we assume $\sigma_0$ has no zeros on $\bbR P^2$) we will in particular delete this base locus from every fiber. Thus we obtain a Lefschetz fibration 
$$\pi = \sigma_1/\sigma_0 : \bbC P^2 \setminus U \into \bbC.$$
Because  $\sigma_0, \sigma_1$ have degree 2, the regular fiber of the pencil is a 2-sphere.
When we delete a neighborhood of the base locus the regular fiber becomes 
$S^2$ with four small disks removed; this is the regular fiber of  $\pi = \sigma_1/\sigma_0$ on $\bbC P^2 \setminus U$. There are three singular fibers each of which consists of 
a pair of  2-spheres which touch at a single point; each of the two 2-spheres contains two of the four base points. The three singular fibers correspond to the three ways of splitting up the four base into pairs.
Thus, there are three vanishing spheres in the regular fiber, which divide the four holes in the fiber into pairs in all three possible ways, see figure \ref{pencilfiber} (see also \cite[p. 19]{A} or \cite[p. 39]{AS}).
\begin{figure}
\begin{center}
\includegraphics[width=2in]{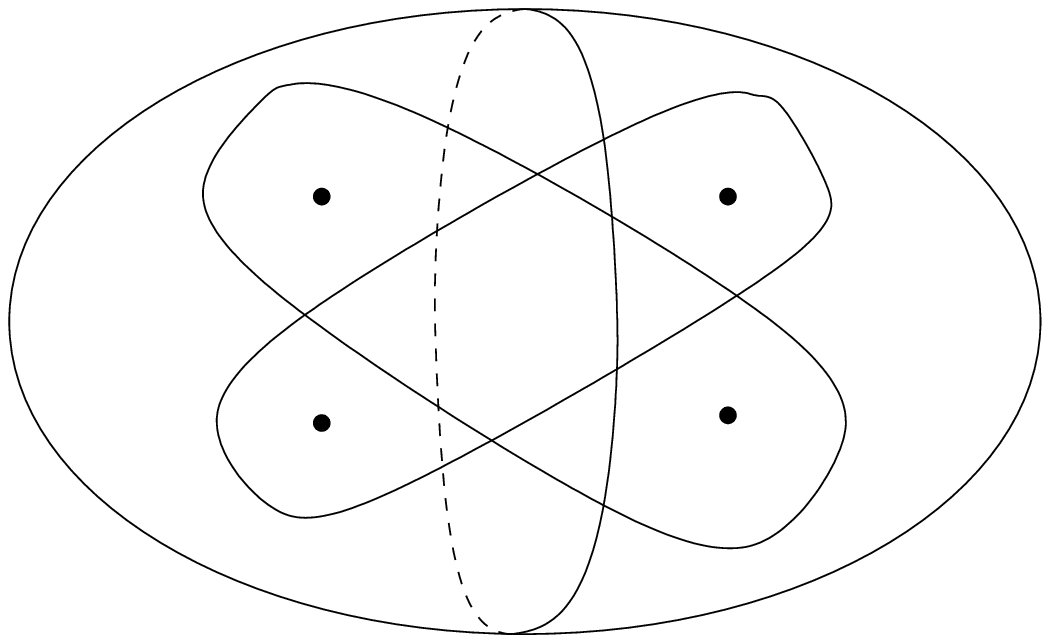} 

\caption{The regular fiber of the pencil $\pi = \sigma_1/\sigma_0$ (with the four base points deleted)
and the three vanishing spheres.}
\label{pencilfiber}
\end{center}
\end{figure}
To see that $\pi$ is a complexification, consider
$$f = \pi|\bbR P^2: \bbR P^2 \into \bbR.$$ If $\sigma_0$, $\sigma_1$ are chosen suitably then the critical points of $\pi$ lie on $\bbR P^2$ and $f$ isotopic to the standard Morse function with three critical points. Furthermore one can show using the Liouville flow that there is a Weinstein tubular neighborhood 
of $\bbR P^2 \subset \bbC P^2 $ which fills out all of $ \bbC P^2 \setminus U$; thus $ \bbC P^2 \setminus U \cong D(T^*\bbR P^2)$ and $\pi$ is the complexification of $f$.

\subsection{Topological construction of  $M$ }

Take a Riemannian metric $g$ on $\bbR P^2$ such that  $(f,g)$ is a Morse-Smale pair, where $f$ has 3 critical points $x_2,x_1,x_0$, with the standard flow lines as in figure \ref{RP2Morsefn}.
\begin{figure}
\begin{center}
\includegraphics[width=2in]{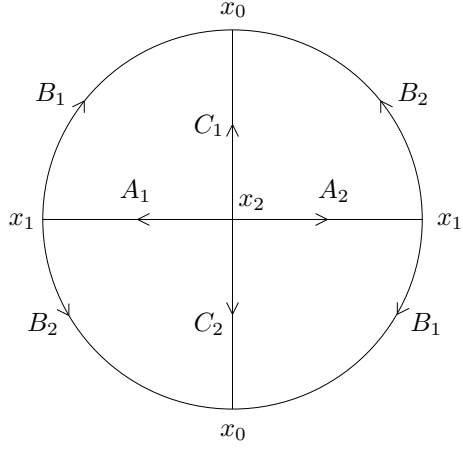} 
\put(-87,105){$C_1$}
\put(-87,30){$C_2$}
\put(-147,117){$B_1$}
\put(-10,117){$B_2$}
\put(-150,30){$B_2$}
\put(-5,30){$B_1$}
\put(-115,80){$A_1$}
\put(-40,80){$A_2$}
\put(-77,150){$x_0$}
\put(-77,-10){$x_0$}
\put(-70,77){$x_2$}
\put(-157,70){$x_1$}
\put(5,70){$x_1$}
\caption{The flow lines of $(f,g)$ on $\bbR P^2$.}
\label{RP2Morsefn}
\end{center}
\end{figure}
To define $M$, let
$V_0 = V_1 = S^1$.
Then take the unit disk bundles with respect to the standard metrics on the cotangent bundles,
$D(T^*V_0)$ and $D(T^*V_1)$. The short answer to constructing $M$ is to take 
two copies of $S^0$, say $K_0 \subset V_0$ and $K_1 \subset V_1$ and plumb
$D(T^*V_0)$ and $D(T^*V_1)$ together so that $V_0$ and $V_1$ meet along $S^0 \cong K_0 \cong K_1$.
As usual, plumbing means we identify neighborhoods of $K_0$ and $K_1$ in 
$D(T^*V_0)$ and $D(T^*V_1)$ by identifying the fiber direction in one with the  base direction in the other. More precisely, we take tubular neighborhoods of $K_0, K_1$, say 
$S^0 \times D^1 \subset V_0$,  $S^0 \times D^1 \subset V_1$
and we  trivialize the disk bundles $D(T^*V_0)$ and $D(T^*V_1)$ over these neighborhoods
$$D(T^*V_0)|_{(S^0 \times D^1)}, \, D(T^*V_1)|_{(S^0 \times D^1)} \cong S^0 \times D^1 \times D^1.$$
Then we glue $D(T^*V_0)$ to $D(T^*V_1)$ along $D(T^*V_0)|_{S^0 \times D^1}$ and 
$D(T^*V_1)|_{S^0 \times D^1}$ using the map 
$(x,y) \mapsto (-y,x): D^1 \times D^1 \into D^1 \times D^1.$
(We have a minus sign to ensure this map is symplectic.)
\\
\newline Depending on the orientations of the  identifications 
\begin{gather}\label{ident}
D(T^*V_0)|_{S^0 \times D^1}, \, D(T^*V_1)|_{S^0 \times D^1} \cong S^1 \times D^1 \times D^1,\end{gather}
there may be twists (like a M\"obius strip) in $D(T^*V_0)$ or $D(T^*V_1)$ after we plumb them together. For $N = \bbR P^2$ it turns out that there should be \emph{no twists} in $D(T^*V_0)$ or $D(T^*V_1)$. (See below for how (\ref{ident}) is determined by Morse theory.)
Thus $M$ is homeomorphic to $S^2$ with four small open disks removed (see figure \ref{RP2fiberVC}). We say homeomorphic, rather than symplectomorphic,  because $M$ has some corners. To remedy that, one can alternatively construct $M$ so that it has smooth contact type boundary by 
attaching two 1-handles to  the boundary $D(T^*V_0)$ using Weinstein's technique \cite{W}.
In this case the 1-handles should be attached to $\partial D(T^*V_0)$ along 
the boundary of the disk conormal bundle $\partial D(\nu^*K_0) \subset \partial D(T^*V_0)$. 
\\
 \newline Let us now explain the connection to Morse theory. Above, $V_0$ and $V_1$ represent the vanishing spheres of $f_\bbC$ corresponding to $x_0$ and $x_1$. Let us assume that $f$ is self-indexing so that it has
two regular level sets $f^{-1}(1/2)$ and $f^{-1}(3/2)$. The first step in relating the 
Picard-Lefschetz data to the Morse theory data is to identify
$$V_0 = f^{-1}(1/2) \cong S^1.$$
Now $(f,g)$ determines the standard handle-decomposition of $\bbR P^2$ with three handles of index $0,1,2$. The 1-handle has an attaching sphere 
$$K_0 = S^0 \subset S^1 \cong f^{-1}(1/2)= V_0.$$ 
Moreover, we have  a tubular neighborhood of $K_0 \subset V_0$
$$\phi: S^0 \times [-1,1] \into S^1$$
determined up to isotopy by the framing we use to attach the 1-handle.
Then $\phi$ in turn determines an exact symplectic identification
\begin{gather}
 \widehat \phi: D(T^*V_0)_{(S^0 \times [-1,1])} \into S^0 \times D^1 \times D^1, \label{plumbingtriv}
\end{gather}
where $D^1 \times D^1 \subset (\bbR^2, dy \wedge dx)$, and  the coordinates are $(x,y) \in \bbR^2$.
On the other hand $V_1 = S^1 \subset \bbR^2$ has two standard $S^0$'s 
$$K_+^1 = \{ (\pm 1, 0 ) \} \text { and } K_-^1 =  \{ ( 0, \pm 1) \}.$$
($K_-^1$  will be used in the construction of $M$; $K_+^1$ will be used in the construction of $V_2$.)
We take the canonical orientation preserving tubular neighborhood of $K_-^1$, 
$S^0 \times [-1,1] \subset S^1$ and the corresponding
exact symplectic trivialization
$$D(T^*V_1)|_{(S^0 \times [-1,1])} \cong S^0 \times D^1 \times D^1$$
which we use to plumb $D(T^*V_0)$ and $D(T^*V_1)$ together.
The main point we wanted to make was, first, that the $S^0$ corresponds precisely to the
intersection of the unstable manifold $U(x_1)$ with the level set 
$V_0 = f^{-1}(1/2)$, and second that the framing of the corresponding 1-handle determines
the trivialization (\ref{plumbingtriv}). (On the other hand, the corresponding trivialization 
for $D(T^*V_1)$ is always the same; this is analogous to the set up in a handle attachment.)

\subsection{Construction of the vanishing spheres}

We have already defined $V_0, V_1$. We now explain how to define $V_2$ (see figure \ref{RP2fiberVC}).
The method we present here is the same as the one that we use in general. Let $\Phi$ denote the time $\pi/2$ geodesic flow on $D(T^*V_1)$ (which is Hamiltonian). Consider 
the disk conormal bundle $D(\nu^*K_+^1) \subset D(T^*V_1)$ and set $H = \Phi(D(\nu^*K_+^1))$. Since $\Phi$ fixes points in the zero section, and moves covectors $v$ of length 1 a distance $\pi/2$ in the direction of $v$, we have $\partial H = \partial D(\nu^* K_-^1)$ and $H \cap V_1 = K_+^1$. 
Now tweak $\Phi$ slightly to get a new Hamiltonian diffeomorphism $\widetilde \Phi$ such that
$\widetilde H = \widetilde \Phi(D(\nu^*K_+^1))$ agrees with $D(\nu^* K_-^1)$ in a neighborhood of 
$\partial D(\nu^* K_-^1)$.
Then we define 
$$V_4 = (V_0 \setminus D(\nu^*K_-^1)) \cup \widetilde H.$$ 
See figure \ref{RP2fiberVC}.
\begin{figure}
\begin{center}
\includegraphics[width=5in]{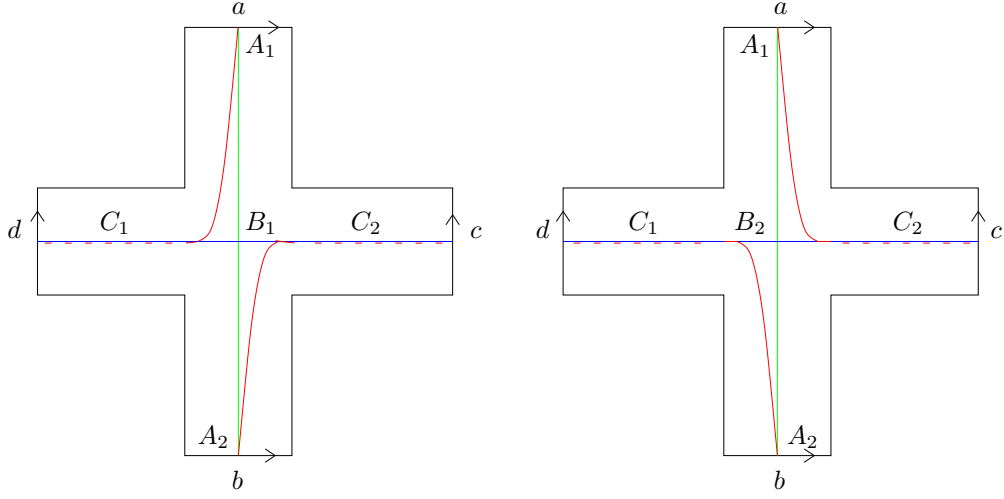} 
\put(-80,170){$a$}
\put(-285,170){$a$}
\put(-80,-10){$b$}
\put(-285,-10){$b$}
\put(-170, 85){$d$}
\put(-195, 85){$c$}
\put(2, 85){$c$}
\put(-370, 85){$d$}
\put(-93,155){$A_1$}
\put(-280,155){$A_1$}
\put(-75,7){$A_2$}
\put(-298,7){$A_2$}
\put(-135, 88){$C_1$}
\put(-95, 88){$B_2$}
\put(-240, 88){$C_2$}
\put(-280, 88){$B_1$}
\put(-35, 88){$C_2$}
\put(-335, 88){$C_1$}
\caption{The fiber $M$ and vanishing spheres $V_2$ (red) $V_1$ (green), $V_0$ (blue) for $(\bbR P^2,f,g)$. The edges labeled by letters ($a,b,c,d$) are identified according to the indicated orientations. The other labels correspond to generators in Floer homology
$A_1, A_2 \in HF(V_2, V_1)$, $ B_1, B_2 \in HF(V_1,V_0)$, $C_1, C_2 \in HF(V_2, V_0)$.}
\label{RP2fiberVC}
\end{center}
\end{figure}
Notice that $V_4$ is obtained by surgery on $V_0$ along the framed attaching sphere $K_0 \subset V_0$,
just as $f^{-1}(3/2)$ is obtained by surgery on $f^{-1}(1/2)$. This is not a coincidence, as we now explain. 
\\
\newline In this simple example it is worth giving a brief sketch of where the construction of fiber and vanishing spheres come from (see \cite{JA} for details). One should consider $M$ as being a model for the regular fiber of $f_\bbC: D(T^*N) \into \bbC$
at the base point $b = 1/2$. And one should think of $(V_0,V_1, V_2)$ as being a model for the vanishing spheres in $M$ relative to the vanishing paths $\gamma_0, \gamma_1, \gamma_2$, as in figure \ref{Fukvanishingpaths}. Namely, $\gamma_0$ parameterizes $[0,1/2]$, $\gamma_1$
parameterizes $[1/2,1]$, and $\gamma_2$ goes around the critical value $1$, by a half loop in the lower half plane, and then continues along the interval $[3/2,2]$.
 \begin{figure}
\begin{center}
\includegraphics[width=3in]{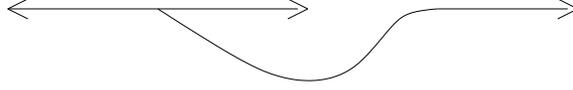} 

\caption{The three vanishing paths $\gamma_0, \gamma_1, \gamma_2$.}
\label{Fukvanishingpaths}
\end{center}
\end{figure}
The fact that $\gamma_0$ and $\gamma_1$ are straight line segments is reflected in the simplicity of $V_0$ and $V_1$. Also the identification of $V_0$ with $f^{-1}(1/2)$ makes sense in view of the fact the stable manifold  of $x_0$ over $[0,1/2]$ is all of $f^{-1}([0,1/2])$ and so 
$f^{-1}([0,1/2])$ coincides with the Lefschetz thimble over $[0,1/2]$.
On the other hand the half loop in $\gamma_2$ accounts for the twist in $V_2$. Indeed, if instead of $b= 1/2$ we had the base point at $b' = 3/2$, with $\gamma_2'$ parameterizing $[3/2,2]$,
then the corresponding vanishing sphere  $V_2'$ would be identified with $f^{-1}(3/2)\subset f_\bbC^{-1}(3/2)$ and the Lefschetz thimble 
would be the unstable manifold $f^{-1}([3/2,1])$. Thus, $V_2' \subset f_\bbC^{-1}(3/2)$ would look similar to $V_0$. But because $V_2 \subset f_\bbC^{-1}(1/2)$ is actually obtained from $V_2'  \subset f_\bbC^{-1}(3/2)$ by parallel transport along the half loop, $V_2$ is twisted by something like a ``half Dehn twist''. 
\\
\newline We conclude this section by pointing out that since $V_2$,$V_1$,$V_0$ divide the four holes of $M$ into pairs in all three possible ways, the fiber and vanishing spheres we have constructed are equivalent to those of the algebreo-geometric example above.

\subsection{Comparing the Flow category and the directed Fukaya category} 
In the case $N =\bbR P^2$ Theorem $B$ was explained already in \cite[p.\,9,27]{SII00}. The flow category of $(\bbR P^2, f,g)$ has three objects $x_2,x_1,x_0$ and the space of  morphisms from $x_i$ to $x_j$, for every $i >j$, is generated by two elements (both in degree 0). For example, from figure \ref{RP2Morsefn} one sees that $Flow(x_2,x_0)$ is diffeomorphic to the disjoint union of two closed intervals $I_1,I_2$, where the gradient flow lines corresponding to $I_1$ (resp. $I_2$) fill out the top half (resp. bottom half) of the disk in figure \ref{RP2Morsefn}. 
Let $A_1, A_2$, $B_1,B_2$, $C_1,C_2$ denote the generators as labeled in figure \ref{RP2Morsefn}. 
Thus the boundary of $I_1$ corresponds to the 
broken trajectories $B_2 \circ  A_2 $ and $B_1\circ A_1$, and
similarly $\partial I_2 \cong \{B_1\circ  A_2, \, B_2 \circ A_1\}$. 
Thus $B_1\circ  A_2$ is homologous to $B_2 \circ A_1$ in $Flow(x_2,x_0)$, and similarly  $B_2 \circ  A_2 \backsim B_1\circ A_1$. Therefore  the flow category can be described as the following  quiver with relations: 

$$\xymatrixcolsep{5pc}\xymatrix{x_2 \ar@/^0.5pc/[r]^{A_1} \ar@/_0.5pc/[r]_{A_2} \ar@/^2.5pc/[rr]^{C_1} \ar@/_2.5pc/[rr]_{C_2} & x_1 \ar@/^0.5pc/[r]^{B_1} \ar@/_0.5pc/[r]_{B_2} & x_0}
$$

$$B_2 A_2 = B_1A_1=C_1, \,\, B_1 A_2 = B_2  A_1 =C_2.$$

On the other hand, the directed Donaldson-Fukaya category of $(M,V_2,V_1,V_0)$ has objects  $V_2,V_1,V_0$ and the Floer homology groups are also each generated by 
two elements (of degree 0). (Indeed, the intersection of any two $V_i$'s is either a pair of points or a pair of intervals; to be more formal one should isotope $V_2$ so that the two intervals become two points.)  By counting triangles in $M$ with boundary on $V_2,V_1,V_0$
one finds precisely the same quiver with relations as above.
For example, in figure \ref{RP2fiberVC} one has four such triangles; the right-most one
has vertices corresponding to $A_1, B_2, C_2$, and it gives rise to the relation
$B_2A_1 = C_2$ (see also \cite[p. 39]{AS}).

\section{Computing the flow category} \label{sectionFlow}


Fix a closed 4-manifold $N$ \label{N} which admits 
a self-indexing Morse function $f: N \into [0,4],$ with critical values $0,2,4$ and with  a unique maximum and minimum. Let $g$ be  Riemannian metric such that $(f,g)$ \label{(f,g)} is Morse-Smale.
In this section we compute the flow category 
of $(N,f,g)$ in this case (see the introduction for the definition). 
\\
\newline Let $L_- =f^{-1}(1) \cong S^3 \text{ and } L_+ =f^{-1}(3) \cong S^3.$ 
For each $j=1,\ldots,k$ we have two knots
$$K_-^j = U(x_2^j) \cap L_- \cong S^1 \text{ and } K_+^j = S(x_2^j) \cap L_+ \cong S^1.$$
Here $U(x)$ is the unstable manifold and $S(x)$ is the stable manifold.
The handle decomposition of $N$ coming from $(f,g)$ has $k$ 2-handles with attaching spheres  $ K^1_-$, $\ldots,$ $ K^k_-$. Let 
$\phi_j: S^1 \times D^2 \into U_j$
 denote the attaching map for the $j$th 2-handle where $U_j \subset L_-$ is a tubular neighborhood of $K_-^j$ in $L_{-}$.
Set 
$U= \cup_{j=1}^{j=k}U_j$, $K_{\pm} = \cup_j K_{\pm}^j.$ 
The first step in the computation is to note the following result.
\begin{lemma} \label{lemmaFlow} There are diffeomorphisms
$Flow(x_4,x_2^j) \cong K_+^j$,  $Flow(x_2^j,x_0) \cong  K_-^j$, and 
$Flow(x_4,x_0) \cong L_- \setminus \Int(U)$. 
\end{lemma}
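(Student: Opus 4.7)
My plan is to parameterize each flow space by intersecting its gradient trajectories with the regular level sets $L_- = f^{-1}(1)$ and $L_+ = f^{-1}(3)$, thereby reducing everything to a description of (un)stable-manifold intersections inside these $3$-spheres. The key simplifying feature is that only the Morse indices $0, 2, 4$ occur, so indices $1$ and $3$ are absent, which rules out most broken trajectories. The general tool is: for any critical points $x, y$ of $f$ with $f(x) > c > f(y)$, the evaluation map $ev_c : Flow^{\,\circ}(x,y) \to f^{-1}(c)$ sending a trajectory to its unique intersection with the level set is a smooth embedding with image $U(x) \cap S(y) \cap f^{-1}(c)$.

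First I would apply this with $c = 3$ and $c = 1$ to handle the first two claims. Because $x_4$ is the only critical point above level $2$, every point of $L_+$ has its upward gradient flow terminating at $x_4$, so $U(x_4) \cap L_+ = L_+$ and the image of $ev_3$ on $Flow^{\,\circ}(x_4, x_2^j)$ is $L_+ \cap S(x_2^j) = K_+^j$. Symmetrically, $S(x_0) \cap L_- = L_-$, so $ev_1$ identifies $Flow^{\,\circ}(x_2^j, x_0)$ with $U(x_2^j) \cap L_- = K_-^j$. Both of these flow spaces have empty boundary, since any broken trajectory would require an intermediate critical point of index $1$ or $3$, of which there are none; hence these embeddings are already diffeomorphisms.

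For $Flow(x_4, x_0)$ I again apply $ev_1$ to the open part. Using $S(x_0) \cap L_- = L_-$ and the observation that a point of $L_-$ lies in $U(x_4)$ iff its upward trajectory avoids every $x_2^j$, the image is $L_- \setminus \bigcup_j K_-^j$, which is diffeomorphic to the interior of the closed knot complement $L_- \setminus \Int(U)$. The compactification adds only the once-broken trajectories (twice-broken ones would require two intermediate index-$2$ points joined by a flow line, which Morse--Smale forbids), contributing a boundary face $Flow(x_4, x_2^j) \times Flow(x_2^j, x_0) \cong K_+^j \times K_-^j$ at each $x_2^j$. I would then show that this boundary face identifies with $\partial U_j$ so that $Flow(x_4, x_0) \cong L_- \setminus \Int(U)$ as manifolds with corners.

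The main technical step is the identification $K_+^j \times K_-^j \cong \partial U_j$, which I carry out in a Morse chart at $x_2^j$. Take coordinates $(s, u) \in \bbR^2 \times \bbR^2$ with $f = 2 - |s|^2 + |u|^2$, so the time-$t$ negative gradient flow is $(s, u) \mapsto (e^{2t}s, e^{-2t}u)$, the set $L_-$ is locally the hyperboloid $\{|s|^2 - |u|^2 = 1\}$ parameterized by $(\hat s, u) \in S^1 \times \bbR^2$ via $s = (1+|u|^2)^{1/2}\hat s$, and $K_-^j$ is the locus $u = 0$. I would take $U_j$ to be the slice $\{|u| \le \epsilon\}$ (a permissible tubular neighborhood). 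For $(\hat s, u) \in \partial U_j$ (so $|u| = \epsilon$), a short computation with the linear flow shows the backward trajectory reaches $L_+$ at time $e^{2t} \sim 1/\epsilon$ at a point approaching $(0, u/|u|) \in K_+^j$; this produces the diffeomorphism $\partial U_j \to K_+^j \times K_-^j$, $(\hat s, u) \mapsto (u/|u|, \hat s)$. The main obstacle is verifying that this local identification assembles with $ev_1$ on the interior into a global smooth diffeomorphism of manifolds with corners, which reduces to matching the Gromov convergence of flow lines to a broken trajectory through $x_2^j$ with the limit $|u| \to 0$ inside the chart, a standard exponential-decay estimate in Morse--Smale theory.
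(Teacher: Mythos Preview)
Your proof is correct and the first two diffeomorphisms are handled exactly as in the paper. For the third, however, you take a genuinely different route. The paper argues abstractly: it removes an open collar of $\partial\,Flow(x_4,x_0)$ to obtain $\widetilde{Flow}(x_4,x_0) \cong Flow(x_4,x_0)$ sitting inside $Flow^{\,\circ}(x_4,x_0) \cong L_- \setminus K_-$, observes that $L_- \setminus \Int(U)$ is another such ``interior minus collar'' inside the same open manifold, and concludes by the fact that any two such complements are diffeomorphic. Your approach instead builds the diffeomorphism explicitly, matching the interiors via $ev_1$ and the boundary tori via a Morse chart computation identifying $\partial U_j$ with $K_+^j \times K_-^j$. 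The paper's argument is slicker and avoids your gluing verification entirely; on the other hand, your explicit identification of $\partial U_j \cong K_+^j \times K_-^j$ is not wasted effort, since exactly this correspondence is unpacked later in the proof of Proposition~\ref{propFlow} (the computation of $\mu^{Flow}$) via the map $\Phi$ on the handle $H_j$.
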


\begin{proof} For the first two diffeomorphisms we use the map which associates 
to a trajectory $\gamma$ the corresponding intersection point in $\gamma \cap L_- \in L_-$ or $\gamma \cap L_+ \in L_+$. For the second diffeomorphism, note first that 
$$Flow^\circ(x_4, x_0) \cong L_+ \setminus K_+ \cong L_- \setminus K_-$$ by the same map.
Now take the compactification $Flow(x,y)$ and delete an open collar neighborhood of the boundary
to obtain an diffeomorphic manifold 
$$\widetilde {Flow}(x,y) \cong Flow(x,y), \text{ with } \widetilde {Flow}(x,y) \subset Flow^\circ(x,y).$$
Now consider $L_- \setminus \Int(U) \subset L_-\setminus K_-$. Under the correspondence
$$ Flow^\circ(x,y) \cong L_-\setminus K_-,$$ 
$L_- \setminus \Int(U)$ also corresponds to $F(x,y)$ minus some collar neighborhood of the boundary, just like $\widetilde {Flow}(x,y)$. But any two complements of a collar neighborhood are diffeomorphic, so 
$ Flow(x,y) \cong \widetilde{Flow}(x,y) \cong L_- \setminus \Int(U).$ 
\end{proof}
For $H_*( K_{\pm}^j)$ take the generators $[K_{\pm}^j]$, $[p_{\pm}^j]$, where 
$p_{\pm}^j \in K_{\pm}^j$. 
For $H_*(\partial U_j)$ take the generators 
$[\partial U_j], \,\lambda_j,\, \mu_j,\, [q_j],$ 
where
$q_j \in \partial U_j$ and $\lambda_j,\, \mu_j \in H_1(\partial U_j)$ 
satisfy 
$$\ell k(\lambda_j, K_-^j) =1, \, \ell k(\mu_j,K_-^j)=0.$$
Here, $\ell k(\cdot,\cdot)$ is the linking number. Then $H_*( L_- \setminus \Int(U))$ is generated by $$[\partial U_j], \,\lambda_j,\, \mu_j, \, [q_j],\, j=1,\ldots,k, $$ and the relations are:
\begin{gather} \label{homologyrelations}
 \Sigma_j [\partial U_j] =0, \, [q_1] = \ldots =[q_k],\\
 \lambda_j = \Sigma_{i\neq j} \ell k(K_-^j,K_i^-)\mu_i, \,  j =1,\ldots, k.
\notag
\end{gather}
(To see the last relation, take a Seifert surface $\Sigma_j$ bounding $K_j$, and cut away a neighborhood of $K_j$ and each $K_i$, $ i \neq j$.) 
\\
\newline Recall that each $2-$handle is a copy of $D^2 \times D^2$, and to attach it to $D^4$ we glue 
$$S^1 \times D^2 = \partial D^2\times D^2 \subset D^2 \times D^2 \text { to } U_j \subset L_-$$
using $\phi_j$. Now take $\widetilde \lambda,\widetilde \mu \in H_1(S^1 \times \partial D^2)$ to be
the homology classes of $S^1 \times \{q\}$ and $\{p\} \times \partial D^2$ respectively,
where $p \in S^1$,  $q\in \partial D^2$.
Then $ \phi_j$ induces a map 
$$(\phi_j)_*: H_1(S^1 \times \partial D^2) \into  H_1(\partial U_j)$$ which satisfies
\begin{gather}\label{phi_jformula}
 (\phi_j)_*(\widetilde \lambda)= \lambda_j + m_j\mu_j,  \, (\phi_j)_*(\widetilde \mu) = \mu_j
\end{gather} 
for some $m_j \in \bbZ$ (called the framing coefficient). Denote the composition in the flow category by
\begin{gather*}
\mu^{Flow}: H_*(Flow(x_4,x_2^j)) \otimes H_*(Flow(x_2^j,x_0))\into H_*(Flow(x_4,x_0)).
\end{gather*}
\begin{prop}
\label{propFlow} In terms of the identifications in lemma \ref{lemmaFlow}, $\mu^{Flow}$ is given by:
$$\mu^{Flow}([K_+^j], [p_-^j]) = \mu_j, \, \mu^{Flow}([p_+^j], [K_-^j]) = \lambda_j + m_j\mu_j$$
$$\mu^{Flow}([K_+^j], [K_-^j]) = [\partial U_j], \, 
\mu^{Flow}([p_+^j], [p_-^j]) = [q_j].$$
(All other products are zero.)  In view of (\ref{homologyrelations}), the relations among the products are therefore
$$ \Sigma_j \mu^{Flow}([K_+^j], [K_-^j]) =0, \, \mu^{Flow}([p_+^1], [p_-^1]) = \ldots =\mu^{Flow}([p_+^k],[p_-^k]),$$
$$\mu^{Flow}([p_+^j], [K_-^j]) = m_j\mu^{Flow}([K_+^j], [p_-^j]) + \Sigma_{i\neq j}\, L_{ji}\mu^{Flow}([K_+^i], [p_i^-]),$$
where $L_{ji}= \ell k(K_-^j, K_-^i)$ and $m_j$ is the framing coefficient of $K_-^j$.
\end{prop}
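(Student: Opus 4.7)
The plan is to use that $\mu^{Flow}$ is, by definition, induced by the embedding of the boundary face $F := Flow(x_4,x_2^j)\times Flow(x_2^j,x_0)\into Flow(x_4,x_0)$, followed by the K\"unneth isomorphism. Under the diffeomorphisms of Lemma \ref{lemmaFlow}, $F\cong K_+^j\times K_-^j$, and this boundary face corresponds to the boundary torus $\partial U_j \subset L_-\setminus \Int(U)$ (each component of the boundary arises from precisely one breaking, at one $x_2^j$). Thus it suffices to compute the homology action of the resulting gluing diffeomorphism $\Psi_j: K_+^j\times K_-^j \into \partial U_j$.

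First I would describe $\Psi_j$ using Morse coordinates $(x_1,x_2,y_1,y_2)$ near $x_2^j$ in which $f=2+|x|^2-|y|^2$ and the gradient flow is $\dot x=-2x,\,\dot y=2y$. Here $K_+^j$ is the unit circle in the stable $x$-plane, parameterized by an angle $\theta_x$, and $K_-^j$ is the unit circle in the unstable $y$-plane, parameterized by $\theta_y$. An unbroken trajectory approximating a given broken one has the form $t\mapsto(x_0e^{-2t},y_0e^{2t})$ with $x_0/|x_0|$ close to $\theta_x$ and $y_0/|y_0|$ close to $\theta_y$; the gluing parameter is determined by the radius of $U_j$. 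Solving $|y(t)|^2-|x(t)|^2=1$ exhibits the exit point on $L_-$ explicitly: its $y$-direction is $\theta_y$ (longitudinal along $K_-^j$) and its $x$-direction is $\theta_x$ (angular in the perpendicular $x$-plane). Hence $\Psi_j$ is the evident product diffeomorphism, with $\theta_y$ tangent to a longitude of $K_-^j$ and $\theta_x$ tangent to a meridian.

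The key step is to identify this Morse-coordinate trivialization of $\partial U_j$ with the trivialization coming from the 2-handle attaching map $\phi_j$. This holds essentially by construction of the handle decomposition associated to $(f,g)$: $\phi_j$ is obtained by transporting the canonical disk-bundle neighborhood of $K_-^j$ in Morse coordinates down to $L_-$ via the gradient flow. Granting this, (\ref{phi_jformula}) yields
\begin{gather*}
(\Psi_j)_*[\{p_+^j\}\times K_-^j]=(\phi_j)_*(\widetilde\lambda)=\lambda_j+m_j\mu_j, \\
(\Psi_j)_*[K_+^j\times\{p_-^j\}]=(\phi_j)_*(\widetilde\mu)=\mu_j,
\end{gather*}
while the top- and zero-dimensional classes satisfy $(\Psi_j)_*[K_+^j\times K_-^j]=[\partial U_j]$ and $(\Psi_j)_*[\{p_+^j\}\times\{p_-^j\}]=[q_j]$. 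Via K\"unneth these are the four stated formulas for $\mu^{Flow}$, and substituting them into (\ref{homologyrelations}) produces the listed relations among the products.

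The main obstacle will be pinning down precisely the identification of the Morse-coordinate framing of $K_-^j\subset L_-$ with the framing used to attach the 2-handle; this is the step that actually produces the coefficient $m_j$ in the longitude. Concretely, one has to follow the natural parallelization of the normal bundle of $K_-^j$ by $(x_1,x_2)$ through the gradient flow connecting the critical level $f^{-1}(2\pm\epsilon)$ to the fixed level $L_-=f^{-1}(1)$, and verify that the result is indeed the framing used to define $\phi_j$. Once that is in hand, the computation of $\mu^{Flow}$ reduces to (\ref{phi_jformula}) and the K\"unneth formula.
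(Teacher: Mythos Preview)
Your proposal is correct and follows essentially the same route as the paper. The paper also reduces the computation to identifying how the product cycles on $K_+^j\times K_-^j$ land in $\partial U_j$ via the handle framing $\phi_j$; it does this by writing down an explicit model flow $\Phi(x,y)=(x/|x|,\,|x|y)$ on the handle $D^2\times D^2$, checking the relations $\Phi(\lambda_+)=\mu_-$, $\Phi(\mu_+)=\lambda_-$, $\Phi(T_+)=T_-$, and then pushing forward by $(\phi_j)_*$ using (\ref{phi_jformula}) --- which is exactly your $\Psi_j$ computation phrased in terms of cycles rather than a gluing diffeomorphism. Your flagged ``main obstacle'' (that the Morse-coordinate framing of the normal bundle of $K_-^j$ agrees with the framing used to define $\phi_j$) is treated in the paper as essentially tautological, since $\phi_j$ is by definition the attaching map coming from the handle decomposition induced by $(f,g)$.
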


\begin{proof} The $\mu_2([p_+^j],[p_-^j])$ product is very simple, so we omit that. The products $\mu^{Flow}([K_+^j], [K_-^j])$, $\mu^{Flow}([p_+^j], [K_-^j])$ and $\mu^{Flow}([K_+^j], [p_-^j])$ are cycles in $Flow(x_4,x_0)$, which  are respectively represented by the following submanifolds 
$$C= \{\gamma \in Flow(x_4, x_0) : \gamma \cap L_+ \in K_+^j, \gamma \cap L_- \in K_-^j\}.$$
$$C'= \{\gamma \in Flow(x_4, x_0) : \gamma \cap L_+ =  p_+^j, \gamma \cap L_- \in K_-^j\}.$$
$$C''= \{\gamma \in Flow(x_4, x_0) : \gamma \cap L_+ \in K_+^j, \gamma \cap L_- = p_-^j\}.$$
(We remind the reader that each $\gamma \in Flow(x_4, x_0)$ could be either broken (at some $x_2^j$)
or not broken; in either case $\gamma \cap L_{\pm}$ makes sense.)
\\
\newline Let's look at how we can represent these submanifolds inside the fixed handle $H_j = D^2 \times D^2$. 
The parts of $L_{-}$ and $L_{+}$ in   $H_j$ are respectively
$$H_- = \partial D^2 \times D^2 \text{ and }H_+ =  D^2 \times \partial D^2,$$
and $K_+^j$, $K_-^j$ correspond respectively to 
$$K_-=  \partial D^2 \times \{0\} \text{ and }K_+=   \{0\}\times \partial D^2.$$
Now if we view $C \cap H_j$, $C' \cap H_j$ and  $C'' \cap H_j$ as 
families of broken trajectories  in $H_j$ then 
$$C \cap L_+ = K_+, \, C \cap L_- = K_-$$ 
$$C' \cap L_+ = \{p_+\}, \, C' \cap L_- = K_-$$ 
$$C'' \cap L_+ = K_+, \, C'' \cap L_- = \{p_-\}$$ 
where $p_{\pm} \in K_{\pm}$ corresponds to $p_{\pm}^j \in K_{\pm}^j$.
We represent the negative gradient flow in $H_j$ by the map
$$\Phi:  (D^2 \setminus \{0\}) \times \partial D^2 \into \partial D^2 \times (D^2 \setminus \{0\}), \phantom{bbb} \Phi(x,y) = (\frac{x}{|x|}, |x| y).$$
Then $\Phi$ maps $H_+ \setminus K_+$ diffeomorphically onto $H_- \setminus K_-$
and it fixes $\partial D^2 \times \partial D^2$ point-wise, since that is in both $H_+$ and $H_-$. Let $0 < \epsilon < 1$, and let $D^2_\epsilon \subset D^2$ 
denote the smaller disk of radius $\epsilon$. Set 
$$T_+ = T_- = \partial D^2 \times \partial D^2,$$
$$\lambda_- = \partial D^2 \times \{q_-\} \text{ for some }   q_- \in  \partial D^2_\epsilon,$$
$$\mu_- = \{r_- \} \times \partial D^2_\epsilon \text{ for some } r_- \in \partial D^2.$$
Similarly, set
$$\lambda_+ = \{r_+\} \times \partial D^2 \text{ for } r_+ = \epsilon r_-\in  \partial D^2_\epsilon,$$
 $$\mu_+  =  \partial D^2_\epsilon \times \{q_+ \} \text{ for } q_+ =  \frac{q_-}{|q_-|} \in \partial D^2.$$
Then 
\begin{gather} \label{PhiRelations}
\Phi(T_+) = T_-, \, \Phi(\lambda_{+}) = \mu_-, \, \Phi(\mu_{+}) = \lambda_-.
\end{gather}
Now, returning to $C, C',C''$, we apply the map which retracts 
$Flow(x_4, x_0)$ onto 
$$\widetilde{Flow}(x_4,x_0) \subset Flow^\circ(x_4,x_0)$$
to obtain new submanifolds $\widetilde C,\widetilde C',\widetilde C''$
which are homologous in $Flow(x_4, x_0)$ to  $C, C', C''$ respectively 
(here $\widetilde{Flow}(x_4,x_0)$ is as in the last lemma).
If $\widetilde{Flow}(x_4,x_0)$ is chosen suitably 
then $\widetilde C,\widetilde C',\widetilde C''$
will satisfy
$$\widetilde C \cap L_+ = T_+, \, \widetilde C \cap L_- = T_-,$$ 
$$\widetilde C' \cap L_+ = \mu_+, \, \widetilde C' \cap L_- = \lambda_-,$$ 
$$\widetilde  C'' \cap L_+ = \lambda_+, \, \widetilde  C'' \cap L_- = \mu_-.$$ 
(Note that $\Phi(\lambda_{+}) = \mu_-, \, \Phi(\mu_{+}) = \lambda_-$ is consistent with the fact that 
$\widetilde C$ is a collection of \emph{unbroken} gradient trajectories, so $\Phi(\widetilde C \cap L_+)$ should be equal to $\widetilde C \cap L_-$, and similarly for $\widetilde C'$, $\widetilde C''$.)
\\
\newline Now we return to what happens in $N$. Recall we wish to represent our cycles 
$C, C',C''$ as cycles in $H_*(\partial U_j)$, using the identification
$$\widetilde{Flow}(x_4,x_0) \cong S^3 \setminus (\cup_j \Int(U_j)).$$
We have already represented $\widetilde C,\widetilde C',\widetilde C''$ as submanifolds in $H_j \cap L_+$ and $H_j \cap L_-$. In fact 
since $\widetilde C,\widetilde C',\widetilde C''$ are families of unbroken trajectories, one only needs the representation in  $H_j \cap L_-$ since one can use the gradient flow to recover the representation in  $H_j \cap L_+$. To view $T_-, \lambda_-, \mu_- \subset H_j \cap L_-$ as cycles in $\partial U_j$, we 
first replace $\lambda_-$ and $\mu_-$ respectively by the homologous cycles 
$\lambda,   \mu  \subset \partial D^2 \times D^2$ from before.
Then we simply apply the map 
$$(\phi_j)_* : H_1(\partial D^2 \times  \partial D^2) \into H_*(\partial U_j ) $$ 
to each of $T_-, \lambda, \mu$ and  (\ref{phi_jformula}) implies 
$$(\phi_j)_*(T_-) = [\partial U_j],\,  (\phi_j)_* (\widetilde \lambda) = \lambda_j + m_j \mu_j, 
(\phi_j)_* (\widetilde \mu) = \mu_j.$$
Since  $\widetilde C,\widetilde C',\widetilde C''$ correspond respectively to $T_-, \lambda, \mu$, the formula for $\mu_2$ follows.
\end{proof}

\section{An outline of the paper in the case $N = \bbC P^2$}\label{outline}

In this section we will give a fairly detailed outline of the 
the main lines of argument in this paper. We focus on the  technical 
aspects for the most part, but we suppress details about Morse-Bott Floer 
homology. 
\\
\newline Fix a closed four manifold $N$ and let $(f,g)$ be a Morse function and 
Riemannian metric on $N$ such that the gradient flow is Morse-Smale. To keep
 the notation simple in this section we assume that $f$ has just three critical 
points, 
say $x_0, x_2, x_4$, with  Morse index $0,2,4$. (For example, we could take 
$N=\bbC P^2$ with its standard handle-decomposition.) 
\\
\newline Below we will sketch the 
construction of  the fiber $M$, based on $(N,f,g)$, and  we will construct
the vanishing spheres $V_4$, $V_2$, $V_0$ in $M$. Then we will
sketch the computation of  the Floer homology groups and the triangle
product.  The crucial ingredient in
the paper is the following: We will see that 
(parts of) $V_4$, $V_2$, $V_0$ have a nice rotational symmetry, and we
will leverage this to find a simple explicit description of
all the holomorphic triangles in $M$ (with respect to some natural
almost complex structure). The demands of this one argument are responsible for
most  of the  technical aspects of this paper: 
First, it necessitates  the use of Morse-Bott
Floer homology (as we mentioned at the end of the introduction). 
Second, while we start with a simple version of the vanishing spheres,
we must repeatedly replace these 
by more complicated versions which are 
exact isotopic to the original ones; each version  resolves a particular
technical problem. (There are four versions in all, but always the 
crucial rotational symmetry will be preserved.)
Here is a very rough outline of the whole argument, arranged according to the 
subsections below:
\begin{itemize}
\item[\S \ref{fibersketch}] We sketch the construction of the fiber $M$.
\vspace{0.1cm}
\item[\S \ref{VCsimplesketch}] (Vanishing spheres version I)  
We define a simple version of the vanishing spheres  in $M$ which we denote
$L_0$, $L_2$, $L_4$. (Figure \ref{IntrofigsimpleVCdim2} below shows 
$L_0$, $L_2$, $L_4$ intersected with a certain 2 dimensional slice in $M$.)
 \vspace{0.1cm}
\item[\S \ref{firsttriangle}] We sketch the core argument of the paper:
 We classify the holomorphic 
triangles in a certain Weinstein neighborhood of $L_2$, 
say $D(T^*L_2) \subset M$, using the rotational symmetry of $L_0$, $L_2$, $L_4$.
\vspace{0.1cm}
\item[\S \ref{VCsketch}] (Vanishing spheres version II) 
We point out the main technical problem with the argument in 
\S \ref{firsttriangle} and we correct it by replacing 
$L_0$, $L_2$, $L_4$ by some new exact isotopic versions 
$V_0$, $V_2$, $V_4$ (see figure \ref{CorrectionGammaIIIbig}).
\vspace{0.1cm}
\item[\S \ref{Floersketch}] (Vanishing spheres version III)
We sketch the computation of the  Floer homology groups. To do this we have 
to modify $V_0$, $V_2$, $V_4$ very slightly: we locate certain graphs of exact 
1-forms $df$, which are subsets of $V_0$, $V_2$, $V_4$,  and we replace these by $\frac{1}{n} df$ for some large fixed 
$n \geq 1$; then we patch these back into  $V_0$, $V_2$, $V_4$. The basic shape of 
 $V_0$, $V_2$, $V_4$ is unchanged, and we keep the same notation.
\vspace{0.1cm}
\item[\S \ref{secondtriangle}] (Vanishing spheres version IV)
We complete our sketch of the classification of holomorphic triangles in $M$. To do this we replace  $V_0$, $V_2$, $V_4$ one final time by new 
exact isotopic versions denoted $\widetilde V_0$, $\widetilde V_2$, $\widetilde V_4$. The main point of  $\widetilde V_0$, $\widetilde V_2$, $\widetilde V_4$
is to shrink the size of the four triangles in figure \ref{CorrectionGammaIIIbig} so that their areas are small (see figure \ref{CorrectionGammaIIbig}). 
This ensures that any holomorphic triangle in $M$ 
must lie in the region  $D(T^*L_2)$ from \S \ref{firsttriangle}.
In addition,  $\widetilde V_0$, $\widetilde V_2$, $\widetilde V_4$
make the boundary of the four triangles in figure \ref{CorrectionGammaIIbig} real analytic, which fixes one last problem in \S \ref{firsttriangle}.

\vspace{0.1cm}
\item[\S \ref{contnmapsketch}] 
We compute certain continuation maps, 
$$\phi: HF(V_i, V_j) \into HF(\widetilde V_i, \widetilde V_j).$$
This relates the Floer groups  $HF(V_i, V_j)$  from \S \ref{Floersketch}  to 
the Floer groups $HF(\widetilde V_i, \widetilde V_j)$, which are 
compatible with our classification of triangles in \S \ref{secondtriangle}.
The groups $HF(V_i, V_j)$ have some particularly nice generators and relations 
which correspond perfectly to the ones we found in our flow category calculation in \S \ref{sectionFlow}. We therefore prove that $\phi$
fixes these generators. Once this is known
it is easy to compute the triangle product for
 $\widetilde V_0$, $\widetilde V_2$, $\widetilde V_4$ (using our explicit 
classification of holomorphic triangles), \emph{and} we can compare easily with the Flow category calculation (using  the above generators and relations for
$HF(\widetilde V_i, \widetilde V_j)$).
\vspace{0.1cm}
\item[\S \ref{FukFlowsketch}] We change gears and give an intuitive argument 
for why we expect Theorem $B$ to be true; this ignores all the 
 technical difficulties we have attended to up until this point. 
We work in the full generality of Theorem $B$ (so $f$ has any number of index 2 critical points $x_2^j$), but we use the simple vanishing spheres 
$L_0$, $L_2^j$, $L_4$. This illustrates the essential structure
of the actual calculation, which can be found in the 
proof of Theorem \ref{fuk=flow}. 
\end{itemize}  

\subsection{Construction of the fiber  $M$}  \label{fibersketch}

Set $L_0 = L_2 = S^3$ and take the disk cotangent bundles  $D(T^*L_0)$
and $D(T^*L_2)$ with respect to some metrics. Let
$K_0 \subset L_0$ and $K_2 \subset L_2$ be two embedded copies of
$S^1$ (i.e. knots) with chosen trivializations of their normal bundles.
Then, roughly speaking, to construct $M$ we will do a version of  
plumbing  (see \cite{GS}) where we glue  $D(T^*L_0)$ and $D(T^*L_2)$ 
together along neighborhoods of
$K_0 \subset D(T^*L_1)$ and $K_2 \subset D(T^*L_2)$ in such a way that
$K_0$ is identified with $K_2$.
Moreover, there is a tubular neighborhood
of $K_0$ in $L_0$, say $U_0 \cong S^1 \times D^2$, which is identified with the disk conormal bundle $D(\nu^*K_2) \subset D(T^*L_2)$, and vice-versa.
Earlier in \S \ref{example} we saw the plumbing construction in the two 
dimensional case, 
see  figure \ref{RP2fiberVC}.  For a schematic picture of the 
higher dimensional case   near the
plumbing region see figure \ref{figureM_0} in \S \ref{fiber}. 
(In \S \ref{fiber} we  call the
plumbing $M_0$, and $M$ will denote a slightly larger space where
the boundary has been smoothed.)
\\
\newline To specify $K_0 \subset L_0$ we look at the 
handle decomposition of $N$ induced by $(f,g)$; this 
determines a knot $K_0 \subset S^3 = L_0$, 
and a parameterization of a tubular neighborhood of $K_0$ 
(determined up to isotopy), say
$S^1 \times D^2 \cong U_0 \subset S^3$. 
Define $K_2 \subset L_2$ to be the
following  unknot in $L_2$ (whose normal bundle has an obvious trivialization): 
$$K_2 = K_-^2= \{(0,0, x_3, x_4) | x_3^2 + x_4^2 =1\}.$$
When we define the Lagrangian submanifolds we will use the 
following related knot: 
$$K_+^2 = \{ (x_1, x_2,0,0) | x_1^2 + x_2^2 =1\}.$$

\subsection{Construction of the vanishing spheres $L_0$, $L_2$, $L_4$ (version I)}\label{VCsimplesketch}
We describe a simple version of the vanishing spheres,
which we denote $L_0$, $L_2$, $L_4$ (only $L_4$ is new). 
Roughly speaking, $L_4$ is defined as the (Morse-Bott) Lagrangian surgery of 
$L_0$ and $L_2$. More precisely, recall that when $D(T^*L_2)$ is plumbed onto
$D(T^*L_0)$, $D(\nu^*K_2) = D(\nu^*K_-^2)$ is identified with a tubular neighborhood
of $K_0$ in $L_0$, say $U_0$. To simplify things, assume $D(T^*L_2)$ 
is the unit disk bundle with respect to the standard round metric.
Now let $\Phi: D(T^*L_2) \into  D(T^*L_2)$ denote the time $\pi/2$ 
geodesic flow, which is Hamiltonian.  Recall that 
we also have the complementary unknot $K_+^2 \subset L_2$ 
and consider its disk conormal bundle $D(\nu^*K_+^2)$. 
The effect of $\Phi$ on $D(\nu^*K_+^2)$ is to fix vectors of zero
length (i.e. points in $K_+^2$) and map the unit vectors $S(T^*K_+^2)$
diffeomorphically onto $S(T^*K_-^2)$, while vectors of intermediate
length interpolate between these extremes.  Now tweak $\Phi$ slightly to get a new Hamiltonian diffeomorphism
$\widetilde \Phi$ so that $\widetilde \Phi(D(\nu^*K_+^2))$ agrees with
$D(\nu^*K_-^2) $ in a small neighborhood of  $\partial
D(\nu^*K_-^2)=S(T^*K_-^2)$. Figure \ref{IntrofigsimpleVCdim2} below 
depicts the  analogous two dimensional case, 
where $\widetilde \Phi(D(\nu^*K_+^2))$ 
corresponds to the two red curves. Then we define
$$H = \widetilde \Phi(D(\nu^*K_+^2)) \text{ and } L_4 = (L_0 \setminus
U_0) \cup H.$$ Thus $L_4$ is obtained from $L_0$ by a surgery along
$K_0$. In \S \ref{example} we made an analogous definition,
where $V_2$ was the Lagrangian surgery of $V_0$ and $V_1$;
see  figure \ref{RP2fiberVC}.

\subsection{Classification of holomorphic triangles in $D(T^*L_2^j)$} 
\label{firsttriangle} 
We first explain how to visualize the parts of $L_0$, $L_2$, $L_4$
in $D(T^*L_2)$ using their rotational symmetry. For each $e \in K_+^2$ and 
$f \in K_-^2$, consider the great circle $K_{ef} \subset L_2$ passing through $e,f$. There is a natural
embedding
$T^*K_{ef} \subset T^*L_3$ based on the standard identification $T^*S^3 = TS^3$.
The rotational symmetry of $L_4$, $L_2$, $L_0$ in $D(T^*L_2)$ can be characterized by noting that
$L_4$, $L_2$, $L_0$ intersect each slice $D(T^*K_{ef})$  in exactly the same way as $e,f$ vary.
In figure \ref{IntrofigsimpleVCdim2} we have identified $D(T^*K_{ef})$ with
$\bbR /2\pi\bbZ \times [-1,1]$ and the intersection of
$L_4$, $L_2$, $L_0$ with $D(T^*K_{ef})$ are indicated respectively by the two red curves, 
the horizontal green line, and the two vertical blue lines.
\begin{figure}
\begin{center}
\includegraphics[width=3in]{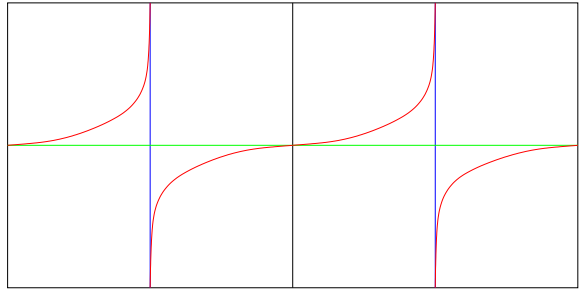}
\caption{Simple version of vanishing spheres
$\Gamma_4^s$ (red), $ \Gamma_2^s$ (green),  $\Gamma_0^s$ (blue)
in $\bbR/2\pi\bbZ \times [-1,1]$. } 
\label{IntrofigsimpleVCdim2}
\end{center}
\end{figure}
In the figure the green curve of course corresponds to $K_{ef} \subset
L_2$. The 
two intersection points of the blue and green curves
correspond to the points $\pm f$, and the  two intersection points of
the red and green curves correspond to $\pm e$.
\\
\newline We now sketch how to give a complete description of the
moduli space of all holomorphic triangles (with finite symplectic area) 
in $D(T^*L_2)$ with boundary on
$L_0 \cap D(T^*L_2)$, $L_2 \cap D(T^*L_2)$, $L_4 \cap D(T^*L_2)$, with respect to some suitable almost complex
structure. Later, it will not be too hard to arrange things so that
any holomorphic triangle in $M$ must necessarily lie in $D(T^*L_2)$.
For the rest of \S \ref{outline}, we will often omit the phrase 
"with finite symplectic area" when talking about  
holomorphic strips and triangles.
\\
\newline First, we identify $T^*L_2=T^*S^3$ with $\{z \in \bbC^4: \Sigma_j z_j^2 =1\}$ as exact symplectic manifolds and
let $J_\bbC$ denote the complex structure which comes from that identification.
In figure \ref{IntrofigsimpleVCdim2} we see the there are four obvious
triangles in $D(T^*K_{ef})$ corresponding to $\pm e$ and $\pm f$.  Let
$V$ denote the disk in $\bbC$ 
with three boundary punctures removed.
Then, using the Riemann mapping theorem, 
it is easy to construct a $J_\bbC-$holomorphic map
$$w_{ef}: V \into D(T^*K_{ef}) \subset D(T^*L_2)$$
with image equal to the triangle with the two base vertices
corresponding to  $e$, $f$ for  each $e$, $f$. Our goal is to  prove that any
$J_\bbC$-holomorphic triangle $w: V \into D(T^*L_2)$ must coincide  with
one of these standard ones. 
Note that this is intuitively plausible on the grounds that holomorphic 
disks are minimal surfaces (however, we will pursue another line of argument).
\\
\newline To exploit the rotational symmetry of $L_0 \cap
D(T^*L_2)$, $L_2 \cap D(T^*L_2)$, $L_4 \cap D(T^*L_2)$, we introduce a
certain  $J_\bbC$-holomorphic map on the target, 
$$P : T^*L_2 \into \bbC, \phantom{123}  P(z_1,z_2,z_3,z_4) = z_1^2 + z_2^2 - z_3^2-z_4^2.$$ 
Here, $P$ is invariant under the rotational symmetry
of $T^*L_2$ which moves the slices $T^*K_{ef}$ into one another.
Because of this invariance, $P$ maps the three Lagrangians $L_0 \cap
D(T^*L_2)$, $L_2 \cap D(T^*L_2)$, $L_4 \cap D(T^*L_2)$ onto three curves
in the plane which bound a triangle $T \subset \bbC$. (Each of the four triangles in figure \ref{IntrofigsimpleVCdim2} is mapped by $P$ onto $T$.)
The crucial property of $P$ is that 
$P|_{P^{-1}(T)}$ can be holomorphically trivialized
over $T$ with fiber $T^*K_+^2 \times T^*K_-^2$.
Now let $w: V \into D(T^*L_2)$ be any $J_\bbC$-holomorphic triangle.
By applying the maximum principle to $P \circ w$, one sees that 
$P(w(V)) = T$. From this it follows that any such
$w$  can be viewed as a holomorphic section of $P|_{P^{-1}(T)}$,
$$s: T \into P^{-1}(T) \cong T \times (T^*K_+^2 \times T^*K_-^2),$$ 
where the three boundary conditions of $w$
all correspond to the same Lagrangian boundary condition for $s$ in the fiber, 
namely
$K_+^2 \times K_-^2 \subset T^*K_+^2 \times T^*K_-^2$. Each standard holomorphic
triangle $w_{ef}$ of course corresponds to the constant map with value $(e,f)$.  But it is easy  to see that
the energy of
\emph{any} holomorphic section must be zero, hence constant. 
(This follows from Stokes' theorem, because the
canonical one form $\theta$ on $T^*K_2^{\pm}$ satisfies $\theta|_{K_2^{\pm}}=0$.)
Thus $w = w_{ef}$ for some $e$, $f$.  This yields
the classification of holomorphic triangles in $D(T^*L_2)$ we wanted.

\subsection{Construction of  $V_4$, $V_2$, $V_0$ (version II): Main correction to \S \ref{firsttriangle} } 
  \label{VCsketch} 

The most immediate problem with the above argument in \S \ref{firsttriangle} 
is that 
$P$ is singular along $K_-^2$ and $K_+^2$ (indeed, $P$ is the complexification of a Morse-Bott function on $L_2 = S^3$ with maximum at $K_+^2$ and minimum at $K_-^2$). This is a problem because each $\pm e \in K_-^2$,  $\pm f \in K_+^2$ correspond to the bottom vertices of the four basic triangles
$w_{\pm e, \pm f}$ in $D(T^*K_{ef})$ in figure \ref{IntrofigsimpleVCdim2},
and we recall that each of these four triangles are mapped onto $T$; thus,
$P$ cannot quite be trivialized over the whole of $T$, because it is 
singular over the two vertices of $T$ corresponding to $\pm e$ and $\pm f$. 
(Incidentally, we must trivialize $P$ over the whole of $T$, and not just $T^*$, the triangle with its vertices deleted. This is because we  need to work with the continuous extensions 
of holomorphic triangles $w$ to the closed disk 
$\overline w : D^2 \into D(T^*L_2)$ 
and the corresponding sections $\overline s: T \into P^{-1}(T)$; this is 
necessary in order to conclude that the symplectic area of $s$ is finite, which we need for the Stoke's theorem argument at the end of \S \ref{firsttriangle}.)
\\
\newline To avoid this difficulty we deform $L_4$, $L_2$, $L_0$ to some new exact Lagrangian spheres $V_4$, $V_2$, $V_0$. For now we focus on defining the parts of 
$V_4$, $V_2$, $V_0$ in $D(T^*L_2)$.
Let $\Gamma_4^s$, $\Gamma_2^s$, $\Gamma_0^s$ denote the three curves in figure 
\ref{IntrofigsimpleVCdim2} corresponding to 
 $L_4$, $L_2$, $L_0$ ($s$ is for simple). Now we choose some new curves  $\Gamma_4$, $\Gamma_2$, $\Gamma_0$ in $\bbR/2 \pi \bbZ \times [-1,1]$ (with $\Gamma_2 =\Gamma_2^s$)
as in figure \ref{CorrectionGammaIIIbig} (the dotted rectangles are not relevant for now), which are exact isotopic to 
the old ones (because the signed area between them is zero). 
\begin{figure}
\begin{center}
\includegraphics[width=5in]{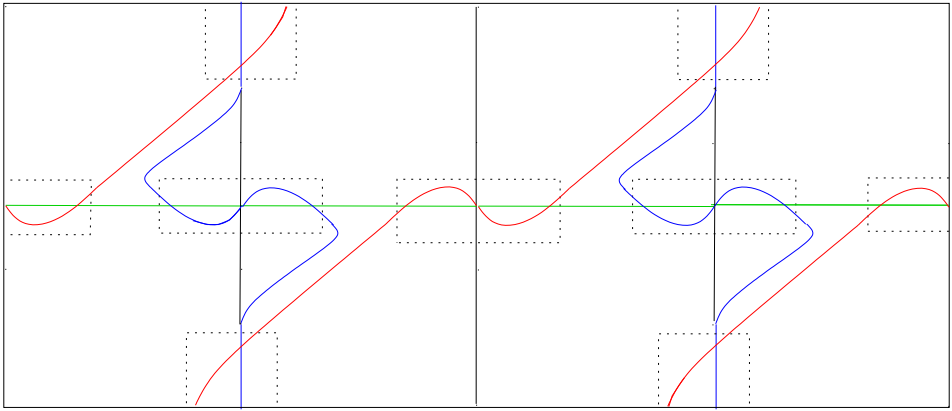}
\caption{$\Gamma_4$ (red),  $\Gamma_2$ (green), $\Gamma_0$ (blue)
 in $\bbR /2\pi\bbZ \times [-r,r]$.}
\label{CorrectionGammaIIIbig}
\end{center}
\end{figure}
Then we define $V_4 \cap D(T^*L_2) $, $V_2 \cap D(T^*L_2)$, $V_0 \cap D(T^*L_2)$ 
to be the unique 3 dimensional submanifolds in $D(T^*L_2)$ whose 
intersection
 with every $D(T^*K_{ef})$ are given by  $\Gamma_4$, $\Gamma_4$, $\Gamma_4$.
Thus $V_4 \cap D(T^*L_2) $, $V_2 \cap D(T^*L_2)$, $V_0 \cap D(T^*L_2)$ are 
rotationally symmetric in the same sense as 
before. It is easy to see they are Lagrangian, and  exact isotopic to 
$L_4 \cap D(T^*L_2)$, $L_2 \cap D(T^*L_2)$, $L_0 \cap D(T^*L_2)$.
\\
\newline What this deformation accomplishes is the following. 
There are still four triangles in $D(T^*K_{ef})$ with boundary on
$V_4$, $V_2$, $V_0$, but now the vertices of these triangles do not lie
on any of the four points $\pm e, \pm f \in K_{ef} \cap K_2^{\pm}$, 
where $P: T^*L_2 \into \bbC$ is singular. 
This ensures that $P$ can be trivialized over $T$ in the
argument we discussed in \S \ref{firsttriangle}. 
(In addition $\Gamma_4$ and $\Gamma_0$ now intersect transversely, 
which will be useful when we compute Floer homology groups below.)
\\
\newline We complete the definition of  $V_0$ by making it coincide with 
$L_0$ outside of  $D(T^*L_2)$.
$V_4$ is extended outside of  $D(T^*L_2)$
in a similar way except we must take it to be the graph of an exact 1-form 
$df$ in $D(T^*L_0)$ defined over $L_0 \setminus U_0$ such that $df$  matches up with the  part of $V_4$ near the boundary of $D(T^*L_2)$; 
see  \S \ref{Floersketch} below for more details.

\subsection{Modification of  $V_4$, $V_2$, $V_0$  (version III): 
Computing the Floer homology groups} \label{Floersketch}

Usually to compute Lagrangian Floer homology one takes an exact isotopy of 
one of the Lagrangians which makes it transverse to the other. 
We cannot do that because it would disrupt the symmetry of 
  $V_4$, $V_2$, $V_0$  and ruin our argument in \S \ref{firsttriangle}. 
However, there is a variant of Floer homology (isomorphic to the usual one)
which works for Lagrangians which only intersect cleanly (i.e. in Morse-Bott fashion); we call it Morse-Bott Floer homology. 
\\
\newline The old Lagrangians $L_4$, $L_2$, $L_0$ did not even intersect cleanly,  
because $L_4$ and $L_0$ intersected in a closed manifold with boundary, namely $L_0$ minus a small neighborhood of $K_0$. (This was partially visible in figure
\ref{IntrofigsimpleVCdim2}: $\Gamma_4^s$ and $\Gamma_0^s$
intersect in four closed intervals.) However, the new Lagrangians $V_4 \cap D(T^*L_2) $, $V_2 \cap D(T^*L_2)$, $V_0 \cap D(T^*L_2)$ do intersect cleanly, because the curves $\Gamma_4$, $\Gamma_2$, 
$\Gamma_0$ intersect transversely. 
\\
\newline It  is useful to understand how the various intersection points between
$\Gamma_4$, $\Gamma_2$, $\Gamma_0$ correspond to submanifolds in $D(T^*L_2)$.
First, consider  the two dotted rectangles surrounding the six intersection points 
of $\Gamma_0$ (blue) and $\Gamma_2$ (green). The two midpoints correspond to the two points $\pm f \in K_-^2$. Thus, as $e$ and $f$ vary, the two midpoints  together sweep out $K_-^2 \cong S^1$. The four intersection points on either side of the two  middle points in each rectangle  together sweep out
a torus, which is the boundary of a tubular neighborhood of $K_-^2$ in 
$V_2 = L_2$, and we denote this torus by $\Sigma_{20}$.
There is a similar story for the intersection points of $\Gamma_4$ (red) and $\Gamma_2$ (green), and we denote the torus there by $\Sigma_{42}$.
The four intersection points of $\Gamma_4$ (red) and $\Gamma_0$ (blue) together sweep out a torus, denoted $\Sigma_{40}$, 
which can be viewed either as the boundary of $D_s(\nu^*K_-^2)$ for a
 certain radius $s$, or as the boundary of a tubular neighborhood of 
$K_0$ in $L_0$ (recall $D(\nu^*K_-^2)$ is identified with 
$U_0 \subset L_0$, a tubular neighborhood of $K_0 \subset L_0$).  
\\
\newline Before discussing the computation of Floer homology groups we 
have to set up some regions in $M$ where we will localize the holomorphic 
strips which are involved in the calculation.
First, consider  the two horizontal dotted rectangles in 
figure \ref{IntrofigsimpleVCdim2} which surround the six intersection points of
$\Gamma_4$ (red) and $\Gamma_2$ (green) (one of them is wrapped at the left and right edges of $\bbR/2\pi \bbZ \times [-1,1]$).
The intersection of these two rectangles with $\Gamma_2$ yields two closed intervals in $\Gamma_2$. As $e$ and $f$ vary, these intervals
sweep out a closed tubular neighborhood of $K_+^2$ in $V_2$, 
and we denote this by  $N_{42} \subset V_2$. 
Now let $D(T^*N_{42})\subset D(T^*L_2)$ denote a Weinstein neighborhood of $N_{42}$ which intersects each slice 
$D(T^*K_{ef})$ precisely in these two dotted rectangles. 
Note that inside the two rectangles 
we can view $\Gamma_4$ as the graph of two function over $\Gamma_2$.
Corresponding to these functions there is an exact 1-form $df_{42}$ defined on $N_{42}$ such that the graph of $df_{42}$ in  $D(T^*N_{42})$ is precisely $V_4 \cap D(T^*N_{42})$. From the shape of $\Gamma_4$ we see that $f_{42}$ is a 
Morse-Bott function on $N_{42}$ with two critical components: it has a maximum at $K_{2}^+$ and a minimum at the torus $\Sigma_{42}$.
In a similar way we  define $D(T^*N_{20})$ which corresponds to the two dotted rectangles surrounding the intersection points of $\Gamma_0$ (blue) and $\Gamma_2$ (green). We define $df_{20}$ as before; from the shape of $\Gamma_0$ we can see
$f_{20}$ is a Morse-Bott function on $N_{20}$ with two critical components: 
it has a minimum at $K_{2}^-$ and  a maximum at the torus $\Sigma_{20}$.
\\
\newline Now take a look at the four  partially formed vertical 
dotted rectangles surrounding the four intersection points of
$\Gamma_4$ (red) and $\Gamma_0$ (blue); these rectangles intersect 
$\Gamma_0$ in four closed intervals. Together, these intervals correspond to an 
 annular region in  $L_0$ which surrounds $K_0$. Denote this region by 
$U_0 \setminus \widetilde U_0$, where $\widetilde U_0 \subset U_0$ is a
 smaller tubular 
neighborhood of $K_0$. We extend this region into $L_0$ by setting
$N_{40}= L_0 \setminus \widetilde U_0$.
Then 
$N_{40}$ intersects $D(T^*K_{ef}) \subset  D(T^*L_2)$ 
precisely in the four subintervals of $\Gamma_0$. 
Now take a Weinstein 
neighborhood $D(T^*N_{40}) \subset D(T^*L_0)$ which intersects   
$D(T^*K_{ef}) \subset  D(T^*L_2)$ precisely  in the four partially formed
 rectangles.  To define $V_4$ outside of $D(T^*L_2)$ more precisely, we take an
 Morse-Bott function  $f_{40}: N_{40}\into \bbR$ such that the 
graph of $df_{40}$ in $D(T^*N_{40})\cap D(T^*L_2) $ agrees with 
$V_4 \cap D(T^*N_{40})$. Thus, the part of the graph of $df_{40}$ in $D(T^*K_{ef})$ is represented by $\Gamma_4$ in the four partially formed rectangles. When we do the plumbing identification, these four 
rectangles in each slice $D(T^*K_{ef})$ are rotated by ninety 
degrees, via multiplication by $i$. (The complete sub-region of 
$D(T^*K_{ef})$ 
which is rotated is a neighborhood of $D(\nu^*K_-^2)$, which would correspond to  a rectangle around $\Gamma_0^s$, i.e.  the two horizontal blue curves  in figure \ref{IntrofigsimpleVCdim2}.) 
After this rotation $\Gamma_4$ becomes the graph of a function
 defined over $\Gamma_0$ 
(the effect of $i$ or $-i$ is the same). Thus, $f_{40}$ is critical
along $\Sigma_{40}$, and we assume that $f_{40}$ has isolated critical points 
aside from that.  Then, from the shape of $\Gamma_4$
one can see $f_{40}$ has a maximum at $\Sigma_{40}$.
\\
\newline We are now ready to  sketch how to compute the  
Floer homology groups
$HF(V_4,V_2)$, $HF(V_2,V_0)$, and $HF(V_4,V_0)$. To do this calculation we 
modify $V_4$, $V_2$, $V_0$, but we do not change their basic shape. Namely, we 
 replace  $f_{40}$, $f_{42}$,  $f_{20}$ by $\frac{1}{n}f_{40}$, $\frac{1}{n} f_{20}$, $\frac{1}{n} f_{42}$ for some large $n \geq 1$. Then we patch the graphs of $\frac{1}{n}df_{40}$,
$\frac{1}{n}df_{42}$, $\frac{1}{n}df_{20}$ back into $V_4$, $V_2$, $V_0$ using some smooth bump function, and  denote the result by  $V_4^n$, $V_2^n$, $V_0^n$ (although $V_2^n = V_2 = L_2$, as usual). Now let $J_n$ denote any sequence of
almost complex structures on $M$ converging to some fixed $J$. 
Using a simple energy argument one can show that, for $n$ sufficiently large,
the moduli space of $J_n$-holomorphic strips in $M$ with boundary on
$(V_4^n, V_2^n)$ is completely contained in $D(T^*N_{42})$, 
and similarly for $(V_4^n, V_0^n)$, or $(V_2^n, V_0^n)$.
Once this is known, we show there exist almost complex structures
 $J_{40}^n$, $J_{20}^n$, $J_{42}^n$ such that the above moduli spaces are in 1-1 correspondence with  the gradient flow lines of 
 $\frac{1}{n}f_{40}$, $\frac{1}{n} f_{20}$, $\frac{1}{n} f_{42}$ in 
$N_{40}$, $N_{20}$, $N_{42}$,  when $n$ is 
large (this relies on Floer's standard result of this type for
 closed manifolds).
(As in Floer's work, it follows from a linearized version of this correspondence that  $J_{40}^n$, $J_{20}^n$, $J_{42}^n$ are regular for  large $n$ as well.)  
Now, we fix an $n$ sufficiently large once for all and 
replace $V_4$, $V_2$, $V_0$ by $V_4^n$, $V_2^n$, $V_0^n$ and  replace
$f_{40}$, $f_{42}$,  $f_{20}$ by $\frac{1}{n}f_{40}$, $\frac{1}{n} f_{42}$, 
$\frac{1}{n} f_{20}$, 
but we keep the old notation in both cases. Also we drop the $n$ 
from  $J_{40}^n$, $J_{20}^n$, $J_{42}^n$ and denote them 
$J_{40}$, $J_{20}$, $J_{42}$. The above correspondence of moduli spaces
yields an identification of $HF(V_4,V_2, J_{42})$ with 
$(H_{MB})_*(N_{42},f_{42})$, where the latter is a version of Morse homology for Morse-Bott functions called Morse-Bott homology. Similarly, $HF(V_2,V_0, J_{20}) $ and  $HF(V_4,V_0, J_{40})$
are isomorphic to $(H_{MB})_*(N_{20},f_{20})$ and  $(H_{MB})_*(N_{40},f_{40})$.
With our explicit Morse-Bott functions it is fairly easy to 
show that the Morse-Bott homology groups are isomorphic to  $H_*(K_+^2)$, $H_*(K_-^2)$, and $H_*(L_0 \setminus U_0)$, respectively. Moreover, there are 
explicit generators and relations for the Morse-Bott homologies which match up 
 with the ones for $H_*(K_+^2)$, $H_*(K_-^2)$, and $H_*(L_0 \setminus U_0)$ that we used in our computation of the flow category of $(N,f,g)$ in  \S  
\ref{sectionFlow}.

\subsection{Construction of  $\widetilde V_4$, $\widetilde V_2$, $\widetilde V_0$ (version IV):  Classification of  holomorphic triangles in $M$, and a final correction 
to \S \ref{firsttriangle}} 
\label{secondtriangle}
At the moment our Lagrangians $V_4$, $V_2$, $V_0$ are 
set up for computing the Floer homology groups. In this section we will
deform them one last time, by exact isotopies, to new versions 
 $\widetilde V_4$, $\widetilde V_2$, $\widetilde V_0$ (but with the same rough shape); these will allow us to describe  the moduli space of 
$J$-holomorphic triangles in $M$ with boundary on $\widetilde V_4$,
$\widetilde V_2$, $\widetilde V_0$, for a certain almost complex structure $J$. (In \S \ref{contnmapsketch} below we will explain how to 
combine the results of this section and \S \ref{Floersketch}.)  
There is also one last mistake to fix in \S \ref{firsttriangle}: In order to invoke the Riemann
mapping theorem to construct the standard holomorphic triangles $w_{ef}$, it is necessary that the boundary
arcs of the four triangles in figure \ref{CorrectionGammaIIIbig} 
are real analytic. The new  versions $\widetilde V_4$,
$\widetilde V_2$, $\widetilde V_0$ will also remedy this problem.
(Note that $\Gamma_4$,  $\Gamma_2$, $\Gamma_0$ 
in figure \ref{CorrectionGammaIIIbig} could not have  triangles 
with real analytic edges  because the edge corresponding to $\Gamma_0$ 
contains an interval in the region corresponding to $D(T^*N_{40})$.)
\\
\newline Fix a small connected compact neighborhood $U$ in $D(T^*L_2)$ which contains
all of the four triangles in $D(T^*K_{ef})$ in figure \ref{IntrofigsimpleVCdim2} for every $e,f$. 
Let $J_\zeta$, $\zeta \in V$ be any family of almost complex structures on $M$ 
such that $J_\zeta|_{U} = J_\bbC|_U$ for all $\zeta \in V$ (recall $V$ is the 
domain of our holomorphic triangles). First, we show that if the area of the
four triangles in  $D(T^*K_{ef})$ in figure \ref{CorrectionGammaIIIbig} 
are sufficiently small then
any $J$-holomorphic triangle must be contained in $U \subset D(T^*L_2)$ (this 
follows from a relative version of the monotonicity lemma).
Therefore, we deform  $\Gamma_4$, $\Gamma_2$, $\Gamma_0$ 
to new versions  $\widetilde \Gamma_4$, $\widetilde \Gamma_2$, $\widetilde \Gamma_0$ in such a
way that these triangles become sufficiently small, as in
 figure \ref{CorrectionGammaIIbig}.
\begin{figure}
\begin{center}
\includegraphics[width=5in]{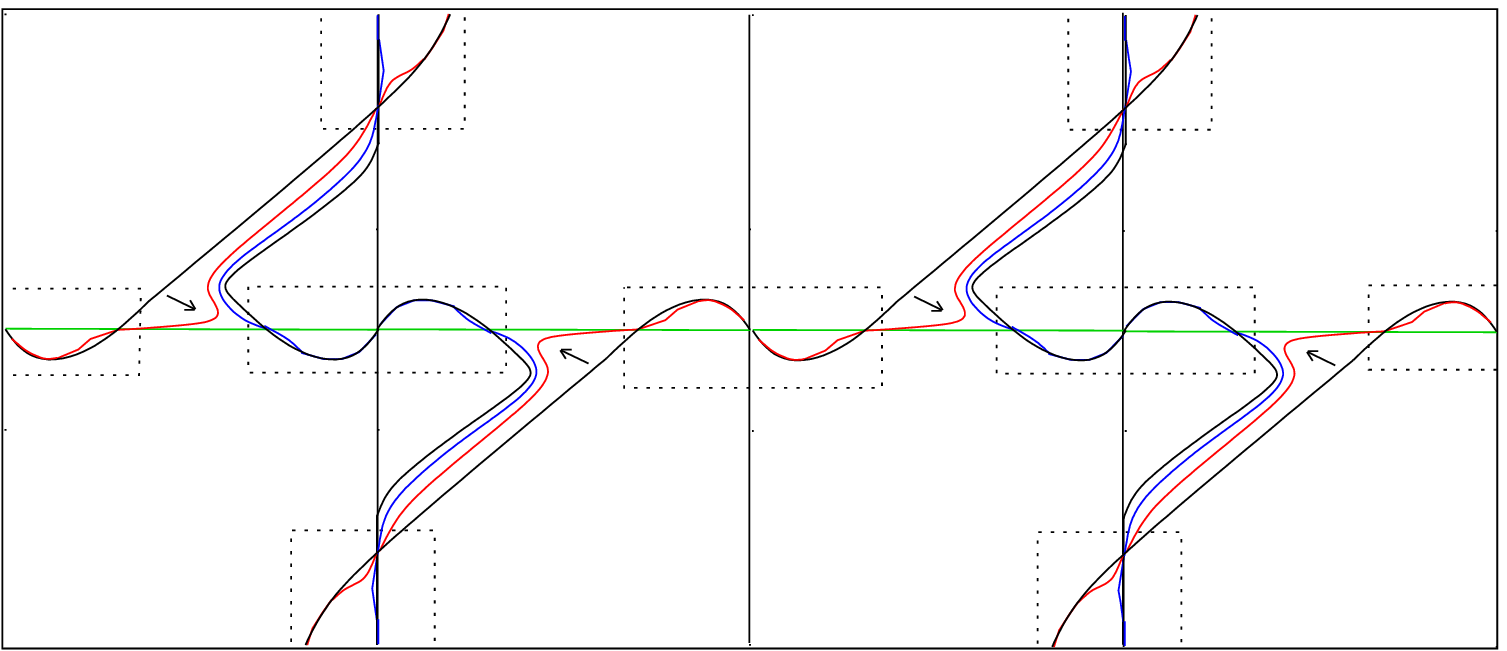}
\caption{$\widetilde \Gamma_4$ (red),  $\widetilde \Gamma_2$ (green), $\widetilde \Gamma_0$ (blue) in $\bbR /2\pi\bbZ \times [-r,r]$.} \label{CorrectionGammaIIbig}
\end{center}
\end{figure}
In addition, we arrange that the the boundaries of the new triangles are real 
analytic. 
(In order to be real analytic the new $\widetilde \Gamma$'s must interpolate back to the old $\Gamma$'s \emph{away} from the triangles; that is why there are small humps 
between  $\widetilde \Gamma_i$ and $\Gamma_i$ next to each vertex of each triangle.) We denote the corresponding new  $V_4$, $V_2$, $V_0$ by $\widetilde V_4$,
$\widetilde V_2$, $\widetilde V_0$. Since every $J-$holomorphic triangle $w$ 
in $M$ with boundary on  $\widetilde V_4$, $\widetilde V_2$, $\widetilde V_0$ lies in $U \subset D(T^*L_2)$, and 
$J|_U = J_\bbC$, our classification result from \S \ref{firsttriangle} applies
and we conclude that all such $w$ are equal to one of our standard ones 
$w_{ef}$. 
(For the monotonicity argument (see \S \ref{sectionmonotonicity})
we take a  set 
$\widetilde U \subset U$, slightly smaller than $U$ 
and set $B = U \setminus \widetilde U$ to be an annular region 
surrounding the four triangles in $D(T^*K_{ef})$ in figure \ref{IntrofigsimpleVCdim2} for every $e,f$. Then $B$ acts as a barrier to holomorphic 
triangles in $U$ with large energy. It is important that 
$V_i \cap B = \widetilde V_i \cap B$ for every $i$; this can be arranged by 
making the small humps between   $\widetilde \Gamma_i$ and $\Gamma_i$ in figure 
\ref{CorrectionGammaIIbig} at each vertex small enough.)
\\
\newline In addition, one can show that for any $J$ with $J|_U = J_\bbC|_U$ as 
above, $J$ is regular. 
This is proved in two steps. First one shows that the kernel of the linearized Cauchy-Riemann 
operator of $J$ has dimension 2; this proved by running through a version of the argument in \S \ref{firsttriangle} at the linearized level. Then one shows that the index of the operator is also equal to 2; this is proved using
a gluing formula in the Morse-Bott setting which reduces the calculation to 
computing the Maslov index of a certain loop of Lagrangian 
tangent planes; the latter is fairly easy to compute because of the rotational symmetry of  
$\widetilde V_4$, $\widetilde V_2$, $\widetilde V_0$ in $D(T^*L_2)$.

\subsection{Computing continuation maps; setting  up to compute the triangle map  $\mu_2$}
\label{contnmapsketch}

In this section we combine the two calculations from \S \ref{Floersketch} 
and \S \ref{secondtriangle} by inspecting the appropriate  continuation maps.  
There are two issues we have to deal with: First, in the Floer homology calculation we used
$(V_4,V_2,V_0)$, whereas in the holomorphic triangle
calculation we used $(\widetilde V_4,\widetilde V_2,\widetilde V_0)$. 
Second, $J$  
is not compatible with the almost complex structures 
 we used in the Floer homology calculation, $J_{40}$, $J_{20}$, $J_{42}$.
 Here, compatible means that $J_\zeta$ is supposed to agree with 
 $J_{40}$, $J_{20}$, or $J_{42}$,
whenever $\zeta$ lies in a small neighborhood of one of the three corresponding boundary punctures of $V$ (but we have not arranged this to be so).
\\
\newline
To address the second issue we pick some regular almost
complex structures on $M$ for the Floer homology groups, say 
$\widetilde J_{40}$, $\widetilde J_{20}$, $\widetilde J_{42}$ 
such that  $\widetilde J_{40}|_U$, $\widetilde J_{20}|_U$, $\widetilde J_{42}|_U$
are all equal to $J_\bbC|_U$ (it is easy to see we can pick them to be regular under this constraint for abstract reasons); then we pick our almost complex structure for the triangle calculation
$J_\zeta$, $\zeta \in V$ also satisfying  $J_\zeta|_U = J_\bbC|_U$ for all $\zeta$ and  such that $J$ is compatible with $\widetilde J_{40}$, $\widetilde J_{20}$, $\widetilde J_{42}$. 
The  condition $J|_U = J_\bbC|_U$ ensures that the whole discussion
from \S \ref{secondtriangle} applies to $J$; in particular, $J$ is regular.
 \\
\newline To address the first issue we take 
two functions $G,H: M \into \bbR$ such that the Hamiltonian flows $\phi_G^1, \phi_H^1: M \into M$
satisfy $\phi^1_G(V_4) = \widetilde V_4$ and $\phi_H^1(V_4)=\widetilde V_4$.
(Recall that $\widetilde V_2 = V_2$.)
 Then, we have canonical  isomorphisms
\begin{gather} \label{naturalisos}
HF(\widetilde V_4, \widetilde V_0, 0, \widetilde J_{40}) 
\cong HF(V_4, V_0, \{(G-H)\circ \phi_G^t\},
\widetilde J_{40}^{*}),\\ \notag
HF(\widetilde V_4, \widetilde V_2, 0, \widetilde J_{42}) 
\cong HF(V_4, V_2, H,
\widetilde J_{42}^{*}),\\ 
HF(\widetilde V_2, \widetilde V_0, 0, \widetilde J_{20}) 
\cong HF(V_2, V_0,\{-G \circ \phi^t_G \} ,
\widetilde J_{20}^{*}). \notag
\end{gather}
These isomorphisms arise from a straight-forward 
 equivalence between the underlying moduli spaces. For example, the 
middle one takes the form 
$u(s,t) \mapsto (\phi_H^t)^{-1} (\phi_H^1)^{-1}(u(s,t))$, where 
$u$ is a  $\widetilde J_{42}$-holomorphic strip with boundary on 
$(\widetilde V_4, \widetilde V_2)$ $=$ $(\widetilde V_4, V_2)$.
Above, $\widetilde J_{40}^{*}$, $\widetilde J_{20}^{*}$, $\widetilde J_{42}^{*}$ 
stand for the corresponding pull backs and push forwards
of   $\widetilde J_{40}$, $\widetilde J_{20}$, $\widetilde J_{42}$, for example
$(\widetilde J_{40}^*)_t = (\phi^t_{\{(G-H)  \circ \phi^t_G   \}})_*(\phi^1_H)_*^{-1}(\widetilde J_{40})_t.$ 
(Each $J_{40}$, etc. and $\widetilde J_{40}$, etc. come in a 
family parameterized by $t\in [0,1]$, although we 
suppressed this earlier.) The precise formulas for the $\widetilde J^{*}$ are not relevant, however;  the only thing that matters is that 
they are also regular; this is  because of the natural equivalence between the 
moduli spaces (at the linearized level). Thus we are led to consider the continuation maps
\begin{gather*}
\phi_{40}: HF(V_4, V_0, 0, J_{40}) \into HF(V_4, V_0, \{(G-H)\circ \phi_G^t\},
\widetilde J_{40}^{*}), \\ \notag
\phi_{42}:HF(V_4, V_2, 0, J_{42}) \into HF(V_4, V_2, H, \widetilde J_{42}^{*}), 
\\
 \phi_{20}:  HF(V_2, V_0, 0, J_{20}) \into HF(V_2, V_0,\{-G \circ \phi^t_G \} ,
\widetilde J_{20}^{*}). \notag
\end{gather*}
Recall near the end of  \S \ref{Floersketch} we mentioned there is 
a nice explicit set of generators and relations for 
$HF(V_4,V_2)$, $HF(V_2,V_0)$, and $HF(V_4,V_0)$ when we use the particular almost complex structures $J_{40}$, $J_{20}$, $J_{42}$. The important thing is  they correspond exactly to the generators and relations  
we found for the flow category in \S \ref{sectionFlow}. Therefore we want to show that the 
above continuation maps \emph{fix} all the generators in this presentation (so that each continuation map in fact equals the identity on homology). 
We prove this by observing that each of the Hamiltonians, $(G-H)\circ \phi_G^t$, $H$, and $-G \circ \phi^t_G$,
has either an absolute minimum  or an absolute maximum along the relevant torus $\Sigma_{42}$, $\Sigma_{20}$, or $\Sigma_{40}$ (where all the generators live).
Using this we can show that all $s-$dependent holomorphic 
strips $u$ (those which define the continuation map) which start and end at 
a point in $\Sigma_{42}$, $\Sigma_{20}$, or $\Sigma_{40}$  must be constant.
\\
\newline The groups  
$HF(\widetilde V_4, \widetilde V_0, \widetilde J_{40})$, $HF(\widetilde V_2, \widetilde V_0, \widetilde J_{20})$,
$HF(\widetilde V_4, \widetilde V_2, \widetilde J_{42})$,
are suitable for computing the triangle product because we can use our explicit 
classification of $J-$ holomorphic triangles in $M$, where 
$J$ is compatible with $ \widetilde J_{40}$, $ \widetilde J_{20}$, 
$ \widetilde J_{42}$.
We also know from what we've just done that these groups
have generators and relations which correspond exactly to 
ones  we found for the flow category.
(One sees this by using the fact that the   
above continuation maps 
fix all the nice generators (and relations) on the left-hand side; 
then we use the 
natural isomorphisms from (\ref{naturalisos}), which also fix all 
the generators.)
Once we are in this situation it is fairly straight-forward to compute the triangle product (using our explicit knowledge of the holomorphic triangles) 
and compare that to the product in the flow category generator by generator; 
in fact the computations are completely parallel. 
We will not try to summarize that calculation in our present technical
set up 
(this is presented fairly clearly in the proof of Theorem \ref{fuk=flow}).
Instead, we will present in \S \ref{FukFlowsketch} below an intuitive 
argument which explains why we expect the triangle  product $\mu_2$, and the 
flow category product, $\mu^{Flow}$, to be the same. In many ways 
this argument parallels the actual argument, 
but  hopefully the main ideas are less obscured by technicalities.


\subsection{Intuitive sketch of the isomorphism in Theorem $B$}
\label{FukFlowsketch}


In this section  we will ignore all the technical difficulties 
that we have dealt with up until now and work with the simplest version
of the Lagrangians $L_0$, $L_2$, $L_4$. Our goal is to illustrate the essential idea of 
the calculation of the triangle product with the technical aspects 
stripped away. Along the way we will assume various things which are 
not quite true as stated, but it should be clear from the above 
that it is possible to tweak the Lagrangians so that the 
statements transform into new ones which are true, 
and are essentially  the same as the old ones. 
In this sense the argument is morally correct; 
in any case it should be a  useful
guide to the proof of Theorem \ref{fuk=flow}, which 
presents a technically correct 
argument.
\\
\newline Let us return to the more general case where $N$ has any number of 
2-handles,  so that we that can compare more directly with the flow 
category calculation in \S \ref{sectionFlow}.
Then, the handle-decomposition of $N$ gives rise to $k$ framed knots 
$K_1, \ldots, K_k \subset S^3$, one for each 2-handle. Let 
$L_0 = S^3$, and take $k$ parameterizations of tubular neighborhoods 
of $K_1, \ldots, K_k \subset L_0$, which are determined up to isotopy, say 
$\phi_j : S^1 \times D^2 \into L_0$, $j =1, \ldots, k$.
Now take $k$ additional copies of $S^3$, denoted $L_2^j$, 
$j =1, \ldots, k$. As before we have two natural unknots  $K_{\pm}^j \subset L_2^j$ for each $j$, 
and for each $e \in K_+^j$, $f \in K_-^j$,  we have the great circle $K_{ef}^j \subset L_2^j$. We construct $M$ by taking the disk bundle
$D(T^*L_0)$ and plumbing on each of the disk bundles $D(T^*L_2^j)$ along the knots $K_j$. 
Here $D(\nu^*K_-^j) \cong S^1 \times D^2$ will be identified with a tubular 
neighborhood of $K_j$ in $L_0$, say $U_0^j \subset L_0$ using the map $\phi_j$.
The last Lagrangian $L_4$ is defined by doing Lagrangian surgery 
of $L_0$ with each of the $L_2^j$ (in the same way $L_4$ was defined before).  
\\
\newline We make two main assumptions. First, we assume that inside each 
$D(T^*L_2^j)$, we have our standard holomorphic triangles $w_{ef}^j$, 
one for each
 $e \in K_+^j$, $f \in K_-^j$, and  we  assume that every holomorphic triangle 
in $M$ coincides with one of these $w_{ef}^j$. Second, we assume we have identifications
$$HF(L_4, L_2^j) \cong H_*(L_4 \cap L_2^j) \cong H_*(K_2^j),$$ 
$$HF(L_2^j, L_0) \cong  H_*(L_2^j\cap L_0) \cong H_*(K_j),$$
$$HF(L_4 , L_0) \cong H_*(L_4 \cap L_0)\cong H_*(L_0 \setminus (\cup_j U_0^j)).$$In this way the morphism spaces in 
$H(Fuk^{\rightarrow}(M; L_4, \{L_2^j\}, L_0))$ are identified with those in
 $Flow(N,f,g)$. Recall that $w_{ef}^j$ is the unique
holomorphic triangle in $D(T^*K_{ef}^j)$ with two of its vertices at $e$, $f$.  The final vertex of $w_{ef}^j$ determines a third point, which we denote
$\psi_j(e,f) \in D(T^*K_{ef})$; it lies in the sphere conormal bundle 
$S(\nu^*K_-^j) \subset D(T^*L_2^j)$, which is a torus, since $\nu^*K_-^j \cong S^1 \times \bbR^2$.
Now,  $S(\nu^*K_-^j)$ is identified by $\phi_j$ 
with the boundary of a smaller neighborhood $U_j \subset U_0^j$ 
of $K_j$ in $L_0$.
\\
\newline Let us choose some generators for  $H_*(K_+^j \times K_-^j)$, 
$H_*(S(\nu^*K_-^j))$, and $H_*(L_0 \setminus (\cup_j U_0^j))$. 
We choose notation similar to that used in \S \ref{sectionFlow}.
For  $H_*(K_+^j \times K_-^j)$, fix  $r_{\pm}^j \in K_+^j$ and take 
the generators
$$T_+^j = [K_+^j \times K_-^j], \,\,
 \lambda_+^j =  [r_+^j \times K_-^j],\,\,
\mu_+^j = [K_+^j \times r_-^j], \,\,
 s_+^j = [r_+^j \times r_-^j].$$ 
For $H_*(S(\nu^*K_-^j))$, we identify $S(\nu^*K_-^j) = K_-^j \times K_+^j$ (this is natural because a point in 
$K_+^j$ determines a direction in the normal bundle of $K_-^j$) and
 take the generators
$$T_-^j  =  [K_-^j \times K_+^j],\,\,
\lambda_-^j =  [K_-^j \times r_+^j],\,\,
\mu_-^j = [r_-^j \times K_+^j],\,\,
s_-^j = [r_-^j \times r_+^j]. $$
Now $H_*(L_0 \setminus (\cup_j U_0^j))$ is generated by cycles in 
$\partial U_j$ (where $U_j \subset U_0^j$ and $\phi_j(S(\nu^*K_-^j))$ $=$ $\partial U_j$). Let $q_j \in \partial U_j$ and let 
$\lambda_j, \mu_j \subset H_1(\partial U_j)$ be represented by 
circles with $\ell (\lambda_j, K_j) =0$ and $\ell k (\mu_j, K_j) = 1$.
Then  $H_*(L_0 \setminus (\cup_j U_0^j))$ is generated by 
$[\partial U_j],$ 
$\lambda_j,$ 
$\mu_j,$ 
and $[q_j]$, 
with the same relations (\ref{homologyrelations}) 
from \S \ref{sectionFlow}.
 \\
\newline Here is an intuitive way to  think about the  triangle product
$$\mu_2: HF(L_4, L_2^j) \otimes  HF(L_2^j, L_0) \into HF(L_4, L_0)$$
in terms of the geometric cycles above. Take a pair of cycles 
 on the right hand side, which we represent by submanifolds 
 $C_+^j$, $C_-^j$ in $K_+^j$, $K_-^j$ (so  $C_{\pm}^j = K_j^{\pm}$ or $C_{\pm}^j = r_j^{\pm}$). Now, as a first step, restrict attention to $D(T^*L_2^j)$
and consider  the cycle in $S(\nu^*K_-^j)$ 
that gets swept out by the points $\psi_j(e,f)$, as $(e,f)$ range over 
$C_+^j \times C_-^j$, and denote this by $\mu_2|_{D(T^*L_2^j)}(C_+^j,C_-^j)$.
Next, $S(\nu^*K_-^j)$ is identified by the framing $\phi_j$ with 
$\partial U_j$, and  in this way we get a cycle in $\partial U_j$
which is $\mu_2(C_+^j, C_-^j) = \phi_j(\mu_2|_{D(T^*L_2^j)}(C_+^j,C_-^j))$.
\\
\newline In the first step above, it is easy to see that
 the restricted triangle product 
$$\widetilde \mu_2 = \mu_2|_{D(T^*L_2^j)}: H_*(K_+^j) \otimes H_*(K_-^j) \into H_*(S(\nu^*K_-^j))$$
satisfies $\widetilde \mu_2(T_+^j) = T_-^j$, 
$\widetilde \mu_2(\lambda_+^j) = \mu_-^j$,
$\widetilde \mu_2(\mu_+^j) = \lambda_-^j$, 
$\widetilde \mu_2(s_+^j) = s_-^j$.
These are analogous to the relations (\ref{PhiRelations}) satisfied by the flow map
$\Phi$ in the fixed handle $H_j$ in \S \ref{sectionFlow}. In the second step, one composes $\widetilde \mu_2$  with the 
map on homology induced by $\phi_j: S(\nu^*K_-^j) \into \partial U_j$ and
 the inclusion  $ \partial U_j \subset L_0 \setminus (\cup_j U_0^j)$, 
$$(\phi_j)_*: H_*(S(\nu^*K_-^j)) \into H_*(L_0 \setminus (\cup_j U_0^j)).$$
As in (\ref{phi_jformula}), we have 
$$ (\phi_j)_*(\lambda_-^j)= \lambda_j + m_j\mu_j,  \, (\phi_j)_*(\mu_-^j) = 
\mu_j.$$
Combining these we have, for example, 
$$\mu_2([K_+^j]\otimes [r_-^j]) = (\phi_j)_*(\widetilde \mu_2 (\mu_+^j))
= (\phi_j)_*(\lambda_-^j) =  \lambda_j + m_j\mu_j,\text{ while }$$
$$
\mu^{Flow}([K_+^j] \otimes [p_-^j]) = (\phi_j)_*(\Phi(\mu_+)) = (\phi_j)_*
(\lambda_-) =  \lambda_j + m_j\mu_j.$$
Notice that the answers agree, but also the calculations are parallel.

\subsection{The case general case $\dim N=2n$,  when
$f$ has critical values $0,n,2n$}\label{0,n,2n}

In this section we briefly summarize how things work in an arbitrary dimension
$\dim N=2n$.
The handle decomposition of $N$ corresponding to 
$(f,g)$ is determined by $k$ framed $(n-1)$-spheres in a $(2n-1)$-sphere,
$K_j \subset S^{2n-1}$, $j =1, \ldots, k$. The flow category calculation 
is much the same, and we omit that discussion.
To construct $M$ we take 
$L_0 = S^{2n-1}$ and  $L_n^j = S^{2n-1}$, $j =1, \ldots, k$.
In $L_0$ we have the framed  spheres $K_j$ and inside
$L_n^j$ we have the two $(n-1)$-spheres with obvious framings 
$$K_+^j = \{(u_1, \ldots, u_{n}, 0, \ldots, 0) : \Sigma u_j^2 =1\}, 
 K_-^j = \{(0, \ldots, 0, 
u_{n+1}, \ldots, u_{2n}): \Sigma u_j^2 =1 \}.$$
We plumb as before; and $L_4$ is defined in the same way, by 
applying the time $\pi/2$ Hamiltonian flow to $D(\nu^*K_-^j)$.
The classification of holomorphic triangles in
$M$ works as before: For each  $e \in K_+^j$, $f \in K_-^j$ we have the great circle $K_{ef}^j \subset L_2^j$ and 
there is a unique holomorphic triangle $w_{ef}^j$ in $D(T^*K_{ef}^j) \subset D(T^*L_2^j)$
with each of its vertices determined by $(e,f)$; and these are the only holomorphic triangles in $M$. Then the identification of the two categories is done in the 
same way as the sketch above. 
(One thing to note is that we do not need to explicitly know the relations
(\ref{homologyrelations}). Indeed, whatever these relations are, they 
show up abstractly in the calculation of the flow category and the 
directed Fukaya category in exactly the same way.)

\section{Constructing  $M$ (the fiber)} \label{fiber}

In \S \ref{plumbing} we give a precise treatment of the
 plumbing construction sketched in \S \ref{fibersketch}. 
It turns out that the plumbing, which we denote $M_0$, 
 has boundary which is not smooth. 
Therefore, in \S \ref{handles} we construct a slightly bigger space
$M$ which is 
obtained from $M_0$ by smoothing the boundary, so that $M$ has convex, 
contact type boundary. 

\subsection{Plumbing} \label{plumbing} Let $(N,f,g)$ be as in \S \ref{sectionFlow}.
Denote the critical points of $f$ by $x_0, x_2^j, x_4,$ \label{x_0, x_2^j, x_4}
$j=1,\ldots,k$.  Here the index of the critical point is given by the subscript. Let $$L_0 = f^{-1}(1) \cong S^3, \text{ and }L_2^j = S^3, \label{L_0,L_2^j} \, j = 1\ldots k.$$ 
Now define
$$K_j = U(x_2^j) \cap f^{-1}(1) \subset L_0,\label{K_j} $$  
and let
$$\phi_j: S^1 \times \bbR^2 \into L_0 \label{phi_j}$$ 
denote a parameterization of a tubular neighborhood of $K_j$ determined by  $(f,g)$ up to isotopy.  
Let $K_-^j, K_+^j \subset L_2^j$, \label{K_-^j} denote the two unknots  
$$K_-^j= \{(0,0, x_3, x_4) | x_3^2 + x_4^2 =1\} \text{ and } 
K_+^j = \{ (x_1, x_2,0,0) | x_1^2 + x_2^2 =1\}.\label{K_+^j}$$
We specify a  parameterization of a tubular neighborhood of $K_-^j$ in $L_2^j$
as follows. First take the obvious identification 
of $S^1 \times \bbR^2$ with the normal bundle of $K_-^j$ in $TS^3$. 
Then, using this identification, we let 
$$\phi_-^j :  S^1 \times D^2_{\pi/3} \into L_0,\label{phi_-^j}$$
be the  map given by geodesic coordinates with respect to the round metric on $S^3$. (Here we take $\pi/3 <\pi/2$ so that $\phi_-^j$ is an embedding.)
For use later on we define $\phi_+^j: S^1 \times D^2_{\pi/3} \into L_2^j$ \label{phi_+^j} for $K_+^j$ in the same way. 
\\
\newline We start with the disjoint union of the 
disk bundles $D(T^*L_0)$ and $D(T^*L_2^j)$, $j=1, \ldots, k$. 
Then for each $j$ we glue a neighborhood of $K_j \subset D(T^*L_0)$  to a neighborhood of $K_-^j \subset D(T^*L_2^j)$. Each neighborhood can be symplectically identified with a neighborhood of 
$S^1\times\{0\}$ in  $T^*S^1\times  T^*\bbR^2 \cong T^*S^1 \times \bbC^2$ and 
each gluing map  is of the form $id_{T^*S^1} \times m_i$, where $m_i$ is multiplication by $i$ on $\bbC^2$.
\\
\newline To be more precise, we choose some convenient metrics with which to form the disk 
bundles $D(T^*L_0)$ and $D(T^*L_2^j)$, and we fix the radii. Let 
$g_2$ \label{g_2} be a Riemannian metric on $\cup_j L_2^j$
such that, 
$$g_2 = (\phi_-^j)_*(g_{S^1} \times g_{\bbR^2}) \text{ on }\phi_-^j(S^1 \times D^2_{\pi/3}),$$ %
$$g_2=  (\phi_+^j)_*(g_{S^1} \times g_{\bbR^2}) \text{ on }\phi_+^j(S^1 \times D^2_{\pi/3}),$$
and outside of a small neighborhood of $\phi_-^j(S^1 \times D^2_{\pi/3}) \cup \phi_+^j(S^1 \times D^2_{\pi/3})$, $g_2$ is the round metric $g_{S^3}$. On the intermediate region we linearly interpolate between $(\phi_j^{\pm})_*(g_{S^1} \times g_{\bbR^2})$ and $g_{S^3}$ using some cut-off function.
Let $g_0$ \label{g_0} be a Riemannian metric on $L_0$, such that 
 $$g_0 = (\phi_j)_*(g_{S^1} \times 
g_{\bbR^2}) \text{ on }\phi_j(S^1 \times \bbR^2),$$ 
and outside of $\phi_j(S^1 \times \bbR^2)$, $g_0$ is arbitrary.
Now, fix $r,R>0$ \label{r, R} such that
$$0<r < R< \pi/3.$$ 
$R$ will be the radius of $D(T^*L_0) = D_r^{g_0}(T^*L_0)$ and $r$ will be the radius of $D(T^*L_2^j)= D_r^{g_2}(T^*L_2^j)$.
\\
\newline 
Now $\phi_-^j, \phi_j$ respectively give rise to exact symplectic  
identifications
$$\rho_-^j : T^*(S^1) \times T^*(D^2_r) \into T^*(\phi_-^j(S^1 \times D^2_r)),$$
$$\rho_j:  T^*(S^1) \times T^*(\bbR^2) \into T^*(\phi_j(S^1 \times \bbR^2)).$$
We take two sets of coordinates on $T^*(S^1) \times T^*(\bbR^2)$
$$q \in S^1, p \in T^*_q S^1, x \in \bbR^2, y \in T^*_x\bbR^2,$$ 
$$ \widetilde q \in S^1, \widetilde p \in T^*_{\widetilde q} S^1, \widetilde x \in \bbR^2, 
\widetilde y \in T^*_{\widetilde x} \bbR^2.$$ 
To do the plumbing we will identify the following two regions in 
$D_R^{g_0}(T^*L_0)$ and
$D_r^{g_2}(T^*L_2^j)$ respectively. Let
\begin{gather*}
U_0^j = \rho_j( \{((q,p),(x,y)) : 
  |p|_{S^1}^2+ |y|^2_{\bbR^2} \leq R^2, 
|x|_{\bbR^2}^2 + |p|_{S^1}^2 \leq r^2 \}),\\
U_2^j= \rho_-^j(\{((\widetilde q, \widetilde p),(\widetilde x,\widetilde y)) : 
 |\widetilde p|_{S^1}^2+ |\widetilde y|^2_{\bbR^2} \leq r^2, 
|\widetilde p|_{S^1}^2 + |\widetilde x|^2_{\bbR^2}  \leq R^2\}).\label{U_2^j,U_0^j}
\end{gather*}
Note that, since $|x|^2_{\bbR^2} \leq |p|_{S^1}^2 + |x|^2_{\bbR^2} \leq r^2 < (\pi/3)^2$, $g_0$ agrees 
with the metric used in $U_0^j$, and so indeed have
$$U_0^j \subset D_{R}^{g_0}(T^*L_0)\text{ and, similarly, } U_2^j \subset D_{R}^{g_2}(T^*L_2^j).$$ 
Define a symplectomorphism
$$\tau^j : U_2^j \into U_0^j \label{tau} \text{ by }  (\widetilde q, \widetilde p, \widetilde x, \widetilde y) \mapsto (q,p,-y,x).$$ 
Define $M_0$ by gluing  $D_{R}^{g_0}(T^*L_0)$
and $D_{r}^{g_2}(T^*L_2^j)$ along the subsets $U_0^j$, 
$U_2^j$  for each $j=1,\ldots,k$ using $\tau^j$.
We call this a plumbing (along $K_j, K_-^j$) and denote it 
$$M_0 =  D_{R}^{g_0}(T^*L_0) \boxplus_{\,j=1}^{\,j=k} D_{r}^{g_2}(T^*L_2^j).$$

\begin{lemma} The interior of $M_0$ is smooth and has an exact symplectic structure which agrees with the standard symplectic structures on $D_R^{g_0}(T^*L_0)$ and $D_r^{g_2}(T^*L_j^2)$, $j=1,\ldots,k$.
\end{lemma}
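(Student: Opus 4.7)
The smoothness statement is essentially automatic: the gluing map $\tau^j : U_2^j \to U_0^j$ given by $(\widetilde q,\widetilde p,\widetilde x,\widetilde y)\mapsto(\widetilde q,\widetilde p,-\widetilde y,\widetilde x)$ is manifestly a diffeomorphism (linear and with determinant $+1$ in the fiber directions, identity on the base $S^1$). Hence $M_0$ is assembled from the two smooth manifolds $D_R^{g_0}(T^*L_0)$ and $D_r^{g_2}(T^*L_2^j)$ by identifying open subsets of their interiors via a diffeomorphism, and so the interior of $M_0$ inherits a smooth structure.

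For the symplectic structure, I would compute $(\tau^j)^*\theta_0$ directly, where $\theta_0 = p\,dq + y\,dx$ is the canonical primitive on $T^*S^1\times T^*\bbR^2$ in the coordinates $(q,p,x,y)$, and similarly $\theta_2 = \widetilde p\,d\widetilde q + \widetilde y\,d\widetilde x$ in the coordinates $(\widetilde q,\widetilde p,\widetilde x,\widetilde y)$. The one-line calculation gives
\begin{gather*}
(\tau^j)^*\theta_0 \;=\; \widetilde p\,d\widetilde q + \widetilde x\,d(-\widetilde y) \;=\; \widetilde p\,d\widetilde q - \widetilde x\,d\widetilde y \;=\; \theta_2 - d(\widetilde x\,\widetilde y).
\end{gather*}
Applying $d$ to both sides shows $(\tau^j)^*\omega_0 = \omega_2$, i.e.\ $\tau^j$ is a symplectomorphism. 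Consequently the standard symplectic forms on $D_R^{g_0}(T^*L_0)$ and on each $D_r^{g_2}(T^*L_2^j)$ patch to a well-defined symplectic form $\omega$ on the interior of $M_0$ that restricts to the standard one on each piece, which is what the lemma asserts.

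The remaining point is to exhibit a \emph{global} primitive for $\omega$. The two canonical primitives do not agree under $\tau^j$; by the computation above they differ by the exact one-form $-d(\widetilde x\,\widetilde y)$. I would absorb this discrepancy with a cut-off: choose a smooth $\chi^j : D_r^{g_2}(T^*L_2^j)\to [0,1]$ that is identically $1$ on a neighborhood of $U_2^j$ and compactly supported in a slight enlargement of $U_2^j$ inside the interior of $D_r^{g_2}(T^*L_2^j)$. Then set
\begin{gather*}
\theta \;=\; \theta_0 \text{ on } D_R^{g_0}(T^*L_0), \qquad \theta \;=\; \theta_2 - d\bigl(\chi^j \cdot \widetilde x\,\widetilde y\bigr) \text{ on } D_r^{g_2}(T^*L_2^j).
\end{gather*}
On the overlap $\chi^j\equiv 1$, so the second definition equals $\theta_2 - d(\widetilde x\,\widetilde y) = (\tau^j)^*\theta_0$, and the two definitions agree under the gluing; everywhere $d\theta = \omega$. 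Because the $K_j$ are disjoint in $L_0$ and $r$ was chosen small, the tubular neighborhoods $U_0^j \subset D_R^{g_0}(T^*L_0)$ are pairwise disjoint, so the modifications carried out on different factors $D_r^{g_2}(T^*L_2^j)$ do not interact.

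There is no genuine obstacle here; the main point is the small verification that $\tau^j$ preserves the standard symplectic form, and the only care required is in the bookkeeping with cut-offs so that the chosen primitive still equals the canonical $\theta_0$ on $D_R^{g_0}(T^*L_0)$ (a requirement that we use later, e.g.\ when computing Floer-theoretic energies via Stokes' theorem along Lagrangians in the $L_0$-piece).
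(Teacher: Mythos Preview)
Your argument has a genuine gap in the smoothness claim. You assert that $M_0$ is obtained ``by identifying open subsets of their interiors via a diffeomorphism,'' but this is false: the gluing regions $U_0^j$ and $U_2^j$ are defined by \emph{closed} inequalities and, crucially, their boundaries meet the boundaries of the ambient disk bundles $D_R^{g_0}(T^*L_0)$ and $D_r^{g_2}(T^*L_2^j)$. Gluing manifolds-with-boundary along closed sub-regions does not automatically produce a manifold in the interior. The paper singles out exactly this as the main issue, with the cautionary example of gluing two strips $\bbR\times[-1,1]$ along the small square $[-1/2,1/2]^2$ via the $90^\circ$ rotation: at an interior edge of that square the glued space looks like two half-planes joined along a quarter-plane, which is not locally Euclidean. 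What you must check is that the ``horizontal'' part $\partial_h U_0^j = \{|p|^2+|y|^2 = R^2\}$ lies in $S_R^{g_0}(T^*L_0)$ and likewise $\partial_h U_2^j \subset S_r^{g_2}(T^*L_2^j)$; since $\tau^j$ swaps horizontal and vertical boundaries, every point of $\partial U_0^j$ that lies in the \emph{interior} of $D_R^{g_0}(T^*L_0)$ is identified with a \emph{boundary} point of $D_r^{g_2}(T^*L_2^j)$, and conversely. This is what rules out the book-with-two-pages singularity, and it is the reason the radii $r<R$ and the product metrics near the knots were set up as they were.

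Your treatment of the symplectic structure and exactness, by contrast, is correct and more explicit than the paper's: the paper simply notes that $\tau^j$ is symplectic and then invokes Mayer--Vietoris for exactness, whereas your computation $(\tau^j)^*\theta_0 = \theta_2 - d(\widetilde x\,\widetilde y)$ followed by a cut-off gives a concrete global primitive. One small correction: since $U_2^j$ itself reaches $\partial D_r^{g_2}(T^*L_2^j)$, your cut-off $\chi^j$ cannot be supported ``inside the interior'' while being $\equiv 1$ on a neighborhood of $U_2^j$; take $\chi^j$ supported in the (strictly larger) coordinate patch $\rho_-^j(T^*S^1\times T^*D^2_{\pi/3})$ instead, and the construction goes through.
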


\begin{proof} The main issue is topological: We must prove that $U_0^j$ and $U_2^j$ reach the boundary of  $D_R^{g_0}(T^*L_0)$ and  $D_r^{g_2}(T^*L_j^2)$ respectively. (To see what could go wrong imagine gluing $\bbR \times [-1,1]$ to $\bbR \times [-1,1]$ along $[-1/2,1/2]\times [-1/2,1/2]$,  using the map $(x,y) \mapsto (-y,x)$.)
The boundary of $U_0^j$ has two parts, which we will call horizontal and vertical. The horizontal part is
$$\partial_h U_0^j = \{((q,p),(x,y)) : 
  |p|_{S^1}^2+ |y|^2_{\bbR^2} = R^2, 
|x|_{\bbR^2}^2 + |p|\\
\newline_{S^1}^2 \leq  r^2 \},$$
and the vertical part is 
$$\partial_v U_0^j  = \{((q,p),(x,y)) : 
  |p|_{S^1}^2+ |y|^2_{\bbR^2} \leq R^2, 
|x|_{\bbR^2}^2 + |p|_{S^1}^2  = r^2 \},$$
Similarly we have $\partial_h U_2^j$ and $\partial_v U_2^j$. Note that $\tau^j$ maps $\partial_h U_2^j$ to $\partial_v U_0^j$ and $\partial_v U_2^j$ to  $\partial_h U_0^j$. 
Now, since $|x|^2_{\bbR^2} \leq |p|_{S^1}^2 + |x|^2_{\bbR^2} \leq r^2 < (\pi/3)^2$, $g_0$ agrees 
with the metric used in $U_0^j$, and so we have 
$$\partial_h U_0^j \subset S_{R}^{g_0}(T^*L_0).$$
Similarly 
$\partial_h U_2^j \subset  S_{r}^{g_2}(T^*L_2^j).$ 
This shows that  $U_0^j$ and $U_2^j$ each reach the boundary of $D_R^{g_0}(T^*L_0)$ and  $D_r^{g_2}(T^*L_j^2)$ respectively, and so $\Int(M_0)$ is locally Euclidean. It is smooth and symplectic since $\tau^j$ is, and
the Mayer-Vietoris sequence in de Rham cohomology shows the symplectic form is exact. \end{proof} 
Let us denote the symplectic structure on $M_0$ by $\omega_0$. We fix a primitive $\theta_0$, with $\omega_0= d\theta_0$. \label{omega_0,theta_0}

\subsection{Handle attachments} \label{handles} The simplest example of a plumbing is obtained by gluing $\bbR \times [-1,1]$
to $\bbR \times [-1,1]$  along $[-1,1]\times [-1,1]$ using $(x,y) \mapsto (-y,x)$. This shows $M_0$  is not an exact symplectic manifold with corners in the sense of \cite[\S 7a]{S08}: it has obtuse corners, and the combined convexity of $S_R^{g_0}(T^*L_0)$ and $S_r^{g_2}(T^*L_2^j)$ are not enough to prevent holomorphic curves from hitting a corner.
To remedy this we have an alternative construction from  \cite{JC} (see also 
\cite{JA}). There
 we construct $M$ by attaching $k$ Morse-Bott type handles to $D(T^*L_0)$
in the style of \cite{W}, one for each $j=1, \ldots, k$. 
Each handle $H_j$ is diffeomorphic to 
$D(T^*(S^1 \times D^2))$, where we think of $S^1$ as a critical manifold (which corresponds to $K_+^j$)
and $S^1 \times D^2$ as its unstable manifold. To attach $H_j$ we identify $D(T^*(S^1 \times D^2))|(S^1 \times\partial D^2)$
with a neighborhood of $S(\nu^*K_j)$ in $S(T^*L_0)$ so that $S^1 \times D^2$ and
$D(\nu^*K_j)$ together form a Lagrangian 3-sphere $\widetilde L_2^j$. 
The following lemma summarizes the relation between $M_0$ and $M$.

\begin{lemma}[\cite{JC}] \label{embedding} There is an exact symplectic manifold 
$(M,\omega,\theta)$ \label{(M,omega,theta)} with smooth, convex, contact type boundary, 
together with an exact symplectic embedding 
$$j:(M_0,\omega_0,\theta_0) \into (M,\omega,\theta).$$
such that $j(L_2^j) = \widetilde L_2^j$ and $j|L_0$ is the obvious embedding.
See figure \ref{figureM_0}. $\square$
\end{lemma}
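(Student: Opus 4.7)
The plan is to repair $M_0$ by replacing a neighborhood of each problematic corner with a Weinstein-style handle, following the scheme in \cite{W} but allowing a Morse--Bott critical manifold $S^1$ instead of an isolated critical point. Away from the plumbing regions, $\partial M_0$ is already a smooth piece of either $S_R^{g_0}(T^*L_0)$ or $S_r^{g_2}(T^*L_2^j)$, so the Liouville vector field $Z_0$ dual to $\theta_0$ already points outward there. The only failure of smoothness and of the convexity condition occurs along the codimension-one corner loci
\[
C_j \;=\; \partial_h U_0^j \,\cap\, \partial_v U_0^j \;=\; \tau^j\bigl(\partial_v U_2^j \cap \partial_h U_2^j\bigr),
\]
each of which is diffeomorphic to $S^1$ times a torus; in the local plumbing coordinates $(q,p,x,y)$ with $|p|^2+|y|^2=R^2$ and $|p|^2+|x|^2=r^2$, it is visible as the set where both inequalities are saturated.

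The first main step is to build the model handle $H_j$. Take $H_j$ to be a neighborhood in $T^*(S^1\times D^2)$ of the zero section, equipped with the standard Liouville form $\theta_{H_j} = p_1\,dq_1 + y\,dx$. Choose a Morse--Bott function on $S^1\times D^2$ whose critical manifold is $S^1\times\{0\}$ (a maximum in the $D^2$ direction); the corresponding Weinstein Liouville vector field then has $S^1\times\{0\}$ as the unique stable manifold in the handle and flows outward through $\partial H_j$ transversely, except along the attaching region which is a tubular neighborhood of $S^1\times\partial D^2$ in the boundary of the cotangent disk bundle. The zero section $S^1\times D^2 \subset H_j$ is Lagrangian and meets the attaching region in $S^1\times\partial D^2 \cong K_+^j$; once the handle is glued it will close up with $D(\nu^*K_j)\subset D(T^*L_0)$ to give the Lagrangian 3-sphere $\widetilde L_2^j$.

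The second main step is the gluing. Shrink $M_0$ slightly, inside $D_R^{g_0}(T^*L_0) \boxplus D_r^{g_2}(T^*L_2^j)$, to a subdomain $M_0'$ whose boundary rounds off the corner $C_j$ slightly inward; this can be done by pushing along the flow of $-Z_0$ in a neighborhood of $C_j$. The region between $\partial M_0'$ and a suitable standard contact-type hypersurface in the local model $T^*S^1 \times T^*\bbR^2 = \{(q,p,x,y)\}$ is Weinstein-equivalent to the attaching collar of $H_j$: both are modeled, after a symplectomorphism respecting Liouville vector fields, on a neighborhood of the sphere $S(\nu^*K_j) \subset S(T^*L_0)$. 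We attach $H_j$ along this collar using this identification, and define
\[
M \;=\; M_0' \,\cup\, \bigcup_{j=1}^{k} H_j .
\]
By construction the Liouville forms patch together to a global primitive $\theta$ with $d\theta = \omega$, because the gluing diffeomorphism was chosen to intertwine Liouville vector fields (so the primitives match up to an exact term, which can be absorbed using a cut-off function with support inside the collar).

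The final step is to verify the properties claimed: smoothness and contact-type convexity of $\partial M$ follow from Weinstein's handle-attachment lemma, since we attached a handle along a contact-type piece of boundary using a Liouville-preserving map; the inclusion $j:M_0\hookrightarrow M$ is exact symplectic because $\theta\big|_{M_0'} = \theta_0\big|_{M_0'}$, and after a further small exact symplectic isotopy supported near the corners we can extend this to all of $M_0$; the identifications of $L_0$ and $L_2^j$ are immediate, as the zero section of $H_j$ glued to $D(\nu^*K_j)$ reconstructs precisely the image of $L_2^j$ under the plumbing. The main obstacle is verifying that the Liouville vector fields on $M_0'$ and on the handle match along the attaching region in such a way that the global primitive exists and the boundary is smooth after gluing; this is the delicate calculation that justifies treating the plumbing as a Weinstein handle attachment rather than a plumbing with corners, and is where the detailed argument of \cite{JC} is used.
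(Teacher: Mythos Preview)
The paper does not prove this lemma; it is stated with a citation to \cite{JC} and closed with a $\square$. Your sketch follows precisely the approach the paper outlines in the paragraph immediately preceding the lemma: construct $M$ by attaching Morse--Bott Weinstein handles $H_j \cong D(T^*(S^1\times D^2))$ to $D(T^*L_0)$ along neighborhoods of $S(\nu^*K_j)$, with the Lagrangian core $S^1\times D^2$ gluing to $D(\nu^*K_j)$ to form $\widetilde L_2^j$. So your proposal is aligned with the intended argument, and you correctly identify that the genuine work --- matching Liouville vector fields along the attaching region so that the primitive patches and the boundary is smooth and convex --- is exactly what is deferred to \cite{JC}.
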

 We mention also that $M$ and $M_0$ are homeomorphic, via a retraction  
$M \into j(M_0)$, but we will not need this.
\begin{figure}
\begin{center}
\includegraphics[width=3in]{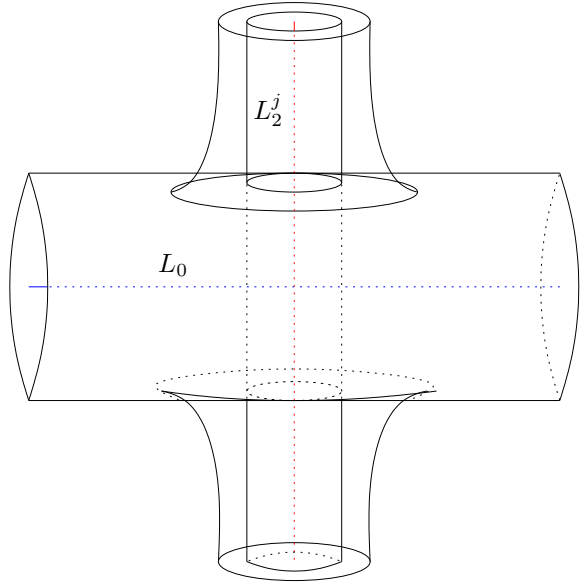} 
\put(-124,175){$L_2^j$}
\put(-160,117){$L_0$}
\caption{Schematic of $M_0$ (inner) and $M$ (outer) near  the plumbing region. }
\label{figureM_0}
\end{center}
\end{figure}

\section{Constructing $V_4, V_2^j, V_0$ (the vanishing spheres) }\label{bigsectionvanishingcycles}

In this section we construct the vanishing spheres $V_4, V_2^j, V_0$ in $M_0$;
these correspond to version II of the vanishing spheres in 
\S \ref{VCsketch}. We will only have a few 
things to add beyond the description  there.
\\
\newline  We regard $T^*L_2^j=T^*S^3$ as 
$\{(u,v) \in \bbR^4 \times \bbR^4 : |u| =1, u \cdot v = 0\}$, \label{(u,v)}
with the restriction of the symplectic structure $\Sigma_j dv_j \wedge du_j$ 
from $\bbR^8$. Recall from \S \ref{fiber} we have the two standard unknots
$K_+^j$, $K_-^j$ in $L_2^j$. For each $e \in K_+^j$, $f \in K_-^j$, we
 have the great circle through $e,f$ in $L_2^j$, denoted 
$K_{ef}^j = \{(\cos(\theta)e, \sin(\theta)f) : \theta \in  \bbR/ 2\pi\bbZ \}\label{K_{ef}}$. (Note that $K_{ef}^j = K_{\pm e, \pm f}^j$.) 
We also embed $T^*K_{ef}^j$ 
in $T^*L_2^j$ as  
$$T^*K_{ef}^j = \{( (\cos(\theta)e, \sin(\theta)f), \lambda (-\sin(\theta)e, \cos(\theta)f): \theta \in  \bbR/ 2\pi\bbZ , \lambda \in \bbR \}.$$
For each $e \in K_+^j$, $f \in K_-^j$ we fix an identification
$$\sigma_{ef}^j :  (\bbR /2\pi\bbZ) \times \bbR \into T^*K_{ef},\phantom{2}
\sigma_{ef}^j(\theta, \lambda) = (\cos(\theta)e, \sin(\theta)f), 
\lambda (-\sin(\theta)e, \cos(\theta)f). \label{(theta,lambda)}$$
Note that  $\sigma_{ef}^j$ maps $(\bbR /2\pi\bbZ) \times [-r,r]$ 
onto $D_r^{g_{S^3}}(T^*K_{ef}^j)$, 
where $g_{S^3}$ is the round metric. But in fact 
 $D_r^{g_{S^3}}(T^*K_{ef}^j) = D_r^{g_2}(T^*K_{ef}^j)$, because
$g_{S^3}(v,v) = g_2(v,v)$  for any $v \in T^*K_{ef}^j$.
(Indeed, it is easy to check that 
$g_{S^3}$ and $(\phi_-^j)_*(g_{S^1} \times g_{\bbR^2})$ agree on $T^*K_{ef}^j$ 
in a neighborhood of $K_-^j$, and then it follows that the 
equality holds on all of $S^3$ because
$g_2$ is defined by linearly interpolating between $g_{S^3}$ and $(\phi_-^j)_*(g_{S^1} \times g_{\bbR^2})$.) From now on we will omit the metric and radius from 
the notation in $D(T^*L_2^j) = D_r^{g_2}(T^*L_2^j)$ and $D(T^*L_0) = D_R^{g_0}(T^*L_0)$.
\\
\newline Let $\Gamma_0$,$\Gamma_2$,$\Gamma_4$ be the three curves as in 
figure \ref{CorrectionGammaIIIbig}.
\begin{figure}
\begin{center}
\includegraphics[width=5in]{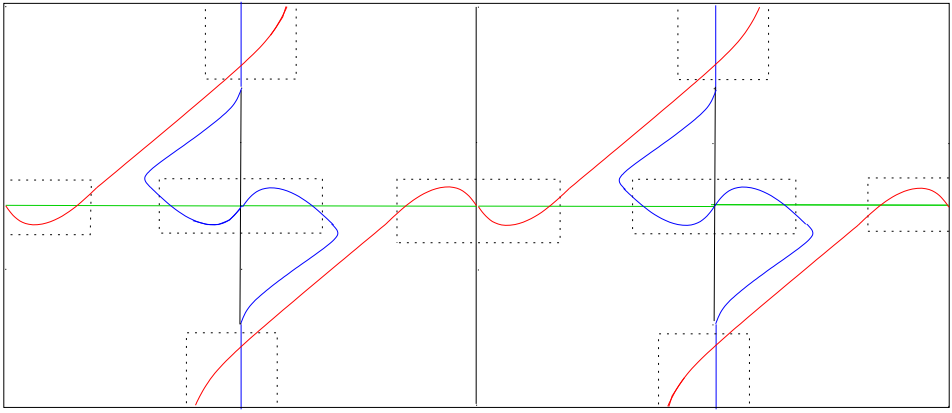}
\caption{$\Gamma_4$ (red),  $\Gamma_2$ (green), $\Gamma_0$ (blue) in 
$\bbR /2\pi\bbZ \times [-r,r]$.}
\label{CorrectionGammaIIIbig(Floer)}
\end{center}
\end{figure}
We define some 3 dimensional submanifolds (possibly with boundary) 
$$W_0^j, W_2^j, W_4^j \subset D(T^*L_2^j), \text{ by } W_i^j= \cup_{e,f} \, \sigma_{ef}^j(\Gamma_i), \, i=0,2,4. \label{W_0^j,W_4^j} $$
This means that $W_0^j, W_2^j, W_4^j$ are rotationally symmetric 
in the sense that  the intersection of  $W_0^j$, $W_2^j$, $W_4^j$
with any slice $D(T^*K_{ef}^j)$,
is given by figure \ref{CorrectionGammaIIIbig}.
We define the parts of $V_0, V_2^j, V_4 \subset M_0$ 
which lie in $D(T^*L_2^j)$ by 
$$V_0 \cap D(T^*L_2^j) = W_0^j, \, V_2^j \cap D(T^*L_2^j) = W_2^j, \, 
V_4 \cap D(T^*L_2^j) = W_4^j.$$
Note that $V_2^j= L_2^j = W_2^j$. It is easy to see that  
 $W_0^j$, $W_2^j$, $W_4^j$ are Lagrangian. 
Note that $\Gamma_0$,  $\Gamma_2$, $\Gamma_4$ are exact isotopic to 
the three corresponding curves $\Gamma_0^s$,  $\Gamma_2^s$, $\Gamma_4^s$ in 
figure \ref{IntrofigsimpleVCdim2} in \S \ref{VCsimplesketch}, since the signed area between $\Gamma_i$ and $\Gamma_i^s$ is zero.
From this it is not it difficult to show that  
$W_0^j$ and $W_4^j$ are exact isotopic to 
$D(\nu^*K_-^j)$ and $D(\nu^*K_+^j)$, respectively.
\\
\newline We define $V_0 \subset M_0$ 
as the union of 
$L_0 \setminus \big(\cup_{j=1}^{j=k}  \phi_j(S^1 \times \Int(D^2_r))\big)$ and $\cup_{j=1}^{j=k} \, W_0^j$; here $D^2_r$ is the closed disk of radius $r$. 
(Notice that these two pieces match up, 
since  $W_0^j$ coincides with $D_r(\nu^*K_-^j) = \phi_j(S^1 \times D^2_r)$ 
near the boundary of $D(T^*L_2^j)$.) Then 
$V_0$ is exact isotopic to $L_0$, since 
$W_0^j$ is  exact isotopic to $D(\nu^*K_-^j)$.
\\
\newline  We give a rough definition of $V_4$ and leave more precise
 details to 
\S \ref{graphs} below. First take the graph of an exact one-form 
$df$ in $D(T^*L_0)$
defined on $L_0 \setminus \big(\cup_{j=1}^{j=k}  \phi_j(S^1 \times \Int(D^2_{r}))\big)$ 
such that the graph of $df$ matches up with $\cup_{j=1}^{j=k} \, W_4^j$.
(In \S \ref{graphs} below we will specify $df$ more precisely and denote it 
$df_{40}$.)
Now define $V_4 \subset M_0$ 
as the union of 
the graph of $df$ and $\cup_{j=1}^{j=k} \, W_4^j$.
Then, $V_4$ is an exact Lagrangian  sphere because $df$ is exact isotopic to  $L_0 \setminus \big(\cup_{j=1}^{j=k}  \phi_j(S^1 \times \Int( D^2_{r})) \big)$ and 
$W_4^j$ is  exact isotopic to $D_{r}(\nu^*K_+^j) \cong S^1 \times D^2_r$.
This also shows $V_4$ is diffeomorphic to the result of doing surgery on 
$L_0$ along the framed link $\cup_jK_j$. Related to this, 
$V_4$ is exact isotopic to $L_4$ from \S \ref{FukFlowsketch}
(see also \S \ref{VCsimplesketch}); recall $L_4$ is the 
 (Morse-Bott) Lagrangian surgery of $L_0$ with each of the $L_2^j$.
\section{Computing the  Floer homology groups}\label{sectionFloer}

In this section we compute the Floer homology groups 
$HF(V_4, V_2^j)$, $HF(V_2^j,V_0)$, $HF(V_4, V_0)$, $j=1, \ldots,k$
(compare with the summary in \S \ref{Floersketch}).
Using results from \S \ref{localstrips} it follows immediately that 
these groups are identified with the 
Morse-Bott homology groups of certain functions (see \S \ref{Floergroups}).
The focus of this section is to find explicit generators and relations for 
these which correspond perfectly to the ones we found for the flow category 
in \S \ref{sectionFlow} (see \S \ref{sectionHF}) .

\subsection{Expressing certain parts of $V_4$,   $V_2^j$,  $V_0$ as graphs}
\label{graphs}
We first set up some regions in $M$ where we 
will localize the holomorphic strips which are involved in the calculation.
This is basically the same as our discussion in  \S \ref{Floersketch}, but we run through it again
to set up the notation, and make things slightly more precise.
\\
\newline To begin, it is useful to understand how the various intersection points between
$\Gamma_4$, $\Gamma_2$, $\Gamma_0$   in figure 
\ref{CorrectionGammaIIIbig(Floer)} correspond to submanifolds in $D(T^*L_2^j)$ (where we look at 
their image under $\sigma_{ef}^j$, and let $e$ and $f$ vary). First, consider  
the two dotted rectangles surrounding the six intersection points 
of $\Gamma_0$ (blue) and $\Gamma_2$ (green). The two midpoints correspond to the two points $\pm f \in K_-^j$. Thus, as $f$ varies, the two midpoints  together sweep out $K_-^j \cong S^1$.  As 
$e$ and $f$ vary, the four intersection points on either side of the two  middle points in each rectangle  together sweep out
a torus, which is the boundary of a tubular neighborhood of $K_-^j$ in 
$V_2^j = L_2^j$, and we denote this torus by $\Sigma_{20}^j$.
There is a similar story for the intersection points of $\Gamma_4$ (red) and $\Gamma_2$ (green):
the two midpoints correspond to $K_+^j$ and the other four points correspond to a torus
 $\Sigma_{42}^j$. The four intersection points of $\Gamma_4$ (red) and $\Gamma_0$ (blue) together sweep out a torus, denoted $\Sigma_{40}^j$, which can be viewed either as the boundary of $D_t^{g_2}(\nu^*K_-^j)$ for a certain radius $0< t< r$, or as the boundary of  $\phi_j(S^1 \times D^2_t) \subset L_0$.
\\
\newline 
The two rectangles which surround $\Gamma_0 \cap \Gamma_2$ intersect $\Gamma_2$ in two intervals; 
as $e$ and $f$ vary these intervals  
sweep out a closed tubular neighborhood of $K_-^j$ in 
$L_2^j$ which we denote  $N_{20}^j$.
Now for each $j$, take a  Weinstein 
neighborhood $D(T^*N_{20}^j) \subset D(T^*L_2^j)$ 
such that $D(T^*N_{20}^j)\cap D(T^*K_{ef}^j)$ 
corresponds precisely 
to these the two rectangles. 
Note that inside the two rectangles  we can view $\Gamma_0$ as the graph of two function 
over $\Gamma_2$.
Corresponding to these functions there is an exact 1-form $df_{20}^j$ defined on $N_{20}^j$ such that the graph of $df_{20}^j$ in  $D(T^*N_{20})$ is precisely 
$V_0 \cap D(T^*N_{20}^j)$. From the shape of $\Gamma_0$ we see that $f_{20}^j$ is a 
Morse-Bott function on $N_{20}^j$ with two critical components: it has a minimum at $K_+^j$ and a
 maximum at the torus $\Sigma_{42}^j$.
In a similar way we  define $D(T^*N_{42}^j)$; it corresponds to the two dotted rectangles surrounding  $\Gamma_4 \cap \Gamma_2$.
We define $df_{42}^j$ as before; from the shape of $\Gamma_4$ we can see
$f_{42}^j$ is a Morse-Bott function on $N_{42}^j$ with two critical components: 
it has a maximum at $K_+^j$ and  a minimum at the torus $\Sigma_{42}^j$. 
\\
\newline Now take a look at the four  partially formed vertical 
dotted rectangles surrounding $\Gamma_4 \cap \Gamma_0$.
These rectangles intersect 
$\Gamma_0$ in four vertical closed intervals; as $e$, $f$ vary, these intervals 
 sweep out an annular region, which can be viewed 
as subset of $L_0$ of the form $\phi_j(S^1 \times D^2_{[s,r]})$, 
for some $0< s< r$, where $D^2_{[s,t]} = \{ x \in \bbR^2: |x| \in [s,t]\}$.
We extend this region into $L_0$ by setting
$$N_{40} = L_0 \setminus \big( \cup_j \phi_j(S^1 \times \Int D^2_s)  \big)$$
Then, the 
intersection of $N_{40}$ with $D(T^*K_{ef}^j) \subset  D(T^*L_2^j)$ corresponds precisely to the four subintervals of $\Gamma_0$ above.
Now take a Weinstein 
neighborhood $D(T^*N_{40}) \subset D(T^*L_0)$ which intersects   
$D(T^*K_{ef}^j) \subset  D(T^*L_2^j)$ precisely in the region corresponding  to  the four partially formed
 rectangles.  To define $V_4$ outside of $D(T^*L_2)$ more precisely 
(compare with the end of \S \ref{bigsectionvanishingcycles}), we take a Morse-Bott function  
$f_{40}: N_{40}\into \bbR$ such that the part of the graph of $df_{40}$ 
in $D(T^*L_2^j)$ agrees with $W_4^j \cap D(T^*N_{40})$. 
Thus, the part of the graph of $df_{40}$ in $D(T^*K_{ef}^j)$ is represented by 
$\Gamma_4$ in the four partially formed rectangles.
Recall from \S \ref{plumbing} that the plumbing map $\tau^j$ is basically defined to be 
$id_{T^*S^1} \times m_i$  on a certain subregion of $T^*S^1 \times T^*\bbR^2 \cong T^*S^1 \times \bbC^2$ 
(where $m_i$ is multiplication by $i$).
This implies that when we do the plumbing identification, the four 
rectangles in each slice $D(T^*K_{ef}^j)$
 are rotated by ninety 
degrees, via multiplication by $i$. (The complete sub-region of 
$D(T^*K_{ef}^j)$ 
which is rotated is a neighborhood of $D(\nu^*K_-^j) \cap D(T^*K_{ef}^j) $.) 
After this rotation $\Gamma_4$ becomes the graph of a function
defined over $\Gamma_0$ (note the effect of $i$ or $-i$ is the same). Thus, $f_{40}$ is critical
along each torus $\Sigma_{40}^j$; from the shape of $\Gamma_4$
one can see $f_{40}$ has a maximum at $\Sigma_{40}^j$ and we assume that $f_{40}$ has isolated critical points aside from that. 

\subsection{Generators and relations for the Morse-Bott homology groups of  $(N_{40},f_{40})$ 
$(N_{42}^j,f_{42}^j)$, and  $(N_{20}^j,f_{20}^j)$} \label{sectionHF}
 We will find explicit generators and relations 
for the Morse-Bott homology groups of $(N_{40},f_{40})$, 
$(N_{42}^j,f_{42}^j)$, and  $(N_{20}^j,f_{20}^j)$ 
which correspond perfectly to the ones we found for the flow category in \S \ref{sectionFlow}.
Later we will identify these groups with the Floer homology groups.
\\
\newline We first specify 
Morse-Smale data $(h_{42}^j,g_{42}^j)$, $(h_{20}^j,g_{20}^j)$ 
\label{(h_{42},g_{42}),(h_{20},g_{20}),(h_{40},g_{40})} 
\label{(widetilde h_{40},widetilde g_{40})} 
on  $\Sigma_{42}^j \cup K_+^j$ and  $\Sigma_{20}^j \cup K_-^j$ respectively. Then we specify 
two sets of Morse-Smale data on $\Sigma_{40}^j$, denoted
$(h_{40}^j,g_{40}^j)$ and $(\widetilde h_{40}^j,\widetilde g_{40}^j)$; the first will be related  to  the identification $\Sigma_{40}^j = S_t(\nu^*K_-^j) \cong K_-^j \times K_+^j$; the second  will be related to the identification $\Sigma_{40}^j = \phi_j(S^1 \times \partial D^2_t)$.
(We will not use $(h_{40}^j,g_{40}^j)$ until \S \ref{sectiontriangleproduct}.)
\\
\newline  There are canonical parameterizations 
\begin{gather*}
\varphi^j_{40}: K_+^j \times K_-^j \into \Sigma_{40}^j,\,\,\,
\varphi^j_{42}: K_+^j \times K_-^j \into \Sigma_{42}^j,\,\,\,
\varphi^j_{20}: K_+^j \times K_-^j \into \Sigma_{20}^j.
\end{gather*}
(These arise from the fact that each torus is swept out by some points in $D(T^*K_{ef}^j)$
as $e \in K_+^j$ and $f \in K_-^j$ vary.)
Fix the obvious identifications of  $ K_+^j$, $K_-^j$ with 
$\bbR/\bbZ$, using $\sin$ and $\cos$. To simplify notation in what follows we will 
express points in $K_+^j$, $K_-^j$, $\Sigma_{42}^j$, $\Sigma_{20}^j$, $\Sigma_{40}^j$ 
in terms of the coordinates $x \in \bbR/\bbZ$, or $(x,y) \in (\bbR/\bbZ)^2$.
Choose the metrics  $g_{42}^j$, $g_{20}^j$, $g_{40}^j$ 
so that 
they correspond to the flat metric on $K_j^{\pm} \cong \bbR/\bbZ$ and
$\Sigma_{42}^j, \Sigma_{20}^j, \Sigma_{40}^j \cong (\bbR/\bbZ)^2.$ We will define  $h_{42}^j, h_{20}^j$, $h_{40}^j$ so that their flow lines are as in figure \ref{figureMorseSmaledata}. 
To define them  precisely,
 \label{(h_{42},g_{42}),(h_{20},g_{20}),(h_{40},g_{40})(b)} 
and give notation for their critical points, we take a Morse function 
$\varphi: \bbR/\bbZ \into \bbR$
with two critical points at $0=1$ and $1/2$, which are respectively a minimum and a maximum.
We define, for $x \in K_{\pm}^j \cong \bbR/\bbZ$, 
$$(h_{42}^j|K_+^j)(x) =\varphi(x-1/4), \phantom{bbb}(h_{20}^j|K_-^j)(x) =\varphi(x-1/4).$$ 
Then, $h_{42}^j|K_+^j$, $h_{20}^j|K_-^j$ 
have critical points, respectively, 
$$a_0 = 1/4, a_1 = 3/4,  \phantom{bbb} b_0 = 1/4, b_1 = 3/4.$$ 
 where $a_0$, $b_0$ 
are  minima and $a_1, b_1$ 
are maxima. For $(x,y) \in \Sigma_{42}^j \cong K_+^j \times K_-^j \cong (\bbR/\bbZ)^2$, we define
$$(h_{42}^j|\Sigma_{42}^j)(x,y) = \varphi(x) + \varphi(y).$$
Then, $h_{42}^j|\Sigma_{42}^j$ has critical points
$$x_2^j= (1/2,1/2), x_1^j = (0, 1/2)= (1,1/2), (x_1^j)' = (1/2,0) = (1/2,1)$$
$$\text{and } x_0^j = (0,0) = (1,0) = (0,1) = (1,1),$$
where the subscript denotes the index. 
Let $$h^j_{20}(x,y) = h_{42}^j(x-1/8,y-1/8),$$  
Then, $h^j_{20}|\Sigma_{20}^j$
has critical points 
$$y_2^j, y_1^j, (y_1^j)', y_0^j$$
with $y_2^j = x_2^j+(1/8, 1/8)$, $y_1^j = x_1^j +(1/8, 1/8)$, etc.
Let $$h^j_{40}(x,y) = h_{42}^j(x+1/8,y+1/8),$$  
then $h_{40}^j = h_{40}|\Sigma_{40}^j$ has critical points 
$$z_2^j, z_1^j, (z_1^j)', z_0^j$$
where $z_2^j = x_2^j-(1/8, 1/8)$, $z_1^j = x_1^j -(1/8, 1/8),$ etc.
\begin{figure}
\begin{center}
\includegraphics[width=4in]{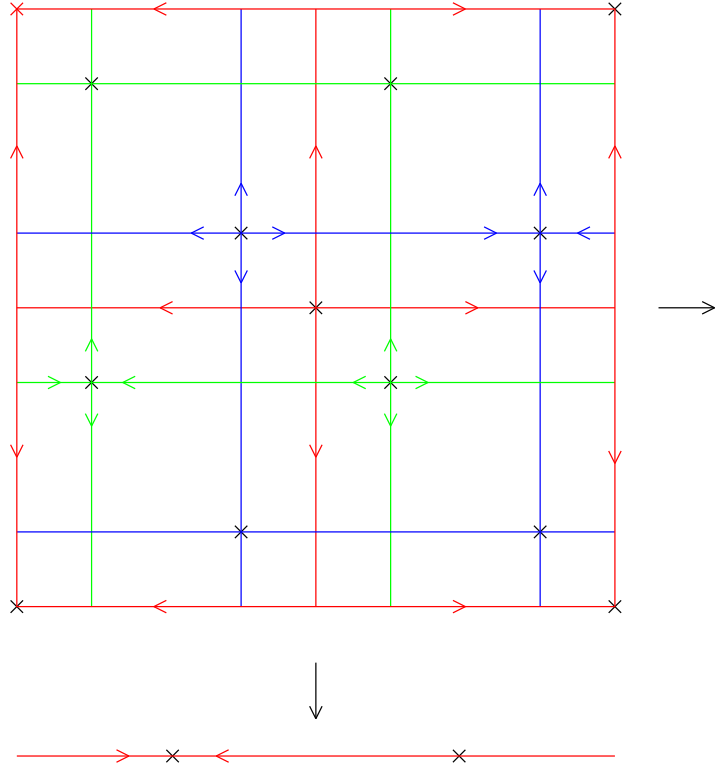} 
\caption{The (negative) flow lines of $(h_{42},g_{42})$, $(h_{20},g_{20})$  on $(\bbR/\bbZ \times \bbR/\bbZ) \cup \bbR/\bbZ$
(in red, blue respectively), and  $(h_{40}, g_{40})$ on $\bbR/\bbZ \times \bbR/\bbZ$ (in green). The two black arrows indicate projection maps.}
\label{figureMorseSmaledata}
\end{center}
\end{figure}
\\
\newline Let $\mu_{42}^j$ and $\mu_{20}^j$ be the Riemannian metrics on $N_{42}^j$ and $N_{20}^j$ 
which are of the form $(\phi_+^j)_*(g_{S^1} \times g_{\bbR^2})$ and
 $(\phi_+^j)_*(g_{S^1} \times g_{\bbR^2})$ respectively. Here 
$\phi_{\pm}^j$ are from  \S \ref{plumbing} (and we may assume 
 $N_{42}^j$ and $N_{20}^j$ are contained in $\phi_+^j(S^1 \times D^2_{\pi/3})$ and
$\phi_+^j(S^1 \times D^2_{\pi/3})$).

\begin{lemma} \label{MorsehomologyI} 
$((-f_{42}^j, \mu_{42}^j)    ; (h_{42}^j, g_{42}^j)) \text{ and }  
( (f_{20}^j, \mu_{20})    ; (h_{20}^j, g_{42}^j))  $
are regular Morse-Bott homology data in the sense of  \S \ref{MorseBotthomology}.
The corresponding ungraded Morse-Bott homology groups with $\bbZ/2$ coefficients
have the following generators and relations.
\begin{gather*}
 (H_{MB})_*( (N_{42}^j, -f_{42}^j, \mu_{42}^j),  (h_{42}^j,g_{42}^j), \partial)
\text{ is freely generated by } x_2^j, x_1^j \in \Sigma_{42}^j;\\
(H_{MB})_*((N_{20}^j, f_{20}^j, \mu_{42}^j), (h_{20}^j,g_{20}^j), \partial)
\text{ is freely  generated by } y_2^j, (y_1^j)' \in \Sigma_{20}^j.
\end{gather*}
\end{lemma}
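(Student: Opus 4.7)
The plan is as follows. Regularity of the Morse-Bott data will follow from the explicit description of the gradient flow of $-f_{42}^j$ (respectively $f_{20}^j$) given in \S\ref{graphs}: identifying $N_{42}^j$ with $K_+^j \times D^2$ so that $f_{42}^j$ depends only on the radial coordinate in the disk, the flow $\nabla f_{42}^j$ is radial in each fiber, and the induced map $\Sigma_{42}^j \to K_+^j$ sending each boundary point to the center of its fiber is precisely the projection $\pi(x,y) = x$ in our parameterization. Similarly, for $f_{20}^j$ the induced map is $\pi(x,y) = y$. The auxiliary pairs $(h_{42}^j,g_{42}^j)$ and $(h_{20}^j,g_{20}^j)$ are product-type Morse-Smale data on tori and circles, and the transversality conditions needed for Morse-Bott regularity (as set out in \S\ref{MorseBott}) can be checked directly from Figure~\ref{figureMorseSmaledata}.

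With regularity established, the chain complex is freely generated over $\bbZ/2$ by the critical points $a_0, a_1 \in K_+^j$ and $x_2^j, x_1^j, (x_1^j)', x_0^j \in \Sigma_{42}^j$ (and analogously for $N_{20}^j$). The plan is to split the differential into two contributions: the Morse differentials of $h_{42}^j$ internal to each critical component, and cascade contributions given by negative gradient trajectories of $-f_{42}^j$ going from $\Sigma_{42}^j$ to $K_+^j$, capped at each end by Morse flow lines of $h_{42}^j$. The internal Morse part vanishes mod $2$ for a standard reason: $h_{42}^j$ restricted to $K_+^j$ and to $\Sigma_{42}^j$ is the usual ``sum-of-heights'' function on a circle and a torus, so every Morse differential coefficient counts an even number of flow lines.

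For the cascade part, the coefficient of $q \in \{a_0,a_1\}$ in $\partial p$, for $p$ a generator on $\Sigma_{42}^j$ of suitable index, reduces to a mod $2$ count of the transverse intersection $\pi(U^h(p)) \cap S^h(q) \subset K_+^j$, where $U^h, S^h$ denote unstable and stable manifolds for $h_{42}^j$. Using $U^h(x_0^j) = \{(0,0)\}$, $U^h(x_1^j) = \{0\}\times S^1$, $U^h((x_1^j)') = S^1 \times \{0\}$, $S^h(a_1) = \{3/4\}$, and $S^h(a_0) = K_+^j \setminus \{3/4\}$, one reads off $\partial x_0^j = a_0$ and $\partial (x_1^j)' = a_1$, with all other differentials zero. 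Taking homology leaves exactly two surviving classes, represented by $x_1^j$ and $x_2^j$. The computation for $(N_{20}^j, f_{20}^j)$ is structurally identical but uses the projection $(x,y)\mapsto y$; the $(1/8,1/8)$-shift built into the critical points of $h_{20}^j$ swaps the roles of the two saddles, yielding $\partial y_0^j = b_0$, $\partial y_1^j = b_1$, and $\partial y_2^j = \partial (y_1^j)' = 0$, so the surviving classes are $y_2^j$ and $(y_1^j)'$.

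The main obstacle, as I see it, is setting up the cascade count in a way consistent with the conventions of the appendix (direction of flow, grading convention, and precise form of the transversality hypotheses); once that is pinned down, the intersections involved live in $K_\pm^j \cong S^1$ and can be read off directly from the explicit coordinates of the critical points of $h_{42}^j$ and $h_{20}^j$.
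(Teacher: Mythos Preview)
Your proposal is correct and follows essentially the same route as the paper: identify the moduli space of cascades with $K_+^j \times K_-^j$ via the evaluation map $ev_-$, observe that $ev_+$ becomes projection to the first (resp.\ second) factor for $N_{42}^j$ (resp.\ $N_{20}^j$), and read off the differentials from Figure~\ref{figureMorseSmaledata}. One small correction: the swap of the two saddles in the $N_{20}^j$ case is caused by the change of projection from $(x,y)\mapsto x$ to $(x,y)\mapsto y$, not by the $(1/8,1/8)$-shift (that shift plays no role here); with that adjustment your differentials match the paper's $\partial x_0^j=a_0^j$, $\partial(x_1^j)'=a_1^j$, $\partial y_0^j=b_0^j$, $\partial y_1^j=b_1^j$.
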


\begin{proof}
We focus on $(-f_{42}^j, \mu_{42}^j)$, first addressing regularity. The unstable and stable manifolds of $-f_{42}^j$ 
are both codimension 0 submanifolds of $N_{42}^j$, hence they intersect transversely.
This shows that  $(-f_{42}^j, \mu_{42}^j)$ is regular. 
Next we show that $((-f_{42}^j, \mu_{42}^j) ; (h_{42}^j,g_{42}^j))$ is regular. 
Let $\mathcal{M}$ denote 
$$\{ \gamma \in C^{\infty}(\bbR, N_{42}^j) : \gamma'(s) = 
- \nabla_{\mu_{42}^j}(-f_{42}^j)(\gamma(s)),
\gamma(-\infty) \in \Sigma_{42}^j, \gamma(\infty) \in K_+^j \},$$
and let $\mathcal{M}^* = \mathcal{M} /\bbR$,  $ev_{+}(\gamma) =\gamma(+\infty)$,  $ev_{-}(\gamma)= \gamma(-\infty)$.
Recall $\mu_{42}^j =  (\phi_+^j)_*(g_{S^1} \times g_{\bbR^2})$ on $N_{42}^j$,  
and note that $f_{42}^j$ is rotationally symmetric as well. 
Therefore,
for each $(e,f) \in K_+^j \times K_-^j$, there is exactly one $\gamma \in \mathcal{M}^*$
with $ev_{+}(\gamma) = e$ and $ev_{-}(\gamma)= \varphi_{42}^j(e,f) \in \Sigma_{42}^j$. This means we can identify $\mathcal{M}^*$ with $\bbR/\bbZ \times \bbR/\bbZ$,
where the evaluation maps 
\begin{gather*}
 ev_{+}: \mathcal{M}^* \into K_+^j, \, ev_{-}: \mathcal{M}^* \into \Sigma_{42}^j 
\text{ become, respectively,}\\
\pi_{+}: \bbR/\bbZ \times \bbR/\bbZ \into \bbR/\bbZ, \, \pi_{-}: \bbR/\bbZ \times \bbR/\bbZ\into \bbR/\bbZ \times \bbR/\bbZ,
\end{gather*}
where $\pi_+$ is projection to the first factor and $\pi_-$ is the identity map.
Then $((-f_{42}^j, \mu_{42}^j) ; (h_{42}^j,g_{42}^j))$ is regular because 
$\pi_-^{-1}(U(x)) \cap \pi_+^{-1}(S(a))$ is a transverse intersection  for all 
$x,a \in Crit(h_{42}^j)$, where
$x \in \bbR/\bbZ \times \bbR/\bbZ \cong \Sigma_{42}^j$
and $ a \in\bbR/\bbZ \cong K_+^j$. (See figure \ref{figureMorseSmaledata}.)
\\
\newline Because $-f_{42}$ has a maximum along $\Sigma_{42}^j$, 
the Morse Bott homology will be isomorphic to $H_*(N_{42}, \partial N_{42})$.
(In general, for a manifold $N$ with boundary, if the Morse-Bott function 
$f: N\into \bbR$ takes a minimum (resp. maximum) along the boundary, then the Morse-Bott homology of $f$ is isomorphic to $H_*(N)$ 
(resp. $H_*(N, \partial N)$).)
Since we have already analyzed $\mathcal{M}^*$ we may as well verify this explicitly.
For $x\in Crit(h_{42}^j|\Sigma_{42}^j)$, 
$$\partial x = \Sigma_a \# (\pi_-^{-1}(U(x)) \cap \pi_+^{-1}(S(a))) a,$$
where $x$, $a$ are as above and $\#(\pi_-^{-1}(U(x)) \cap \pi_+^{-1}(S(a)))$ is the number of zero dimensional components mod 2. Using this it is easy to see that
$$\partial x_2^j =0, \, \partial x_1^j = 0, \,\partial (x_1^j)' = a_1^j,  \,\partial x_0^j = a_0^j.$$
For $(H_{MB})_*(N_{20}^j, (f_{20}^j, \mu_{42}^j) ; (h_{20}^j,g_{20}^j), \partial)$
everything is the same except $ev_{+}$ is projection to the second factor. In that case we get 
$$\partial y_2^j =0, \, \partial (y_1^j)' = 0, \,\partial (y_1^j) = b_1^j,  \,\partial y_0^j = b_0^j.$$  \end{proof}

Now let $(\widetilde h_{40}^j, \widetilde g_{40}^j)$  \label{(widetilde h_{40},widetilde g_{40})(b)} be any Morse-Smale pair on $\Sigma_{40}^j$ such that $\widetilde h_{40}^j$ has critical points
$\tilde z_0^{\,j}, \tilde z_1^{\,j},  (\tilde z_1^{\,j})', \tilde z_2^{\,j}$ (where the index is given by the subscript) such that the closure of the unstable manifolds 
$$\widetilde \mu_j  = \overline{U((\tilde z_1^{\,j})')} \cong S^1 \text{ and } \widetilde \lambda_j = \overline{U(\tilde z_1^{\,j})}\cong S^1$$ 
have linking numbers in $L_0 \cong S^3$ respectively
$$\ell k(\widetilde \lambda_j, K_j) = 0 \text{ and } \ell k(\widetilde \mu_j, K_j) = 1.$$
To be concrete, we can take  $\widetilde \lambda_j = \phi_j(S^1 \times \{p\})$ for some $p \in \partial D^2$ and $\widetilde \mu_j = \phi_j(\{q\} \times \partial D^2)$ for some $q \in  S^1$.
Recall the singular homology $H_*(N_{40})$ is generated by the cycles  
$\Sigma_{40}^j$, $\widetilde \lambda_j$,  $\widetilde \mu_j$,  $\widetilde p_j$, $j=1, \ldots,k$,
where $ \widetilde p_j \in \Sigma^j_{42}$ is any point. The relations are as in \S\ref{sectionFlow}:
\begin{gather} [ \widetilde \lambda_j] = \Sigma_{i \neq j} \ell k(K_j,K_i) [ \widetilde\mu_i]; 
\, \Sigma_j [ \widetilde \Sigma_{40}^j] = 0; \,  [\widetilde p_1] = \ldots =  [ \widetilde p_k].
\label{H_*(N_{40}relations}
\end{gather}

\begin{lemma}\label{MorsehomologyII} 

The Morse-Bott function $f_{40} : N_{40} \into \bbR$ can be chosen  
(maintaining  same form as in \S \ref{graphs}) so that, for some metric $\mu_{40}$ on $N_{40}$, the Morse-Bott homology data   
$((-f_{40},  \mu_{40}); (\widetilde h_{40}, \widetilde g_{40}))$
is regular in the sense of  \S \ref{MorseBotthomology}. Moreover
the Morse-Bott homology
$$(H_{MB})_*((N_{40},-f_{40},  \mu_{40}) , (\widetilde h_{40}, \widetilde g_{40}), \partial )$$
is generated by $\tilde z_0^{\, j}, \tilde z_1^{\, j},  ( \tilde z_1^{\, j})',  \tilde z_2^{\, j}$,  $j=1, \ldots,k$ and the relations are 
$$ [ \tilde z_1^{\, j}] = \Sigma_{i \neq j} \ell k(K_j,K_i) 
[(\tilde z_1^{\, i})'];\,\, \Sigma_j [ \tilde z_2^{\, j}] = 0; \, \,
  [ \tilde z_0^{\, 1}] = \ldots =  [ \tilde z_0^{\, k}].$$
\end{lemma}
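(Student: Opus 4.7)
The plan is to exploit the standard identification between Morse-Bott homology and singular homology, which reduces the lemma to identifying chain-level classes with explicit singular cycles.

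First, I would choose $f_{40}$ extending the graph form forced by \S \ref{graphs} near each $\Sigma_{40}^j$ to a Morse-Bott function on all of $N_{40}$ whose only non-isolated critical submanifold is $\partial N_{40} = \sqcup_j \Sigma_{40}^j$ (where $-f_{40}$ attains a nondegenerate Morse-Bott minimum), and whose remaining critical points in the interior are Morse. Such an extension exists because $f_{40}$ is only constrained in a collar of $\partial N_{40}$. I would then choose $\mu_{40}$ to be a product with respect to the obvious splitting on this collar, and a generic Morse-Smale metric for the isolated interior critical points. Regularity of $(-f_{40}, \mu_{40})$ in the sense of \S \ref{MorseBott} then follows from a standard transversality argument, and the coupled regularity with $(\widetilde h_{40}, \widetilde g_{40})$ follows exactly as in the proof of Lemma \ref{MorsehomologyI}: the evaluation map from the relevant trajectory space to $\Sigma_{40}^j$ is a submersion by the product structure of $\mu_{40}$ near the boundary, so Morse-Smale-ness of $(\widetilde h_{40}, \widetilde g_{40})$ on each torus is inherited by the coupled moduli spaces.

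With regularity established, the standard theorem recalled in \S \ref{MorseBott} gives a canonical isomorphism $(H_{MB})_*(N_{40}, -f_{40}, \mu_{40}; \widetilde h_{40}, \widetilde g_{40}, \partial) \cong H_*(N_{40})$. Under this isomorphism, each chain-level generator $\tilde z_k^{\,j}$ is represented by the closure of its unstable manifold inside $N_{40}$ under the combined gradient flow. By the choices recorded in the hypothesis of the lemma, these closures are
\begin{gather*}
\overline{U(\tilde z_2^{\,j})} = \Sigma_{40}^j, \quad \overline{U(\tilde z_1^{\,j})} = \widetilde \lambda_j, \quad \overline{U((\tilde z_1^{\,j})')} = \widetilde \mu_j, \quad \overline{U(\tilde z_0^{\,j})} = \{\tilde p_j\}
\end{gather*}
for some point $\tilde p_j \in \Sigma_{40}^j$. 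Therefore the classes $[\tilde z_2^{\,j}], [\tilde z_1^{\,j}], [(\tilde z_1^{\,j})'], [\tilde z_0^{\,j}]$ correspond in $H_*(N_{40})$ precisely to the singular cycles $[\Sigma_{40}^j], [\widetilde \lambda_j], [\widetilde \mu_j], [\tilde p_j]$, which (as recalled just before the lemma) generate $H_*(N_{40})$ subject to the relations (\ref{H_*(N_{40}relations}). Pulling these relations back through the isomorphism gives the three claimed relations, and in particular shows that the isolated interior critical points of $-f_{40}$ contribute no additional generators in homology since every class in $H_*(N_{40})$ is already spanned by the $[\tilde z_k^{\,j}]$.

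The main obstacle will be the verification of regularity of the coupled Morse-Bott data: because $(-f_{40}, \mu_{40})$ has both a positive-dimensional critical set (the tori) and isolated critical points, the cascade-type moduli spaces appearing in Morse-Bott homology must be shown to be cut out transversely. This should reduce, via the product structure of $\mu_{40}$ near $\partial N_{40}$ and the identification of evaluation maps with projections (as in the proof of Lemma \ref{MorsehomologyI}), to the Morse-Smale property of $(\widetilde h_{40}, \widetilde g_{40})$ on $\Sigma_{40}^j$ and the Morse-Smale property of $(-f_{40}, \mu_{40})$ at its isolated critical points, both of which are arranged by construction.
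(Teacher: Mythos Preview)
Your overall strategy---reduce to the isomorphism $(H_{MB})_* \cong H_*(N_{40})$ and then read off the generators and relations from the known singular homology---is the same as the paper's. However, your regularity argument has a genuine gap. You claim that the evaluation map $ev_+: \mathcal{M}^*(p_i, \Sigma_{40}^j) \to \Sigma_{40}^j$ is a submersion ``by the product structure of $\mu_{40}$ near the boundary.'' This is false for the low-index isolated critical points: since $-f_{40}$ has a Morse-Bott minimum along $\Sigma_{40}^j$, the stable manifold $S(\Sigma_{40}^j)$ is open, so $\mathcal{M}^*(p_i,\Sigma_{40}^j)$ has dimension $\lambda_i - 1$ where $\lambda_i$ is the Morse index of $p_i$. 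For $\lambda_i = 1$ or $2$ the target torus is $2$-dimensional and $ev_+$ cannot be a submersion. The product metric on the collar only tells you that $ev_+$ factors through the $\Sigma$-projection of $U(p_i) \cap (\Sigma_{40}^j \times \{t_0\})$; it does not control the dimension of that intersection. The analogy with Lemma~\ref{MorsehomologyI} breaks down precisely here: there the trajectory space was parametrized by the whole torus and $ev_-$ was literally the identity, a situation with no analogue once isolated interior critical points are present.

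The fix is not to claim submersion but to argue that for generic $\mu_{40}$ the image $ev_+(\mathcal{M}^*(p_i,\Sigma_{40}^j)) \subset \Sigma_{40}^j$ is transverse to each $S(\tilde z)$. The paper does exactly this, but in a more concrete form: it builds $(-f_{40},\mu_{40})$ from an explicit handle decomposition of $N_{40}$ (starting from $\Sigma \times [0,1]$ and attaching handles dictated by the link diagram of $\cup_j K_j$), and then achieves regularity by isotoping the attaching spheres, which amounts to perturbing $\mu_{40}$. Beyond fixing the transversality issue, this explicit decomposition is not idle: it is invoked again in the proof of Lemma~\ref{fixgenerators} to show that every isolated critical point $p$ of $f_{40}$ satisfies $\partial p \neq 0$ (each $1$-handle connects two tori, each $2$-handle runs over exactly two $1$-handles, etc.). Your abstract ``generic $f_{40}$ with Morse interior'' would prove the present lemma once the regularity gap is repaired, but would leave you without the chain-level information needed later.
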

\begin{proof}
We will describe a suitable handle decomposition of $N_{40}$ and then take  $(-f_{40}, \mu_{40})$ so that it realizes this handle decomposition. Regularity
will easily follow by suitably adjusting the attaching maps by some isotopy, 
which corresponds to adjusting  $\mu_{40}$ by a small isotopy.
\\
\newline Assume for a moment that this has been done. 
Then, since $-f_{40}$ takes a 
minimum along $\cup_j \Sigma_{40}^j \cong \partial N_{40}$, the Morse-Bott homology is isomorphic to the singular homology $H_*(N_{40})$ 
(as in the proof of lemma \ref{MorsehomologyI}), where the generators $\widetilde \Sigma_{40}^j$, $\widetilde \lambda_j$, $\widetilde \mu_j$, $\tilde p_j$ correspond respectively to $\tilde z_2^j$, $\tilde z_1^j$,  $( \tilde z_1^j)',$  $\tilde z_0^j \in Crit(\tilde h_{40}| \widetilde \Sigma_{40}^j),$  $j=1, \ldots,k$. Thus we have the expected generators and relations from (\ref{H_*(N_{40}relations}). (One can also verify this explicitly by inspecting the handle-decomposition 
we describe below; compare with the end of the proof of lemma \ref{fixgenerators}).
\\
\newline We now explain the handle-decomposition of $N_{40}$. Since $-f_{40}$ is Morse-Bott near its minimum value our handle decomposition  
starts with $\Sigma \times [0,1]$, where $\Sigma = \cup_j \widetilde \Sigma_{40}^j$. Here, the corresponding Morse-Bott function would depend only on the $[0,1]$ factor, and have a single critical point at 1/2, which is a minimum. From now on we attach standard handles in the usual way to $\Sigma \times \{1\}$ because $-f_{40}$ has only isolated critical points away from $\Sigma$.
\\
\newline We do a variation on  the standard Heegaard diagram representation of a link complement.  Consider the link diagram of $\cup_j K_j \subset L_0$ with over-crossings and under-crossings. We can visualize $\Sigma \times \{1\}$ as the boundary of a tubular neighborhood of
the link diagram of $\cup_j K_j \subset L_0 = S^3$. Each crossing gives rise to 
two points, one on the bottom of the over-crossing and one on the top of the under-crossing; these two points form an $S^0$ and we attach a 1-handle to this 
$S^0$ at each crossing. We denote the result by $X_1$.
\\
\newline Now consider the singularized link diagram $D\subset \bbR^2$, where each crossing no longer has over/under information recorded. For each bounded component $U\cong D^2$ of $\bbR^2 \setminus D$ we attach a 
2-handle to $X_1$ whose core corresponds to the disk $U$; the attaching circle will run once along a 1-handle each time it encounters a crossing. Let $X_2$ denote result of attaching all these 2-handles; it has one $S^2$ boundary component for each component of $D$, and the other boundary components are tori, one for each link component $K_j$.
\\
\newline To finish, if $X_2$ 
has several components $C_1, \ldots, C_l$ then  we attach a 1-handle to each consecutive pair $(C_1,C_2),(C_2,C_3), \ldots, (C_{l-1}, C_l)$, where the attaching regions lie in 
$\Sigma\times \{1\}$. The result is a new space $X_2'$ which is connected. 
The boundary components of $X_2'$ consist of the the same tori as before, plus one $S^2$ component. We attach a 3-handle to this $S^2$ and the result is diffeomorphic to $N_{40}$.
\\
\newline Now $(-f_{40}, \mu_{40})$ is regular if and only if 
$U(C) \cap S(C\,')$ is transverse for every pair of critical components $C,C\,'$.
This is in turn means that the attaching sphere 
of each handle should be transverse to the descending spheres of all handles up to that point. (This does not constrain how the attaching spheres meet the tori.)
Also, $((-f_{40}, \mu_{40}),(\widetilde h_{40}, \widetilde g_{40}))$ is regular if and only if
the attaching sphere of each handle meets $U(\tilde z) \subset \Sigma \times \{1\}$
transversely in $\Sigma \times \{1\}$ for every $\tilde z \in Crit(\widetilde h_{40})$. 
Both of these conditions can be arranged by suitably isotoping the attaching spheres of the handles, and this corresponds to adjusting 
$\mu_{40}$ by a small isotopy.
\end{proof}

\subsection{Identifying the Floer homology  groups with the Morse-Bott homology groups} \label{Floergroups}
 
In this section we identify  the Floer homology  groups we are interested in with the Morse-Bott homology groups from the last section (in fact the chain complexes are isomorphic). 
We use the results of \S \ref{localstrips}; there we explain
 how to replace   
$V_4$ and $V_0$ by new exact isotopic versions obtained, roughly speaking, by 
 replacing $d f_{40}$, $d f_{42}^j$, $df_{20}^j$ by 
by $\frac{1}{n} d f_{40}$, $\frac{1}{n} df_{42}^j$, $\frac{1}{n}df_{20}^j$, for some fixed $n\geq 1$ sufficiently large, and patching those back into $V_4$ and 
$V_0$ using a cut-off function. Let us assume that this replacement has been made, but we keep the same notation. (Note that the re-scaling $f \leadsto \frac{1}{n}f$ does not affect the arguments in \S\ref{sectionHF}.)
Then the main result of \S \ref{localstrips} is given by the following lemma; it corresponds to Proposition \ref{correspondence}.

\begin{lemma}\label{strips=gradientlines} 

There exist almost 
complex structures $J_{40}$, $J_{42}^j$, $J_{20}^j$ $\in$ $\mathcal{J}([0,1],M,I)$
such that $(J_{42}^j; h_{42}^j,g_{42}^j)$,  $(J_{20}^j; h_{20}^j,g_{20}^j)$,   
$(J_{40}; (\widetilde h_{40}, \widetilde g_{40}))$ 
are each regular as Morse-Bott Floer data (see \S \ref{specialcaseI}).
Moreover, 
for every $p$, $q$ $\in$ $Crit(f_{40})$ $=$ $V_4 \cap V_0$, 
$\mathcal{M}(V_4,V_0, J_{40}; p,q)$ is in one-one correspondence with
$$ \{ \gamma\in C^{\infty}(\bbR, N_{40}) :\gamma'(s) = 
-\nabla_{\mu_{40}} (-f_{40})(\gamma(s)), \gamma(-\infty) = p, \gamma(\infty) = q \}$$
Similarly,
for every $p, q \in Crit(f_{42}^j) = V_4 \cap V_2^j$,
$\mathcal{M}(V_4, V_2^j, J_{42}^j; p,q)$ is in one-one correspondence with
$$\{ \gamma\in C^{\infty}(\bbR, N_{42}^j) :\gamma'(s) = -\nabla_{\mu_{42}^j}(- f_{42}^j)(\gamma(s)), \gamma(-\infty) = p, \gamma(\infty) = q \},\text{ and }$$
for every $p, q \in Crit(f_{20}^j) = V_2^j\cap V_0$, 
$\mathcal{M}(V_2^j, V_0, J_{20}^j; p,q)$ is in one-one correspondence with
$$\{ \gamma\in C^{\infty}(\bbR, N_{20}^j) :\gamma'(s) = -(\nabla_{\mu_{20}^j} f_{20}^j)(\gamma(s)), \gamma(-\infty) = p, \gamma(\infty) = q \}.$$
\end{lemma}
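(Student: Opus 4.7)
The plan is to deduce this lemma directly from Proposition \ref{correspondence} in \S\ref{localstrips}, which packages Floer's classical correspondence between strips in a cotangent bundle and gradient lines in the base, adapted to our Morse-Bott setting. The three assertions are entirely parallel, so I describe the plan for the pair $(V_4, V_2^j)$ and indicate how the other two pairs follow by the same recipe.

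First, I would exploit the structural setup from \S\ref{graphs}: inside the Weinstein neighborhood $D(T^*N_{42}^j)$, the submanifold $V_4$ coincides with the graph of $\tfrac{1}{n}df_{42}^j$ over the zero section $N_{42}^j\subset V_2^j$, and the intersection $V_4\cap V_2^j$ is exactly $\mathrm{Crit}(f_{42}^j)$. Thus locally we are in the cotangent-bundle model to which Proposition \ref{correspondence} applies. The remaining work is to localize the Floer strips in $M$ inside the Weinstein neighborhood and to select the almost complex structure $J_{42}^j$ so that Proposition \ref{correspondence} can be invoked verbatim.

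Next I would carry out the localization by a monotonicity argument (this is the step I expect to be the main obstacle, since it couples the geometry of the fiber $M$ with the rescaling parameter $n$). Fix any $J\in\mathcal{J}([0,1],M,I)$ that agrees with the standard cotangent structure on $D(T^*N_{42}^j)$. For $n$ large, the Lagrangian $V_4$ is $C^1$-close to $V_2^j$ inside $D(T^*N_{42}^j)$, so the Hofer-type action difference between any two intersection points in $V_4\cap V_2^j$ is $O(1/n)$, forcing any finite-energy Floer strip between them to have energy $O(1/n)$. The complement $M\setminus D(T^*N_{42}^j)$ carries a uniform lower bound on the energy of any $J$-holomorphic strip that exits and re-enters (by the standard monotonicity lemma applied near $\partial D(T^*N_{42}^j)$, together with the fact that $V_4\cap V_2^j$ lies in the interior). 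Taking $n$ large enough that the $O(1/n)$ bound falls below this threshold rules out any strip from leaving $D(T^*N_{42}^j)$. I would fix one such $n$ once and for all for all three pairs simultaneously.

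With localization in hand, Proposition \ref{correspondence} provides a $t$-dependent almost complex structure $J_{42}^j$ on $D(T^*N_{42}^j)$ (extended arbitrarily to an element of $\mathcal{J}([0,1],M,I)$) such that the map sending a $J_{42}^j$-holomorphic strip to its projection onto $N_{42}^j$ is a bijection with negative gradient trajectories of $-f_{42}^j$ with respect to $\mu_{42}^j$; the $\tfrac{1}{n}$ factor only rescales the time parameter and is absorbed into the reparametrization. The linearization of this bijection identifies the linearized Cauchy--Riemann operator with the linearized gradient flow operator, so regularity of $(J_{42}^j;h_{42}^j,g_{42}^j)$ as Morse-Bott Floer data follows from the Morse-Bott regularity of $((-f_{42}^j,\mu_{42}^j);(h_{42}^j,g_{42}^j))$ established in Lemma \ref{MorsehomologyI}. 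Repeating the identical three-step argument for $(V_2^j,V_0)$ using Lemma \ref{MorsehomologyI}, and for $(V_4,V_0)$ using Lemma \ref{MorsehomologyII}, yields the remaining two correspondences and the regularity of $J_{20}^j$ and $J_{40}$.
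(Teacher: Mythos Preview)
Your plan is essentially what the paper does: this lemma is stated as a restatement of Proposition~\ref{correspondence}, so invoking it directly (together with Lemmas~\ref{MorsehomologyI} and~\ref{MorsehomologyII} for regularity) is exactly right, and your identification of the linearized operators to transfer regularity is also the paper's mechanism.

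The one genuine difference worth flagging is the localization step. You propose a monotonicity argument: strips have energy $O(1/n)$, and any strip leaving $D(T^*N_{42}^j)$ would have energy bounded below by a fixed constant. The paper instead proves localization (Lemma~\ref{stripcontainment}, which feeds into Proposition~\ref{correspondence}) by a Gromov compactness argument: assume strips $u_n$ escape, pass to a subsequential limit with $H_n\to 0$, obtain a genuine $J$-holomorphic strip with zero energy, hence constant, and derive a contradiction from the boundary conditions. Both approaches are valid; the paper's version has the mild advantage that it works for an arbitrary convergent sequence $J_n\to J$ without needing $J$ to be fixed in a collar of $\partial D(T^*N_{42}^j)$ (which monotonicity requires), while your version gives an explicit effective bound on $n$. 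Also note that Proposition~\ref{correspondence} already packages the localization, so once you invoke it you do not need to redo that step; your paragraph on monotonicity is really a sketch of an alternative proof of Lemma~\ref{stripcontainment} rather than part of the proof of the present lemma.
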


As an immediate consequence we have the following generators and relations for
 the Floer homology groups.

\begin{prop}\label{generators} 
$$HF_*((V_4, V_2^j, J_{42});(h_{42}^j,g_{42}^j))
\text{ is freely generated by } x_2^j, x_1^j \in \Sigma_{42}^j;$$
$$HF_*((V_2^j,V_0,  J_{20});(h_{20}^j,g_{20}^j))
\text{ is freely  generated by } y_2^j, (y_1^j)' \in \Sigma_{20}^j;$$
and $HF_*((V_4, V_0, J_{40}); (\widetilde h_{40}, \widetilde g_{40}))$
is  generated by 
$\tilde z_0^{\,j},$ $\tilde z_1^{\,j},$  $( \tilde z_1^{\,j})',$  
$\tilde z_2^{\,j} \in \Sigma_{40}^j$, $j=1, \ldots,k$, with the relations 
$$ [\tilde z_1^{\,j}] = \Sigma_{i \neq j} \ell k(K_j,K_i) [(\tilde z_1^{\,i})'], \phantom{bb} \Sigma_j [\tilde z_2^{\,j}] = 0, \, \,   [\tilde z_0^1] = \ldots =  [ \tilde z_0^k].$$
\end{prop}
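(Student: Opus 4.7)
The plan is to deduce this directly from Lemma \ref{strips=gradientlines} together with Lemmas \ref{MorsehomologyI} and \ref{MorsehomologyII}. First, by Lemma \ref{strips=gradientlines}, the data $(J_{42}^j; h_{42}^j, g_{42}^j)$, $(J_{20}^j; h_{20}^j, g_{20}^j)$, and $(J_{40}; \widetilde h_{40}, \widetilde g_{40})$ are regular as Morse-Bott Floer data in the sense of \S \ref{specialcaseI}, so the corresponding Floer complexes are well-defined and the generators can be taken to be the critical points of $h_{42}^j$, $h_{20}^j$, $\widetilde h_{40}$ on the respective clean intersection loci. In particular the underlying $\bbZ/2$-vector spaces of the Floer chain complexes agree, as vector spaces, with the Morse-Bott chain complexes computed in Lemmas \ref{MorsehomologyI} and \ref{MorsehomologyII}.

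The next step is to identify the differentials. Recall that in the Morse-Bott Floer complex, the matrix coefficient between two generators $p, q$ is a count of pairs consisting of a gradient flow line of $h$ on the unstable manifold side, a $J$-holomorphic strip in $\mathcal M(V_i, V_j, J; p', q')$ connecting intersection components, and a gradient flow line of $h$ on the stable manifold side (the cascades-type description from \S \ref{MorseBotthomology}). By Lemma \ref{strips=gradientlines}, each moduli space $\mathcal M(V_4, V_2^j, J_{42}^j; p, q)$, $\mathcal M(V_2^j, V_0, J_{20}^j; p, q)$, $\mathcal M(V_4, V_0, J_{40}; p, q)$ is in bijection with the corresponding space of negative gradient trajectories of $-f_{42}^j$, $f_{20}^j$, $-f_{40}$ on $N_{42}^j$, $N_{20}^j$, $N_{40}$, respectively. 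Thus the cascades defining the Floer differential on the left-hand side are in bijection with the cascades defining the Morse-Bott homology differential on the right-hand side, and the two chain complexes are literally isomorphic.

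Therefore one has isomorphisms of chain complexes
\begin{gather*}
CF_*((V_4, V_2^j, J_{42}^j); (h_{42}^j, g_{42}^j)) \;\cong\; CM^{MB}_*((N_{42}^j, -f_{42}^j, \mu_{42}^j); (h_{42}^j, g_{42}^j)), \\
CF_*((V_2^j, V_0, J_{20}^j); (h_{20}^j, g_{20}^j)) \;\cong\; CM^{MB}_*((N_{20}^j, f_{20}^j, \mu_{20}^j); (h_{20}^j, g_{20}^j)), \\
CF_*((V_4, V_0, J_{40}); (\widetilde h_{40}, \widetilde g_{40})) \;\cong\; CM^{MB}_*((N_{40}, -f_{40}, \mu_{40}); (\widetilde h_{40}, \widetilde g_{40})),
\end{gather*}
and passing to homology, the claimed generators and relations are exactly the content of Lemmas \ref{MorsehomologyI} and \ref{MorsehomologyII}.

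The main obstacle is really Lemma \ref{strips=gradientlines} itself, which is proved in \S \ref{localstrips}: one must show that holomorphic strips with boundary on these rotationally symmetric Lagrangians are trapped inside the Weinstein neighborhoods $D(T^*N_{42}^j)$, $D(T^*N_{20}^j)$, $D(T^*N_{40})$ for large $n$ (an energy/action argument using the rescaling $f \leadsto \tfrac{1}{n} f$), and then apply Floer's theorem identifying strips in a cotangent bundle with gradient flow lines of the defining function. Once Lemma \ref{strips=gradientlines} is in hand, Proposition \ref{generators} is a formal consequence via the chain-level isomorphism above.
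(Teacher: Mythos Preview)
Your proof is correct and follows essentially the same approach as the paper: use Lemma \ref{strips=gradientlines} to identify the Morse-Bott Floer chain complexes with the Morse-Bott homology complexes at the chain level, then invoke Lemmas \ref{MorsehomologyI} and \ref{MorsehomologyII} for the generators and relations. The paper's version is simply more terse, omitting your explanatory remarks about cascades and the final paragraph on the ``main obstacle.''
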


\begin{proof} As ungraded complexes with $\bbZ/2$ coefficients, we have isomorphisms 
$$(CF((V_4, V_2^j, J_{42}),(h_{42}^j,g_{42}^j),\partial) \cong 
(C_{MB})((N_{42}^j,-f_{42}^j, \mu_{42}^j), (h_{42}^j,g_{42}^j),\partial ).$$
$$(CF((V_2^j,V_0,  J_{20}),(h_{20}^j,g_{20}^j),\partial) \cong (C_{MB})((N_{20}^j, f_{20}^j, \mu_{20}^j), (h_{20}^j,g_{20}^j), \partial)$$
$$(CF((V_4, V_0, J_{40}), (\widetilde h_{40}, \widetilde g_{40}),\partial) \cong (C_{MB})((N_{40},-f_{40}, \mu_{40}), (\widetilde h_{40},\widetilde g_{40}),\partial).$$
All three isomorphisms are given by the identity map (for example from  $V_4 \cap V_0$ to $Crit(f_{40})$). The identification of these complexes is immediate from the last lemma, and the definition of these complexes (see  \S \ref{MorseBott}).  Propositions \ref{MorsehomologyI} and \ref{MorsehomologyII} then imply the stated generators and relations.\end{proof}
Note that the generators and relations in the last proposition match up 
perfectly with the ones for the flow category (\ref{homologyrelations})
in \S \ref{sectionFlow}.

\section{The monotonicity lemma; constructing  $\widetilde V_4$, $\widetilde V_2^j$, $\widetilde V_0$} 
\label{sectionmonotonicity}

In this section we carry out the details of the 
arguments involving the monotonicity lemma (sketched in 
\S \ref{secondtriangle}). 
In the process we replace $V_0$, $V_2^j$, $V_4$ by $\widetilde V_0$,
 $\widetilde V_2^j$, $\widetilde V_4$, which correspond to version IV of the vanishing spheres  in \S \ref{secondtriangle}.
\\
\newline Let $\Delta$ denote any one of the four triangles bounded by 
$\Gamma_4$, $\Gamma_2$ and  $\Gamma_0$ in figure 
\ref{CorrectionGammaIIIbig(Floer)}. Let $\widetilde U$ be a small connected
compact neighborhood of $\Delta$ in $\bbR/2\pi\bbZ \times [-r,r]$, where $\widetilde U$ is so small that it 
does not contain the points corresponding to $K_{\pm}^j$ 
(in figure \ref{figureB}, $\widetilde U$ is bounded by the 
largest triangle). Let $\widetilde B \subset \widetilde U$ be a compact  annular subregion which surounds $\Delta$, but does not touch it, and let  $\widetilde B_0 \subset \Int \widetilde B$ be another smaller compact annular region surounding $\Delta$ (see figure   \ref{figureB}, where  $\widetilde B$ is bounded by the 
two black triangles, and  $\widetilde B_0$  is bounded by the 
two brown triangles inside $B$).
\begin{figure}
\begin{center}
\includegraphics[width=4in]{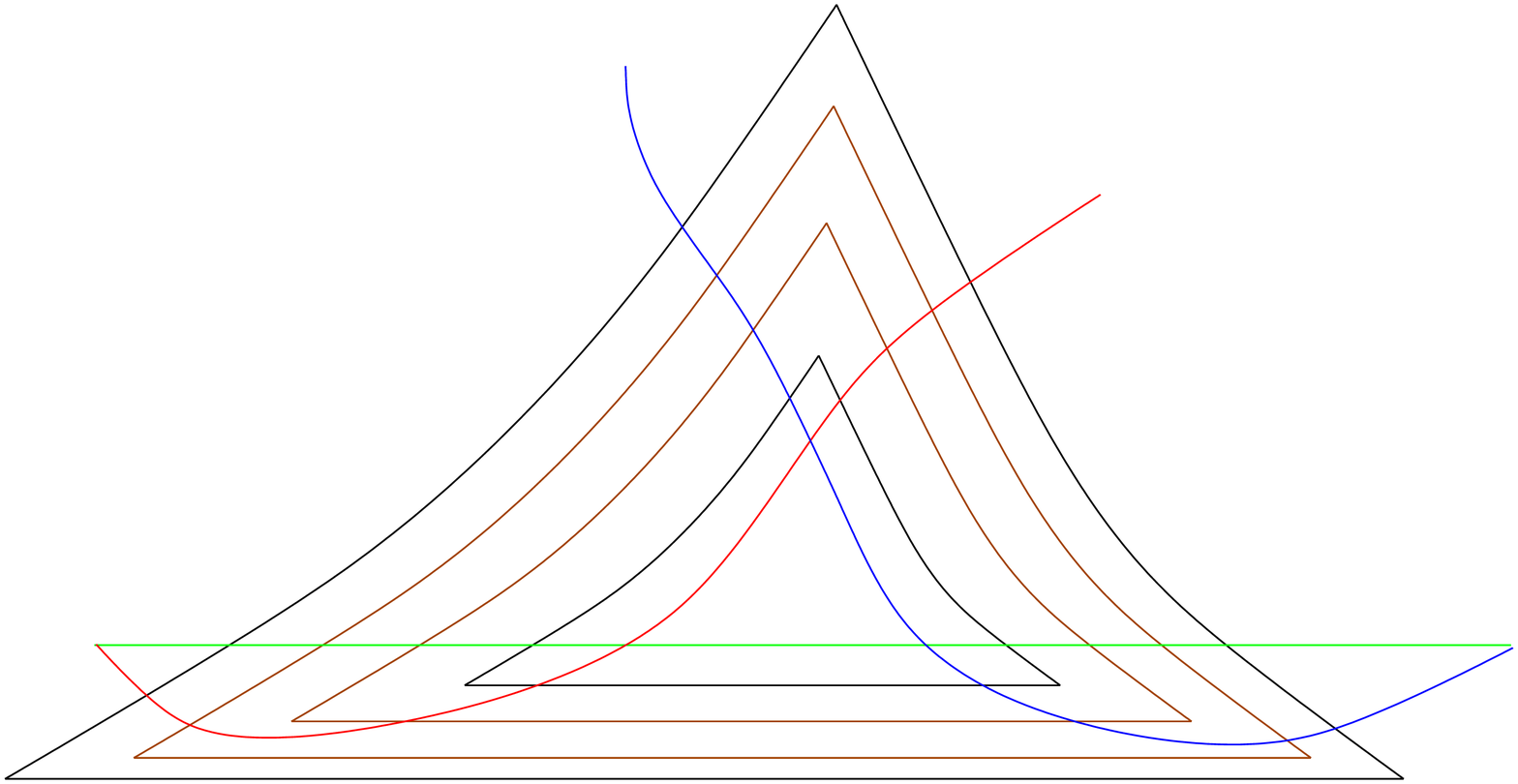} 
\caption{The regions $\widetilde U, \widetilde B, \widetilde B_0$  surounding $\Delta$ in  $\bbR/2\pi\bbZ \times [-r,r]$. }
\label{figureB}
\end{center}
\end{figure}
Now let $T^j = \cup_{e,f} \sigma_{ef}^j(\Delta)$ 
(this is the union of all our expected holomorphic triangles)
and let $U^j \subset D(T^*L_2^j)$ be a  connected compact neighborhood of $T^j$ 
such that $U^j \cap D(T^*K_{ef}^j) = \sigma_{ef}^j(\widetilde U)$ 
for every $e$, $f$.  Similarly, let $B_0^j \subset B^j \subset U^j$
be annular regions surounding $T^j$ 
corresponding to $\widetilde B_0 \subset\widetilde  B \subset\widetilde  U$
in the sense that
 $B^j \cap D(T^*K_{ef}^j) = \sigma_{ef}^j(\widetilde B)$
and $B^j_0 \cap D(T^*K_{ef}^j) = \sigma_{ef}^j(\widetilde B_0)$ 
for every $e$, $f$. 
More precisely, we may suppose that $B_0^j \subset B^j \subset U^j$ are  of the form 
$$U^j = \{x \in T^*L_2^j : dist(x,T^j) \leq \eta\},$$
$$B^j = \{x \in T^*L_2^j : \lambda \leq dist(x,T^j) \leq  \eta\}, \, \, B^j_0 = \{x \in T^*L_2^j : \lambda_0 \leq dist(x,T^j) \leq  \eta_0\},$$
where $0<\lambda< \lambda_0 < \eta_0 < \eta$. 
Then $T^*L_2^j \setminus B^j$ has exactly two components, one of 
which contains $T^j$, and similarly for $B_0^j$.
\\
\newline 
Let $g$ denote  the metric on $\cup_j \, D(T^*L_2^j)$ determined by 
$\omega|(\cup_j \,D(T^*L_2^j))$ 
and $J_\bbC$.
Fix $\mu>0$ such that for  every $y \in B_0^j$ we have
\begin{gather} \label{B}
D_\mu^{\,g}(y) = \{x \in D(T^*L_2^j) : dist_g(x,y) \leq \mu \} \subset B^j.
\end{gather}
Also assume that $\mu$ is small enough that 
$$exp_y^{\,g} : D_\mu(T_y(T^*L_2^j), g) \into  D_\mu^{g}(y)$$
is a diffeomorphism for any $y\in B_0^j$.
We have the following relative form of the monotonicity lemma (see \cite{FSS} lemma 12). (The essential ingredient is an isoperimetric inequality, with respect to the metric $g$, applied inside small balls $D_\mu^{g}(y)$.)

\begin{lemma}\label{monotonicity} 

Fix $j=1,\ldots, k$.
Suppose $L$ is a (not necessarily connected) Lagrangian submanifold of $B^j$ with 
$\partial L \subset \partial B^j$. Set 
\begin{gather*}
d = (1/2)min\{dist(C,C'): C,C'\text{ are distinct components of L}\}.
\end{gather*}
There are constants $0< \gamma, 0< \rho < min(\mu,d)$, 
 depending on $L$, $J_{\bbC}|(\cup_j B^j)$, and $\omega|(\cup_j B^j)$  
such that the following holds. 
Let $\Sigma$ be a compact 
Riemann surface with (nonempty) boundary with corners,
and let $u: \Sigma \into B^j$ be a nonconstant $J_{\bbC}-$holomorphic curve such that there is some $y \in B_0^j \cap u(\Sigma)$, and $u(\partial \Sigma) \subset \partial D_{s}^g(y) \cup L$ for some $0< s \leq \rho$. Then
$$\int_\Sigma u^*\omega \geq \gamma s^2. \, \square$$
\end{lemma}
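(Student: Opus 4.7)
The plan is to adapt the standard monotonicity argument for $J_\bbC$-holomorphic curves to the setting with partial Lagrangian boundary by combining uniform Darboux--Weinstein coordinates near $L$ with a Schwarz reflection trick. First I would choose $\rho \leq \tfrac{1}{2}\min(\mu, d)$ small enough that, for every $y' \in L \cap \overline{B^j}$, there is a Darboux chart on the $g$-geodesic ball of radius $\rho$ about $y'$ in which the unique component of $L$ through $y'$ appears as a linear Lagrangian subspace of $\bbR^{2n}$, and in which $\omega$ and $J_\bbC$ are $C^k$-close (with small, quantitative constants) to their standard Euclidean counterparts. The existence of a uniform such $\rho$ uses compactness of $L \cap \overline{B^j}$ and $\overline{B^j}$; the condition $\rho < d/2$ ensures that $D_\rho^g(y)$ meets at most one component of $L$ for each $y \in B_0^j$, while $\rho \leq \mu$ guarantees $D_\rho^g(y) \subset B^j$ by (\ref{B}).

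Given $y \in B_0^j \cap u(\Sigma)$ and $s \leq \rho$, set $\Sigma_r = u^{-1}(D_r^g(y))$ and
\begin{equation*}
a(r) = \int_{\Sigma_r} u^*\omega, \qquad 0 < r \leq s.
\end{equation*}
By Sard's theorem $\Sigma_r$ is a compact Riemann surface with piecewise smooth boundary for a.e.\ $r$, with $\partial \Sigma_r = \alpha_r \cup \beta_r$, where $u(\alpha_r) \subset \partial D_r^g(y)$ and $u(\beta_r) \subset L$. The goal is the differential inequality $r\,a'(r) \geq 2\,a(r)$ for a.e.\ $r \in (0,s]$, with constants independent of $y$, $u$, and $s$. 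This integrates to $(a(r)/r^2)' \geq 0$; combined with a lower bound $\liminf_{r\to 0^+} a(r)/r^2 \geq \gamma$ coming from the local holomorphic germ of $u$ at $y$ (a nontrivial $J_\bbC$-holomorphic map $z \mapsto z^k + O(z^{k+1})$ in local coordinates, and its reflection double if $y \in L$), this yields $a(s) \geq \gamma s^2$ with $\gamma$ uniform over $y$, $s$, and all admissible $u$.

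The differential inequality follows from two ingredients. The first is the coarea identity $a'(r) \geq \tfrac{1}{2}\,\mathrm{length}(u(\alpha_r))$, immediate from $|du|^2\,\mathrm{vol}_\Sigma = 2\,u^*\omega$ for $J_\bbC$-holomorphic $u$.  The second, and the main obstacle, is an isoperimetric bound of the form $a(r) \leq \tfrac{r}{2}\,\mathrm{length}(u(\alpha_r))$ for the image of the minimal surface $u|_{\Sigma_r}$; the difficulty is that the standard isoperimetric inequality assumes a closed boundary, whereas the free piece $\beta_r \subset \partial \Sigma_r$ lies on $L$ and must be eliminated. I would handle this by working inside the Darboux chart around the (unique) nearby Lagrangian component and invoking the Schwarz reflection principle: reflection across a linear Lagrangian is antiholomorphic for the standard complex structure, so $u|_{\Sigma_r}$ doubles to a holomorphic map $\widetilde u$ on the Schottky double of $\Sigma_r$, still mapping into the chart, with area $2 a(r)$ and boundary equal to $u(\alpha_r)$ together with its reflected copy, of at most twice the length. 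The classical isoperimetric inequality for holomorphic curves (as in \cite{FSS} and the references there) applied to $\widetilde u$ then gives the required bound, with the constant's deviation from the Euclidean one absorbed into the $C^k$-smallness of the chart data. When $y$ is far from $L$, i.e.\ $D_\rho^g(y) \cap L = \emptyset$, $\beta_r$ is empty and one reduces directly to the classical monotonicity. The resulting $\rho$ and $\gamma$ depend only on $L$, $J_\bbC|_{\cup_j B^j}$, and $\omega|_{\cup_j B^j}$, as required.
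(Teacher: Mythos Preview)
The paper does not actually prove this lemma: it cites \cite{FSS}, Lemma~12, notes that the essential ingredient is an isoperimetric inequality in small balls $D_\mu^g(y)$, and then explains in a remark how to pass from the connected case treated there to the disconnected case by taking $\rho$ less than half the minimum distance between components and applying the connected version component by component. Your proposal is a from-scratch sketch of exactly the argument one finds in such references (coarea identity plus isoperimetric inequality, with a reflection/doubling trick to eliminate the free Lagrangian boundary), and your handling of multiple components via $\rho < d$ matches the paper's remark. So the approaches agree in spirit; you are simply supplying what the paper outsources.

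One technical point worth tightening: Schwarz reflection across a linear Lagrangian is antiholomorphic for the \emph{standard} $J_0$, but after pulling $J_\bbC$ back through a Darboux--Weinstein chart it is only $C^k$-close to $J_0$, so the doubled map $\widetilde u$ is not literally $J_0$-holomorphic. The usual fix is either to observe that $\widetilde u$ is pseudoholomorphic for the reflected almost complex structure (still tamed by the standard symplectic form, with uniformly controlled constants), so the isoperimetric inequality for pseudoholomorphic curves applies with a slightly worse constant, or to prove directly an isoperimetric inequality for $J$-curves with partial boundary on a totally real submanifold. You gesture at this (``absorbed into the $C^k$-smallness''), but in a complete write-up you should make explicit which of these routes you take.
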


\begin{remark} It doesn't matter that $B^j$ has boundary because the proof of this type of lemma takes place on small balls of radius less than $\mu$ centered at points $y \in B_0$, which are contained in $\Int(B)$. 
Also, the version of this lemma in \cite{FSS} assumes that $L$ is connected.
Because $\rho$ is less than half the distance between any pair of components of $L$, we can apply that version to each component successively, shrinking $\rho$ each time, to deduce the version above.
\end{remark}
We will apply the above lemma to $L=L^j$, where
$$L^j = (V_4 \cap B^j) \cup (V_0\cap B^j ) \cup (V_2^j \cap B^j)$$
From figure \ref{figureB} we see that $L^j$ is a  
smooth Lagrangian (not connected) submanifold of $B^j$ with 
$\partial L_j \subset \partial B_j$.
\\
\newline Fix an $I \in \mathcal{J}(M)$ which makes $\partial M$ $I-$convex. 
 Equip $T^*L_2^j = T^*S^3$  the complex structure $J_\bbC$ \label{J_{bbC}} inherited from the symplectic identification 
$T^*S^3 \cong \{z \in \bbC^4 : \Sigma_i z_i^2 =1\}$ (see page \pageref{mu}). Fix any $J \in \mathcal{J}(V,M,I)$ which satisfies 
\begin{gather}\label{J}
J_\zeta|U^j = J_{\bbC}|U^j \text{ for all }j \text{ and for all } \zeta \in V.
\end{gather}

\begin{lemma} \label{containment} There is a constant $\epsilon >0$ depending on
$J_{\bbC}|(\cup_j B^j)$, $\cup_j L^j \subset \cup_j B^j$, 
and $\omega|(\cup_j B^j)$ such that the following holds.
Let $J \in  \mathcal{J}(V,M,I)$ satisfy (\ref{J}); then for any 
$w \in \mathcal{M}(M,V_4,V_2^j,V_0,J)$, if $w(p) \in B_0^j$ for some $p$ then 
$\int_V w^*\omega \geq \epsilon.$ 
\end{lemma}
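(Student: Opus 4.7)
The plan is to reduce to the relative monotonicity lemma (Lemma \ref{monotonicity}) applied to the Lagrangian $L^j = (V_4 \cap B^j) \cup (V_2^j \cap B^j) \cup (V_0 \cap B^j)$. That lemma produces constants $\gamma > 0$ and $\rho \in (0, \min(\mu, d))$ depending only on $J_\bbC|(\cup_j B^j)$, $L^j$, and $\omega|(\cup_j B^j)$; I will take $\epsilon = \gamma (\rho/2)^2$.

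Given $w \in \mathcal{M}(M, V_4, V_2^j, V_0, J)$ with $y := w(p) \in B_0^j$, the first step is to note that $w$ is nonconstant. Indeed, a constant holomorphic triangle at $y$ would satisfy the three Lagrangian boundary conditions only if $y \in V_4 \cap V_2^j \cap V_0$; but the triple intersection points lie at the vertices of the triangles in $T^j$, while $B_0^j$ is an annular neighborhood of $T^j$ that does not meet $T^j$, so no such $y$ exists. Next, consider the smooth function $\phi : V \to \bbR$, $\phi(\zeta) = \mathrm{dist}_g(w(\zeta), y)$. Because $J_\zeta|U^j = J_\bbC|U^j$, the map $w$ is genuinely $J_\bbC$-holomorphic on $w^{-1}(B^j) \subset w^{-1}(U^j)$, so we may work with $J_\bbC$ from here on. By Sard's theorem, choose a regular value $s \in (\rho/2, \rho]$ of $\phi$ and set
\[
\Sigma = \phi^{-1}([0,s]) = \{\zeta \in V : w(\zeta) \in \overline{D^g_s(y)}\}.
\]

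The key verifications are: (i) $\Sigma$ is a compact Riemann surface with corners boundary; (ii) $w(\Sigma) \subset B^j$; and (iii) $w(\partial \Sigma) \subset \partial D_s^g(y) \cup L^j$. For (i), compactness holds because the punctures of $V$ correspond to the vertices of the triangles in $T^j$, and $y \in B_0^j$ is uniformly bounded away from these vertices; hence $\Sigma$ is contained in a compact subset of $V$ away from the punctures. Smoothness of the interior part of $\partial \Sigma$ follows from the choice of $s$ as a regular value, and corners appear precisely where $w^{-1}(\partial D_s^g(y))$ meets $\partial V$. For (ii), the inclusion $D_s^g(y) \subset D_\mu^g(y) \subset B^j$ from (\ref{B}) applies since $s \leq \rho < \mu$ and $y \in B_0^j$. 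For (iii), the interior portion of $\partial \Sigma$ maps by construction to $\partial D_s^g(y)$, while the boundary portion $\partial V \cap \Sigma$ maps into $(V_4 \cup V_2^j \cup V_0) \cap B^j = L^j$ by the Lagrangian boundary conditions and (ii).

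Applying Lemma \ref{monotonicity} to the nonconstant $J_\bbC$-holomorphic curve $w|_\Sigma$ (which is nonconstant by unique continuation, since $p$ lies in the interior of $\Sigma$ and $w$ is nonconstant on $V$) yields
\[
\int_\Sigma w^*\omega \;\geq\; \gamma s^2 \;\geq\; \gamma (\rho/2)^2 \;=\; \epsilon.
\]
Since $J$ is $\omega$-tame, $w^*\omega \geq 0$ pointwise, so $\int_V w^*\omega \geq \int_\Sigma w^*\omega \geq \epsilon$, as required. The main technical obstacle I expect is verifying (i) and (iii) rigorously: one must confirm that $y \in B_0^j$ stays a uniform distance from the vertices of $\Delta$ (so that $\Sigma$ is compact, with $\epsilon$ independent of the particular $y$), and that the corners of $\Sigma$ are genuine Riemann-surface corners of the type admitted by Lemma \ref{monotonicity}. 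Both points follow from the geometry of the $B^j, B_0^j$ construction together with Sard, but they deserve care.
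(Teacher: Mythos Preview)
Your proposal is correct and follows essentially the same route as the paper: pick a regular radius $s \in (\rho/2,\rho)$, set $\Sigma = w^{-1}(D_s^g(y))$, and apply Lemma~\ref{monotonicity} to $w|_\Sigma$ to get $\int_V w^*\omega \geq \int_\Sigma w^*\omega \geq \gamma s^2 \geq \gamma\rho^2/4$. One small caution on your compactness argument for $\Sigma$: you cannot yet assume the punctures map to vertices of the triangles in $T^j$ (that is what the whole section is building towards); instead use that $\overline w(\zeta_i)$ lies in one of the intersection components $\Sigma_{40}^j$, $\Sigma_{42}^j \cup K_+^j$, $\Sigma_{20}^j \cup K_-^j$, or an isolated critical point of $f_{40}$, all of which are disjoint from $B^j \supset D_s^g(y)$ by construction of $U^j$.
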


\begin{proof} 
Set $y = w(p) \in B_0^j$. 
We invoke lemma \ref{monotonicity}  for $L^j$ as defined above.
Let $\rho>0$ and $\gamma>0$ be the resulting constants.
Now take $s>0$ such that $\rho/2 < s < \rho$ and such that $w$ is transverse 
to $\partial D_s^{\,g}(y)$. Set $\Sigma = w^{-1}(D_s^{\,g}(y))$. 
Then $w(\partial \Sigma) \subset \partial D_s^{\,g}(y)\cup L^j$. We have 
$\partial \Sigma \neq \emptyset$ and $w|\Sigma$  nonconstant because of the boundary conditions for $w$ and because $D_s^{\,g}(y) \subset \Int(B^j)$.
Lemma \ref{monotonicity} applied to $u = w|\Sigma$ says that
$$\int_V w^*\omega \geq  \int_{\Sigma} u^*\omega \geq \gamma s^2 \geq \gamma \rho^2/4,$$
so we can take $\epsilon =  \gamma \rho^2/4.$
\end{proof}

We now consider new  Lagrangians $\widetilde V_4$, $\widetilde V_2^j$, 
$\widetilde V_0$, obtained by applying exact isotopies to
$V_4$, $V_2^j$, $V_0$. These isotopies arise by adjusting
$\Gamma_4$, $\Gamma_2$, $\Gamma_0$ in $\bbR/2\pi\bbZ \times [-r, r]$ 
by exact isotopies as in figure \ref{CorrectionGammaIIbig(mu2)}.
\begin{figure}
\begin{center}
\includegraphics[width=5in]{CorrectionGammaIIbig}
\caption{ $\widetilde \Gamma_4$ (red), $\widetilde \Gamma_2$ (green), $\widetilde \Gamma_0$ (blue) in $\bbR /2\pi\bbZ \times [-r,r]$.} \label{CorrectionGammaIIbig(mu2)}
\end{center}
\end{figure}
The new versions $\widetilde \Gamma_4$, $\widetilde \Gamma_2$, $\widetilde \Gamma_0$ are chosen so that the 
four triangles bounded by $\widetilde \Gamma_4$, $\widetilde \Gamma_2$, $\widetilde \Gamma_0$ each have
area less that $\epsilon$ (from lemma \ref{containment}). 
In addition, the 
boundary arcs of each triangle are chosen to be real analytic (we will need this to construct $w_{ef}^j$ as in Proposition \ref{classificationII}). In order for this to happen, the new 
$\widetilde \Gamma$'s must interpolate back to the old $\Gamma$'s \emph{away} from the triangles; that is why there are small humps between  $\Gamma_i$
and $\widetilde \Gamma_i$ at each vertex. However, we also insist that 
$\widetilde \Gamma_i \cap \widetilde B = \Gamma_i \cap \widetilde B$
 (this is ensured if the small humps at each vertex are small enough).
The last condition implies  that 
$\widetilde V_4 \cap B = V_4 \cap B $, 
$\widetilde V_2^j \cap B = V_2^j \cap B $, and
$\widetilde V_0 \cap B = V_0 \cap B $. 
 Thus, for any $w \in \mathcal{M}(M,\widetilde V_4,\widetilde V_2^j,
\widetilde V_0,J)$, 
 \begin{gather} \label{epsilon}
\text{ if } w(p) \in B_0^j \text{ for some } p, \text{ then } 
\int_V w^*\omega \geq \epsilon, 
\end{gather}
where $\epsilon$ is the same one from  lemma \ref{containment}.
(Note the proof of lemma \ref{containment} takes place entirely in $B$, so it will not feel the difference between  
$\widetilde V_4$, $\widetilde V_2^j$, 
$\widetilde V_0$, and
$V_4$, $V_2^j$, $V_0$.) 
\\
\newline We keep the same notation 
$\Delta \subset  \bbR/2\pi\bbZ \times [-r, r]$ 
for any one of the  four triangles bounded by $\widetilde \Gamma_4$, $\widetilde \Gamma_2$, $\widetilde \Gamma_0$. Note that the new functions $f_{40}$,  
$f_{42}^j$,  $f_{20}^j$, 
corresponding to $\widetilde V_4$, $\widetilde V_2^j$, 
$\widetilde V_0$ have the same basic form, so we keep the same notation there as well.

\begin{lemma} \label{area}
Let $J \in  \mathcal{J}(V,M,I)$ satisfy (\ref{J}), and 
 let $w \in \mathcal{M}(\widetilde V_4, \widetilde V_2^j, \widetilde V_0 , J;  M)$
for some $j =1, \ldots, k$. 
Then the symplectic area of $w$ is less than or equal to the Euclidean area of the triangle $\Delta \subset \bbR /2\pi \bbZ\times \bbR$, hence
it is less than $\epsilon$ (from lemma \ref{containment}). 
\end{lemma}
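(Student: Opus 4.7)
My plan is to reduce the computation of $\int_V w^*\omega$ to a boundary integral via Stokes' theorem, then to an invariant depending only on the three corner intersection points of $w$, and finally to identify it with the Euclidean area of $\Delta$ via a piecewise-smooth model triangle sitting inside $D(T^*L_2^j)$.

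First, since $\omega=d\theta$ on $M$ and the boundary arcs of $V$ are mapped into the three exact Lagrangians, Stokes' theorem gives
$\int_V w^*\omega=\int_{\partial V} w^*\theta.$
Writing $\theta|_{\widetilde V_i}=dK_i$ for smooth primitives $K_i$ on $\widetilde V_4,\widetilde V_2^j,\widetilde V_0$, the right-hand side collapses to a finite sum of differences $K_i(p)-K_i(p')$ at the three corner intersection points $p_{42},p_{20},p_{40}$. In particular, the symplectic area depends only on these corners, not on the particular holomorphic map $w$.

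Next, I would construct, for an appropriate choice of $e\in K_+^j$ and $f\in K_-^j$ determined by the corners of $w$, an explicit smooth model disk $w_0:=\sigma_{ef}^j(\Delta')\subset D(T^*K_{ef}^j)\subset D(T^*L_2^j)$, where $\Delta'$ is the triangle among the four in figure~\ref{CorrectionGammaIIbig(mu2)} whose vertices $\sigma_{ef}^j$ sends precisely to the corners of $w$. Since $\sigma_{ef}^j$ is an exact symplectic embedding of $\bigl(({\bbR}/2\pi{\bbZ})\times{\bbR},\,d\lambda\wedge d\theta\bigr)$ onto $T^*K_{ef}^j$, the symplectic area of $w_0$ equals the Euclidean area of $\Delta'$. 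Applying the Stokes computation of the previous paragraph to $w_0$ yields the same sum of $K_i$-differences as for $w$, so the two symplectic areas agree:
$\int_V w^*\omega=\int_{w_0}\omega=\mathrm{Area}(\Delta')\leq \mathrm{Area}(\Delta)<\epsilon.$

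The main point I expect to require care is the combinatorial matching of corners to a \emph{single} pair $(e,f)$: each corner of $w$ is an intersection point of two of the three Lagrangians, and hence is parameterized by either a point of $K_\pm^j$ (for corners lying on $\widetilde V_2^j\cap\widetilde V_i$ along $K_\pm^j$) or a pair in one of the tori $\Sigma_{42}^j,\Sigma_{20}^j,\Sigma_{40}^j$. Using the rotational symmetry of the construction in $D(T^*L_2^j)$, two of the corners already uniquely determine the $(e,f)$ needed to form the model triangle, and the third is forced to be compatible because $w$ is continuous along $\partial V$ and the three curves $\widetilde\Gamma_4,\widetilde\Gamma_2,\widetilde\Gamma_0$ in $({\bbR}/2\pi{\bbZ})\times[-r,r]$ form a triangle at a single $(e,f)$ slice. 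A brief orientation check accounts for the inequality $\leq$ rather than equality in the case where signed and unsigned areas differ; the assertion $\int_V w^*\omega<\epsilon$ then follows from the small-area arrangement of the $\widetilde\Gamma_i$'s established above.
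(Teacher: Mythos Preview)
Your first step---reducing $\int_V w^*\omega$ via Stokes to differences of primitives $K_i$ at the three corners---is correct and is exactly how the paper begins. The gap is in the second step.

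You try to build a model triangle $w_0=\sigma_{ef}^j(\Delta')$ with the \emph{same} corners as $w$. This fails for two reasons. First, the three corners of a general $w$ need not be compatible with a single pair $(e,f)$: a priori $p_{42}\in\Sigma_{42}^j$, $p_{20}\in\Sigma_{20}^j$, $p_{40}\in\Sigma_{40}^j$ are three independent points, and ``continuity of $w$ along $\partial V$'' gives no constraint forcing them into one slice $D(T^*K_{ef}^j)$---that is precisely the uniqueness statement proved later, not something available here. Second, and more seriously, you have overlooked that $p_{40}=\lim_{\zeta\to\zeta_0}w(\zeta)$ can be an \emph{isolated} critical point of $f_{40}$ lying in $N_{40}\subset L_0$ far from $D(T^*L_2^j)$; likewise $p_{42}$ may lie on $K_+^j$ and $p_{20}$ on $K_-^j$ rather than on the tori. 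No model triangle $\sigma_{ef}^j(\Delta')$ hits such corners.

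The paper handles this by a case analysis on which intersection component each corner lies in. The point is that the primitives $k_4,k_2,k_0$ are constant on each component (the components are critical submanifolds of $f_{42},f_{20},f_{40}$), so the Stokes formula depends only on the components, not the precise corners. In the ``main'' case where all three corners lie on the tori $\Sigma_{42}^j,\Sigma_{20}^j,\Sigma_{40}^j$, a comparison with \emph{any} $\sigma_{ef}^j\circ\varphi$ gives area exactly $\mathrm{area}(\Delta)$. When a corner moves off a torus (to $K_\pm^j$ or to an isolated critical point of $f_{40}$), one must check the sign of each resulting difference $k_i(q)-k_i(p)$ using the max/min structure of $f_{40},f_{42},f_{20}$ and the shape of $\widetilde\Gamma_0$ near the relevant vertex; each such term is shown to be negative, yielding a strict inequality. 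Your ``brief orientation check'' does not capture this---the inequality is not an orientation artifact but a genuine comparison of critical values.
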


\begin{proof}  
Let $w: V \into M$ be any smooth map, which extends continuously to $D^2$,
and which satisfies $w(I_0) \subset \widetilde V_0$, $w(I_1) \subset \widetilde V_2^j$, 
$w(I_2) \subset \widetilde V_4$ (for example, any $w \in  
\mathcal{M}(\widetilde V_4,  \widetilde V_2^j, \widetilde V_0 , J;  M)$).
 Then 
$$p_{40} =\lim_{\zeta \rightarrow \zeta_0} w(\zeta) \in  \Sigma_{40}^j \cup \{\text{isolated critical points of }f_{40}\},$$ 
$$ p_{42} = \lim_{\zeta \rightarrow \zeta_1}w(\zeta) \in K_+^j \cup \Sigma_{42}^j, \text{ and }
p_{20} = \lim_{\zeta \rightarrow \zeta_2}w(\zeta) \in K_+^j \cup \Sigma_{20}^j.$$
Take functions $k_0, k_2, k_4$ defined on $\widetilde V_0, (\cup_j \widetilde V_2^j), \widetilde V_4$ 
respectively, such that
$\theta|\widetilde V_2^j = dk_2$, $\theta|\widetilde V_0 = dk_0$, $\theta|\widetilde V_4 = dk_4$. 
Up to adding constants we have, 
\begin{gather*}
k_4|(\widetilde V_4 \cap D(T^*N_{42})) = f_{42}, \, 
k_4|(\widetilde V_4 \cap D(T^*N_{40})) = f_{40}, \,
k_0|(\widetilde V_0 \cap D(T^*N_{20})) = f_{20}, 
\end{gather*}
Then, by Stoke's theorem, 
\begin{gather} \label{stokes}
\int_V w^*\omega = k_4(p_{40})-k_4(p_{42})
+ k_2(p_{42})-k_2(p_{20}) 
+ k_0(p_{20})-k_0(p_{40}).
\end{gather}
\\
Consider the case when all three vertices lie on the tori,
$p_{kl} \in \Sigma_{kl}^j$. Then 
$\int_V w^*\omega = area(\Delta)$ because the above formula shows the 
symplectic area  will not change if 
we  assume  $w$ to be of the form $\sigma_{ef}^j \circ \varphi$, 
where $\varphi: V \into \bbR/2\pi\bbZ \times [-r, r]$ is a smooth 
map parametrizing $\Delta$, and then, using the coordinates $(u,v)$ 
on $T^*L_2^j$, 
\begin{gather*}
\int_V(\sigma_{ef}^j \circ \varphi)^*\omega  
= \ \int_V (\sigma_{ef}^j \circ \varphi)^*(\Sigma_k \, d v_k\wedge du_k)
= \int_V \varphi^*(d\lambda \wedge d\theta).
\end{gather*}
(Note: $\varphi$ should be orientation reversing; compare with 
the proof of Proposition \ref{existence}.)
Now consider the case 
$p_{40} \in 
\{\text{isolated critical points of }f_{40}\}$, $p_{42} \in \Sigma_{42}^j$,
and $p_{20} \in \Sigma_{20}^j$. Let  $\widetilde w : V \into M$ 
be second map satisfying the same conditions, 
except this time with
$q = \lim_{\zeta \rightarrow \zeta_0} \widetilde w(\zeta) \in  \Sigma_{40}^j$. 
Then $\int_V \widetilde w^* \omega = area(\Delta)$, but also 
 $\int_V \widetilde w^* \omega$ is given by the same formula
(\ref{stokes}), but with $p_{40}$ replaced by $q$.
Therefore,
$$\int_V w^*\omega = area(\Delta)  - k_4(q)+ k_4(p_{40}) - k_0(q) + k_0(p_{40}). 
$$ 
Since $k_4 = f_{40}$ in $D(T^*N_{40})$, and $f_{40}$ takes a maximum at $\Sigma_{40}^j$
(and we may assume the value of $f_{40}$ at all isolated critical points is much smaller)
it follows that $- k_4(q)+ k_4(p_{40})< 0$. Similarly, by inspecting the shape of $\widetilde \Gamma_0$ near the vertex of $\Delta$ corresponding to $\Gamma_4 \cap \Gamma_0$ (see figure \ref{CorrectionGammaIIbig(mu2)}), we see that 
$k_0$ has a maximum
along $\Sigma_{40}^j$, and decreases to some constant value in the region where 
$p_{40}$ lies. Therefore $-k_0(q) + k_0(p_{40}) < 0$ as well. 
We conclude  $\int_V w^*\omega < area(\Delta)$ (if it is negative, it means that no such 
holomorphic $w$ could exist).
The other cases, $p_{42} \in K_+^j$,
and/or $p_{20} \in K_-^j$, are handled similarly; one uses the fact that
$f_{42}$ has a maximum at $\Sigma_{40}^j$, and $f_{20}$ has a minimum at $\Sigma_{20}^j$. \end{proof}

Now (\ref{epsilon}) and lemma \ref{area} immediately imply: 

\begin{prop} \label{containmentII} Any  $w \in \mathcal M( \widetilde V_4, \widetilde V_2^j, \widetilde V_0, J; M)$
must be contained in the interior of $U^j \subset T^*L_2^j$. $\square$
\end{prop}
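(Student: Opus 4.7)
The plan is to combine Lemma \ref{area} with equation (\ref{epsilon}) to show that $w(V)$ cannot meet the barrier region $B_0^j$, and then use a connectedness argument to force $w(V)$ into the inner component of $M\setminus B_0^j$.

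First I would argue that $w(V)\cap B_0^j=\emptyset$. By Lemma \ref{area}, the symplectic area $\int_V w^*\omega$ is strictly less than $\epsilon$ (the triangle $\Delta$ was arranged in figure \ref{CorrectionGammaIIbig(mu2)} to have Euclidean area less than $\epsilon$). On the other hand, (\ref{epsilon}) says that if $w$ passes through any point of $B_0^j$, then $\int_V w^*\omega\geq \epsilon$. The two inequalities are incompatible, so $w(V)$ must avoid $B_0^j$ entirely.

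Next I would use the geometry of $B_0^j$ to locate $w(V)$. Since $B_0^j=\{x\in T^*L_2^j:\lambda_0\leq dist(x,T^j)\leq \eta_0\}$ is an annular region separating $T^j$ from the rest of $T^*L_2^j$, the complement $M\setminus B_0^j$ has two connected components: the open inner region $\{x\in T^*L_2^j: dist(x,T^j)<\lambda_0\}$, which is a neighborhood of $T^j$ contained in $\Int U^j$, and the outer region, which includes the outer shell of $T^*L_2^j\setminus B_0^j$ together with everything in $M\setminus T^*L_2^j$ (the outer shell reaches $\partial T^*L_2^j$ and is connected through the plumbing to the rest of $M$). Since $V$ is connected, so is $w(V)$, and therefore $w(V)$ lies entirely in one of these two components.

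Finally, I would rule out the outer component by examining the limits of $w$ at the three boundary punctures of $V$. These limits are intersection points of pairs among $\widetilde V_4,\widetilde V_2^j,\widetilde V_0$, and the analysis in \S\ref{graphs} identifies them as lying in $K_+^j\cup\Sigma_{42}^j$, $K_-^j\cup\Sigma_{20}^j$, or $\Sigma_{40}^j$ (all three tori and the two circles sit inside $T^j$, since they correspond exactly to the three vertices of the triangle $\Delta$). In particular, $w(V)$ contains at least one point of $T^j$, and hence sits in the inner component. Thus $w(V)\subset \Int U^j$, proving the proposition. The only subtle point is to make sure that when the limit of $w$ at the $\widetilde V_4\cap\widetilde V_0$ puncture could a priori be an isolated critical point of $f_{40}$ lying outside $T^*L_2^j$, the connectedness argument rules this out automatically: such a limit would force $w(V)$ into the outer component, contradicting that $w(V)$ also hits the vertex in $\Sigma_{42}^j\subset T^j$ without crossing $B_0^j$.
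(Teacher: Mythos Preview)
Your overall strategy is exactly the paper's: combine (\ref{epsilon}) and Lemma \ref{area} to conclude $w(V)\cap B_0^j=\emptyset$, then use connectedness of $w(V)$ to place it in one of the two components of $M\setminus B_0^j$.

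There is, however, a factual error in the step where you identify the component. The circles $K_+^j$ and $K_-^j$ do \emph{not} lie in $T^j$: only the tori $\Sigma_{42}^j,\Sigma_{20}^j,\Sigma_{40}^j$ arise as the swept-out vertices of $\Delta$, whereas $K_\pm^j$ are the \emph{other} intersection points of the curves $\widetilde\Gamma_4,\widetilde\Gamma_2,\widetilde\Gamma_0$ (the middle points in the dotted rectangles of figure~\ref{CorrectionGammaIIIbig(Floer)}, not triangle vertices). The paper explicitly takes $\widetilde U$, and hence $U^j$, small enough to exclude the points corresponding to $K_\pm^j$, so they sit in the \emph{outer} component. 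Consequently your argument does not rule out the possibility that the limits at $\zeta_1,\zeta_2$ lie in $K_-^j,K_+^j$ while the limit at $\zeta_0$ is an isolated critical point of $f_{40}$ (or a point of some $\Sigma_{40}^i$ with $i\neq j$), all in the outer component. Your final paragraph presupposes the limit at $\zeta_2$ is already in $\Sigma_{42}^j$, which begs this very question.

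One way to close the gap: use the boundary arc $w(I_1)\subset \widetilde V_2^j=L_2^j$. Under the identification $L_2^j\setminus(K_+^j\cup K_-^j)\cong K_+^j\times K_-^j\times(0,\pi/2)$ given by $(\cos\theta\,e,\sin\theta\,f)\mapsto(e,f,\theta)$, the set $L_2^j\cap B_0^j$ has the form $T^2\times(\text{two short intervals})$ flanking the middle piece $L_2^j\cap T^j$, and hence separates $K_+^j$ from $K_-^j$ inside $L_2^j$. So if the two limits of $w(I_1)$ were in $K_-^j$ and $K_+^j$, the connected closure of $w(I_1)$ in $L_2^j$ would have to cross $B_0^j$, contradicting what you already proved. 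Therefore at least one of those two limits lies in $\Sigma_{20}^j$ or $\Sigma_{42}^j\subset T^j$, and then your connectedness argument finishes the job.
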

Recall that $J|U^j = J_{\bbC}|U^j$. In \S  \ref{localtriangles}
we classify all $J_\bbC-$holomorphic triangles in  $T^*L_2^j$, as summarized 
in Proposition \ref{classificationII} below.
Let $\delta_{42}, \delta_{20}, \delta_{40}$ denote the vertices of $\Delta$, where $\delta_{ij} \in \Gamma_i \cap \Gamma_j$, $i,j =0,2,4$, and set $\Delta^* = \Delta \setminus \{ \delta_{42}, \delta_{20}, \delta_{40} \}.$
 Note that $\sigma_{ef}^j(\delta_{40}) \in \Sigma_{40}^j$,
 $\sigma_{ef}^j(\delta_{42}) \in \Sigma_{42}^j$, and $\sigma_{ef}^j(\delta_{20}) \in \Sigma_{20}^j$.
Below $W_4^j =\widetilde V_4 \cap D(T^*L_2^j)$, 
$W_2^j = \widetilde V_2^j \cap D(T^*L_2^j)$, 
$W_0^j =  \widetilde V_0 \cap D(T^*L_2^j)$.

\begin{prop}\label{classificationII}
(1) (Existence) For each $e \in K_+^j$, $f \in  K_-^j$ there exists 
$w_{ef}^j \in \mathcal{M}(W_4^j, W_2^j, W_0^j, J_\bbC; D(T^*L_2^j))$ which satisfies $w_{ef}^j(V) = \sigma_{ef}^j(\Delta^*) \subset D(T^*K_{ef}^j),$ and 
$$\lim_{\zeta \rightarrow \zeta_0} w(\zeta) = \sigma_{ef}^j(\delta_{40}), \, \, 
\lim_{\zeta \rightarrow \zeta_1} w(\zeta) = \sigma_{ef}^j(\delta_{20}),
\, \, 
\lim_{\zeta \rightarrow \zeta_2} w(\zeta) = \sigma_{ef}^j(\delta_{42}).$$
(2) (Uniqueness) Every $w \in \mathcal{M}(W_4^j, W_2^j, W_0^j, J_\bbC; D(T^*S^3))$
is equal to $w_{ef}^j$ for some $e,f$.
(3) (Regularity) $J_\bbC$ is regular.

\end{prop}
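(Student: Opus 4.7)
The plan is to handle the three parts in order, leveraging the rotational symmetry and the explicit geometry laid out in \S\ref{secondtriangle}.

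For existence (1), I would work inside the slice $D(T^*K_{ef}^j)$. Because the boundary arcs of $\Delta \subset \bbR/2\pi\bbZ \times [-r,r]$ were chosen real analytic in the construction of $\widetilde\Gamma_4, \widetilde\Gamma_2, \widetilde\Gamma_0$, the Riemann mapping theorem (together with the Schwarz reflection principle at the analytic corners) supplies a biholomorphism from $V$ to the interior of $\Delta$ sending the three punctures $\zeta_0,\zeta_1,\zeta_2$ to the three vertices $\delta_{40},\delta_{20},\delta_{42}$. Composing with $\sigma_{ef}^j$ produces $w_{ef}^j$. To see this is $J_\bbC$-holomorphic, note that the embedding $T^*K_{ef}^j\subset T^*L_2^j$ is complex with respect to $J_\bbC$ (it is the fixed set of an anti-holomorphic involution on $\{z\in\bbC^4:\Sigma z_j^2=1\}$), so $J_\bbC$ restricts to the standard complex structure on $T^*K_{ef}^j\cong T^*S^1$ in which $\sigma_{ef}^j$ is holomorphic.

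For uniqueness (2), I would carry out the argument sketched in \S\ref{firsttriangle}. Introduce the $J_\bbC$-holomorphic map
\[ P: T^*L_2^j \to \bbC, \qquad P(z_1,z_2,z_3,z_4) = z_1^2+z_2^2-z_3^2-z_4^2. \]
By construction $P$ is invariant under the rotational symmetry that permutes the slices $T^*K_{ef}^j$, so it sends each of the three Lagrangians $W_4^j,W_2^j,W_0^j$ into three curves in $\bbC$ bounding a triangle $T$. Given any $w\in\mathcal{M}(W_4^j,W_2^j,W_0^j,J_\bbC)$, apply the maximum principle to $P\circ w$ (its boundary lies in $\partial T$ and it is holomorphic and bounded) to conclude $P\circ w(V)\subset T$. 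Next, I would verify that $P$ admits a holomorphic trivialization
\[ P^{-1}(T) \cong T\times (T^*K_+^j \times T^*K_-^j) \]
such that all three Lagrangian boundary pieces map to the common fiberwise condition $K_+^j\times K_-^j$. Then $w$ corresponds to a holomorphic section $s:T\to T\times(T^*K_+^j\times T^*K_-^j)$ with fiberwise boundary in $K_+^j\times K_-^j$. Since the canonical $1$-form $\theta$ on $T^*K_\pm^j$ vanishes on the zero section $K_\pm^j$, Stokes' theorem forces the fiberwise symplectic area to vanish, hence the fiber component is constant equal to some $(e,f)\in K_+^j\times K_-^j$. Thus $w$ factors through $D(T^*K_{ef}^j)$ with image $\sigma_{ef}^j(\Delta^*)$, and by the uniqueness clause of the Riemann mapping theorem $w=w_{ef}^j$. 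The main subtlety here is checking that $w$ and the induced section extend continuously across the punctures so that Stokes' theorem applies with finite area; this uses the continuous extension to $D^2$ guaranteed for elements of the moduli space and the fact that the Lagrangians meet cleanly along $\Sigma_{40}^j,\Sigma_{42}^j,\Sigma_{20}^j$.

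For regularity (3), I would show that for each $w=w_{ef}^j$ the linearized Cauchy--Riemann operator $D_w$ has kernel and index both equal to $2$. To bound $\dim\ker D_w \le 2$, I would run the argument of (2) at the linearized level: the holomorphic function $dP\circ\xi$ attached to any $\xi\in\ker D_w$ again takes values in the appropriate triangular region, and the same Stokes-type vanishing argument (using that the linearized boundary conditions are tangent to $K_+^j\times K_-^j$ in the fiber direction) forces $\xi$ to be determined by its infinitesimal choice of $(e,f)$, giving a two-parameter family. The matching inequality $\mathrm{index}(D_w)\ge 2$ would come from the Morse-Bott index formula, reducing the computation to the Maslov index of a loop of Lagrangian tangent planes built from the boundary conditions; the rotational symmetry of $\widetilde V_4,\widetilde V_2^j,\widetilde V_0$ in $D(T^*L_2^j)$ makes this loop very explicit and the Maslov count comes out to $2$.

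The main obstacle is the trivialization of $P$ over $T$ (not just over $T$ with vertices removed). Since $P$ is singular along $K_\pm^j$, which correspond precisely to the points $\pm e\in K_{ef}\cap K_+^j$ and $\pm f\in K_{ef}\cap K_-^j$, it is crucial that the modified curves $\widetilde\Gamma_i$ from \S\ref{secondtriangle} place the vertices of $\Delta$ \emph{off} these singular loci, so that $P$ is a submersion over a neighborhood of $T$ and the trivialization extends to all of $T$. Once this geometric input is in hand, both the uniqueness and regularity arguments are controlled by the same fiberwise Stokes calculation.
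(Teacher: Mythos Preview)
Your overall strategy matches the paper's closely, but there is one concrete error in part (1) and one imprecise step in part (3).

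For existence, you claim that $\sigma_{ef}^j:(\bbR/2\pi\bbZ)\times\bbR\to T^*K_{ef}^j$ is holomorphic with respect to $J_\bbC$. It is not. The paper makes this explicit: ``We will need $\rho_{ef}$ because $\sigma_{ef}$ \ldots\ is unfortunately not holomorphic.'' What is true is that $T^*K_{ef}^j$ corresponds under $\mu$ to the complex curve $Y_{ef}=(\bbC e\oplus\bbC f)\cap Z$, and the paper introduces an explicit biholomorphism $\rho_{ef}:\bbC^\times\to Y_{ef}$. One then pushes $\Delta$ forward to $\Delta'=\rho_{ef}^{-1}(\mu^{-1}(\sigma_{ef}(\Delta)))\subset\bbC^\times$, checks via the explicit formula $(\rho_{ef}^{-1}\circ\mu^{-1}\circ\sigma_{ef})(\theta,\lambda)=(f(\lambda)+\lambda/f(\lambda))e^{i\theta}$ that the boundary arcs of $\Delta'$ remain real analytic, and applies the Riemann mapping theorem there. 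This formula is visibly not holomorphic in $\theta+i\lambda$, which is why your shortcut fails. A related point you skip: this map is orientation-reversing, so the clockwise labeling of $\partial\Delta$ by $\widetilde\Gamma_0,\widetilde\Gamma_2,\widetilde\Gamma_4$ becomes counter-clockwise on $\partial\Delta'$, which is what is needed to match the counter-clockwise labeling of $I_0,I_1,I_2\subset\partial V$.

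For regularity, your description of the kernel bound is not quite the mechanism the paper uses. The component $\varphi=DP_{w}(X):V\to\bbC$ is a holomorphic function with totally real boundary conditions given by the \emph{tangent lines} $T(P(\mu^{-1}(W_{2k})))$; it does not ``take values in the triangular region.'' The paper kills $\varphi$ by computing the winding number of this loop of lines in $\bbR P^1$ (it is $0$) and invoking the vanishing criterion of \cite[Lemma 11.5]{S08}. Only after $\varphi\equiv 0$ does the Stokes argument on the fiber components $D\Phi_1(X),D\Phi_2(X)$ reduce the kernel to the two-dimensional space $T_eL\oplus T_fL$. Your index sketch is fine in spirit; just note the Maslov index itself is $\mu=-1$, and the index is $n+\mu=3-1=2$ via the gluing formula, not ``the Maslov count comes out to $2$.''
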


\begin{remark} \label{boundaryworries}The above result implies in particular 
that no 
$$w \in \mathcal{M}(W_4^j, W_2^j, W_0^j , J_\bbC;  D(T^*L_2^j))$$ 
touches the boundary of $D(T^*L_2^j)$, or the boundary of $W_4^j$, $W_0^j$. 
This is proved by composing a given $w$ with a certain holomorphic map 
$P: T^*L_2^j \into \bbC$ and applying the  maximum principle.
\end{remark}

From Propositions \ref{classificationII} and \ref{containmentII} we conclude:
\begin{prop} \label{moduli} We have an equality of moduli spaces
\begin{gather*}
\mathcal{M}(J, \widetilde V_4, \widetilde V_2^j, \widetilde V_0; M)
= \mathcal{M} (W_4^j, W_2^j,W_0^j, J_{\bbC};D(T^*L_2^j))
\end{gather*}
Moreover, $J$ is a regular almost complex structure.
\end{prop}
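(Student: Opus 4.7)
The plan is to establish the moduli space equality by containment in both directions, using the fact that $J|U^j = J_{\bbC}|U^j$ combined with the confinement from Proposition \ref{containmentII} and the explicit description from Proposition \ref{classificationII}. The regularity claim will follow essentially for free once the equality is set up, because the linearized operator only "sees" the almost complex structure along the image of the holomorphic triangle.

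For the inclusion $\mathcal{M}(J, \widetilde V_4, \widetilde V_2^j, \widetilde V_0; M) \subseteq \mathcal{M}(W_4^j, W_2^j, W_0^j, J_{\bbC}; D(T^*L_2^j))$, I would take $w$ in the left-hand moduli space and apply Proposition \ref{containmentII} to conclude that the image of $w$ lies in $\Int(U^j) \subset D(T^*L_2^j)$. Since condition (\ref{J}) forces $J|U^j = J_{\bbC}|U^j$, the map $w$ is automatically $J_{\bbC}$-holomorphic there. The boundary conditions transfer because by construction $\widetilde V_i \cap D(T^*L_2^j) = W_i^j$ for $i=0,2,4$, so $w$ satisfies the right boundary conditions for the second moduli space.

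For the reverse inclusion, I would take $w$ in the right-hand moduli space; by Proposition \ref{classificationII}(2), $w = w_{ef}^j$ for some $e,f$, so its image equals $\sigma_{ef}^j(\Delta^*) \subset T^j \subset U^j$. Remark \ref{boundaryworries} guarantees $w$ stays off $\partial D(T^*L_2^j)$ and $\partial W_i^j$, so composing with the inclusion $D(T^*L_2^j) \hookrightarrow M$ produces an honest map into $M$. Since $W_i^j \subset \widetilde V_i$ and $J$ agrees with $J_{\bbC}$ on the image of $w$, this composition lies in the left-hand moduli space. For regularity, I would observe that for any $w$ in this common moduli space, the linearized Cauchy--Riemann operator only depends on $J$ along the image of $w$; since that image lies in $U^j$ where $J = J_{\bbC}$, and $w^*TM = w^*T(T^*L_2^j)$ there, the linearization for $J$ (viewed in $M$) coincides with the linearization for $J_{\bbC}$ (viewed in $D(T^*L_2^j)$). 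Surjectivity of the latter, which is Proposition \ref{classificationII}(3), then transfers to $J$.

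The arguments are essentially bookkeeping at this stage, so there is no deep obstacle; the main subtlety worth double-checking is that the Banach space setups in $M$ versus in $D(T^*L_2^j)$ agree, which requires knowing both the maps and the perturbations stay in the interior (guaranteed by Remark \ref{boundaryworries}) and that the boundary conditions $W_i^j$ versus $\widetilde V_i$ impose identical constraints near the image of $w$. Both follow from the construction in \S \ref{sectionmonotonicity} where $\widetilde V_i$ was defined to agree with $W_i^j$ throughout a neighborhood of $T^j$.
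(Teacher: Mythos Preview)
Your proposal is correct and matches the paper's approach: the paper simply states that the proposition follows from Propositions \ref{classificationII} and \ref{containmentII}, and you have spelled out exactly the bookkeeping that this entails. The two inclusions and the transfer of regularity via the linearized operator along the image are precisely the intended argument.
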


\section{Computing the continuation map} \label{sectioncontnmap}

In this section we give a precise treatment of the argument summarized in
\S \ref{contnmapsketch}. 
The main point is to show  $HF(\widetilde V_4,\widetilde V_0)$,
$HF(\widetilde V_4,\widetilde V_2^j)$, and
$HF(\widetilde V_2^j,\widetilde V_0)$,
inherit the same generators and relations from Proposition \ref{generators}; 
recall these match up perfectly with the ones for the flow category (\ref{homologyrelations}) in 
\S \ref{sectionFlow}.
\\
\newline For $HF(\widetilde V_4,\widetilde V_0)$, $HF(\widetilde V_4,\widetilde V_2^j)$, $HF(\widetilde V_2^j,\widetilde V_0)$, we take the same
Morse-Smale data as in \S \ref{sectionHF}: $(h_{42}^j,g_{42}^j)$, $(h_{20}^j,g_{20}^j)$,  and $(\widetilde h_{40}^j,\widetilde g_{40}^j)$
on  $\Sigma_{42}^j \cup K_+^j$,  $\Sigma_{20}^j \cup K_-^j$, and $\Sigma_{40}^j$, respectively. For the almost complex structures let
$$\widetilde J_{42}^j, \widetilde J_{20}^j, \widetilde J_{40} \in \mathcal{J}([0,1],M, I), j=1,\ldots, k.
\label{widetilde J_{40} ,widetilde  J_{42},widetilde  J_{20}(b)}$$
be such that
\begin{gather}\label{J_t|U}
(\widetilde J_{42}^j)_t|U, (\widetilde J_{20}^j)_t|U, (\widetilde J_{40})_t|U= J_\bbC, t \in [0,1],
\end{gather}
and assume that the following Morse-Bott Floer data are regular: 
$$(\widetilde J_{42}^j,(h_{42}^j,g_{42})), (\widetilde J_{20}^j,(h_{20}^j,g_{20})) (\widetilde J_{40},(h_{40}^j,g_{40})). $$
To see  that regularity can be achieved under the above constraints, we follow
a basic principle outlined in \cite[p. 35]{MS}: If every nonconstant 
$J-$holomorphic curve meets some neighborhood $\Omega$, then to make $J$ 
regular it suffices to 
perturb $J$ on $\Omega$ only. In our situation, we 
fix a neighborhood $\Omega \subset (M \setminus \cup_j U^j)$ containing  
$\cup_j (K_+^j \cup  K_-^j)$ as well as all isolated critical point of 
$f_{40}$ (this is possible by construction of the $U^j$: 
see figure \ref{figureB}).
Now note that every nonconstant finite area holomorphic strip will meet
$\Omega$, and,  since $\Omega \subset  (M \setminus \cup_j U^j)$, we can  pick $\widetilde J_{40}$
$\widetilde J_{42}^j$, and $\widetilde J_{40}$ with complete freedom on 
$\Omega$.
\\
\newline In fact, we can obviously pick all $\widetilde J_{20}^j$ to be \emph{equal}
and similarly for $\widetilde J_{42}^j$; so, for convenience of notation 
can drop the $j$'s and denote these by 
$\widetilde J_{20}$, $\widetilde J_{42}$.
Now pick $J\in \mathcal{J}(V,M,I)$ compatible with  $\widetilde J_{40}$
$\widetilde J_{42}$, and $\widetilde J_{20}$ in the sense of 
(\ref{Jcompatible}), that is, for all $(s,t) \in [0,\infty) \times [0,1]$, 
\begin{gather*}
J_{\epsilon_0(s,t)}  = (\widetilde J_{40})_t, \, 
J_{\epsilon_1(-s,t)} = (\widetilde J_{42})_t, \,
J_{\epsilon_2(-s,t)} = (\widetilde J_{20})_t, \text{ and } 
J_\zeta|U = J_\bbC, \, \zeta \in V. 
\end{gather*}
The  first three conditions are compatible with last one, since 
each $\widetilde J_t$ satisfies (\ref{J_t|U}). 
Since $J$ satisfies (\ref{J}), the whole discussion in 
\S \ref{sectionmonotonicity} applies to $J$; in particular we have 
Proposition \ref{moduli}, and $J$ is regular.
\\
\newline We now pick some smooth functions
$G,H: M \into \bbR$  such that their Hamiltonian flows satisfy
$\phi_1^H(V_4) = \widetilde V_4$ and $\phi_1^G(V_0) = \widetilde V_0$. 
These will be determined by 
$H_0, G_0: \bbR/2\pi \bbZ \times [-r,r] \into \bbR$ which satisfy  
$\phi_1^{H_0}(\Gamma_4) = \widetilde \Gamma_4$ and $\phi_1^{G_0}(\Gamma_0) = \widetilde \Gamma_0$. 
Consider  a Weinstein tubular neighborhood of $\Gamma_0$, say
$D(T^*\Gamma_0) \subset  \bbR/2\pi \bbZ \times [-r,r]$, 
which contains $\widetilde \Gamma_0$ in its interior.
Inside  $D(T^*\Gamma_0)$, we view $\widetilde \Gamma_0$ as 
the graph of an exact 1-form $g'(x)dx$, where $x \in [0,1] \cong \Gamma_0$
is some coordinate. Similarly, we view $\widetilde \Gamma_4$ 
as the graph of $h'(x)dx$ over $\widetilde \Gamma_4$. Then 
the graphs of $g'$, $h'$, $g$, and $h$ are as shown in 
figure \ref{CorrectionGH}. 
(The graphs of  $g'$ and $h'$ are found by inspecting  figure 
\ref{CorrectionGammaIIbig(mu2)} and 
copying out  $\widetilde \Gamma_0$ and $\widetilde \Gamma_4$, viewed as graphs 
of functions over $\Gamma_0$ and  $\Gamma_4$.)
\begin{figure}
\begin{center}
\includegraphics[width=3in]{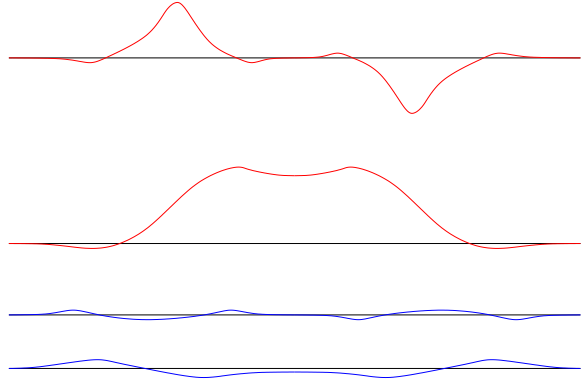} 
\caption{From top to bottom: the graphs of $h'$ and $h$ (red), and the graphs of $g'$ and $g$ (blue).}
\label{CorrectionGH}
\end{center}
\end{figure}
Note that in figure \ref{CorrectionGH} (compare with 
figure \ref{CorrectionGammaIIbig(mu2)})
the two minima of $h$ correspond to 
points in $\Sigma_{40}^j$, and the two maxima correspond to points in 
$\Sigma_{42}^j$. Similarly, the two maxima of $g$ correspond to 
points in $\Sigma_{40}^j$, and the two minima correspond to points in 
$\Sigma_{20}^j$.
\\
\newline Let $(x,y)$ denote the usual coordinates on
$D(T^*[0,1])$, then we define 
$G_0$, $H_0$ 
to be zero outside of $D(T^*\Gamma_0)\cong D(T^*[0,1])$ and
 $D(T^*\Gamma_4)\cong D(T^*[0,1])$, and on those regions we set
$H_0(x,y) = - \psi(|y|) h(x)$, $ G_0(x,y) = - \psi(|y|) g(x),$
where $\psi$ is some suitable cut-off function which makes $G_0$, $H_0$ 
equal to zero near the horizontal boundary of $D(T^*[0,1])$.
(Note that we have taken $g$ and $h$ to be 
zero near the boundary of $[0,1]$ also.)
We need the $-$ signs in order to have $\phi_1^{H_0}(\Gamma_4) = \widetilde \Gamma_4$ and $\phi_1^{G_0}(\Gamma_0) = \widetilde \Gamma_0$.
This is because of our conventions (see  \S \ref{conventions})
and because the symplectic form on 
$\bbR/2\pi \bbZ \times [-r,r]$ is $d\lambda \wedge d\theta$, where
$(\theta,\lambda) \in  \bbR/2\pi \bbZ \times [-r,r]$ (see Proposition \ref{existence} to see why). Now let us take coordinates on  
$D(T^*L_2^j) = D_{r}^{g_2}(T^*S^3)$
given by $(u,v) \in \bbR^4 \times \bbR^4$, $|u|=1$, $u \cdot v=0$,  where 
$$u = (\cos\theta e, \sin \theta f), \phantom{bbb} v=\lambda_1(-\sin\theta e, \cos \theta f)
+ \lambda_2(J_0 e)  + \lambda_3 (J_0 f)$$ 
where $\lambda_1, \lambda_2, \lambda_3 \in \bbR$, 
$e\in K_+^j$, $f\in K_-^j$, $\theta \in \bbR/2\pi \bbZ$, 
$J_0e$ $=$ $J_0(e_1, e_2,0,0)$ $ =$ $ (e_2, -e_1,0,0)$, similarly for  $J_0 f$,
 and $(g_2)_u(v,v) \leq r^2$. Then define
$$G,H: D(T^*L_2^j) \into \bbR, \phantom{bbb} H(u,v)  = H_0 (\theta, \lambda_1), \phantom{bbb} G(u,v)  =  G_0 (\theta, \lambda_1).$$
Since $G$ and $H$ are both zero near $\partial D(T^*L_2^j)$, we can extend them by zero to the rest of $M$, and we denote these also by $G$, $H$. 
Then we have 
$\phi_1^H(V_4) = \widetilde V_4$ and  $\phi_1^G(V_0) = \widetilde V_0.$
Because we have defined $G$ and $H$ in a symmetrical way 
explicitly in terms of $g$ and $h$, we can see from figure \ref{CorrectionGH}
that $H$ has an absolute minimum at $\Sigma_{40}^j$ and an absolute maximum at 
$\Sigma_{42}^j$.  Similarly, $G$ has an absolute maximum at $\Sigma_{40}^j$ and an absolute minimum at $\Sigma_{20}^j$. 
\\
\newline Recall there is an elementary isomorphism of moduli spaces
$$\mathcal{M}(L_0,L_1,\{H_t\},\{J_t\}) \into
\mathcal{M}(L_0,(\phi_1^H)^{-1}(L_1), 0, \{(\phi_t^H)^*J_t\})$$
given by $u \mapsto \widetilde u$, where
$\widetilde u(s,t)= (\phi^H_t)^{-1}(u(s,t))$.
This isomorphism also has a version at the 
linearized level which carries regular data into regular data.
From this we obtain
$HF(L_0, (\phi_1^H)^{-1}(L_1), 0, \{J_t\}) \cong HF(L_0,L_1,\{H_t\},\{(\phi_t^H)_*J_t\})$.
We apply this (as well as a simpler isomorphism where one pushes everything forward by a 
symplectomorphism) and  obtain
$$HF(\widetilde V_4, \widetilde V_0, 0, \{(\widetilde J_{40})_t\}) 
= HF(\phi_1^{H}(V_4), \phi_1^{G}(V_0), 0, \{(\widetilde J_{40})_t\})$$
$$\cong HF(V_4, ((\phi_1^{H})^{-1}\circ \phi_1^{G})(V_0), 0, 
\{ (\phi_1^{H})^{-1}_*(\widetilde J_{40})_t\})$$
  $$ =  HF(V_4, (\phi_1^{(G-H) \circ \phi_t^G})^{-1}(V_0), 0, 
\{ (\phi_1^{H})^{-1}_*(\widetilde J_{40})_t\})$$                    
 $$  \cong   HF(V_4,V_0, \{(G-H) \circ \phi_t^G\},
\{ (\phi_1^{(G-H) \circ \phi_t^G})_*(\phi_1^{H})^{-1}_*(\widetilde J_{40})_t\}).$$
Above we have used $(\phi_t^F)^{-1} = \phi_t^{\{-F \circ \phi_t^F\}}$ and
$\phi_t^{\{G_t\}} \circ \phi_t^{\{H_t\}} = \phi_t^{\{G_t + H_t \circ (\phi^G_t)^{-1}\}}$.
Similarly, we have 
$$HF(\widetilde V_4, \widetilde V_2^j, 0, \{(\widetilde J_{42})_t\}) 
= HF(\phi_1^{H}(V_4), V_2^j, 0, \{(\widetilde J_{42})_t\})$$
$$\cong    HF(V_4,V_2^j, H , \{(\phi_t^{H})_*(\phi_1^{H})^{-1}_*(\widetilde J_{42})_t\}), 
\text{ and }$$
$$HF(\widetilde V_2^j, \widetilde V_0, 0, \{(\widetilde J_{20})_t\}) 
= HF( V_2^j, (\phi_1^{-G \circ \phi_t^G })^{-1}(V_0), 0, \{(\widetilde J_{20})_t\})$$
$$\cong    HF(V_2^j,V_0,\{ -G \circ \phi_t^G\} , \{(\phi_t^{-G \circ \phi_t^G} )_*(\widetilde J_{20})_t\}).$$ 
As we mentioned above, all these new Floer data $(H,J)$ for  $V_4$, $V_2^j$, $V_0$ are regular.
Now set $(H_{40}^\alpha)_t = 0 $,
$(H_{42}^\alpha)_t =   H$, $(H_{20}^\alpha)_t =   -G \circ \phi_t^G$,
$H_{40}^\beta = (G-H) \circ \phi_t^G$,  $H_{42}^\beta = 0$, $H_{20}^\beta = 0$, for $t \in [0,1]$.
Note that  $(H_{40}^\alpha)_t$ has an absolute maximum at $\Sigma_{40}^j$ for all $t \in [0,1]$, and $(H_{42}^\alpha)_t$, $(H_{20}^\alpha)_t$
 each have an  absolute minimum respectively at 
$\Sigma_{42}^j$, $\Sigma_{20}^j$.
(For example, to see this for  $(H_{40}^\alpha)_t$, note that
$G$ and $-H$ both have a maximum at 
$\Sigma_{40}^j$, and $\phi_t^G$
is a diffeomorphism which fixes $\Sigma_{40}^j$ point-wise for all $t$.)
Add constants to each of  $(H_{40}^\alpha)_t$, $(H_{42}^\alpha)_t$ and $(H_{20}^\alpha)_t$ so that these maxima and minima 
are all equal to \emph{zero}.
We take  homotopies  of time dependent Hamiltonians,
$$(H_{40}^{\alpha \beta})_{s,t}, (H_{42}^{\alpha \beta})_{s,t}, (H_{20}^{\alpha \beta})_{s,t}: M \into \bbR, s \in \bbR, t \in [0,1], \text{ with} $$
 $$(H_{40}^{\alpha \beta})_{s,t} = \varphi(s)(H_{40}^\alpha)_t, \, 
(H_{42}^{\alpha \beta})_{s,t} = \varphi(s)(H_{42}^\alpha)_t, \,
(H_{20}^{\alpha \beta})_{s,t} = \varphi(s)(H_{20}^\alpha)_t,$$
where $\varphi: \bbR \into \bbR$ is some monotone cut-off function with
$\varphi(s) = 1$ for $s \leq -1$ and $\varphi(s) = 0$ for $s \geq 1$.
Because $\partial_s (H_{40}^{\alpha \beta})_{s,t}(p) \leq 0$ for all
$p\in M$, $s\in \bbR$, $t \in [0,1]$, we say that 
$(H_{40}^{\alpha \beta})_{s,t}$ is a \emph{monotone} homotopy (see 
\cite{CFH, FH, CR}), and similarly for $(H_{42}^{\alpha \beta})_{s,t}$, 
$(H_{20}^{\alpha \beta})_{s,t}$. This condition will be important in what follows.
Set 
$$(J_{40})_t^\alpha= (J_{40})_t,
\phantom{bb}   (J_{40}^\beta)_t 
= (\phi_1^{(G-H) \circ \phi_t^G})_*(\phi_1^{H})^{-1}_*(\widetilde  J_{40})_t,$$
$$(J_{42}^\alpha)_t =(\phi_t^{H})_*(\phi_1^{H})^{-1}_*(\widetilde J_{42})_t,
\phantom{bb}  
(J_{42})_t^\beta=(  J_{42})_t,\text{ and } $$
$$(J_{20}^\alpha)_t = (\phi_t^{-G \circ \phi_t^G} )_*(\widetilde J_{20})_t, 
\phantom{bb}   (  J_{20})_t^\beta= (  J_{20})_t.$$
Pick homotopies of almost complex structures
$(J_{40}^{\alpha \beta})_{s,t}$, $(J_{42}^{\alpha \beta})_{s,t}$, $(J_{20}^{\alpha \beta})_{s,t}$ respectively equal to  
$(  J_{40}^\alpha)_t$, $(  J_{42}^\alpha)_t$, $(  J_{20}^\alpha)_t$ for $s \leq -1$ and equal to 
$(  J_{40}^\beta)_t$, $(  J_{42}^\beta)_t$, 
$(  J_{20})_t^\beta$ for  $s \geq 1$.
We will consider the continuation map
$$\phi_{40}: HF(V_4, V_0, H^\alpha_{40}, J^\alpha_{40}) \into  
HF(V_4, V_0, H^\beta_{40}, J^\beta_{40}),$$
and similarly for $(V_4,V_2)$ and $(V_2,V_0)$.
The main ingredient for this 
is the moduli space of $s$-dependent 
$(H^{\alpha \beta}_{40}, J^{\alpha \beta}_{40})$-holomorphic curves:
\begin{gather}\label{phimoduli}
\mathcal{M}(V_4,V_0,  H^{\alpha \beta}_{40}, J^{\alpha \beta}_{40}) 
=\{ u  \in C^\infty(\bbR \times [0,1], M) :\phantom{b}
 u(\{0\} \times \bbR ) \subset V_4,  \phantom{b} \\\notag
 u(\{1\} \times \bbR ) \subset V_0, \phantom{bb}
\partial_s u + (J^{\alpha \beta}_{40})_{s,t} 
(\partial_t u - X_{s,t}^{ H^{\alpha \beta}_{40}} ) = 0, \phantom{b} \int u^*\omega < \infty\}. 
\end{gather}
(See \S \ref{generalMorseBott}
 for more details on the continuation map in
 the Morse-Bott set up.) 
We choose the $J^{\alpha \beta}_{40}$ 
so that each pair $(H^{\alpha \beta}_{40},J^{\alpha \beta}_{40} )$ is regular in 
the sense that the linearized Cauchy-Riemann operator associated to the above moduli space is regular (in addition, we pick 
 $J^{\alpha \beta}_{40}$
so that the Morse-Bott data
 $((h_{40}^j, g_{40}^j), H^{\alpha \beta}_{40},J^{\alpha \beta}_{40} )$
is regular for each $j$). We impose similar conditions regarding
$(V_4,V_2^j)$ and $(V_2^j,V_0)$.

\begin{lemma} \label{uconst} If $u \in \mathcal{M}(V_4,V_0,  H^{\alpha \beta}_{40}, J^{\alpha \beta}_{40})$ satisfies $\lim_{s\rightarrow \pm \infty} u(s,t) \in \Sigma_{40}^j$, then 
$u$ must be constant. The corresponding statements hold for 
$(V_4,V_2^j)$ and $(V_2^j,V_0)$ as well.
\end{lemma}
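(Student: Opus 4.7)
The plan is to apply the standard monotone-homotopy energy estimate. I carry out the $40$ case in detail; the $42$ and $20$ cases are identical in form, with $\Sigma^j_{40}$ replaced by $\Sigma^j_{42}$ and $\Sigma^j_{20}$ respectively.

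Let $u \in \mathcal{M}(V_4, V_0, H^{\alpha\beta}_{40}, J^{\alpha\beta}_{40})$ have limits $p_\pm := \lim_{s \to \pm\infty} u(s,\cdot) \in \Sigma^j_{40}$. Starting from the pointwise identity
\[
|\partial_s u|^2 \;=\; \omega(\partial_s u,\partial_t u) \;-\; \partial_s\!\bigl(H^{\alpha\beta}_{s,t}(u)\bigr) \;+\; (\partial_s H^{\alpha\beta}_{s,t})(u),
\]
integrating over the strip and applying Stokes' theorem with the primitives $\theta|V_4 = dk_4$, $\theta|V_0 = dk_0$, I obtain the energy identity
\[
E(u) \;=\; \bigl[(k_4-k_0)(p_+) - (k_4-k_0)(p_-)\bigr] \;-\; \int_0^1 \bigl[H^{\alpha\beta}_{s,t}(u(s,t))\bigr]_{s=-\infty}^{s=+\infty}\,dt \;+\; \iint (\partial_s H^{\alpha\beta})(u)\,ds\,dt.
\]

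The three terms on the right are all $\leq 0$, and the first two in fact vanish. \emph{(i)} Since $d(k_4-k_0) = \theta|V_4 - \theta|V_0 = 0$ on $V_4 \cap V_0$ and $\Sigma^j_{40}$ is a connected component of this intersection containing both $p_\pm$, the first bracket is zero. \emph{(ii)} The tori $\Sigma^j_{40}$, $\Sigma^j_{42}$, $\Sigma^j_{20}$ are by construction critical submanifolds of $G$ and $H$, hence are fixed pointwise by $\phi^G_t$ and $\phi^H_t$; the constants added to the Hamiltonians are chosen so that $H^\alpha_{40}$ and $H^\beta_{40}$ both vanish identically on $\Sigma^j_{40}$, whence so does the interpolating family $H^{\alpha\beta}_{s,t}$, killing the boundary term. \emph{(iii)} The third integral is $\leq 0$ by the monotone homotopy condition $\partial_s H^{\alpha\beta}_{s,t} \leq 0$. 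Combined with $E(u) \geq 0$, we conclude $E(u) = 0$.

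Consequently $\partial_s u \equiv 0$, so $u(s,t) = \gamma(t)$ for a path $\gamma \colon [0,1] \to M$ with $\gamma([0,1]) \subset \Sigma^j_{40}$ (by the limit condition). The Floer equation reduces to $\gamma'(t) = X_{H^{\alpha\beta}_{s,t}}(\gamma(t))$ for every $s$; since $\Sigma^j_{40} \subset Crit(H^{\alpha\beta}_{s,t})$ at every $(s,t)$, the right-hand side vanishes, $\gamma$ is constant, and $u$ is the constant strip at $p_- = p_+$. The hardest point to verify carefully will be \emph{(ii)}---that the scalar adjustments of $G$ and $H$ can simultaneously render all three interpolations $H^{\alpha\beta}_{40}, H^{\alpha\beta}_{42}, H^{\alpha\beta}_{20}$ zero on their respective tori; this reduces to a finite-dimensional compatibility among the extrema of $g$ and $h$ in Figure \ref{CorrectionGH}, which can always be arranged by suitable choice of amplitudes during construction.
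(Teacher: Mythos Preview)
Your proof is correct and follows essentially the same monotone-homotopy energy argument as the paper: the paper phrases it via the $s$-dependent action $a(s)=A^{H^{\alpha\beta}_{40}}_s(u(s,\cdot))$ and shows $\partial_s a\le 0$ with $a(\pm\infty)=0$, while you integrate this directly into an energy identity; the conclusions $E(u)=0$ and $\partial_s u\equiv 0$ are reached the same way. Your final step (using that $\Sigma^j_{40}$ is critical for every $H^{\alpha\beta}_{s,t}$) differs cosmetically from the paper's (which uses that the homotopy is identically zero at one end in $s$), but both are valid.

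One remark on your closing concern about \emph{(ii)}: there is no compatibility to arrange. The paper does not add constants to $G$ and $H$ themselves; it adds constants independently to the three nonzero Hamiltonians among $H^{\alpha}_{40},H^{\beta}_{40},H^{\alpha}_{42},H^{\beta}_{42},H^{\alpha}_{20},H^{\beta}_{20}$, and these feed into three \emph{separate} continuation maps. So the normalizations are decoupled, and no ``compatibility among extrema of $g$ and $h$'' is needed.
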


\begin{proof} We prove this for $(V_4,V_0)$; the other cases are exactly the
same. Consider the $s-$dependent action functional:
For $y \in C^\infty([0,1], M)$ with $y(0) \in V_4$, $y(1) \in V_0$, 
$$A^{H^{\alpha \beta}_{40}}_s(y) =
   - \int_0^1 y^*\theta \,dt
+ h_{V_4}(y(1)) +h_{V_0}(y(0)) + \int_0^1 (H^{\alpha \beta}_{40})_{s,t}(y(t))\, dt,$$
where $\theta|V_i = dh_{V_i}, i=4,0$. Now consider $a(s) = A^{H^{\alpha \beta}_{40}}_s(u(s,\cdot))$
First, we have 
\begin{gather}\label{da}
\partial_s a(s)  = d(A^{H^{\alpha \beta}_{40}}_s) \cdot \partial_s u + 
\int_0^1 [ \partial_s (H^{\alpha \beta}_{40})_{s,t}] (u(s,t)) \,dt \\
=-\int_0^1 (g^{\alpha \beta}_{40})_{s,t}(\partial_s u,\partial_s u)\,dt \notag
+  \int_0^1 [ \partial_s (H^{\alpha \beta}_{40})_{s,t}] (u(s,t)) \,dt  \leq 0.
\end{gather}
Here the first term comes from 
the standard calculation that the action functional 
decreases along holomorphic strips, and $(g^{\alpha \beta}_{40})_{s,t}$ 
is the metric determined by  $(J^{\alpha \beta}_{40})_{s,t}$ and $\omega$, 
so the first term is $\leq 0$. The second term is $\leq 0$ because we assumed
$\partial_s (H^{\alpha \beta}_{40})_{s,t}(p) \leq 0$ for all $p \in M$.
\\
\newline Next, set  $p_{\pm} =\lim_{s\rightarrow \pm \infty} u(s,t) \in \Sigma_{40}^j$. Since $(H^{\alpha}_{40})_t$ and  $(H^{\beta}_{40})_t$
are both equal to zero at $\Sigma_{40}^j$, we have
$$\lim_{s \rightarrow  - \infty} a(s) = 
 \int_0^1(H^{\alpha}_{40})_t(p_-)\,dt =0 = \int_0^1(H^{\beta}_{40})_t(p_+)\,dt= \lim_{s \rightarrow  + \infty} a(s).$$
Since $\partial_s a \leq 0$, this shows $a$ is constant, so $\partial_s a =0$.
Now, returning to (\ref{da}), and recalling that each of the
two terms is $\leq 0$, we conclude that each term must be zero; in particular
$-\int_0^1 (g^{\alpha \beta}_{40})_{s,t}(\partial_s u,\partial_s u) =0$.
Hence $\partial_s u =0$ and therefore $u(s,t) = x(t)$ depends only on $t$ and
$(\partial_t x(t) - X_{s,t}^{ H^{\alpha \beta}_{40}}(x(t)) = 0$ for all $s$.
But for $s \geq 1$, we have  $X_{s,t}^{ H^{\alpha \beta}_{40}} =0$, so 
$\partial_t x(t) =0$ and we conclude $u$ is constant.
\end{proof}

Recall that in Proposition \ref{generators} we found specific generators and relations for 
$$HF(V_4, V_0, J_{40}^\alpha), 
HF(V_4, V_2, J_{42}^\beta), \text{ and } HF(V_2, V_0, J_{20}^\beta),$$ 

in terms of the critical points 
of $(h_{42}^j,g_{42}^j)$, $(h_{20}^j,g_{20}^j)$, $(\widetilde h_{40}^j,\widetilde g_{40}^j)$
on  $\Sigma_{42}^j $,  $\Sigma_{20}^j$, $\Sigma_{40}^j$.

\begin{lemma} \label{fixgenerators} The continuation maps
$$\phi_{40}: HF(V_4, V_0, H^\alpha_{40}, J^\alpha_{40}) \into  
HF(V_4, V_0, H^\beta_{40}, J^\beta_{40}),$$
$$\phi_{42}: HF(V_4, V_2, H^\alpha_{42}, J^\alpha_{42}) \into  
HF(V_4, V_2, H^\beta_{42}, J^\beta_{42}),$$
$$\phi_{20}: HF(V_2, V_0, H^\alpha_{20}, J^\alpha_{20}) \into  
HF(V_2, V_0, H^\beta_{20}, J^\beta_{20})$$
are such that  $\phi_{40}$,  $\phi_{42}^{-1}$
 $\phi_{20}^{-1}$ fix all  the generators given in  
Proposition \ref{generators}.
\end{lemma}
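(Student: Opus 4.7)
The plan is to use Lemma \ref{uconst} together with an action comparison to show that on each of the specific generators from Proposition \ref{generators}, the chain-level continuation maps receive contributions only from constant strips, which produce the identity.

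First I would extend Lemma \ref{uconst} to allow the two endpoints of $u$ to lie on different components of $\bigcup_{j'}\Sigma_{40}^{j'}$. The proof given in Lemma \ref{uconst} needs no change: the additive constants added to $(H^\alpha_{40})_t$ and $(H^\beta_{40})_t$ made each vanish on \emph{all} of $\bigcup_{j'}\Sigma_{40}^{j'}$ simultaneously, so $a(-\infty) = a(+\infty) = 0$ no matter which component each endpoint lies in, forcing $u$ to be constant. The Morse-Bott chain-level expression for $\phi_{40}(\widetilde z)$ counts hybrid configurations consisting of a gradient flow of $\widetilde h_{40}^\alpha$ from $\widetilde z$ to $p\in\Sigma_{40}^j$, a strip $u$, and a gradient flow of $\widetilde h_{40}^\beta$ from the right endpoint $q$ of $u$ to the output generator. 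The extended Lemma \ref{uconst} forces $u$ to be constant whenever $q\in\bigcup_{j'}\Sigma_{40}^{j'}$, and since the Morse data on both sides coincide, this contribution is precisely the identity on the subcomplex generated by the Morse-Bott tori.

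Next I would rule out configurations in which $q$ is an isolated critical point of $f_{40}$ by an action argument. By exploiting the freedom to shift the primitives $h_{V_4}, h_{V_0}$ (and, if necessary, the choice of $f_{40}$ afforded by Lemma \ref{MorsehomologyII}), I would arrange the normalized action to vanish on every $\Sigma_{40}^j$ and to be strictly positive at every isolated critical point of $f_{40}$; then $a(-\infty)=0 < a(+\infty)$ contradicts $\partial_s a\leq 0$, so no such strip exists. The identical three-step argument, applied with the $(V_4,V_2^j)$ and $(V_2^j,V_0)$ analogs of Lemma \ref{uconst}, shows that $\phi_{42}$ fixes $x_1^j, x_2^j$ and $\phi_{20}$ fixes $y_2^j, (y_1^j)'$; here the potential off-diagonal targets sit on $K_+^j$ and $K_-^j$ and are ruled out using that $H$ has absolute minimum on $\Sigma_{42}^j$ and $-G\circ\phi_t^G$ has absolute minimum on $\Sigma_{20}^j$. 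Since continuation maps are isomorphisms on homology via reverse monotone homotopies and the standard chain-homotopy equivalence, $\phi_{42}$ and $\phi_{20}$ fixing the relevant generators immediately yields the corresponding statements for $\phi_{42}^{-1}$ and $\phi_{20}^{-1}$.

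The main obstacle I anticipate is the action-normalization step: one must verify that the primitives of $\theta$ on the three Lagrangians, together with whatever freedom remains in the Morse-Bott functions $f_{40}, f_{42}^j, f_{20}^j$ and the additive constants on the Hamiltonians, can be adjusted simultaneously so that the relevant Morse-Bott generators lie at a common zero action value while every other critical component lies strictly above. This is a careful bookkeeping exercise dictated by the explicit shapes of the $\Gamma_i$ (figure \ref{CorrectionGammaIIIbig(Floer)}) and of $g,h$ (figure \ref{CorrectionGH}), but the core energy inequality argument is self-contained and robust.
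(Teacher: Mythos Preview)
Your use of Lemma~\ref{uconst} for the ``diagonal'' contribution matches the paper. The gap is in your treatment of the off-diagonal terms for $\phi_{42}$ and $\phi_{20}$.

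For $\phi_{42}$ the potential off-diagonal outputs lie on $K_+^j$, and you propose to exclude them by an action inequality. But the action at $K_+^j$ is \emph{strictly lower} than at $\Sigma_{42}^j$: in the notation of \S\ref{specialcaseI} one has $A(C_+)=A(\Sigma_{42}^j)>A(C_-)=A(K_+^j)$, and this gap is a fixed geometric quantity (essentially the area of the bigon between $\widetilde\Gamma_4$ and $\widetilde\Gamma_2$). It cannot be removed by shifting the primitives $h_{V_4},h_{V_2}$ (which only add a single overall constant to the action) or by adding constants to the Hamiltonians. The monotone homotopy gives only $a(-\infty)\geq a(+\infty)$, so a continuation strip from $\Sigma_{42}^j$ on the $\alpha$-side to $K_+^j$ on the $\beta$-side is perfectly consistent with the energy inequality. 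The fact that $H$ attains its minimum on $\Sigma_{42}^j$ is what makes the homotopy monotone; it does not restrict the output component. The same issue arises for $\phi_{20}$ with $K_-^j$.

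The paper does not try to exclude these contributions at chain level. Instead it writes $\phi_{42}(x_2^j)=x_2^j+A\,a_0^j+B\,a_1^j$ for unknown coefficients $A,B$, and then observes from the explicit differential computed in Lemma~\ref{MorsehomologyI} that $a_0^j=\partial x_0^j$ and $a_1^j=\partial (x_1^j)'$ are boundaries; hence $\phi_{42}([x_2^j])=[x_2^j]$ on homology, and similarly for $x_1^j$ and for $\phi_{20}$. For $\phi_{40}$ the same strategy works: the off-diagonal outputs are the isolated critical points $p_i$ of $f_{40}$, and by inspecting the handle decomposition from Lemma~\ref{MorsehomologyII} one checks that every such $p_i$ has $\partial p_i\neq 0$, so $[p_i]=0$. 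This homological argument is both simpler and avoids the action-balancing problem you flagged as the main obstacle.
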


\begin{proof} 
 First we deal with $HF(V_4, V_2^j)$ and $HF(V_2^j, V_0)$. 
Recall we have  generators of $CF(V_4,V_2^j)$ given by
$$x_2^j, x_1^j, (x_1^j)', x_0^j \in \Sigma_{42}^j, \, a_0^j,a_1^j \in K_+^j$$ 
and generators of $CF(V_2^j, V_0)$ given by
$$y_2^j,y_1^j, (y_1^j)', y_0^j \in \Sigma_{20}^j, \, b_0^j, b_1^j \in K_-^j.$$
In the proof of lemma \ref{MorsehomologyI}
we saw that $x_2^j, x_1^j$ and $y_2^j,(y_1^j)'$ freely generate their respective  homology groups, while
$$\partial(x_1^j)' = a_1^j, \partial x_0^j = a_0^j; \, \partial (y_1^j) = b_1^j,  \partial y_0^j= b_0^j.$$ 
It follows from the definition of the continuation map 
(see \ref{generalMorseBott}) and  lemma \ref{uconst}
that
$$\phi_{42}^j(x_2^j) = x_2^j + A^ja_0^j +B^ja_1^j,$$
for some coefficients $A^j,B^j$. This shows $\phi_{42}^j(x_2^j)$ is homologous to $x_2^j$, 
so $\phi_{42}^j([x_2^j]) =[x_2^j]$ in homology. Similarly for $\phi_{20}^j$.
\\
\newline For  $HF(V_4, V_0)$ we argue similarly: Let
$z \in Crit(h_{40}^j)$ be one of the critical points of 
$h_{40}^j: \Sigma_{40}^j \into \bbR$. Let $p_1, \ldots, p_r \in N_{40}$ denote the isolated critical points of $f_{40}$. Then again we have
$$\phi_{40}([z]) = [z] + \Sigma_j A_j [p_j]$$ 
for some coefficients $A_j$. 
We will now argue that all $p_j$ are such that $\partial p_j \neq 0$, so that 
$[p_j] =0$ in homology. 
To see this we recall the details of the chain complex 
$(C_{MB})_*((N_{40},f_{40},\mu_{40}), (h_{40},g_{40}))$, which 
is determined by a certain (Morse-Bott) handle decomposition of $N_{40}$ in 
the proof of  lemma \ref{MorsehomologyII}.
In that handle decomposition, we start with the union of tori $\cup_j \Sigma_{40}^j$ which 
are minima.
Then we attach 1-handles to each pair of adjacent tori. This gives the relations
$[z_0^1]= \ldots = [z_0^n]$. 
Thus the corresponding critical points $p$ of $f_{40}$ of index 1 satisfy
$\partial p = z_0^j + z_0^k$, so $\partial p \neq 0$.
Next, we attach certain 2-handles which always run along exactly two 1-handles. Thus
each critical point $p'$ of $f_{40}$ of index 2 satisfies
$\partial p' = p_1^p+ p_1^q + \Sigma_j C_j z_j$, $p \neq q$, where $\Sigma_j C_j z_j$ represents some 
terms (of index 1) in $Crit(h_{40})$. Thus, $\partial p' \neq 0$.
Finally there is one 3-handle, and the corresponding critical point $p''$ satisfies
$\partial p'' = \Sigma_{j=1}^{j=k} z_2^j$;  it gives rise to the relation $\Sigma_j [z_2^j] =0$. Thus $\partial p'' \neq 0$.
\end{proof}

\section{The proof of Theorem  $B$: Computing the triangle product}
\label{sectiontriangleproduct}

\begin{thm}\label{fuk=flow}  The directed Donaldson-Fukaya category
of $(M,,\widetilde V_4, \{\widetilde V_2^j\},\widetilde V_0 )$ is described 
as follows. First, $HF(\widetilde V_4,\widetilde V_0)$, 
$HF(\widetilde V_4,\widetilde V_2^j)$, 
and  $HF(\widetilde V_2^j,\widetilde V_0)$, computed with the data
$(\widetilde J_{40}, (\widetilde h_{40}, \widetilde g_{40}))$,
$(\widetilde J_{42}, ( h_{42},g_{42}))$,
$(\widetilde J_{20}, ( h_{20},g_{20}))$,
are generated by 
$$x_2^{\, j}, x_1^{\, j} \in \Sigma_{42}^{\, j},\phantom{bb} y_2^{\, j}, (y_1^{\, j})' \in \Sigma_{20}^{\, j},
 \phantom{bbb} \tilde z_2^{\, j}, \widetilde {z}_1^{\, j}, 
(\widetilde{z}_1^{\, j})', \widetilde {z}_0^{\, j} \in {\Sigma}_{40}^{\, j},\phantom{bb} j=1, \ldots, k.$$ 
with the relations 
$$[ \tilde z_1^{\, j}] = \Sigma_{i \neq j} \ell k(K_j,K_i) 
[(\tilde z_1^{\, i})'];\,\, \Sigma_j [ \tilde z_2^{\, j}] = 0; \, \,
  [ \tilde z_0^{\, 1}] = \ldots =  [ \tilde z_0^{\, k}].$$
Here $m_j\in \bbZ$ is the framing coefficient for the 
$j$th 2-handle whose attaching sphere is $K_j$, 
and $\ell k(K_j,K_i)$ is the linking number of the knots.
Second,  $\mu_2$, computed with respect to $J$, 
is given by the formulas
$$\mu_2([x_2^{\, j}], [y_2^{\, j}]) = [\tilde z_2^{\, j}], \phantom{bbb}
\mu_2([x_1^{\, j}], [(y_1^{\, j})']) =[\tilde z_0^{\, j}],$$
$$\mu_2([x_2^{\, j}], [(y_1^{\, j})']) = [(\tilde z_1^{\, j})'],  \phantom{bbb}
\mu_2([x_1^{\, j}], [y_2^{\, j}]) = [\tilde z_1^{\, j}] + m_j [(\tilde z_1^{\, j})'].$$
In particular, these generators and relations are identical with those of the flow category stated in Proposition \ref{propFlow}, so the categories are isomorphic. \end{thm}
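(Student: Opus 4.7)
The plan is to assemble the pieces already established. First, the generator/relation statement for the three Floer groups follows by transport of structure. By Proposition \ref{generators}, the groups $HF(V_4, V_2^j, J_{42})$, $HF(V_2^j, V_0, J_{20})$ and $HF(V_4, V_0, J_{40})$ computed with the Morse--Smale data $(h_{42}^j,g_{42}^j)$, $(h_{20}^j,g_{20}^j)$, $(\widetilde h_{40},\widetilde g_{40})$ carry exactly the generators $x_2^j, x_1^j$; $y_2^j, (y_1^j)'$; and $\widetilde z_i^j$ with the stated linking relations. The natural ``push-forward'' isomorphisms from Section \ref{sectioncontnmap} identify these with the groups $HF(V_4,V_0, H^\beta_{40}, J^\beta_{40})$, $HF(V_4,V_2^j, H^\alpha_{42}, J^\alpha_{42})$, $HF(V_2^j,V_0, H^\alpha_{20}, J^\alpha_{20})$, and the continuation maps $\phi_{40}, \phi_{42}^{-1}, \phi_{20}^{-1}$ fix each of these generators on homology by Lemma \ref{fixgenerators}. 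Composing these identifications transports the generators and relations verbatim to the $(\widetilde V_4,\widetilde V_2^j,\widetilde V_0)$ side computed with $(\widetilde J_{40}, \widetilde J_{42}, \widetilde J_{20})$.

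For the triangle product $\mu_2$ with respect to $J$, the central input is Proposition \ref{moduli}, which identifies $\mathcal{M}(J, \widetilde V_4, \widetilde V_2^j, \widetilde V_0; M)$ with the explicit moduli $\mathcal{M}(W_4^j, W_2^j, W_0^j, J_\bbC; D(T^*L_2^j))$ classified in Proposition \ref{classificationII}: every such triangle equals some $w_{ef}^j$ for $(e,f) \in K_+^j \times K_-^j$, with its three boundary punctures mapping to $\sigma_{ef}^j(\delta_{42}) \in \Sigma_{42}^j$, $\sigma_{ef}^j(\delta_{20}) \in \Sigma_{20}^j$, and $\sigma_{ef}^j(\delta_{40}) \in \Sigma_{40}^j$. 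Under the canonical parameterizations $\varphi_{42}^j, \varphi_{20}^j, \varphi_{40}^j: K_+^j \times K_-^j \to \Sigma_{42}^j, \Sigma_{20}^j, \Sigma_{40}^j$, the two evaluation maps at the inputs are the identity, and the evaluation at the output is also the identity into $\Sigma_{40}^j$ viewed intrinsically as the boundary of $D_t(\nu^*K_-^j)$.

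Now, in the Morse-Bott definition of $\mu_2$ from Section \ref{MorseBott}, the generators $[x_2^j], [x_1^j] \in HF(V_4,V_2^j)$ correspond to the cycles $[\Sigma_{42}^j]$ and $[U(x_1^j)]$ on $\Sigma_{42}^j$, and analogously on $\Sigma_{20}^j$; the output class is obtained by fibering the triangle moduli over these input cycles and pushing forward the resulting output cycle by Morse-trajectories attached at the third puncture. Because the evaluation maps above become projection maps under the $\varphi$-parameterizations, the product $\mu_2([\alpha],[\beta])$ amounts to taking $(ev_+^{-1}(\alpha) \cap ev_-^{-1}(\beta))$ in $K_+^j \times K_-^j$ and pushing it to $\Sigma_{40}^j$. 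The four specified product formulas follow by plugging in: for instance, $\mu_2([x_1^j],[y_2^j])$ corresponds to $\{r_+^j\} \times K_-^j \subset K_+^j \times K_-^j$, pushed to $\Sigma_{40}^j \cong S(\nu^*K_-^j)$, which under the plumbing identification $\phi_j: S(\nu^*K_-^j) \to \partial U_j$ yields the class $\lambda_j + m_j \mu_j = [\widetilde z_1^j] + m_j [(\widetilde z_1^j)']$, in perfect parallel with the flow-category computation of $\mu^{Flow}([p_+^j],[K_-^j])$ in the proof of Proposition \ref{propFlow}; the other three cases are analogous.

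The main obstacle is compatibility: the cycle-level evaluation argument above must be interpreted within the Morse-Bott Floer framework, which means verifying that the moduli $\{w_{ef}^j\}$ admit regular evaluation maps transverse to $U(x)$, $S(a)$ for the chosen Morse data $(h_{42}^j,g_{42}^j), (h_{20}^j,g_{20}^j), (\widetilde h_{40},\widetilde g_{40})$, and that the chain-level $\mu_2$ is indeed computed by the fibered product $ev_+^{-1}(U(x_{42})) \cap ev_-^{-1}(U(x_{20})) \cap ev_0^{-1}(S(\widetilde z_{40}))$. Both are manifest here because the three evaluation maps $\varphi_{42}^j, \varphi_{20}^j, \varphi_{40}^j$ are diffeomorphisms from $K_+^j \times K_-^j$, and the chosen Morse functions on $\Sigma_{42}^j, \Sigma_{20}^j, \Sigma_{40}^j$ are pull-backs of standard Morse functions on $K_+^j \times K_-^j$ via these parameterizations. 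Once this bookkeeping is done, the final comparison with Proposition \ref{propFlow} is a term-by-term match, completing the proof.
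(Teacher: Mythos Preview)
Your argument follows essentially the same route as the paper: transport generators via Proposition \ref{generators} and Lemma \ref{fixgenerators}, then use Propositions \ref{moduli} and \ref{classificationII} to reduce $\mu_2$ to a fibered-product count in $K_+^j \times K_-^j$, and read off the framing term from the plumbing map $\phi_j$.

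There is one genuine imprecision in your final paragraph. You assert that the chosen Morse function on $\Sigma_{40}^j$ is the pull-back of a standard torus function via the intrinsic parameterization $\varphi_{40}^j: K_+^j\times K_-^j\to\Sigma_{40}^j$. This is false for $(\widetilde h_{40},\widetilde g_{40})$: by its definition in \S\ref{sectionHF}, the closures of its index-$1$ unstable manifolds are $\widetilde\lambda_j=\phi_j(S^1\times\{p\})$ and $\widetilde\mu_j=\phi_j(\{q\}\times\partial D^2)$, i.e.\ they are adapted to the framing identification $\Sigma_{40}^j=\phi_j(S^1\times\partial D^2)$, not to $\varphi_{40}^j$. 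Consequently the direct chain-level count $ev_0^{-1}(S(\widetilde z))\cap ev_1^{-1}(U(x))\cap ev_2^{-1}(U(y))$ does \emph{not} reduce to the clean picture in figure \ref{figureMorseSmaledata}, and your justification for transversality and for the specific output coefficients does not go through as stated.

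The paper handles exactly this point by introducing a \emph{second} Morse pair $(h_{40}^j,g_{40}^j)$ on $\Sigma_{40}^j$, the one that \emph{is} the $\varphi_{40}^j$-pull-back. It computes the local product $\mu_2^j$ in $D(T^*L_2^j)$ with respect to this intrinsic data, obtaining the simple formulas $\mu_2^j([x_1^j],[y_2^j])=[z_1^j]$ etc., and then composes with $\tau^j_*$ followed by a Morse continuation map $\phi$ on $\Sigma_{40}^j$ from $\tau^j_*(h_{40}^j,g_{40}^j)$ to $(\widetilde h_{40},\widetilde g_{40})$. It is precisely this continuation map, computed explicitly via the shear $(x,y)\mapsto(x+m_jy,y)$, that produces the framing correction $[z_1^j]\mapsto[\widetilde z_1^j]+m_j[(\widetilde z_1^j)']$ (formula (\ref{phicirctau})). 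Your cycle-level argument ``push to $\Sigma_{40}^j\cong S(\nu^*K_-^j)$, then apply $\phi_j$'' is the singular-homology shadow of $\phi\circ\tau^j_*$ and gives the right homology class, but to make it rigorous inside the Morse--Bott definition of $\mu_2$ you need this intermediate Morse data and continuation step, exactly as the paper does.
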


\begin{proof} 
The generators and relations for the Floer groups are obtained by
combining lemma \ref{fixgenerators} and the isomorphisms we discussed at the beginning of \S \ref{sectioncontnmap}, and then using Proposition 
\ref{generators}. 
\\
\newline We first compute the  triangle product, restricting attention to $D(T^*L_2^j)$:
$$\mu_2^j: HF(W_4^j,W_2^j) \otimes HF(W_2^j, W_0^j) \into HF(W_4^j, W_0^j),$$  
for any fixed $j$, where we recall $W_i^j = \widetilde V_i \cap D(T^*L_2^j)$, 
$i=0,2,4$. We use $J_\bbC|D(T^*L_2^j)$ and the Morse-Smale pairs
$(h_{42}^j, g_{42}^j)$, $(h_{20}^j, g_{20}^j)$, $(h_{40}^j, g_{40}^j)$
on $\Sigma_{40}^j, \Sigma_{40}^j, \Sigma_{40}^j$. Each of these Floer groups is simply the corresponding Morse homology. We identify each of 
$\Sigma_{42}^j, \Sigma_{20}^j, \Sigma_{40}^j$ with $K_+^j \times K_-^j$.
Then, by Proposition 
\ref{classificationII}, we have the following: For any given triple of points 
$(e,f),(e',f'), (e'',f'') \in K_+^j \times K_-^j$,
if $(e,f)=(e',f')=(e'',f'')$, there is exactly one 
$w \in \mathcal{M}(W_4^j,W_2^j, W_0^j, J_\bbC; D(T^*L_2^j))$ with those points at its vertices;
otherwise there is no such $w$ with those points at the vertices. This means that for any $x \in Crit(h_{42}^j), y \in Crit(h_{20}^j),
z \in Crit(h_{40}^j)$, we have 
$$\mu_2^j(x,y) = \Sigma_z \# [U(x) \cap U(y) \cap S(z)] z$$
where $\# [U(x) \cap U(y) \cap S(z)]$ is the number of zero-dimensional components of 
$U(x) \cap U(y) \cap S(z)$ counted modulo 2. 
Referring to figure \ref{figureMorseSmaledata} we see that
\begin{gather}\label{mu_2^j}
\mu_2^j([x_2^j], [y_2^j]) = [z_2^j],\phantom{bbb} \mu_2^j([x_1^j], [y_2^j]) = [z_1^j], 
\notag \\ 
 \mu_2^j([x_2^j], [(y_1^j)']) = [(z_1^j)'],\phantom{bbb} \mu_2^j([x_1^j], [(y_1^j)']) =[z_0^j] 
.\notag
\end{gather}
In each case there is a unique point of intersection in 
$U(x) \cap U(y) \cap S(z)$ or the intersection is empty.
\\
\newline We now turn to computing $\mu_2$ in $M$, using $J$. Recall that when we constructed $M_0 \subset M$  
the plumbing map $\tau^j: U_2^j \into U_0^j$ maps $D(\nu^*K_-^j)$ onto a 
neighborhood of $K_j$ in $L_0$. In particular,
$\Sigma_{40}^j$ can be viewed either as 
$\partial D_s(\nu^*K_-^j)$ or as $\partial \phi_j(S^1\times D^2_s)$ for some
 $0 < s < r$. We have two sets of Morse-Smale data on 
$\Sigma_{40}^j$: $(h_{40}^j, g_{40}^j)$ and 
$(\widetilde h_{40}^j,\widetilde  g_{40}^j)$; the first is compatible with 
$\Sigma_{40}^j = \partial D_s(\nu^*K_-^j)$, and the second with
$\Sigma_{40}^j = \partial \phi_j(S^1\times D^2_s)$.
We will use the notation $\widetilde \Sigma_{40}^j$ when we are thinking of
$\partial D_s(\nu^*K_-^j)$ equipped  $(h_{40}^j, g_{40}^j)$, and
we will use $\Sigma_{40}^j$ when we are thinking of 
 $\partial \phi_j(S^1\times D^2_s)$ equipped with $(\widetilde h_{40}^j,\widetilde  g_{40}^j)$.
\\
\newline We express $\mu_2$  
as the composite of the following maps, suppressing Morse-Smale data except where necessary. The reason $\mu_2$ can be expressed in terms of $\mu_2^j$ is of course
Proposition \ref{moduli}, which says that the underlying moduli spaces of 
holomorphic triangles  are exactly the same.
\begin{gather}
\begin{CD}
HF(\widetilde V_4, \widetilde V_2^j) \otimes HF(\widetilde V_2^j, \widetilde V_0) \label{mu_2}\\ 
@V{\cong}VV \\
HF(W_4^j,W_2^j) \otimes HF(W_2^j, W_0^j) \\ 
@VV{\mu_2^j}V \\
HF(W_4^j, W_0^j)  \\
@V{\cong}VV  \\
HF(W_4^j \cap U_2^j, W_0^j \cap U_2^j, (h_{40}^j, g_{40}^j))  \text{ in $U_2^j$} \\
@V{\cong}V{\tau^j_*}V \\ 
HF(\tau^j(W_4^j \cap U_2^j), \tau^j(W_0^j \cap U_2^j), \tau^j_*(h_{40}^j, g_{40}^j))\text{ in $U_0^j$} \\
@V{\cong}V{\phi}V \\ 
HF(\tau^j(W_4^j \cap U_2^j), \tau^j(W_0^j \cap U_2^j), (\widetilde h_{40}^j, 
\widetilde g_{40}^j)) \text{ in $U_0^j$}\\
@V{\cong}VV \\
HF_{loc}(\widetilde V_4,\widetilde V_0; \widetilde   \Sigma_{40}^j) \\
@VV{\iota}V\\
HF(\widetilde V_4, \widetilde V_0) 
\end{CD}
\end{gather}
Here: 
$\tau^j_*$ denotes the tautological isomorphism obtained by pushing 
forward all data 
from $U_2^j$ to $U_0^j$; $\phi$ is the continuation map in Morse homology
 induced by choosing any homotopy of Morse-Smale data on $\widetilde \Sigma_{40}^j$ from
$\tau^j_*(h_{40}^j,g_{40}^j)$ \text{ to } $(\widetilde h_{40}^j,\widetilde g_{40}^j);$ 
$HF_{loc}(\widetilde V_4, \widetilde V_0; \widetilde \Sigma_{40}^j)$ denotes the local Floer homology in $M$
defined by restricting attention to a small neighborhood of the component 
$\widetilde \Sigma_{40}^j$; 
$\iota$ is the map induced by the inclusion of the subcomplex
$CF_{loc}(\widetilde V_4, \widetilde V_0;\widetilde  \Sigma_{40}^j) \into CF(\widetilde V_4, \widetilde V_0).$
\\
\newline In our case,  $HF_{loc}(\widetilde V_4,\widetilde  V_0;\widetilde  \Sigma_{40}^j)$ is simply 
the Morse homology $H_*(\widetilde \Sigma_{40}^j, (\widetilde h_{40}^j,\widetilde g_{40}^j))$ and 
$\iota([\tilde z])  = [\tilde z]$
for $\tilde z =  \tilde z_0^j,\tilde z_1^j, (\tilde z_1^j)', \tilde z_2^j$.
The Floer homology groups involved in $\phi$ and $\tau^j_*$ are by definition equal to the corresponding Morse homologies. After making these identifications, that part of the diagram becomes
\[
\begin{CD}
H_*(\Sigma_{40}^j, (h_{40}^j, g_{40}^j)) \\
@VV{\tau^j_*}V \\
H_*(\widetilde \Sigma_{40}^j, \tau^j_*(h_{40}^j, g_{40}^j)) \\
@VV{\phi}V \\
H_*(\widetilde \Sigma_{40}^j, (\widetilde h_{40}^j,\widetilde g_{40}^j)).
\end{CD}
\]
In terms of singular homology, the composite $\phi \circ \tau^j_*$ corresponds to the map
$\tau^j_*: H_*(\Sigma_{40}^j) \into H_*(\widetilde \Sigma_{40}^j)$.
Recall
$\Sigma_{40}^j = S(\nu^*K_-^j) \cong S^1 \times \partial D^2$ and
$\widetilde \Sigma_{40}^j = \phi_j(S^1 \times \partial D^2)$; and $\tau^j|S(\nu^*K_-^j)$ by definition is 
just $\phi_j : S(\nu^*K_-^j) \into L_0$. Thus, intuitively, $\tau^j_*$ is just the map $(\phi_j)_*$ 
as in \S \ref{FukFlowsketch} (satisfying the formulas (\ref{phi_jformula} in  \S  \ref{sectionFlow}).
Thus we expect the map  $\phi \circ \tau^j_*$ on Morse homology to be given by the corresponding 
formulas:
\begin{gather}\label{phicirctau}
(\phi \circ \tau^j_*)([z_1^j]) = [\tilde z_1^j] + m_j [(\tilde z_1^j)'],\\ \notag
(\phi \circ \tau^j_*)([(z_1^j)']) = [(\tilde z_1^j)'],\\  \notag
(\phi \circ \tau^j_*)[z_2^j] = [\tilde z_2^j],\, (\phi \circ \tau^j_*)[z_0^j] = [\tilde z_0^j]. \notag
\end{gather}
To see this, think of $ \Sigma_{40}^j$ and $\widetilde \Sigma_{40}^j$ as identified with
 $\bbR/\bbZ \times \bbR /\bbZ$, in such a way that $(h_{40}^j, g_{40}^j)$ and  $(\widetilde h_{40}^j,\widetilde g_{40}^j)$ both have unstable manifolds which are either horizontal or vertical. 
We take $\tau^j$ to be the map $(x,y) \mapsto (x+ m_j y, y)$ (which is correct up to isotopy), so that 
$\tau^j_*(h_{40}^j, g_{40}^j)$ has unstable manifolds which are either horizontal or 
have slope $m_j$. We take the homotopy of Morse-Smale data induced by 
$h_s(x,y) = (x+ sm_j y, y)$, $s \in [0,1]$. The continuation map $\phi$ counts 
(possibly constant) $s$-dependent flow lines which interpolate between the horizontal/vertical unstable manifolds and the horizontal/slope $m_j$ unstable manifolds; the result is that $\phi\circ \tau^j_*$ 
does indeed have the above form (\ref{phicirctau}).
\\
\newline To see that $\mu_2$ satisfies the stated formulas,
 we combine (\ref{phicirctau})  with our formulas 
for $\mu_2^j$, and use the fact that $\iota$ fixes all generators 
$[\tilde z]$. 
\end{proof}

\section{Correspondence  holomorphic strips and gradient lines}\label{localstrips}

Assume $V_4$, $V_2^j$, $V_0$ are given as in \S \ref{bigsectionvanishingcycles}
and we have the corresponding functions  $f_{40}$, $f_{42}^j$, $f_{20}^j$ as in 
\S \ref{graphs}.  In this section we 
explain how to make  these functions small enough that we have a 1-1 correspondence between certain holomorphic strips in $M$ 
and the gradient lines of $f_{40}$, $f_{42}^j$, $f_{20}^j$  in $N_{40}$, 
$N_{42}^j$, and $N_{20}^j$, as stated in lemma \ref{strips=gradientlines}; this corresponds to Proposition \ref{correspondence} below in the case $(V_4, V_0)$; the other cases $(V_2^j,V_0)$ and $(V_4,V_2^j)$ have corresponding versions of  Proposition \ref{correspondence}  which 
are proved in exactly the same way. 
\\
\newline The main issue we have to worry about is that $N_{40}$, 
$N_{42}^j$, and $N_{20}^j$ have boundary. First we recall the standard 
correspondence for closed manifolds:
 See \cite[pages~84--87] {Poz} or \cite{FloerMorse}. Given a Riemannian metric $g$ on $L$ there is a natural corresponding $J_g \in \mathcal{J}(T^*L)$, 
which  exchanges the Horizontal and vertical subspaces of 
the Levi-Civita connection of $g$.

\begin{thm}[Floer]\label{Floer} Let $L$ be a smooth closed manifold 
and let $\pi: T^*L \into L$ be the projection. 
Let $g$ be a Riemannian metric on $L$. Then, there is a constant 
$A= A(L,g) >0$ such that,
for any  smooth function $f: L \into \bbR$,
if $||\nabla_g f||_\infty = max\{|\nabla_g f |_g  \}< A$ then 
$$\mathcal M(L, L, f\circ \pi, J_g)$$ 
is identical with 
$$\{\gamma \in C^{\infty}(\bbR, L) : \gamma'(s) = -(\nabla_g f)(\gamma(s))\}.$$
Namely, if $\gamma$ lies in second space then
$u(s,t) = \gamma(s)$
lies in  the first space and every $u$ in that space is of this form.
\end{thm}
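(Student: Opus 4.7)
The plan is to verify the two inclusions separately. For the easy direction, set $u(s,t):=\gamma(s)$ for $\gamma\in C^\infty(\bbR,L)$ solving $\gamma'(s) = -(\nabla_g f)(\gamma(s))$. Then $\partial_t u = 0$ and $u$ lies in the zero section, so the perturbed Cauchy-Riemann equation $\partial_s u + J_g(\partial_t u - X_{H}) = 0$ with $H := f\circ\pi$ reduces to $\gamma'(s) = J_g\bigl(X_H(\gamma(s))\bigr)$. Since $H$ depends only on the base, $X_H$ along the zero section is the vertical lift of $df$, and $J_g$ exchanges horizontal and vertical distributions of the Levi-Civita connection via the musical isomorphism $T^*L\cong TL$; a local coordinate check then gives $J_g\circ X_H|_L = -\nabla_g f$, which is exactly the gradient flow equation. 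The converse assertion, that a strip of the form $u(s,t)=\gamma(s)$ automatically produces a gradient flow line, follows from the same calculation run backwards.

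The substantive content of the theorem is the other direction: every $u\in\mathcal{M}(L,L,H,J_g)$ is $t$-independent once $\|\nabla_g f\|_\infty<A(L,g)$. I would proceed by an a priori confinement estimate followed by a quantitative maximum principle. The energy identity $E(u) = \mathcal{A}_H(x_-) - \mathcal{A}_H(x_+)$ for a Floer strip asymptotic to critical points $x_\pm\in\mathrm{Crit}(f)\subset L$ gives an energy bound of order $\|\nabla_g f\|_\infty\cdot\diam_g(L)$ (since $\mathcal{A}_H$ on constant paths at zero-section points equals $\pm f$), and standard a priori estimates for finite-energy pseudoholomorphic strips with boundary on the zero section then confine $u$ to a disk bundle $D_{c\|\nabla f\|_\infty}(T^*L)$ for some $c=c(L,g)$. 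Inside this neighborhood, apply the gauge transformation $\widetilde u(s,t):=\phi_H^{-t}(u(s,t))$ to convert the perturbed problem into a genuine $t$-dependent pseudoholomorphic strip with boundary on the pair $(L,\phi_H^{-1}(L))=(L,\mathrm{graph}(-df))$, two Lagrangians which are $C^1$-close for small $\|\nabla f\|$. In geodesic normal coordinates around $L$, write $\widetilde u = (q(s,t), p(s,t))$; the Cauchy-Riemann equation decouples to leading order into a gradient-flow-plus-error equation for $q$ and an elliptic equation for the fiber component $p$ with boundary data $p(s,0)=0$ and $p(s,1)= -df(q(s,1))$. A maximum-principle argument on $|p|^2$, with smallness of $\|\nabla f\|_\infty$ used to absorb the nonlinear remainder into the principal linear term, forces $p(s,t) = -t\,df(q(s,t))$ exactly; untwisting the gauge transformation then shows $u$ is $t$-independent, and the base curve satisfies gradient flow by the first paragraph.

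The main obstacle is extracting the effective threshold $A(L,g)$. A soft rescaling argument is easy: any hypothetical sequence $u_n$ of non-$t$-independent solutions corresponding to $f_n$ with $\|\nabla_g f_n\|_\infty\to 0$ would, after rescaling by the gradient norm, produce a nonconstant bounded $J_g$-holomorphic strip with both boundaries on the zero section $L$, contradicting exactness of $(T^*L,\omega)$. Upgrading this qualitative contradiction into a quantitative uniform threshold depending only on $(L,g)$ requires sharp Sobolev estimates on the linearized Cauchy-Riemann operator on the strip together with a careful accounting of the nonlinear error terms. I would invoke the explicit arguments of Floer~\cite{FloerMorse} and the detailed exposition of Pozniak~\cite{Poz} already cited in the excerpt, rather than reproducing the functional-analytic work from scratch.
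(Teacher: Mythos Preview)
The paper does not prove this theorem at all: it is stated as a classical result of Floer, with the citation ``See \cite[pages~84--87]{Poz} or \cite{FloerMorse}'' immediately preceding the statement, and no proof is given. Your proposal is therefore not comparable to a proof in the paper, because there is none; your final paragraph, which defers the quantitative threshold to exactly those two references, is in effect what the paper does in its entirety.

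That said, your sketch of the argument is broadly in line with how the result is actually proved in those references. The easy direction is correct as you state it. For the hard direction, your outline via energy confinement, gauge transformation to the graph picture, and a maximum-principle estimate on the fiber component is one standard route; the soft compactness/rescaling contradiction you mention is another. One minor caution: your claim that the maximum principle on $|p|^2$ forces $p(s,t)=-t\,df(q(s,t))$ \emph{exactly} is too strong as stated---what one actually obtains is that the fiber component vanishes after undoing the gauge, not that it takes a specific affine-in-$t$ form before undoing it. But since you ultimately invoke \cite{FloerMorse} and \cite{Poz} rather than claiming a self-contained proof, this does not affect the correctness of your proposal as written.
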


The first thing we do is replace $f_{40}$, $f_{42}^j$, $f_{20}^j$ by 
 $\epsilon f_{40}$, $\epsilon f_{42}^j$, $\epsilon f_{20}^j$ for some small
 $\epsilon >0$ (but we keep the same notation)
so  that  
$$||\nabla_{\mu_{40}} f_{40}||_\infty < A(L_0,\mu_{40}), 
||\nabla_{\mu_{42}^j} f_{42}^j||_\infty < A(L_2^j,\mu_{42}^j), 
||\nabla_{\mu_{20}^j} f_{20}^j||_\infty < A(L_2^j,\mu_{20}^j).$$
On the left hand-sides, the functions are defined on  
$N_{40}$  $N_{42}^j$  $N_{20}^j$, but on the right hand side $\mu_{40}$
$ \mu_{42}^j$, $ \mu_{42}^j$ are the metrics from lemmas \ref{MorsehomologyI} 
and \ref{MorsehomologyII}, extended to the rest of $L_2^j$ and $L_0$ arbitrarily.
We patch the graphs $\Gamma(\epsilon df_{40})$, $\Gamma(\epsilon df_{42}^j)$,
$\Gamma(\epsilon df_{20}^j)$, into $V_4$, $V_2^j$, $V_0$ using some cut-off 
function; this does not alter their basic shape and we keep the same notation.
\\
\newline We will now focus our attention on $(V_4,V_0)$. The other cases 
 $(V_2^j,V_0)$, and $(V_4, V_2^j)$ can be treated in the same way.
First, fix some smooth extension $f_{40}^*: L_0 \into \bbR$ still satisfying  
$||\nabla_{\mu_{40}} f_{40}^*||_\infty < A(L_0,\mu_{40})$.
Denote the components of  $\partial N_{40}$ by $C_{40}^j$, $j=1,\ldots, k$, 
where $C_{40}^j$ is a torus adjacent to  $\Sigma_{40}^j$.
Let  $$\psi_j: T^2 \times [0,1] \into N_{40},$$  
$j=1,\ldots, k$, be a parameterization of a tubular neighborhood of $C_{40}^j$,
with   $\psi_j(T^2 \times \{1\}) = C_{40}^j$. Set 
$$\widehat N_{40} = N_{40} \setminus (\cup_j \psi_j(T^2 \times (0,1]))$$ 
and assume $\psi_j(T^2 \times [0,1]))$ is small enough that
$\Sigma_{40}^j \subset \widehat N_{40}$  for every $j$.
We now define some functions $f_{40}^n, \widehat f_{40}: L_0 \into \bbR$, 
$n \geq 1$, with $f_{40}^n \into \widehat f_{40}$ in $C^\infty$,
see figure \ref{figuretildef_40} for a schemetic near  
$\psi_j(T^2 \times [0,1]))$.
Define $\widehat f_{40}$ so that 
$$\widehat f_{40}|\widehat N_{40} =0, \phantom{bbb} \widehat f_{40}|
(L_0 \setminus N_{40}) =   f_{40}^*|(L_0 \setminus N_{40}),  $$
and on each intermediate region
$\psi_j(T^2 \times [0,1])$, 
set $$\widehat f_{40}(\psi_j(p,t)) = \varphi(t) f_{40}^*(\psi_j(p,t)),$$ where 
$\varphi$ is  smooth and monotone, with $\varphi(0) = 0$, 
$\varphi|{[1/2,1]} = 1$, so that   $\widehat f_{40} = f_{40}^*$ on 
$\psi_j(T^2 \times [1/2,1])$. Now define
$f_{40}^n : L_0 \into \bbR$ so that 
$$f_{40}^n|\widehat N_{40} = \frac{1}{n}f_{40}|\widehat N_{40},  \phantom{bbb}
f_{40}^n|(L_0 \setminus N_{40}) = f_{40}^*|(L_0 \setminus N_{40}), \text{ and }$$
$$  f_{40}^n(\psi_j(p,t)) = (1-\varphi(t))\frac{1}{n}f_{40}(\psi_j(p,0)) 
+  \varphi(t) f_{40}^*(\psi_j(p,t)),$$
so that $ f_{40}^n = f_{40}^*$ on $\psi_j(T^2 \times [1,1/2])$.
\begin{figure}
\begin{center}
\includegraphics[width=4in]{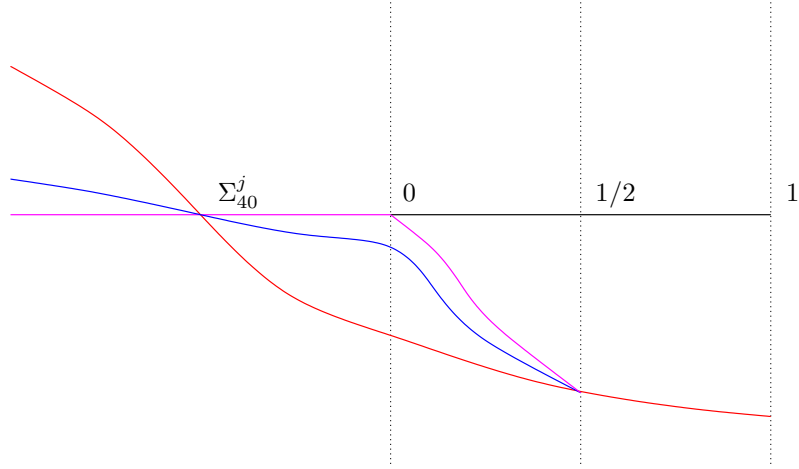} 
\put(-210,100){$\Sigma_{40}^j$}
\put(-140,100){$0$}
\put(-67,100){$1/2$}
\put(5,100){$1$}
\caption{Schematic of $f_{40}$ (red), $\widehat f_{40}$ (purple), $f_{40}^n$ (blue).}
\label{figuretildef_40}
\end{center}
\end{figure}
Now take smooth functions 
$$H_n, G_n: M \into \bbR \text{ so that } \phi_1^{H_n}(\Gamma(\widehat f_{40}))
= \Gamma(df_{40}^n), \text{ and }
\phi_1^{G_n}(L_0) = \Gamma(df_{40}^n).$$

We may assume $H_n \into 0$ in the $C^\infty$ norm,
 since $f_{40}^n \into \widehat f_{40}$. Similarly we may assume
$G_n \into G$ such that $\phi_1^G(L_0) = \Gamma(d \widehat f_{40})$.
To be more definite, assume that 
in coordinates $(x,y) \in T^*L_0$ 
in some small Weinstein neighborhood of $L_0$
we have $H_n(x,y) = -(f_{40}^*(x) -f_{40}^n(x))$ and
$G_n(x,y) = -f_{40}^n(x)$ (the signs in front are necessary because of our conventions in  \S \ref{conventions}).
\\
\newline Let $\widehat V_4$ and $V_4^n$ be the exact Lagrangian spheres obtained
by replacing the graph $\Gamma(df_{40})$ by $\Gamma(d \widehat f_{40})$ and 
 $\Gamma(df_{40}^n)$ respectively. 

\begin{lemma} \label{stripcontainment} Take any $J \in \mathcal{J}([0,1],M,I)$. 
Assume $J_n  \in \mathcal{J}([0,1],M,I)$ satisfies $J_n \into J$ in the $C^\infty$ sense. 
Then for any $\epsilon >0$ there is an $N$ such that for $n \geq N$,  every 
$$u \in \mathcal M(\widehat V_4, V_0, H_n, J_n; M)$$
has $Image (u) \subset \mathcal{N_\epsilon}(\widehat N_{40})= 
\{p \in M : dist(p,\widehat N_{40}) < \epsilon\} $.   
\end{lemma}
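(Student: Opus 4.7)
The plan is to argue by contradiction, using Gromov--Floer compactness together with the exactness of $(M,\omega)$ and of the Lagrangians $\widehat V_4, V_0$. The first key observation is that $\widehat V_4 \cap V_0 \subset \widehat N_{40}$. Indeed, on each $D(T^*L_2^j)$, $\widehat V_4 = W_4^j$, so $\widehat V_4 \cap V_0 \cap D(T^*L_2^j) = W_4^j \cap W_0^j = \Sigma_{40}^j \subset \widehat N_{40}$. On $D(T^*L_0)$, $\widehat V_4$ is the graph of $d\widehat f_{40}$, and since $\widehat f_{40}$ vanishes identically on $\widehat N_{40}$, this graph coincides with the zero section along $\widehat N_{40}$; choosing the extension $f_{40}^*$ generically so that any of its critical points outside $N_{40}$ lie inside the open solid tori $\phi_j(S^1 \times \Int D^2_r)$ (which have been removed from $V_0$), the intersection outside $\widehat N_{40}$ is empty. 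Since the Liouville one-form $\theta$ vanishes on the zero section, I choose primitives $h_{\widehat V_4}, h_{V_0}$ of $\theta|\widehat V_4$ and $\theta|V_0$ that both vanish identically on $\widehat N_{40}$.

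Next I show that $E(u_n) \to 0$ uniformly in $u_n \in \mathcal M(\widehat V_4, V_0, H_n, J_n; M)$. The asymptotic Hamiltonian chords $\gamma_\pm^n$ of $u_n$ satisfy $\gamma'(t) = X_{H_n}(\gamma(t))$ with $\gamma(0) \in V_0$, $\gamma(1) \in \widehat V_4$; since $H_n \to 0$ in $C^\infty$, every such chord lies in a shrinking $C^1$-neighborhood of a constant chord at a point of $\widehat V_4 \cap V_0 \subset \widehat N_{40}$, where both primitives and $H_n$ vanish in the limit. Hence the action satisfies $A^{H_n}(\gamma_\pm^n) \to 0$, and therefore $E(u_n) = A^{H_n}(\gamma_-^n) - A^{H_n}(\gamma_+^n) \to 0$.

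Now suppose the lemma fails: there are $\epsilon > 0$, a subsequence $n_k \to \infty$, and points $(s_k, t_k) \in \bbR \times [0,1]$ with $u_k(s_k, t_k) \notin \mathcal N_\epsilon(\widehat N_{40})$. Set $\tilde u_k(s,t) = u_k(s + s_k, t)$, so $\tilde u_k(0, t_k) \notin \mathcal N_\epsilon(\widehat N_{40})$, and pass to a subsequence with $t_k \to t_\infty$. By the vanishing-energy bound, Gromov--Floer compactness produces a broken limit consisting of finite-energy $J$-holomorphic strips with boundary on $(V_0, \widehat V_4)$ and asymptotes in $\widehat N_{40}$; the exactness of $\omega$ forces any sphere bubble to be constant, and the exactness of $\widehat V_4, V_0$ forces any disk bubble to have zero area and hence be constant, so no bubbles occur. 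For each component $v$ of the broken limit, Stokes' theorem and the choice of primitives give
$$
\int v^*\omega = h_{\widehat V_4}(p_+) - h_{\widehat V_4}(p_-) + h_{V_0}(p_-) - h_{V_0}(p_+) = 0,
$$
since $p_\pm \in \widehat N_{40}$; hence $v$ is constant. Thus the broken limit reduces to a single constant strip at some $q \in \widehat N_{40}$, i.e.\ $\tilde u_k \to q$ in $C^0_{\mathrm{loc}}$. Combined with uniform Lipschitz bounds from elliptic regularity, this gives $\tilde u_k(0, t_k) \to q \in \widehat N_{40}$, contradicting $\tilde u_k(0, t_k) \notin \mathcal N_\epsilon(\widehat N_{40})$ for $k$ large.

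The main technical obstacle is the compactness step in the presence of the clean (Morse-Bott) intersection $\widehat V_4 \cap V_0 = \widehat N_{40}$: the asymptotic chords of $H_n$ may drift along $\widehat N_{40}$, which is a non-transverse critical manifold of positive dimension, so the underlying Fredholm problems degenerate in the limit and standard transverse compactness does not apply verbatim. However, because $H_n \to 0$ in $C^\infty$ (so the chord equation limits to the trivial constant-path equation at every point of $\widehat N_{40}$) and because exactness forces all limit strips and bubbles to be constant, the compactness question reduces to uniform $C^0$-convergence of a sequence of nearly-constant holomorphic strips, which yields the desired containment.
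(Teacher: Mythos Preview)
Your argument is correct and essentially the same as the paper's: contradiction via Gromov compactness, using exactness to rule out bubbling and the vanishing of the primitives on $\widehat N_{40}$ together with $H_n \to 0$ to force the energies to zero, so the limit strip is constant at a point of $\widehat V_4 \cap V_0 = \widehat N_{40}$, contradicting the assumed escape point. The paper's version is slightly sharper in one spot---it identifies the Hamiltonian chords \emph{exactly} as constant maps at points of $Crit(f_{40})$ (since $dH_n$ vanishes there) rather than via a shrinking-neighborhood argument---and your final paragraph worrying about Morse--Bott compactness is unneeded, since once the limiting energy is zero the limit is constant regardless of the intersection geometry.
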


\begin{proof} 
Since $\phi_{H_n}(\widehat V_4)$ has Morse-Bott intersection with $V_0$ it 
follows from  condition \ref{convergence} that every $u \in \mathcal M(\widehat V_4, V_0, H_n, J_n; M)$ satisfies
$$\lim_{s \into \infty} u(s, \cdot) = y_0, \phantom{bb}\lim_{s \into -\infty} u(s, \cdot) = y_1$$
for some $y_0, y_1 \in C^\infty([0,1], M)$ such that, for $j=0,1$,
\begin{gather*}
y_j'(t) = X_{H_n}(y_j(t)), \, t\in [0,1], \phantom{bbb}
y_j(0) \in \widehat V_4, \, y_j(1) \in V_0.
\end{gather*} 
Such $y_j$ are in one-one correspondence with points of 
$$\phi_{H_n}(\widehat V_4) \cap V_0 = Crit(\frac{1}{n}f_{40})= Crit(f_{40}).$$ 
Since $dH_n(p) = 0$ for any $p \in Crit(\frac{1}{n}f_{40})$, the $y_j$ are constant.
\\
\newline Now suppose for a contradiction that the lemma is false. Then, there is some $\epsilon>0$ and a sequence $u_n \in  \mathcal M(\widehat V_4, V_0, H_n, J_n; M)$ such that for every $n$ the image of $u_n$ contains a point $x_n \notin \mathcal{N_\epsilon}(\widehat N_{40})$. 
Passing to subsequences, and reparameterizing each $u_n$ by a suitable $s_n \in \bbR$, we assume: $x_n \rightarrow x_0 \notin \mathcal{N_\epsilon}(\widehat N_{40})$; $u_n(t_n,0) = x_n$, for some $t_n \in [0,1]$, $t_n \rightarrow t_0$;  there are fixed critical manifolds $C_0, C_1$ of $\frac{1}{n}f_{40}$ 
such that for every $n$, $\lim_{s \rightarrow \infty} u_n(s, t) = y_0^n$, $\lim_{s \rightarrow -\infty} u(s,t) = y_1^n$, where $y_0^n \in C_0$, $y_1^n \in C_1$
$y_0^n \rightarrow y_0$, $y_1^n \rightarrow y_1$. 
\\
\newline Because $M$ and the Lagrangian submanifolds are exact, $\int_{\bbR \times [0,1]} u_n^*\omega$ is uniformly bounded, and there is no bubbling. Hence, by  Gromov compactness, there is a convergent subsequence $u_n \into u$ where 
\begin{gather}
\partial_su + J_t \partial_t u = 0, \label{Jholo}
\end{gather}
since $H_n \into 0$. We also have
$\lim_{s \rightarrow -\infty} u(s,t) = y_0$, $\lim_{s \rightarrow \infty} u(s, t) = y_1$, $u(t_0, 0) = x_0$.
Let $f_{\widehat V_4}\in C^\infty(\widehat V_4)$, $f_{V_0}\in C^\infty(V_0)$ be such that $\theta|\widehat V_4 = df_{\widehat V_4}$, $\theta|V_0 = df_{V_0}$.
We can assume $f_{\widehat V_4}$, $f_{V_0}$ to be equal to 0 on
$Crit(\frac{1}{n}f_{40})$, since $\widehat V_4 \cap \widehat N_{40} =\widehat N_{40}$
and $V_0 \cap  \widehat N_{40} =\widehat N_{40}$. This together with $H_n \into 0$
shows that $\int_{\bbR \times [0,1]} u_n^*\omega = A(y_1^n) -A(y_0^n) \into 0$.
It follows that $\int_{\bbR \times [0,1]} u^*\omega =0$, and so by (\ref{Jholo}), $u$ must be constant, and equal to  $u(t_0,0) = x_0 \notin \mathcal{N_\epsilon}(\widehat N_{40})$. On the other hand, since
$u_n(t_0, 0) \in \widehat V_4$, and $u_n(t_0, 1)\in V_0$ both converge to $x_0$, it follows that $x_0 \in \widehat V_4 \cap V_0 =  \widehat N_{40}$, contradiction.
\end{proof}

Note that for $n$ sufficiently large  we still have
$||\nabla_{\mu_{40}} f_{40}^n||_\infty < A(L_0,\mu_{40})$ (from the corresponding inequality for $f_{40}^*$).

\begin{prop}\label{correspondence}
For each $n$ there exist 
$J_{40}^n \in \mathcal{J}([0,1],M,I)$
such that $J_{40}^n$ converges to some  
$J_{40}^\infty \in \mathcal{J}([0,1],M,I)$ in the $C^\infty$ sense, and
$J_{40}^n$ is such that for all $n$ sufficiently large we have for each $p,q \in Crit(f_{40}) = Crit(\frac{1}{n}f_{40})$, a 1-1 correspondence between 
$$\{ \gamma\in C^{\infty}(\bbR, N_{40}) :\gamma'(s) = ( \frac{1}{n} 
\nabla_{\mu_{40}}f_{40})(\gamma(s)), \gamma(-\infty) = p, \gamma(\infty) = q \}$$
and
$\mathcal{M}(V_4^n, V_0, J_{40}^n,p,q; M).$
Namely the correspondence sends $\gamma$ to $u$, where
$u(s,t) = \phi_t(\gamma(s)),$  
and $\phi_t$, $t \in [0,1]$, is a certain  exact isotopy which fixes 
$Crit(f_{40})$ point-wise. Finally, $J_{40}^n$ is regular for 
the second moduli space  if and only if the 
first moduli space is regular.
\end{prop}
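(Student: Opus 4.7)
The strategy is to reduce to Floer's theorem (Theorem \ref{Floer}) applied to the closed manifold $L_0$, exploiting the containment Lemma \ref{stripcontainment} to localize all relevant holomorphic strips to a small neighborhood of $\widehat N_{40}$ in $M$. First I would define $J_{40}^n$ by declaring that, on a Weinstein neighborhood $\mathcal{W} \subset M$ of $L_0$,
$$(J_{40}^n)_t \;=\; (\phi_t^{f_{40}^n \circ \pi})^* J_{\mu_{40}},$$
where $J_{\mu_{40}}$ is Floer's canonical almost complex structure on $T^*L_0$ associated to $\mu_{40}$, and then extend $J_{40}^n$ to the rest of $M$ in any way compatible with $I$-convexity on $\partial M$. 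Since $f_{40}^n \to \widehat f_{40}$ in $C^\infty$, the $J_{40}^n$ can be arranged to converge to some $J_{40}^\infty$ in $C^\infty$.

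Next, I would invoke the standard Hamiltonian-to-isotopy isomorphism of moduli spaces (cf.\ the beginning of \S \ref{sectioncontnmap}) to identify $\mathcal{M}(V_4^n, V_0, 0, J_{40}^n; M) \cong \mathcal{M}(\widehat V_4, V_0, H_n, J_n; M)$ for an appropriate $J_n$ with $J_n \to J_{40}^\infty$. Lemma \ref{stripcontainment} then forces, for all sufficiently large $n$, every strip on the right-hand side to have image in $\mathcal{N}_\epsilon(\widehat N_{40})$. Transporting back via the isomorphism (and using $\phi_t^{H_n} \to \mathrm{id}$ because $H_n \to 0$), every $u \in \mathcal{M}(V_4^n, V_0, 0, J_{40}^n; M)$ has image in a slightly enlarged tubular neighborhood $\mathcal{N}_{\epsilon'}(\widehat N_{40}) \subset \mathcal{W}$. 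In this region, $V_0$ coincides with the zero section $L_0$ and $V_4^n$ coincides with the graph $\Gamma(df_{40}^n)$.

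Floer's theorem then applies directly inside $T^*L_0$: since $\|\nabla_{\mu_{40}} f_{40}^n\|_\infty < A(L_0,\mu_{40})$ for all $n$ by construction, $\mathcal{M}(L_0, L_0, f_{40}^n \circ \pi, J_{\mu_{40}})$ is identified with the space of negative gradient flow lines of $f_{40}^n$ on $L_0$. Translating via the standard Hamiltonian-to-isotopy isomorphism yields
$$\mathcal{M}(L_0, \Gamma(df_{40}^n), 0, J_{40}^n; T^*L_0) \;\cong\; \{\text{gradient lines of } f_{40}^n \text{ on } L_0\},$$
with the explicit correspondence $\gamma \mapsto u(s,t) = \phi_t^{f_{40}^n \circ \pi}(\gamma(s))$; this $\phi_t$ is precisely the exact isotopy named in the statement, fixing $Crit(f_{40})$ pointwise. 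For asymptotic data $p, q \in Crit(f_{40}) \subset \widehat N_{40}$: on $\widehat N_{40}$, $f_{40}^n = \tfrac{1}{n} f_{40}$, and by inspection of the profile of $f_{40}^n$ (which takes strictly larger values outside $\widehat N_{40}$) any gradient trajectory joining $p$ to $q$ must remain inside $\widehat N_{40}$. Such a trajectory is therefore equivalently a gradient line of $\tfrac{1}{n} f_{40}$ in $N_{40}$.

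The regularity claim follows because both the Hamiltonian-to-isotopy isomorphism and Floer's correspondence possess linearized versions that are Fredholm-equivariant Banach-space isomorphisms between the relevant deformation operators (see \cite[p.\ 86]{Poz}), so $J_{40}^n$ is regular for the strips if and only if the linearized negative gradient flow of $\tfrac{1}{n} f_{40}$ is surjective at each trajectory, i.e., if and only if $\tfrac{1}{n} f_{40}$ is Morse--Smale on $N_{40}$; the latter holds because $(f_{40}, \mu_{40})$ is Morse--Smale. The main technical hurdle is the containment step: one must check that the $\mathcal{N}_\epsilon$-containment of Lemma \ref{stripcontainment} genuinely transfers through the Hamiltonian-to-isotopy isomorphism to an $\mathcal{N}_{\epsilon'}$-containment for the $J_{40}^n$-holomorphic strips, without losing any strip; this is guaranteed by the $C^0$-closeness $\phi_t^{H_n}\to\mathrm{id}$, together with the fact that $\widehat N_{40}$ is strictly interior to $N_{40}$, so that a thin enough $\mathcal{N}_{\epsilon'}$-neighborhood still sits inside the Weinstein neighborhood $\mathcal{W}$ where our construction of $J_{40}^n$ is valid.
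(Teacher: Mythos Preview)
Your proposal is correct and follows essentially the same strategy as the paper: localize strips via Lemma \ref{stripcontainment} after passing through the Hamiltonian-to-isotopy isomorphism to $\mathcal{M}(\widehat V_4, V_0, H_n, J_n)$, then invoke Floer's Theorem \ref{Floer} on the closed manifold $L_0$, and finally observe that the relevant gradient lines of $f_{40}^n$ stay in $\widehat N_{40}$ where $f_{40}^n=\tfrac1n f_{40}$. The only discrepancies are bookkeeping---the order of the two Lagrangians (the paper has $V_4^n$ as the $t=0$ boundary), the corresponding sign in Floer's theorem (one must take $f=-f_{40}^n$ to obtain the \emph{positive} gradient flow appearing in the statement), and the precise form of the isotopy (the paper gets $\phi_t=\phi_1^{G_n}\circ(\phi_t^{G_n})^{-1}$ with $G_n=-f_{40}^n\circ\pi$, not $\phi_t^{f_{40}^n\circ\pi}$)---all of which are easily corrected.
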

\begin{proof} Unfortunately Floer's proof does not work for the 
the case of a manifold with boundary. Instead, the idea is to appeal to Floer's theorem for $L= L_0, f = - f_{40}^n$, and then deduce from lemma \ref{stripcontainment} that, for large $n$, the corresponding result is 
 true for $N_{40}$ and $f_{40}^n|N_{40} : N_{40} \into \bbR$. 
(Since $f = - f_{40}^n$ we get a correspondence with \emph{positive} flow lines of $f_{40}^n$.)
\\
\newline Let $J_{\mu_{40}}^*$ denote any extension of $J_{\mu_{40}}$ from a Weinstein neighborhood $D(T^*L_0)$ to $M$ which is equal to $I$ near $\partial M$
(where $I$ makes $\partial M$ convex).
Now set
$$J_t^n = (\phi^{H_n}_t)_*(\phi^{H_n}_1)^*(\phi^{G_n}_1)_* (\phi^{G_n}_t)^*(J_{\mu_{40}}^*).$$
Then $J_t^n \into (\phi^{G}_1)_* (\phi^{G}_t)^*(J_{\mu_{40}}^*)$ and
we may assume $G_n$ is zero near $\partial M$, so that
$J^n_t =I$ near $\partial M$. 
Note that $$(\phi^{H_n}_1)_*(\phi^{H_n}_t)^*J_t^n = 
(\phi^{G_n}_1)_* (\phi^{G_n}_t)^*(J_{\mu_{40}}) \text{ on } D(T^*N_{40}).$$
There are isomorphisms (see (\ref{isomorphism}) in \S  \ref{conventions}),
$$\mathcal M(\widehat V_4, V_0, H_n, \{J_t^n\} , M) \cong \mathcal M(\widehat V_4, (\phi^{H_n}_1)^{-1}(V_0), \{(\phi^{H_n}_t)^*J_t^n\}, M)$$
$$\cong \mathcal M(\phi^{H_n}_1 (\widehat V_4), V_0,\{ (\phi^{H_n}_1)_*(\phi^{H_n}_t)^*J_t^n\}, M),$$
where the isomorphisms are respectively 
$$u \mapsto \widetilde u, \text{ }  
\widetilde  u(s,t) = (\phi_t^{H_n})^{-1}(u(s,t)),\text{ and }\widetilde u \mapsto \phi_1^{H_n}(\widetilde u).$$
Now set 
$$J_{40}^n = (\phi^{H_n}_1)_*(\phi^{H_n}_t)^*J_t^n.$$
We will now prove this satisfies conditions of the lemma (keeping in mind  
$\phi^{H_n}_1 (\widehat V_4) = V_4^n$).
Let $\epsilon >0$ be such that  
$\mathcal{N}_{2\epsilon}(\widehat N_{40}) \subset D(T^*N_{40})$.
Lemma \ref{stripcontainment} implies that, for large $n$, each
$$u \in \mathcal M(\widehat V_4, V_0, H_n,\{ J^n_t\} , M)$$ 
is contained in $\mathcal N_{\epsilon}(\widehat N_{40})$. 
Since $H_n \into 0$ in $C^1$ we have, for large $n$, 
$$(\phi_1^{H_n}\circ (\phi_t^{H_n})^{-1})(\mathcal N_{\epsilon}(\widehat N_{40})) \subset \mathcal N_{2\epsilon}(\widehat N_{40}),$$
for all $t$. Each  
$$u \in \mathcal{M}(\phi^{H_n}_1 (\widehat V_4), V_0, 
\{(\phi^{H_n}_1)_*(\phi^{H_n}_t)^*J_t^n\}, M)$$ 
therefore lies in $\mathcal N_{2 \epsilon}(\widehat N_{40}) 
\subset D(T^*N_{40})$ for large $n$. 
Therefore it follows that each such $u$ satisfies 
$$u \in \mathcal{M}(\phi_1^{G_n}(L_0), L_0, (\phi_1^{G_n})_*(\phi_t^{G_n})^*(-J_{g_0}),  T^*L_0).$$
This is because $u$ lies in $D(T^*N_{40})$, and by construction the Lagrangians and complex structures are are equal in  $D(T^*N_{40})$.
Now Floer's Theorem \ref{Floer} applied to 
$$\mathcal{M}(L_0, L_0, G_n, J_{\mu_{40}}, T^*L_0)$$
implies that  $u$ is of the form 
$u(s,t) =  ((\phi_1^{G_n}) \circ (\phi_t^{G_n})^{-1})(\gamma(s))$
for some $\gamma \in C^{\infty}(\bbR, L_0)$ such that
$\gamma'(s) = (\nabla_{\mu_{40}} f_{40}^n )(\gamma(s))$
(recall $G_n(x,y) = -f_{40}^n(x)$).
Since $u$ lies in $D(T^*N_{40})$, we have
$\lim_{s \into \pm  \infty}u(s,t) = x_{\pm} \in N_{40}$. 
Therefore, since $\phi_t^{G_n}(x) = x$ for all $x \in Crit(f_{40}^n)$, 
$\gamma(\pm \infty) = x_{\pm} \in N_{40}.$
But the only gradient lines of $f_{40}^n$ joining points in $N_{40}$ 
are contained in $\widehat N_{40}$. Therefore, since
$f_{40}^n|\widehat N_{40}=  \frac{1}{n} f_{40}|\widehat N_{40}$
$\gamma$ in fact satisfies $\gamma \in C^{\infty}(\bbR, N_{40})$,
$\gamma'(s) = (\nabla_{g_0}(\frac{1}{n}f_{40})(\gamma(s)).$
\\
\newline To address the statement about regularity we note that
 correspondence in Floer's Theorem \ref{Floer} has a version at the 
linearized level which carries regular data into regular data 
(see  \cite[p. 86]{Poz}).
\end{proof}

One can prove a version of lemmas \ref{stripcontainment} and  
\ref{correspondence} for $(V_2^j,V_0)$ and $(V_4, V_2^j)$ in the same way. 
The signs $\gamma'(s) = -\frac{1}{n}(\nabla f_{42})(\gamma(s))$ and 
$\gamma'(s) = -\frac{1}{n}(\nabla f_{20})(\gamma(s))$ in the correspondence are determined in the course of the argument. Alternatively,
if $u \in \mathcal{M}(L_0,L_1,J)$, where $(L_0,L_1,J) =$ $(V_4,V_0,J_{40})$ or $(V_4,V_2^j,J_{42})$ or $(V_2^j,V_0,J_{20}^j)$, then 
the corresponding action functionals satisfy  
$$A_{20}(u(s,\cdot))= f_{20}^j(\pi(u(s,1))),\phantom{bbb}
A_{42}(u(s,\cdot))= -f_{42}^j(\pi(u(s,0))),$$
$$A_{40}(u(s,\cdot))= -f_{40}^j(\pi(u(s,0))),$$
and $\partial_s A(u(s,\cdot))< 0$ in all cases. Here $\pi: T^*N \into N$ is the projection for $N= N_{42}, N_{20},$ or  $N_{40}$. The term  in $A$ of the form $\int y^*\theta$ always vanishes because all  $u$ are such that, for any fixed $s$, $y(t) = u(s,t)$ lies in a fixed cotangent fiber for all $t$.
\section{Classification of holomorphic triangles in $D(T^*L_2^j)$} 
\label{localtriangles} 

In this section we prove Proposition \ref{classificationII}, which corresponds here to Propositions
\ref{existence} (existence), \ref{uniqueness} (uniqueness), \ref{regular} (regularity), 
in sections \ref{sectionexistence}, \ref{sectionuniqueness}, 
\ref{sectionregular}. (The arguments of this section are sketched in 
\S \ref{firsttriangle} and \S \ref{VCsketch}.)
\\
\newline Take $Z = \{ (z_1, \ldots, z_4) \in \bbC^4 : \Sigma z_j^2 =1\}$. 
This is exact symplectomorphic to $T^*S^3=\{(u,v) \in \bbR^4 \times \bbR^4: |u|=1, u\cdot v=0\}$, via 
$$\mu:Z \into  T^*S^3, \phantom{bb} \mu(x +iy) = (x/|x|, -|x| y), 
\label{mu} \text{ where } \mu^*( \Sigma_j -v_j du_j) = \Sigma_j \, y_j dx_j.$$
$Z$ has a complex structure $J_Z$, and we equip 
$T^*S^3$ with 
\begin{gather}
J_{\bbC} = \mu^*(J_Z).\label{J_{bbC}(b)}  
\end{gather}
The formula for $\mu^{-1}$ is 
\begin{equation}
\mu^{-1}(u,v)= f(|v|) u - i f(|v|)^{-1} v, \phantom{bbb} f(s) = \sqrt{  \frac{1+ \sqrt{1+4s^2 }    } {2}   }.
\label{muinverse}
\end{equation}
(This formula for $|x| = f(|v|)$ is obtained by combining the equations 
$|x|^2 -|y|^2 =1$ and $|x||y| = |v|$.)
\\
\newline 
As before we set $W_4^j = \widetilde V_4 \cap D(T^*L_2^j)$, 
$W_2^j = \widetilde V_2^j \cap D(T^*L_2^j)$,
$W_0^j = \widetilde V_0 \cap D(T^*L_2^j)$.  
For convenience of notation, we fix $j$ 
and consider $D(T^*L_2^j) = D(T^*S^3)$; we also drop the
$j$ from $W_4^j$, $W_2^j$, $W_0^j$, and from $K_{\pm}^j$.
Any 
$w \in \mathcal{M}(W_4, W_2, W_0, J_\bbC; D(T^*S^3))$
has a continuous extension  to $D^2$, by condition \ref{convergence},
and we denote it by $\overline w: D^2 \into D(T^*S^3).$

\subsection{Existence of holomorphic triangles}\label{sectionexistence}

For each $e \in K_+$, $f \in K_-$, let $Y_{ef} \subset Z$ 
denote the complex submanifold 
$Y_{ef} = (\bbC e \oplus \bbC f) \cap Z \subset \bbC^4.$ 

\begin{lemma}\label{biholo}$\mu(Y_{ef}) = T^*K_{ef}$, and
there is a biholomorphism $\rho_{ef} : \bbC^{\times} \into Y_{ef}.$
\end{lemma}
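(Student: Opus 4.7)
The approach is to give an explicit parametrization of $Y_{ef}$ by $\bbC^2$, which reduces the defining equation to a conic; from there the biholomorphism with $\bbC^\times$ is immediate, and the identification $\mu(Y_{ef}) = T^*K_{ef}$ reduces to a direct computation using the formula for $\mu^{-1}$.

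First I would write a general element of $\bbC e \oplus \bbC f$ as $ze + wf$ with $(z,w) \in \bbC^2$; this gives a complex linear isomorphism $\bbC^2 \cong \bbC e \oplus \bbC f$. Since $e \in K_+$ and $f \in K_-$ are orthogonal unit vectors in $\bbR^4$, the quadratic form $\sum_j z_j^2$ pulls back to $z^2 + w^2$. Hence $Y_{ef}$ is identified with the smooth affine conic $\{(z,w) \in \bbC^2 : z^2 + w^2 = 1\}$. The change of variables $u = z + iw$, $v = z - iw$ turns the equation into $uv = 1$, so $Y_{ef}$ is biholomorphic to $\{uv = 1\} \cong \bbC^\times$ via $u \mapsto (u, u^{-1})$. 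Concretely, I would define
\begin{equation*}
\rho_{ef}(\zeta) = \tfrac{1}{2}(\zeta + \zeta^{-1})\,e + \tfrac{1}{2i}(\zeta - \zeta^{-1})\,f,
\end{equation*}
which extends the real parametrization $e^{i\theta} \mapsto \cos(\theta)e + \sin(\theta)f$ of $K_{ef}$ from the unit circle to all of $\bbC^\times$.

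For the identification $\mu(Y_{ef}) = T^*K_{ef}$, write a point of $Y_{ef}$ as $ze + wf$ with $z = x_1 + iy_1$ and $w = x_2 + iy_2$, so its real and imaginary parts are $x = x_1 e + x_2 f$ and $y = y_1 e + y_2 f$, both lying in the real $2$-plane $\bbR e \oplus \bbR f \subset \bbR^4$. The condition $z^2 + w^2 = 1$ is equivalent to $|x|^2 - |y|^2 = 1$ and $x \cdot y = 0$, in particular $x \neq 0$. Applying the formula $\mu(x + iy) = (x/|x|, -|x|y)$, the first coordinate is a unit vector in $\bbR e \oplus \bbR f$, so lies on $K_{ef}$; the second lies in $\bbR e \oplus \bbR f$ and is orthogonal to $x/|x|$ in that $2$-plane, hence is a covector in $T^*_{x/|x|}K_{ef}$. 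This shows $\mu(Y_{ef}) \subseteq T^*K_{ef}$, and the opposite inclusion is immediate from the formula $\mu^{-1}(u,v) = f(|v|)u - if(|v|)^{-1}v$ applied to $(u,v) \in T^*K_{ef}$, since both $u$ and $v$ lie in $\bbR e \oplus \bbR f$ and so $\mu^{-1}(u,v) \in \bbC e \oplus \bbC f$, while $\mu^{-1}(u,v) \in Z$ holds by definition of $\mu$. No step is a serious obstacle; the main thing to be careful about is keeping track of the identifications between the real $2$-plane $\bbR e \oplus \bbR f$, the circle $K_{ef}$, and its cotangent bundle as embedded in $T^*S^3$.
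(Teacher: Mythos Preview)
Your proposal is correct and follows essentially the same approach as the paper: both identify $Y_{ef}$ with the affine conic $\{z^2+w^2=1\}\subset\bbC^2$ and parametrize it by $\bbC^\times$ via the substitution $u=z+iw$, $v=z-iw$, and both verify $\mu(Y_{ef})=T^*K_{ef}$ by checking the two inclusions directly from the formulas for $\mu$ and $\mu^{-1}$. Your explicit formula $\rho_{ef}(\zeta)=\tfrac12(\zeta+\zeta^{-1})e+\tfrac1{2i}(\zeta-\zeta^{-1})f$ is in fact the correct normalization; the paper's stated formula omits the factors of $\tfrac12$ (as written there one gets $z_1^2+z_2^2=4$ rather than $1$).
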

\begin{proof} To see $\mu(Y_{ef}) = T^*K_{ef}$ we check that 
$$\mu^{-1}(T^*K_{ef}) \subset \{(z_1 e, z_2 f ) : z_1^2 + z_2^2 =1\}=Y_{ef},$$ 
using (\ref{muinverse}), and  
$$\mu(Y_{ef}) \subset 
[(\bbR e \oplus \bbR f) \oplus (\bbR e \oplus \bbR f)] \cap T^*S^3 =T^*K_{ef}.$$
We define
$\rho_{ef} : \bbC^{\times} \into Y_{ef}$
by $\rho_{ef}(\zeta) = (z_1 e, z_2 f)$, where $z_1 = \zeta + \zeta^{-1}$, $z_2 = -i(\zeta - \zeta^{-1})$.
Here, $z_1^2 + z_2^2 = (z_1 +iz_2)(z_1-iz_2) = 1,$ and $\zeta = z_1 +iz_2$.
\end{proof}
We will need $\rho_{ef}$ because $\sigma_{ef}: (\bbR /2\pi\bbZ) \times \bbR \into T^*K_{ef}$ is  unfortunately  not holomorphic. We have the 
following refinement of the Riemann mapping theorem
from \cite{Wen}, Chapter 2, Theorems 2.1, 2.2, 2.4.  

\begin{lemma}\label{Riemannmap}
Assume $\Omega \subset \bbC$ is homeomorphic to a closed disk,
and $\partial \Omega$ is parametrized by a continuous, injective curve 
$\gamma: \bbR /2 \pi \bbZ \into \partial \Omega$, 
which is piece-wise real analytic. Let $t_0, t_1, t_2 \in \bbR /2 \pi \bbZ$ 
denote three distinct points
such that  $\gamma|[ (\bbR /2 \pi \bbZ) \setminus\{t_0, t_1, t_2\} ]$ 
is real analytic. Let $z_i= \gamma(t_i)$, and 
assume $z_0,z_1, z_2$  are labeled in counter-clockwise order. 
Let $l_0 = \gamma(t_0, t_1), l_1 = \gamma(t_1, t_2) , l_2 =\gamma(t_2, t_0)$. 
Then there is a unique biholomorphic map 
$$w: V \into \Omega \setminus\{z_0, z_1, z_2\}$$
such that 
$$w(I_i) = l_i, \text{ and } \lim_{\zeta \rightarrow \zeta_i} w(\zeta) = z_i, \, i=0, 1,2.\, \square$$  
\end{lemma}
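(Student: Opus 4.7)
The plan is to reduce the statement to the classical Riemann mapping theorem plus Carath\'eodory's boundary extension theorem, and then use the triple-transitive action of $\text{Aut}(D^2)$ on ordered triples of boundary points. First I would apply the Riemann mapping theorem to obtain a biholomorphism $\phi: \Int(D^2) \into \Int(\Omega)$. Because $\partial \Omega$ is a Jordan curve (it is the image of the injective continuous loop $\gamma$), Carath\'eodory's theorem guarantees that $\phi$ extends to a homeomorphism $\overline{D^2} \into \overline{\Omega}$. Set $\widetilde \zeta_i = \phi^{-1}(z_i) \in \partial D^2$; since $\phi$ is orientation-preserving on the boundary, the triple $(\widetilde \zeta_0, \widetilde \zeta_1, \widetilde \zeta_2)$ sits in counter-clockwise order on $\partial D^2$, matching the order of $(\zeta_0, \zeta_1, \zeta_2)$.

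Next I would compose with an automorphism of the disk. The group $\text{Aut}(D^2) \cong \PSL(2,\bbR)$ acts simply transitively on counter-clockwise ordered triples of boundary points, so there is a unique $\psi \in \text{Aut}(D^2)$ with $\psi(\zeta_i) = \widetilde \zeta_i$. Setting $w = \phi \circ \psi$ and restricting to $V = D^2 \setminus \{\zeta_0, \zeta_1, \zeta_2\}$ yields a biholomorphism $w : V \into \Omega \setminus \{z_0, z_1, z_2\}$. The extension property $\lim_{\zeta \rightarrow \zeta_i} w(\zeta) = z_i$ follows from the Carath\'eodory extension of $\phi$, and the boundary-arc condition $w(I_i) = l_i$ holds because both sides are the unique arc in the respective boundary joining the corresponding pair of distinguished points (in the correct orientation).

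For uniqueness, suppose $w, w': V \into \Omega \setminus \{z_0, z_1, z_2\}$ both satisfy the stated conditions. Then $w^{-1} \circ w'$ extends across the three punctures to a biholomorphism of $D^2$ fixing $\zeta_0, \zeta_1, \zeta_2$ on the boundary; the only such automorphism is the identity, so $w = w'$.

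The main obstacle to watch for is that Carath\'eodory's theorem alone only produces a continuous homeomorphism on the closure, which is all that the statement demands. The piecewise real analyticity of $\gamma$ away from $t_0, t_1, t_2$ is not strictly needed for existence or uniqueness; however, it is useful downstream (for instance when applying this lemma to construct $w_{ef}^j$ in Proposition \ref{classificationII}), since Schwarz reflection across the analytic arcs $l_i \setminus \{z_0, z_1, z_2\}$ then upgrades $w$ to a biholomorphism on an open neighborhood of $V$ in a Schottky double, which is what is actually invoked for regularity and linearized arguments. I would therefore record this reflection extension as a separate remark after the main existence/uniqueness argument, referring to \cite{Wen} for the detailed treatment.
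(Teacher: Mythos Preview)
Your argument is correct and is the standard route: Riemann mapping theorem, Carath\'eodory boundary extension over a Jordan domain, then normalization by the unique disk automorphism sending $(\zeta_0,\zeta_1,\zeta_2)$ to the preimages of $(z_0,z_1,z_2)$; uniqueness follows since an element of $\text{Aut}(D^2)$ fixing three boundary points is the identity. Note that the paper does not supply its own proof of this lemma---it simply cites \cite{Wen}, Chapter~2, Theorems~2.1, 2.2, 2.4 and marks the statement with $\square$---so there is no in-paper argument to compare against; the theorems in \cite{Wen} package exactly the ingredients you invoke. Your closing remark about the role of real analyticity is accurate: it is not needed for the bare existence and uniqueness statement, but is used downstream (e.g.\ in Proposition~\ref{existence} and the kernel computation in Lemma~\ref{kernal}) via Schwarz reflection across the boundary arcs.
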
 

We will need the following stronger version of lemma \ref{Riemannmap} for the uniqueness part
because our holomorphic triangles $w: V \into \Omega \setminus\{z_0, z_1, z_2\}$ would  not be assumed to be injective, nor surjective on the boundary.

\begin{lemma} \label{identity} 
Take $\Omega$ as in lemma \ref{Riemannmap}. 
Suppose  $w: V \into \Omega$ is holomorphic and 
$$\lim_{\zeta \rightarrow \zeta_i} w(\zeta) = z_i, \, i =0,1,2.$$
Taking $i, i+1$ modulo 3, assume that
$w(I_i) \subset  \gamma(t_i, t_{i+1}) \text{, } i =0,1,2.$
Then $w:V \into \Omega \setminus \{z_0, z_1, z_2\}$
is a biholomorphism, and it coincides with the map in lemma \ref{Riemannmap}.
\end{lemma}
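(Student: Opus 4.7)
The plan is to reduce the statement to a classical uniqueness result for holomorphic self-maps of the disk. Since $w$ takes at least three distinct boundary values $z_0, z_1, z_2$, it is nonconstant; by the open mapping theorem and the maximum principle, $w$ sends $\mathrm{Int}\, V$ into $\mathrm{Int}\, \Omega$. Combined with the hypotheses $w(I_i) \subset \gamma(t_i, t_{i+1})$ and $\lim_{\zeta \to \zeta_i} w(\zeta) = z_i$, the map $w$ therefore extends continuously to $\overline{w}: D^2 \to \Omega$ with $\overline{w}(\partial D^2) \subset \partial \Omega$ (identifying $\overline{V}$ with $D^2$ by filling in the three punctures). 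Let $\phi: V \to \Omega \setminus \{z_0, z_1, z_2\}$ denote the biholomorphism from Lemma \ref{Riemannmap}; since $\Omega$ is a Jordan domain, Carath\'eodory's theorem provides an extension $\overline{\phi}: D^2 \to \Omega$ which is a homeomorphism and satisfies $\overline{\phi}(\zeta_i) = z_i$.

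Consider then the composition $\psi := \overline{\phi}^{-1} \circ \overline{w}: D^2 \to D^2$. By construction $\psi$ is continuous on $D^2$, holomorphic on $\mathrm{Int}\, D^2$, satisfies $\psi(\zeta_i) = \zeta_i$ for $i=0,1,2$, and sends each arc $I_i$ into $\overline{I_i}$. Since $\psi(\partial D^2) \subset \partial D^2$, the restriction $\psi: \mathrm{Int}\, D^2 \to \mathrm{Int}\, D^2$ is proper (any compact $K \subset \mathrm{Int}\, D^2$ has preimage closed in $\overline{D^2}$ and disjoint from $\partial D^2$), hence is a finite Blaschke product of some degree $d \geq 1$ (the constant map is excluded because $\zeta_0 \neq \zeta_1$). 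It then suffices to show $d=1$: a M\"obius transformation of the disk fixing three distinct boundary points must be the identity, so $\overline{w} = \overline{\phi}$, and the lemma follows.

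For the degree count I would use the classical fact that the boundary restriction $\psi|_{\partial D^2}$ of a finite Blaschke product of degree $d$ is strictly increasing in the angular coordinate, with total argument change $2\pi d$ around the circle. Fix a strictly increasing lift $\widetilde{\psi}: \bbR \to \bbR$ with $\widetilde{\psi}(\theta + 2\pi) = \widetilde{\psi}(\theta) + 2\pi d$, and let $\theta_0 < \theta_1 < \theta_2 < \theta_0 + 2\pi$ be the angular coordinates of $\zeta_0, \zeta_1, \zeta_2$. Continuity, strict monotonicity, and the constraint $\psi(I_i) \subset \overline{I_i}$ force the connected image $\widetilde{\psi}([\theta_i, \theta_{i+1}])$ to lie in a single translate $[\theta_i, \theta_{i+1}] + 2\pi k_i$; matching at consecutive endpoints yields $k_0 = k_1 = k_2$, and comparing $\widetilde{\psi}(\theta_0 + 2\pi) = (\theta_0 + 2\pi) + 2\pi k_2$ with $\widetilde{\psi}(\theta_0) + 2\pi d = \theta_0 + 2\pi(k_0 + d)$ forces $d=1$. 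The main obstacle I expect is not this degree computation (which is fairly robust) but rather the clean verification of the continuous boundary extension near the three corner punctures $\zeta_i$; this is a Carath\'eodory-type argument that must be done with some care given that $w$ is only assumed holomorphic on $V$ with specified limits at the $\zeta_i$.
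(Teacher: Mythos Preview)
Your proof is correct and follows the same overall strategy as the paper: show that composing $w$ with the inverse of the Riemann map $\phi$ from Lemma~\ref{Riemannmap} gives a biholomorphism of the disk fixing three boundary points, hence the identity. The difference is in how the degree-one step is established. The paper argues directly via the argument principle on $w$ itself: since $\overline w|_{\partial D^2}$ passes through $z_0,z_1,z_2$ in counter-clockwise order with each arc $\overline{I_i}$ mapped onto the corresponding boundary arc $\gamma([t_i,t_{i+1}])$, the boundary curve winds exactly once around $\partial\Omega$, and then
\[
\frac{1}{2\pi i}\int_{\partial D^2}\frac{(\overline w\circ b)'}{(\overline w\circ b)-w_0}\,dt = 1
\]
shows that every $w_0\in\Int\Omega$ has a unique preimage. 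You instead pull back to the disk first, invoke the classification of proper holomorphic self-maps of the disk as finite Blaschke products, and then read off $d=1$ from the boundary lift. Both routes are ultimately the same winding-number computation; the paper's is a bit more elementary (no Blaschke structure theory or boundary monotonicity needed), while yours isolates the self-map of the disk more cleanly. Your concern about the continuous extension at the punctures is not a genuine obstacle here: the limits at the $\zeta_i$ are given as hypotheses and $w$ is already continuous on the open arcs $I_i$, so $\overline w$ is obtained by simply filling in the three prescribed values.
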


\begin{proof} Let $\overline w : D^2 \into \Omega$ denote the continuous extension of $w$. Let $b: \bbR /2\pi \bbZ \into \partial D^2$ be 
$b(\theta) = e^{i\theta}$. 
Let $J_i \subset  \bbR /2\pi \bbZ$, $i=0,1,2$ denote the open subintervals 
 such that 
$b(J_i) = I_i \subset V$. Then $(\overline w \circ b) | \overline J_i$ is a continuous curve 
connecting the point $z_{i}$ to $z_{i+1}$, where we take $i, i+1$ modulo 3. Since $w(I_i) \subset  \gamma(t_i, t_{i+1})$, we conclude $(w \circ b) ( \overline Ji) = \gamma([t_i, t_{i+1}]).$
In total $(\overline w \circ b) : \bbR /2\pi \bbZ \into \partial D^2$ 
winds once around the boundary of $\Omega$ in the counter-clockwise sense, 
since $z_0, z_1, z_2$ are so ordered.
So, for any $w_0 \in \Int \Omega$, 
$$\frac{1}{2\pi i} \int_{0}^{2\pi} 
\frac{(w \circ b)'(t)} {(w \circ b)(t) - w_0}dt = 1,$$
where $(w \circ b)'(t)$ exists for all but finitely many $t$.
This integral is also the number of zeros of
$w(z) - w_0$ in $\Int D^2$ counted with multiplicity, so we conclude that 
$$w| \Int D^2 : \Int D^2 \into \Int \Omega$$ is a biholomorphic map. 
Let $\widetilde w$ denote the map from lemma \ref{Riemannmap}. 
Since $(\widetilde w^{-1}\circ w)|\Int D^2$ is a biholomorphism of $\Int(D^2)$ onto itself, 
it must be a linear fractional transformation, which extends to $\bbC$ and fixes
$\zeta_0$, $\zeta_1$ $\zeta_2$. Thus $w = \widetilde w$.
\end{proof}

Let $\Delta\subset \bbR/2\pi\bbZ \times [-r,r]$ denote any one of the  four triangles bounded by $\widetilde \Gamma_0$,  $\widetilde \Gamma_2$,  $\widetilde \Gamma_4$
in figure \ref{CorrectionGammaIIIbig}. Let $\delta_{42}, \delta_{20}, \delta_{40}$ denote the vertices of $\Delta$, where $\delta_{ij} \in\widetilde \Gamma_k \cap\widetilde \Gamma_i$, $k,i =0,2,4$. Set $\Delta^* = \Delta \setminus \{ \delta_{42}, \delta_{20}, \delta_{40} \}.$
\begin{prop} \label{existence}
For each $e \in K_+, f \in K_-$ there is a
$$w=  w_{ef} \in \mathcal{M}(W_4, W_2, W_0, J_\bbC; D(T^*S^3))$$
satisfying 
$$w_{ef}(V) = \sigma_{ef}(\Delta^*) \subset D(T^*K_{ef}),$$
$$\lim_{\zeta \rightarrow \zeta_0} w(\zeta) = \sigma_{ef}(\delta_{40}), \, \, 
\lim_{\zeta \rightarrow \zeta_1} w(\zeta) = \sigma_{ef}(\delta_{20}),
\, \, 
\lim_{\zeta \rightarrow \zeta_2} w(\zeta) = \sigma_{ef}(\delta_{42}).$$
\end{prop}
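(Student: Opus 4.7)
The plan is to construct $w_{ef}$ holomorphically inside the complex slice $T^*K_{ef} \subset T^*S^3$, by reducing the problem to a two-dimensional Riemann mapping question on $\bbC^{\times}$. First I will identify $T^*K_{ef}$ with $\bbC^{\times}$ as a Riemann surface via the composition $\psi_{ef} = \mu \circ \rho_{ef} : \bbC^{\times} \into T^*K_{ef}$, which is holomorphic by Lemma \ref{biholo} (since $Y_{ef} \subset Z$ is a $J_Z$-complex submanifold and $\mu$ is a biholomorphism to its image). Because $T^*K_{ef}$ is $J_{\bbC}$-invariant inside $T^*S^3$, any holomorphic map into $T^*K_{ef}$ is automatically $J_{\bbC}$-holomorphic into $D(T^*S^3)$.

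Next I would transport the triangle $\Delta$ into $\bbC^{\times}$ via the real-analytic diffeomorphism $\Psi_{ef} := \psi_{ef}^{-1} \circ \sigma_{ef} : \bbR/2\pi\bbZ \times [-r,r] \into \bbC^{\times}$. Setting $\Omega = \Psi_{ef}(\Delta)$ and $z_i = \Psi_{ef}(\delta_{ij})$ for the three vertices, one checks: (i) $\Omega$ is homeomorphic to a closed disk, because $\Delta$ is a topological triangle not winding around $\bbR/2\pi\bbZ$, and $\Psi_{ef}$ is a diffeomorphism onto its image; (ii) $\partial \Omega$ is piecewise real-analytic with corners exactly at $z_0, z_1, z_2$, because the three arcs of $\partial \Delta$ were arranged in \S \ref{sectionmonotonicity} to be real-analytic, and $\sigma_{ef}$, $\mu^{-1}$, $\rho_{ef}^{-1}$ are all real-analytic (note $\mu^{-1}$ is real-analytic on $T^*K_{ef}$ by the explicit formula (\ref{muinverse}), since $f(|v|) \geq 1$ is smooth). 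After possibly reversing the parameter or relabeling, we may assume $z_0, z_1, z_2$ appear in counter-clockwise order on $\partial \Omega$ (the remark following figure \ref{CorrectionGammaIIbig(mu2)} noting that ``$\varphi$ should be orientation reversing" indicates that $\sigma_{ef}$ reverses orientation relative to the complex orientation of $T^*K_{ef}$, so here $\Omega$ will in fact inherit a clockwise orientation and we simply reverse).

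Now I would apply Lemma \ref{Riemannmap} to $\Omega$ to obtain a unique biholomorphism
\[
\phi : V \into \Omega \setminus \{z_0, z_1, z_2\}
\]
with $\lim_{\zeta \to \zeta_i} \phi(\zeta) = z_i$ and with $\phi(I_i)$ equal to the prescribed boundary arc (after matching up the labeling so that $I_0, I_1, I_2$ correspond respectively to the arcs of $\partial \Omega$ coming from $\widetilde \Gamma_0, \widetilde \Gamma_2, \widetilde \Gamma_4$). Then I would define
\[
w_{ef} = \psi_{ef} \circ \phi : V \into T^*K_{ef} \subset D(T^*S^3).
\]
By construction $w_{ef}$ is $J_{\bbC}$-holomorphic, has image $\sigma_{ef}(\Delta^*) \subset D(T^*K_{ef})$, and satisfies the required limits at $\zeta_0, \zeta_1, \zeta_2$. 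The boundary conditions $w_{ef}(I_i) \subset W_i$ follow because the boundary arcs of $\Delta$ lie in the curves $\widetilde \Gamma_i$, which by the rotational symmetry of $W_i$ satisfy $\sigma_{ef}(\widetilde \Gamma_i) \subset W_i$. Finite symplectic area is automatic since $w_{ef}$ has image in the compact set $D(T^*K_{ef})$.

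The main technical obstacle is ensuring that the composite $\Psi_{ef}$ really sends the piecewise-real-analytic boundary of $\Delta$ to a piecewise-real-analytic Jordan curve in $\bbC^{\times}$ (so that Lemma \ref{Riemannmap} applies and gives a $\phi$ extending continuously to the three corners). This relies crucially on the explicit care taken in \S \ref{sectionmonotonicity} to make the edges of $\Delta$ real analytic and on the formulas for $\mu$ and $\rho_{ef}$ being real analytic on all of $T^*K_{ef}$; a secondary bookkeeping issue is matching orientations and the cyclic labeling of the three vertices with $(\zeta_0, \zeta_1, \zeta_2)$, which is handled by a (possibly orientation-reversing) relabeling.
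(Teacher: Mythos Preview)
Your proposal is correct and follows essentially the same route as the paper: transport $\Delta$ into $\bbC^\times$ via $\rho_{ef}^{-1}\circ\mu^{-1}\circ\sigma_{ef}$, invoke Lemma~\ref{Riemannmap} on the resulting region, and then compose back with $\mu\circ\rho_{ef}$. The paper differs only in that it carries out explicitly the two points you flag as ``obstacles'': it computes the closed formula $(\rho_{ef}^{-1}\circ\mu^{-1}\circ\sigma_{ef})(\theta,s)=(f(s)+s/f(s))e^{i\theta}$ to verify real-analyticity directly, and it checks by a pullback calculation that this map is orientation-reversing, so that the clockwise labeling $\Gamma_0,\Gamma_2,\Gamma_4$ on $\partial\Delta$ becomes the counter-clockwise labeling required by Lemma~\ref{Riemannmap}. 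Your phrase ``handled by a (possibly orientation-reversing) relabeling'' is slightly misleading---one cannot simply relabel if the orientation came out wrong, since precomposing with a reflection would destroy holomorphicity; the point is that the orientation \emph{does} come out right, and this has to be checked rather than finessed.
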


\begin{proof}
Let $\Delta' =  \rho_{ef}^{-1}(\mu^{-1}(\sigma_{ef}(\Delta))) \subset \bbC^{\times}.$
First we check that the boundary arcs of $\Delta'$ are real analytic.
A calculation yields
\begin{equation*}
(\rho_{ef}^{-1} \circ \mu^{-1} \circ \sigma_{ef})(\theta, s) = 
(f(s) + \frac{s}{f(s)})e^{i\theta}.
\label{analyticbdry}
\end{equation*}
Recall from formula (\ref{muinverse}) that 
$$f(s) = \sqrt{  \frac{1+ \sqrt{1+4s^2 }    } {2}   } \geq 1.$$
Thus $s\mapsto f(s) + \frac{s}{f(s)}$  is real analytic on all of $\bbR$. 
By construction $\Delta$ (see \S \ref{sectionmonotonicity}) has real analytic boundary arcs, 
therefore $\Delta'$ does as well.
\\ 
\newline Note that $I_0,I_1,I_2$ label the boundary arcs of $V$ in counter-clockwise order,
whereas $\Gamma_0, \Gamma_2, \Gamma_4$ label the boundary arcs of $\Delta$ in clockwise order.
We claim that   
$$\rho_{ef}^{-1} \circ \mu^{-1} \circ \sigma_{ef}: \bbR/2\pi\bbZ \times \bbR \into \bbC^{\times} $$ 
is orientation reversing:
$(\rho_{ef}^{-1} \circ \mu^{-1} \circ \sigma_{ef})^*(d\tilde x \wedge d\tilde y) = -d\theta \wedge d\lambda,$
where $\tilde x +i\tilde y \in \bbC^{\times}$, $(\theta, \lambda) \in  \bbR/2\pi\bbZ \times \bbR$.
To see this note that, since $\rho_{ef}^{-1}$ is a biholomorphism,
it respects the orientations of $(\Sigma_j \, dx_j\wedge dy_j)|Y_{ef})$ and
$d\tilde x \wedge d\tilde y$ on $\bbC^{\times}$,  where 
$(x_j+iy_j) \in \bbC,$ $j=1,\ldots,4$. 
Then one calculates:
$$(\mu^{-1})^*(\Sigma_j \, dx_j\wedge dy_j) =  (\Sigma_j \, dv_j \wedge du_j), 
\phantom{bbb}
\sigma_{ef}^*(\Sigma_j \, dv_j \wedge du_j) = - d\theta \wedge d\lambda, $$
where  $(u,v) \in \bbR^4 \times \bbR^4$ are the the usual coordinates for $T^*S^3$. 
Therefore 
$$(\rho_{ef}^{-1} \circ \mu^{-1} \circ \sigma_{ef})(\Gamma_0), \, (\rho_{ef}^{-1} \circ \mu^{-1} \circ \sigma_{ef})(\Gamma_2), \, (\rho_{ef}^{-1} \circ \mu^{-1} \circ \sigma_{ef})(\Gamma_4)$$ 
label the boundary arcs of $\Delta'$ in counter-clockwise order. Then lemma \ref{Riemannmap} produces a biholomorphic map $\varphi: V \into \Delta'$ satisfying suitable boundary conditions, and we set $w_{ef} = \mu \circ \rho_{ef} \circ \varphi$.
\end{proof}

\subsection{Uniqueness of holomorphic triangles} \label{sectionuniqueness}
Define 
$$P:Z\into \bbC, \phantom{bbb} P(z_1, z_{2}, z_{3},  z_{4}) =  z^2_1 + z_{2}^2 - z_{3}^2 - z_{4}^2.$$
Set $Y = \{(w_1, w_2) \in \bbC^2 : w_1^2 + w_2^2 = 1\} \cong T^*S^1$.

\begin{lemma} \label{trivialization}
Let $U \subset \bbC \setminus \{\pm1\}$ be 
such that the map induced by the inclusion $\pi_1(U) \into \pi_1(\bbC \setminus \{\pm1\})$ is trivial.
Then there is a holomorphic trivialization of $P: Z \into \bbC$ over $U$ 
given by
 $$\Phi: P^{-1}(U) \into U \times Y \times Y, \phantom{bbb} \Phi (z_1, z_2,  z_{3}, z_{4}) = (\lambda,\alpha(\lambda) 
(z_1, z_2), \beta(\lambda) (z_3, z_{4}) ),$$ where 
 $\lambda = P (z_1, z_2,  z_{3}, z_{4})$, and 
$\alpha, \beta: U \into \bbC$ are the following holomorphic functions.
$$\alpha(\lambda) =  \sqrt{ \frac{2}{1+\lambda}}, \lambda \in U,  \phantom{bbb} \beta(\lambda) =  \sqrt{ \frac{2}{1-\lambda}}, \lambda \in U.$$
(Here we have chosen branches of the square root function.)
\end{lemma}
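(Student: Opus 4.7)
The plan is a direct verification: define the inverse candidate, check both $\Phi$ and its inverse land in the correct spaces using the algebraic identity that $Z$ imposes, and note that holomorphicity is immediate from the formulas.

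First I would record the key identity: if $z = (z_1,z_2,z_3,z_4) \in Z$ with $P(z) = \lambda$, then adding and subtracting the two equations $z_1^2+z_2^2+z_3^2+z_4^2 = 1$ and $z_1^2+z_2^2-z_3^2-z_4^2 = \lambda$ gives
\[
z_1^2+z_2^2 = \tfrac{1+\lambda}{2}, \qquad z_3^2+z_4^2 = \tfrac{1-\lambda}{2}.
\]
This shows in particular that the fiber $P^{-1}(\lambda) \cap Z$ is a product of two affine quadrics, one for each pair of coordinates, and already suggests the form of the trivialization.

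Next I would address the branch issue. The functions $2/(1+\lambda)$ and $2/(1-\lambda)$ are holomorphic and nowhere vanishing on $\mathbb{C}\setminus\{\pm 1\}$, so they define holomorphic maps $U \to \mathbb{C}^*$. Since the square root map $\mathbb{C}^* \to \mathbb{C}^*$ is a degree two covering, and by hypothesis $\pi_1(U) \to \pi_1(\mathbb{C}\setminus\{\pm 1\})$ is trivial, both $2/(1+\lambda)$ and $2/(1-\lambda)$ lift through this covering. Thus holomorphic branches $\alpha$ and $\beta$ exist on $U$; fix one such choice of each. They satisfy $\alpha(\lambda)^2 \cdot \tfrac{1+\lambda}{2} = 1$ and $\beta(\lambda)^2 \cdot \tfrac{1-\lambda}{2} = 1$.

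Then I would verify that $\Phi$ is well-defined, i.e.\ that $\Phi(P^{-1}(U)) \subset U \times Y \times Y$. For $z \in P^{-1}(U)$, the first component is $\lambda = P(z) \in U$; and using the key identity,
\[
(\alpha(\lambda) z_1)^2 + (\alpha(\lambda) z_2)^2 = \alpha(\lambda)^2(z_1^2 + z_2^2) = \alpha(\lambda)^2 \cdot \tfrac{1+\lambda}{2} = 1,
\]
so $\alpha(\lambda)(z_1,z_2) \in Y$; the same computation with $\beta$ shows $\beta(\lambda)(z_3,z_4) \in Y$. Holomorphicity of $\Phi$ is clear since $P$, $\alpha$, $\beta$ are all holomorphic.

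Finally I would exhibit the inverse explicitly by
\[
\Psi(\lambda,(w_1,w_2),(w_3,w_4)) = \bigl(\alpha(\lambda)^{-1} w_1,\alpha(\lambda)^{-1} w_2,\beta(\lambda)^{-1} w_3,\beta(\lambda)^{-1} w_4\bigr).
\]
The image lies in $Z$: $\alpha^{-2}(w_1^2+w_2^2) + \beta^{-2}(w_3^2+w_4^2) = \alpha^{-2} + \beta^{-2} = \tfrac{1+\lambda}{2} + \tfrac{1-\lambda}{2} = 1$, and $P$ of the image equals $\alpha^{-2} - \beta^{-2} = \lambda$, so the image lies in $P^{-1}(U)$. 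The compositions $\Phi\circ\Psi$ and $\Psi\circ\Phi$ are plainly the identity, and $\Psi$ is holomorphic since $\alpha, \beta$ are nowhere zero on $U$. There is no real obstacle here; the only subtlety, and the reason for the $\pi_1$ hypothesis, is the existence of single-valued square-root branches for $\alpha$ and $\beta$.
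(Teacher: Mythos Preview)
Your proof is correct and follows essentially the same approach as the paper: derive the identity $z_1^2+z_2^2 = \tfrac{1+\lambda}{2}$, $z_3^2+z_4^2 = \tfrac{1-\lambda}{2}$ from the two defining equations, then use the $\pi_1$ hypothesis to obtain single-valued square-root branches for $\alpha$ and $\beta$. You go further than the paper by explicitly checking that $\Phi$ lands in $U\times Y\times Y$ and by writing down and verifying the inverse $\Psi$, which the paper leaves implicit; this is harmless extra detail rather than a different method.
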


\begin{proof}
Assume $z\in P^{-1}(\lambda)$ for some $\lambda \in \bbC$.
Then
$$z^2_1 + z_{2}^2 - z_{3}^2 - z_{4}^2 = \lambda, \, \, z_1^2 + z_3^2+z_4^2 + z_4^2 =1,$$ 
since the domain of $P$ is $Z$. Therefore
$$ z_1^2 + z_2^2 = \frac{1+\lambda}{2}, \phantom{bbb} z_{3}^2 + z_{4}^2 = \frac{1-\lambda}{2}.$$ 
Now, by the assumptions on $U$,  $\{\frac{2}{\lambda+1}:\lambda \in U \}$ and 
$\{\frac{2}{\lambda-1}:\lambda \in U \}$ are subsets of $\bbC\setminus\{0\}$
and neither of them contains a nontrivial loop around 0. Therefore 
they both admit a branch of the square root function, and we can define $\alpha,\beta$ as
stated.
\end{proof}Let $B_1, B_2 \subset \bbR/2\pi\bbZ \times [-r, r]$ denote the two bigons 
adjacent to $\Delta$ 
bounded respectively by $\widetilde \Gamma_4$,  $\widetilde \Gamma_2$, 
and  $\widetilde \Gamma_2$,  $\widetilde \Gamma_0$ in figure \ref{CorrectionGammaIIbig}
(where each $B_i$ has one vertex in common with $\Delta$).
\begin{figure}
\begin{center}
\includegraphics[width=5in]{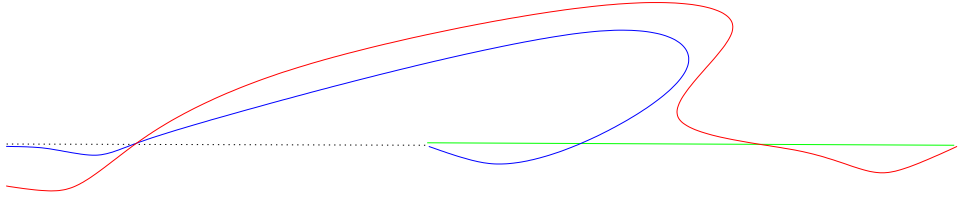} 
\caption{The curves $P(\mu^{-1}(W_4))$ (red), $P(\mu^{-1}(W_2))$ (green), 
$P(\mu^{-1}(W_0))$ (blue) in $\bbC$.} 
\label{P(Gamma)}
\end{center}
\end{figure}
\begin{lemma} \label{Pimage} $P(\mu^{-1}(W_0)), P(\mu^{-1}(W_2)), P(\mu^{-1}(W_4)) \subset \bbC$ are embedded curves in $\bbC$ as in figure 
\ref{P(Gamma)}.
They enclose three compact regions 
$R$, $R_1$, $R_2$,  Here, $R_1, R_2  \subset \{ z\in \bbC : Im z \leq 0\}$ are bigons diffeomorphic to
$B_1$, $B_2$,
and $R$ is a triangular region, diffeomorphic to $\Delta$, which is contained in $$(\bbC\setminus \{-1,1\}) \cap \{ z\in \bbC : Im \,z \geq 0\}.$$
In particular $U= R$ satisfies the assumptions of the last lemma. 
\end{lemma}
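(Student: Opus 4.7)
The plan is to reduce the lemma to a single explicit computation of the map $P\circ\mu^{-1}\circ\sigma_{ef}$, and then to read off the three curves and enclosed regions directly from that formula. The key observation is that although $\sigma_{ef}$ depends on the choice of $(e,f)\in K_+\times K_-$, the composition $P\circ\mu^{-1}\circ\sigma_{ef}$ does not; consequently all three image curves are traced out by the \emph{same} map $F:\bbR/2\pi\bbZ\times[-r,r]\to\bbC$, applied to $\widetilde\Gamma_4$, $\widetilde\Gamma_2$, $\widetilde\Gamma_0$ respectively.

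The first step will be to carry out this calculation. Writing $u=(\cos\theta\,e,\sin\theta\,f)$, $v=\lambda(-\sin\theta\,e,\cos\theta\,f)$ and using the formula (\ref{muinverse}) for $\mu^{-1}$ together with $|e|=|f|=1$ and $e\perp f$, the first two complex coordinates of $\mu^{-1}\circ\sigma_{ef}(\theta,\lambda)$ collapse to scalars times $e$, and similarly for the last two in the $f$-direction. Setting $A=f(|\lambda|)$ and $B=\lambda/f(|\lambda|)$ (so $AB=\lambda$ and, using $|x|^2-|y|^2=1$, $A^2+B^2=\sqrt{1+4\lambda^2}$), a direct expansion gives
\[
F(\theta,\lambda):=P(\mu^{-1}(\sigma_{ef}(\theta,\lambda)))=\sqrt{1+4\lambda^2}\cos(2\theta)+2i\lambda\sin(2\theta),
\]
independent of $(e,f)$ as promised. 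Hence $P(\mu^{-1}(W_i))=F(\widetilde\Gamma_i)$ for $i=0,2,4$.

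The second step will be to read off the three curves. Since $\widetilde\Gamma_2$ sits along $\lambda=0$, its image is $F(\widetilde\Gamma_2)=[-1,1]\subset\bbR$. The curve $\widetilde\Gamma_4$ (resp.\ $\widetilde\Gamma_0$) is an exact isotopy of the two vertical arcs through $\theta=0,\pi$ (resp.\ $\theta=\pi/2,3\pi/2$), and I will show that under the $\pi$-periodicity $F(\theta,\lambda)=F(\theta+\pi,\lambda)$ its two components trace the same simple curve in $\bbC$, which near $\theta=0$ (resp.\ $\theta=\pi/2$) is a small perturbation of the ray $[1,\sqrt{1+4r^2}]$ (resp.\ $[-\sqrt{1+4r^2},-1]$) on the real axis. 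Injectivity of these traces is verified by computing $\partial_\theta F$ and $\partial_\lambda F$ and checking that the Jacobian of $F$ is nondegenerate on each $\widetilde\Gamma_i$; the crucial point here is just that $\widetilde\Gamma_4,\widetilde\Gamma_0$ are transverse to the horizontal direction except in a region where $F$ is itself a local diffeomorphism. The signs of $\mathrm{Im}\,F$ along the non-real portions are then determined from the asymptotics $\mathrm{Im}\,F\approx 2\lambda\sin(2\theta)$ near $\theta=0,\pi/2$; by consulting figure \ref{CorrectionGammaIIIbig(Floer)} one sees that the portions of $\widetilde\Gamma_4$ and $\widetilde\Gamma_0$ which are not collapsed to the real axis lie in quadrants producing $\mathrm{Im}\,F\leq 0$, matching figure \ref{P(Gamma)}.

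The third step will be to identify the three regions. The complement in $\bbC$ of the union of the three image curves has exactly three bounded components: one triangular region $R$ bounded by arcs of all three image curves with vertices at (or near) $\pm 1$ and a third point, and two bigons $R_1,R_2$ bounded by pairs of adjacent arcs with vertices at $\pm 1$. I will verify $R\subset\{\mathrm{Im}\,z>0\}\setminus\{\pm 1\}$ by showing that the three boundary arcs of $R$ (corresponding to the three vertices of the triangle $\Delta$) all lie in $\{\mathrm{Im}\,z\geq 0\}$ with $\pm 1$ appearing only as the endpoints corresponding to the vertices $\delta_{20},\delta_{42}\in\widetilde\Gamma_2$, and $R_1,R_2\subset\{\mathrm{Im}\,z\leq 0\}$. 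The main obstacle, if any, is not conceptual but combinatorial: careful case analysis is needed to check that each of the $\widetilde\Gamma_i$, which individually has several loops and near-vertical pieces in the cylinder, projects under $F$ to precisely the simple arc shown in figure \ref{P(Gamma)}, and that the three arcs together bound only three components. Once this is done, the simple-connectivity of $R$ in $\bbC\setminus\{-1,1\}$ is immediate, verifying the hypothesis of lemma \ref{trivialization} with $U=R$.
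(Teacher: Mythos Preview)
Your overall approach is exactly the paper's: compute the composite $F=P\circ\mu^{-1}\circ\sigma_{ef}$, observe it is independent of $(e,f)$, and then read off the three image curves as $F(\widetilde\Gamma_i)$. Your formula
\[
F(\theta,\lambda)=\sqrt{1+4\lambda^2}\cos 2\theta + 2i\lambda\sin 2\theta
\]
is correct and matches the paper's computation, and the paper likewise finishes by describing $p$ as a family of ellipses and sketching $p(\widetilde\Gamma_i)$ against the figure. So at the level of strategy there is no difference.

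However, two of your concrete claims in the second and third steps are wrong and would derail the argument if carried out literally. First, you assert that the portions of $\widetilde\Gamma_4,\widetilde\Gamma_0$ not collapsed to the real axis all lie in quadrants with $\mathrm{Im}\,F\le 0$. This cannot be right: the triangular region $R$ is contained in $\{\mathrm{Im}\,z\ge 0\}$, and two of its three boundary arcs come from $F(\widetilde\Gamma_4)$ and $F(\widetilde\Gamma_0)$ (the third, from $F(\widetilde\Gamma_2)=[-1,1]$, lies on the real axis). Hence those two arcs must go into the \emph{upper} half plane; indeed the sides of $\Delta$ on $\widetilde\Gamma_4$ and $\widetilde\Gamma_0$ sit in $(\theta,\lambda)$-quadrants where $\lambda\sin 2\theta>0$. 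The bigons $B_1,B_2$ adjacent to $\Delta$ lie in the opposite quadrants, which is why their images $R_1,R_2$ land in $\{\mathrm{Im}\,z\le 0\}$. Your own third step implicitly contradicts your second, so this needs to be corrected rather than merely tightened.

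Second, you identify the two base vertices of $R$ with $\pm 1$, writing that ``$\pm 1$ appear only as the endpoints corresponding to $\delta_{20},\delta_{42}$''. This is precisely what the construction of $\widetilde\Gamma_i$ was designed to \emph{avoid}: the points $\pm 1=F(K_\pm)$ are the images of the singular fibers of $P$, and the deformation from $\Gamma_i$ to $\widetilde\Gamma_i$ moved the vertices $\delta_{42},\delta_{20}$ off $K_\pm$ so that $F(\delta_{42}),F(\delta_{20})\in(-1,1)$ strictly. The conclusion $R\subset\bbC\setminus\{\pm 1\}$ (needed for lemma~\ref{trivialization}) is exactly the statement that $\pm 1$ are \emph{not} in $R$, not that they are its vertices.
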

\begin{proof} 
A computation shows that 
\begin{gather}
\label{ellipse}
 P(\mu^{-1}(\sigma_{ef}(\theta, \lambda)))=  
\sqrt{1+4\lambda^2 } \cos2\theta + i2\lambda\sin2 \theta.
\end{gather}
Indeed, $P(\mu^{-1}(\sigma_{ef}(\theta,\lambda)))$ is equal to
\begin{gather*}
P\big(f(\lambda)(\cos(\theta)e, \sin(\theta)f) +i(-\frac{\lambda}{f(\lambda)}) 
(-\sin(\theta)e, \cos(\theta)f)\big)
\end{gather*}
$$= (f(\lambda)^2 + f(\lambda
)^{-2} \lambda
^2)  \cos2\theta+ 2i \lambda
 \sin2 \theta.$$
Set 
$$a(\lambda) =f(\lambda)^2 + f(\lambda
)^{-2} \lambda
^2, \phantom{bbb} b(\lambda) =2 \lambda,$$ and  recall from (\ref{muinverse}) that $f(\lambda)$ satisfies
$$(f(\lambda)^2)^2 -f(\lambda)^2 -\lambda^2 = 0.$$  
Hence
$$f(\lambda)^2a(\lambda) = (f(\lambda)^2)^2 + \lambda^2 = f(\lambda)^2  + 2\lambda^2\text{, and}$$ 
$$a(\lambda) = 1+ 2 \frac{\lambda^2}{f(\lambda)^2}=1+4 \frac{\lambda^2}{ 1+ \sqrt{1+4\lambda^2 }  }
=1+( \sqrt{1+4\lambda^2 } -1).$$
\\
Formula (\ref{ellipse}) implies 
$$p= P \circ \mu^{-1}\circ \sigma_{ef}:\bbR/2\pi\bbZ \times [-r,r]\into \bbC$$ 
is independent of $e,f$.
Therefore, it is enough to describe the images of $\widetilde \Gamma_4$, 
$\widetilde \Gamma_2$, $\widetilde \Gamma_0$ under $p$. 
Note also that the formula for $p$ is periodic, so it suffices to 
understand $p$ just near $\Delta$; the other three triangles are mapped 
onto $p(\Delta)$ as well. One can understand the map $p$ as follows. For fixed $\lambda > 0$, we have  $a(\lambda) >0$ and $b(\lambda) >0$; thus as $\theta$ ranges from  $0$ to $\pi$, 
$$\theta \mapsto p(\theta, \lambda) = a(\lambda)\cos2\theta + ib(\lambda) \sin2 \theta$$ 
parameterizes an ellipse in the counterclockwise direction, with axes $a(\lambda)$ and $b(\lambda)i$. Now as $\lambda>0$ varies, one gets a family of disjoint ellipses, since $a,b$ are strictly increasing functions. Similarly, for fixed $\lambda<0$, we have $a(\lambda) >0$, $b(\lambda) <0$, so one gets an ellipse parameterized in the clockwise direction. For $\lambda=0$, we get the map $\theta \mapsto \cos(2\theta)+0i$. 
Using this description of $p$, we sketch $P(\mu^{-1}(W_0))= 
p(\widetilde \Gamma_0),$ $ P(\mu^{-1}(W_2)) = p(\widetilde  \Gamma_2),$ $ P(\mu^{-1}(W_4)) =p(\widetilde \Gamma_4)$ as in figure \ref{P(Gamma)}. 
\end{proof}

\begin{lemma} \label{image}Let 
$w \in  \mathcal{M}(W_4, W_2, W_0, J_\bbC; D(T^*S^3))$ and denote its
continuous extension by $\overline w: D^2 \into D(T^*S^3))$.
Then $P(\mu^{-1}(\overline w(D^2)))= R.$
\end{lemma}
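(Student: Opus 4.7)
The plan is to use the fact that $P \circ \mu^{-1}: T^*S^3 \into \bbC$ is $J_\bbC$-holomorphic and then apply the argument principle. By construction (\ref{J_{bbC}(b)}), $\mu$ intertwines $J_Z$ and $J_\bbC$, and $P$ is $J_Z$-holomorphic on $Z$; hence the composition
$$\varphi \;:=\; P \circ \mu^{-1} \circ \overline w \,:\, D^2 \longrightarrow \bbC$$
is continuous on all of $D^2$ and holomorphic on $\Int D^2$.

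Next I would analyze the boundary map $\varphi|\partial D^2$. The Lagrangian boundary conditions on $w$ together with Lemma \ref{Pimage} force $\varphi(\overline{I_0})$, $\varphi(\overline{I_1})$, $\varphi(\overline{I_2})$ to lie in the blue, green, red curves of figure \ref{P(Gamma)}, respectively, so in particular $\varphi(\partial D^2)$ is contained in the union of the three curves bounding $R$, $R_1$, $R_2$. Moreover, formula (\ref{ellipse}) shows that $P \circ \mu^{-1} \circ \sigma_{ef}$ sends each of $\delta_{40}, \delta_{20}, \delta_{42}$ to a point independent of $(e,f)$; combining this with condition \ref{convergence}, which locates $\overline w(\zeta_i)$ in the appropriate clean-intersection component of the $W$'s, forces $\varphi(\zeta_0), \varphi(\zeta_1), \varphi(\zeta_2)$ to be exactly the three vertices of $R$.

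I would then invoke the argument principle. For $p \in \bbC$ in the unbounded component of $\bbC \setminus (P(\mu^{-1}(W_0 \cup W_2 \cup W_4)))$, the winding number of $\varphi|\partial D^2$ around $p$ is zero, so $p \notin \varphi(\Int D^2)$; this confines $\varphi(D^2)$ to $\overline R \cup \overline R_1 \cup \overline R_2$. To finish, it suffices to show that the winding number of $\varphi|\partial D^2$ is $+1$ around every interior point of $R$ and $0$ around every interior point of $R_1$ and $R_2$, because then the argument principle yields $\varphi(D^2) \supseteq R$ and $\varphi(D^2) \cap (\Int R_1 \cup \Int R_2) = \emptyset$, hence $\varphi(D^2) = R$. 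Each restriction $\varphi|\overline{I_i}$ is a continuous path in one of the three curves with endpoints at two specified vertices of $R$, and a homotopy rel endpoints inside $\bbC \setminus \{p\}$ reduces $\varphi|\partial D^2$ to the boundary of $R$ traced once in the correct orientation.

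The hard part will be the last homotopy step, which relies on the precise topological picture of the three curves. Specifically, I need to verify from formula (\ref{ellipse}) that each of the three colored curves, away from its intersections with the others, is a simple arc, and that the bounded components of the complement of the union of the three curves are precisely $\Int R$, $\Int R_1$, $\Int R_2$ (with no extra hidden lobes). Once this geometric claim is established from the explicit parametrization via ellipses in the proof of Lemma \ref{Pimage}, the homotopy argument is automatic: any two paths in one of the curves with the same endpoints are homotopic rel endpoints in $\bbC \setminus \{p\}$ for $p$ in the interior of any one of $R, R_1, R_2$, and the conclusion follows.
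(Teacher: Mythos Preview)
Your argument-principle approach is a legitimate alternative to the paper's maximum-principle argument, but as written it has a real gap at the vertex step.

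You claim that condition~\ref{convergence} forces $\overline w(\zeta_1)$ and $\overline w(\zeta_2)$ into the torus components $\Sigma_{20}^j$ and $\Sigma_{42}^j$, so that $\varphi(\zeta_1),\varphi(\zeta_2)$ are exactly the two base vertices of $R$. Condition~\ref{convergence} says only that the limits exist in the clean intersection; it does not select a component. In fact $W_2\cap W_0=\Sigma_{20}^j\cup K_-^j$ and $W_2\cap W_4=\Sigma_{42}^j\cup K_+^j$, so a priori $\varphi(\zeta_1)$ could equally well be the other vertex $q_{20}$ of the bigon $R_2$, and similarly $\varphi(\zeta_2)$ could be $q_{42}$. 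The paper's own proof explicitly carries both possibilities (``$P(\overline w(\zeta_1))=p_{42}$ or $q_{42}$'') and only rules out the extra pieces afterwards. With the wrong vertex your homotopy computation of the winding number would change: going from $p_{20}$ to $q_{20}$ along blue and back along green is exactly $\partial R_2$, so the winding number around points of $\Int R_2$ shifts by $\pm 1$, and your conclusion that $\varphi(D^2)\cap\Int R_2=\emptyset$ does not follow directly.

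The approach is repairable: since $\varphi$ is holomorphic on $\Int D^2$, all winding numbers are $\geq 0$, and a short case analysis using this positivity together with the orientation (the map $P\circ\mu^{-1}\circ\sigma_{ef}$ is orientation-reversing, cf.\ the proof of Proposition~\ref{existence}) shows the ``wrong'' vertex choices force a negative winding number somewhere, a contradiction. But you have to actually carry out those cases. By contrast, the paper's proof avoids this by a direct open-mapping/escape-to-infinity argument to confine the image to $R\cup R_1\cup R_2$, and then a simple connectedness argument (the bigons are attached to $R$ only at the single points $p_{42},p_{20}$, which cannot lie in the interior of the image) to peel off $R_1$ and $R_2$. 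That route never needs to know which component the vertices land in.
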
 

\begin{proof} For this proof we consider $\mu^{-1} \circ \overline w, \mu^{-1} \circ w : V \into Z$,
but we denote them again by $\overline w, w$.  The lemma follows basically from the maximum principle. Let $R, R_1, R_2$ be as in the last lemma (see figure \ref{P(Gamma)}). Let $p_{40}$, $p_{42}$, and $p_{20}$ denote the vertices of $R$, let 
$q_{42}$ and $p_{42}$ denote the two vertices of $R_1$, and let 
$q_{20}$ and $p_{20}$ denote those of $R_2$. 
Then, the boundary conditions on $w$, plus the description of
 $P(\mu^{-1}(W_4))$, $P(\mu^{-1}(W_2))$, $P(\mu^{-1}(W_0))$ 
in the last lemma  imply
$$P(\overline w(\zeta_0)) =  p_{40}, \, \, P(\overline w(\zeta_1)) =  p_{42} \text{ or } q_{42}, \, \,
P(\overline w(\zeta_2)) =  p_{20} \text{ or } q_{20}.$$         
 Let $D  = R_1 \cup R \cup R_2$ and  $D' = D \cup P(W_2) \cup P(W_4)$.
Assume for a contradiction there exists $\zeta \in D^2$ such that 
$(P \circ \overline w) (\zeta) \notin D'$. 
Let $C: [0, \infty) \into \bbC \setminus D'$
be a continuous curve 
such that $C(0) = (P \circ \overline w) (\zeta)$ and $\lim_{t \rightarrow \infty} |C(t)| = \infty$.
Let $t_{max}$ be the largest element of the compact set 
$C^{-1}[(P \circ \overline w)(D^2)]$. 
Let $\zeta_{max} \in D^2$ be such that 
$(P \circ \overline w)(\zeta_{max}) = C(t_{max}) \in \bbC \setminus D'$.
Now $\zeta_{max}$ is necessarily in the interior of $D^2$ because of the boundary conditions on $w$. But then, since $(P\circ w)|\Int D^2$ is a nonconstant holomorphic function, there must be an open disk around 
$(P \circ w)(\zeta_{max})$ contained in $(P \circ  w)(\Int D^2)$. 
Then by continuity of $C$ there must exist 
$t>t_{max}$ such that $C(t) \in (P \circ \overline w)(D^2)$, contradiction. Therefore 
$(P \circ \overline w)(D^2) \subset D'$.
\\
\newline It follows easily that $(P\circ \overline w)(D^2) \subset D$, because if  
$\zeta \in D^2$ is such that $(P\circ \overline w)(\zeta) \in D' \setminus D$ then  by continuity there 
is a nearby point $\zeta' \in \Int(D^2)$  satisfying the same condition; then a small neighborhood of $\zeta'$ would map onto an open disk in $\bbC$, but this is impossible since $D' \setminus D$ is contains no open disks.
\\ 
\newline Now suppose for a contradiction that there exists $\zeta \in D^2$ such that 
$P(\overline w(\zeta)) \in R_1 \setminus \{p_{42}\}$. Let $\upsilon: [0,1] \into D^2$
be a path connecting $\zeta$ and $\zeta_0$ such that $\upsilon(0,1) \subset \Int(D^2)$. 
Then by connectedness there is $\zeta' \in \Int(D^2)$ 
satisfying $P(\overline w(\zeta')) = p_{42}.$ But there is no open disk contained around $p_{42}$ 
contained in $D$, so this is impossible. We conclude that $P(\overline w(D^2)) \subset R \cup R_2$
and a similar argument implies $P(\overline w(D^2))\subset R$. 
 The same type of argument, using the fact that $(P \circ  w)|\Int D^2$ is an open map, shows that $P(\mu^{-1}(\overline w(D^2)))= R$.
\end{proof}
Recall $Y= \{(w_1, w_2) \in \bbC^2 : w_1^2 + w_2^2 = 1\} \cong T^*S^1$ and set 
 $L = \bbR^2 \cap Y = S^1$.
Lemmas \ref{Pimage} and \ref{trivialization} imply there is a 
holomorphic trivialization
\begin{equation}
\Phi: P^{-1}(R) \into R \times Y \times Y \label{Phi}
\end{equation}
Denote the compositions of $\Phi$ with the two projections to $Y$ by
\begin{gather}
 \Phi_1, \Phi_2:  P^{-1}(R) \into Y. \label{Phi_1,Phi_2}
\end{gather}

\begin{lemma} \label{Lagrangian} For any $p \in \mu^{-1}(T^*K_{e'f'}) \cap P^{-1}(R)$, we have $\Phi_1(p) = \pm e$ and $\Phi_2(p) = \pm f$, 
where $e' = (e,0) \in K_+$, $f'= (0,f) \in K_-$. Consequently, 
$$\Phi_\ell(\mu^{-1}(W_i)\cap P^{-1}(R)) \subset L_i.$$
for any $\ell=1,2$, $i =0,2,4$
\end{lemma}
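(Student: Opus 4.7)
The plan is to reduce the statement to a direct algebraic computation on a single holomorphic slice $\mu^{-1}(T^*K_{e'f'}) = Y_{ef}$, using the parametrization of $Y_{ef}$ supplied by the proof of lemma \ref{biholo} together with the explicit formulas for $\alpha, \beta$ from lemma \ref{trivialization}. Any $p \in \mu^{-1}(T^*K_{e'f'})$ then has the form $p = (z_1 e_1, z_1 e_2, z_2 f_1, z_2 f_2) \in \bbC^4$ with $z_1^2 + z_2^2 = 1$, where $e = (e_1, e_2), f = (f_1,f_2) \in \bbR^2$ are unit vectors.

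The first step is to compute $\lambda = P(p) = z_1^2 |e|^2 - z_2^2 |f|^2 = z_1^2 - z_2^2$. Combined with $z_1^2 + z_2^2 = 1$ this yields $1 + \lambda = 2 z_1^2$ and $1 - \lambda = 2 z_2^2$, and therefore $\alpha(\lambda)^2 z_1^2 = 1$ and $\beta(\lambda)^2 z_2^2 = 1$, irrespective of which branch of the square root was chosen on $R$ when defining $\Phi$. Hence $\alpha(\lambda) z_1 = \pm 1$ and $\beta(\lambda) z_2 = \pm 1$, and substituting into the definition of $\Phi$ gives $\Phi_1(p) = \alpha(\lambda)(z_1 e_1, z_1 e_2) = \pm e$ and $\Phi_2(p) = \beta(\lambda)(z_2 f_1, z_2 f_2) = \pm f$, which is the first claim. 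Since $e$ and $f$ are real unit vectors, $\pm e, \pm f \in \bbR^2 \cap Y = L$.

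The consequence is then essentially automatic: by construction $W_i \subset \bigcup_{e \in K_+, f \in K_-} T^*K_{ef}$, so every point of $\mu^{-1}(W_i) \cap P^{-1}(R)$ lies in some slice $\mu^{-1}(T^*K_{e'f'})$ and the computation above applies verbatim to give $\Phi_\ell(p) \in L$. I expect no real obstacle here; the one technical point to verify is that $z_1, z_2 \neq 0$ on the relevant slices, so that $\alpha(\lambda), \beta(\lambda)$ are well-defined and the identities $1 \pm \lambda = 2 z_{1,2}^2$ can be inverted. This is precisely guaranteed by the hypothesis $p \in P^{-1}(R)$ together with lemma \ref{Pimage}, which ensures $R \subset \bbC \setminus \{\pm 1\}$.
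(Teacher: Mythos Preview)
Your proof is correct and follows essentially the same approach as the paper's. Both arguments write $p = (ae, bf)$ with $a,b \in \bbC$ and reduce to showing $\alpha(\lambda)a = \pm 1$ and $\beta(\lambda)b = \pm 1$; the paper deduces this from the fact that $\Phi_1(p), \Phi_2(p) \in Y$ (built into the construction of $\Phi$), while you verify it by a direct computation of $\lambda$ in terms of $z_1,z_2$ together with the explicit formulas for $\alpha,\beta$ --- the two are really the same identity read in opposite directions. (Note that $L_i$ in the displayed conclusion is a typo for $L = \bbR^2 \cap Y$; your argument correctly lands in $L$.)
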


\begin{proof} By (\ref{muinverse}), every $p \in \mu^{-1}(T^*K_{e'f'})$ is of the form $p = (ae,bf),$ for some $a,b \in \bbC$. If $p \in \mu^{-1}(T^*K_{e'f'}) \cap \Phi^{-1}(R)$, then 
$$\Phi_1(p) = \alpha(\lambda)(ae) = \rho_1 e \in Y, \, \, \Phi_2(p) = \beta(\lambda)(bf) = \rho_2 f \in Y$$
for some $\rho_1,\rho_2 \in \bbC$. Since  $\rho_1 e, \rho_2 f \in Y$ we have
$\rho_1^2 (e_1^2 + e_2^2) = \rho_1^2 = 1$, and similarly $\rho_2^2 = 1$. Thus $\rho_1 = \pm 1, \rho_2 = \pm 1.$
\end{proof}
\begin{prop} \label{uniqueness} Every $w \in \mathcal{M}(W_4, W_2, W_0, J_\bbC; D(T^*S^3))$
is equal to $w_{ef}$ for some $e,f$, as in Proposition \ref{existence}.
\end{prop}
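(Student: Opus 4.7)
The plan is to apply the holomorphic trivialization $\Phi$ from lemma \ref{trivialization} to view $w$ as a section over $R$, use Stokes' theorem to force the fiber components to be constant, and then invoke uniqueness of the Riemann map to identify $w$ with the standard triangle $w_{ef}$ produced in Proposition \ref{existence}.

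Let $w \in \mathcal{M}(W_4, W_2, W_0, J_\bbC; D(T^*S^3))$ and let $\overline{w}: D^2 \to D(T^*S^3)$ be its continuous extension. By lemma \ref{image}, $\overline{w}(D^2) \subset \mu(P^{-1}(R))$, so the composition $\overline{s} = \Phi \circ \mu^{-1} \circ \overline{w}: D^2 \to R \times Y \times Y$ is well-defined, continuous on $D^2$, and holomorphic on $V$. I will write the three components as $p = P \circ \mu^{-1} \circ \overline{w}$ and $u_\ell = \Phi_\ell \circ \mu^{-1} \circ \overline{w}$, $\ell = 1,2$. By lemma \ref{Lagrangian}, combined with the boundary conditions $w(I_i) \subset W_i$, we have $u_\ell(\partial D^2) \subset L = S^1 \subset Y$, where $L$ is the zero section of $Y \cong T^*S^1$.

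The key step is to show that $u_1$ and $u_2$ are constant. Let $\theta_Y$ denote the canonical Liouville primitive of $\omega_Y$ on $Y \cong T^*S^1$, so $\theta_Y|_L = 0$. Applying Stokes' theorem on $D^2_{1-\epsilon}$ and passing to the limit $\epsilon \to 0$ (legitimized by the exponential decay of $\overline w$ near the three punctures afforded by condition \ref{convergence}), one obtains
\[
\int_{D^2} u_\ell^*\omega_Y \;=\; \int_{\partial D^2} u_\ell^*\theta_Y \;=\; 0.
\]
Since $u_\ell$ is holomorphic and $\omega_Y$ tames the standard complex structure on $Y$, the left-hand side equals the energy $\tfrac{1}{2}\int_{D^2}|du_\ell|^2$, so $u_\ell$ must be constant. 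Write $u_1 \equiv e' \in L$ and $u_2 \equiv f' \in L$, and set $e = (e',0) \in K_+$ and $f = (0,f') \in K_-$. A direct check with the formula for $\Phi$ in lemma \ref{trivialization} shows that $\mu(\Phi^{-1}(R \times \{e'\} \times \{f'\})) = T^*K_{ef} \cap \mu(P^{-1}(R))$, and hence $w(V) \subset T^*K_{ef}$.

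It remains to identify $w$ with $w_{ef}$. Factor $w = \mu \circ \rho_{ef} \circ \varphi$ for some holomorphic $\varphi: V \to \bbC^\times$ via the biholomorphism $\rho_{ef}$ of lemma \ref{biholo}. The boundary conditions on $w$, together with the description in Proposition \ref{existence} of $(\rho_{ef}^{-1}\circ \mu^{-1}\circ \sigma_{ef})(\Delta)$ as a region in $\bbC^\times$ with piecewise real analytic boundary, force $\varphi$ to map each arc $I_i$ into the corresponding boundary arc, with $\lim_{\zeta \to \zeta_i}\varphi(\zeta)$ equal to the appropriate vertex. The uniqueness statement in lemma \ref{identity}, combined with the Riemann mapping theorem (lemma \ref{Riemannmap}), then identifies $\varphi$ with the Riemann map used to construct $w_{ef}$, so $w = w_{ef}$. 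I expect the principal technical obstacle to be justifying the Stokes' theorem step rigorously at the three punctures where $\overline{w}$ is only continuous, which requires invoking the Morse-Bott asymptotic convergence at the punctures, but this should present no serious difficulty.
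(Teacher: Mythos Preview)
Your proposal is essentially the same strategy as the paper's proof: use lemma \ref{image} to confine $\overline w$ to $P^{-1}(R)$, project via $\Phi_1,\Phi_2$ to $Y$, apply Stokes with $\theta_Y|_L=0$ to force the fiber components constant, then invoke lemma \ref{identity} for the base component.

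One cosmetic difference worth noting: in the final step the paper applies lemma \ref{identity} directly to $P\circ\mu^{-1}\circ w: V\to R\subset\bbC$ (the triangle $R$ in the $P$-plane), whereas you pass through $\rho_{ef}$ and apply it to $\varphi:V\to\Delta'\subset\bbC^\times$. Both are valid; the paper's version is slightly cleaner because $P$ and $R$ are already set up and the argument does not require first establishing $w(V)\subset T^*K_{ef}$. Your claimed equality $\mu(\Phi^{-1}(R\times\{e'\}\times\{f'\}))=T^*K_{ef}\cap\mu(P^{-1}(R))$ is in fact only an inclusion (the right-hand side also contains points with $\Phi_1=-e'$ or $\Phi_2=-f'$, cf.\ lemma \ref{Lagrangian}), but the inclusion is all you need. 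Also, for the Stokes step, the paper simply observes that the continuous extension $\overline w_k$ to $D^2$ gives finite energy and then applies Stokes on $V$ with boundary in $L$; your $D^2_{1-\epsilon}$ formulation and appeal to ``exponential decay'' is more than necessary (and condition \ref{convergence} is only $C^0$ convergence---exponential decay is condition \ref{exponentialconvergence}).
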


\begin{proof} Let $\overline w: D^2 \into T^*S^3$ be the continuous extension of $w$.
Lemma \ref{image} implies $R =   P(\mu^{-1}(\overline w(D^2)))$, and therefore it makes sense to form
$$\overline w_k = \Phi_k \circ \mu^{-1}\circ  \overline w : D^2 \into Y, \, k=1,2.$$
Set $w_k = \Phi_k \circ \mu^{-1}\circ  w.$ Then
$$w_k \in \mathcal{M}(L,L,L, J_{Y}; Y),$$
where $J_{Y}$ is the complex structure from $\bbC^2$.
Here, the boundary conditions follow from Lemma \ref{Lagrangian}, and $\int_V w_k^*\omega_{Y} < \infty$
follows from the fact that $w_k$ has a continuous extension $\overline w_k$.
Once we know $\int_V w_k^*\omega_{Y} < \infty$, we can apply Stokes' theorem to compute
$\int_V w_k^*\omega_{Y}$ in terms of $\theta_{Y}|L$; then, 
since $\theta_{Y}|L=0$,
$\int_V w_k^*\omega_{Y}$ is zero and $w_k$ must be constant, say $w_1 = e$, $w_2 =f$.
\\
\newline Now, by lemma \ref{Pimage}, $\Omega =R$ satisfies the assumptions of lemma \ref{Riemannmap}. Further,
$P \circ \mu^{-1} \circ w: V \into R$ is holomorphic and maps $I_0$, $I_1$, $I_2$ into the corresponding boundary arcs of $R$ and $\zeta_0$, $\zeta_1$, $\zeta_2$ to the vertices of $R$ (see the proof of lemma \ref{Pimage}).
Hence, by lemma \ref{identity}, it must be the unique biholomorphism  $\psi:V \into R$ satisfying these conditions.  Choosing $e'=(\pm e, 0),f'=(0,\pm f)$ suitably, $w_{e'f'}$ will also satisfy 
$$\Phi_1 \circ \mu^{-1} \circ w_{e'f'} = e, \phantom{bbb}  \Phi_2 \circ \mu^{-1} \circ w_{e'f'} = f$$
(the sign of $\pm e,\pm f$ depends on the choice of square-roots for $\alpha, \beta$). And, again, $P  \circ \mu^{-1} \circ w_{e ' f' } = \psi$. 
Therefore 
$w(\zeta) = \Phi^{-1}(\psi(\zeta),\pm e, \pm f) =w_{e'f'}(\zeta).$ \end{proof}

\subsection{Regularity of the moduli space of holomorphic triangles}\label{sectionregular} For this section we will work with $Z$ 
rather than $T^*S^3$, and we will look at the corresponding moduli space
of $J$-holomorphic triangles, where $J = J_Z$, 
$$\mathcal{M} =\mathcal{M}( \widetilde W_4,\widetilde W_2,\widetilde W_0,J;Z),$$
where we set $\widetilde W_i = \mu^{-1}(W_j), \, i=0,2,4.$
As we explain in \S \ref{sectionFredholm},
$\mathcal{M}$ sits inside a certain Banach space 
$\mathcal{B}$: there is a Banach-bundle 
$\mathcal{E} \into \mathcal{B}$; the Cauchy-Riemann operator is a section
$\partial_J : \mathcal{B} \into \mathcal{E}$, and 
$\mathcal{M} = \partial_J^{-1}(0)$.
For each $w \in \mathcal{M}$ the linearization 
$ D(\overline{\partial}_J)_w: T_w \mathcal{B} \into \mathcal{E}_w$
is a Fredholm operator by lemma \ref{lemmaFredholm}.

\begin{prop}\label{regular} 
$J = J_Z$ is a regular almost complex structure. That is, 
for any $w \in \mathcal M$, the linearized operator  $D(\overline \partial_J)(w): T_w(\mathcal B) \into \mathcal{E}_w$ 
is surjective. 
\end{prop}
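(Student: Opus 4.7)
The plan is to prove surjectivity of $D(\overline\partial_J)(w)$ indirectly, by showing that its Fredholm index equals the dimension of its kernel. Since $D(\overline\partial_J)(w)$ is Fredholm by lemma \ref{lemmaFredholm}, and $\dim \mathrm{coker} = \dim \ker - \mathrm{ind}$, an equality $\dim \ker = \mathrm{ind}$ forces the cokernel to vanish, giving regularity. I will show both numbers equal $2$.

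First I would compute the kernel by linearizing the uniqueness argument of Proposition \ref{uniqueness}. Given $\xi \in \ker D(\overline\partial_J)(w)$, I would push it through the trivialization $\Phi$ from lemma \ref{trivialization} to obtain three linearized pieces: a ``base'' variation $\eta := dP(\xi) : V \to w^*TR$ with boundary on the tangent directions to $P(\widetilde W_4), P(\widetilde W_2), P(\widetilde W_0)$, and two ``fiber'' variations $\xi_k := d\Phi_k(\xi) : V \to w_k^* TY$ with boundary on $TL$, $k=1,2$. For the fiber pieces, the Stokes argument used in the proof of Proposition \ref{uniqueness} linearizes: each $\xi_k$ is holomorphic with boundary on $TL$, which is itself Lagrangian in $TY$ with vanishing primitive on $L$ (since $\theta_Y|_L = 0$ implies $d\theta_Y|_{TL}=0$ and the pullback of the canonical one-form to $TL$ vanishes). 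Finite symplectic area then forces $\xi_k$ to be a constant vector in $T_{w_k}L \cong \mathbb{R}$, giving exactly $1+1=2$ dimensions. For the base piece $\eta$, I would argue that because $P\circ w$ is already the unique biholomorphism $V \to R$ guaranteed by the refined Riemann mapping Lemma \ref{Riemannmap} (with real analytic boundary arcs of the triangle $R$ and prescribed vertices), any infinitesimal variation of it through holomorphic maps with the same boundary arcs must be trivial; concretely, $\eta$ extends continuously over the punctures to the vertices of $R$ and is a holomorphic vector field along $P\circ w$ tangent to $\partial R$, and the rigidity of the three-pointed Riemann map among such vector fields gives $\eta = 0$. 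Thus $\dim \ker D(\overline\partial_J)(w) = 2$.

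Next I would compute the index. Because the three Lagrangians $\widetilde W_4, \widetilde W_2, \widetilde W_0$ meet cleanly along the Morse--Bott submanifolds $\Sigma_{40}^j, \Sigma_{42}^j, \Sigma_{20}^j$ at the three asymptotic ends, I will invoke the Morse--Bott index formula of \S \ref{MorseBott}, which expresses the index as a Maslov index of a loop of Lagrangian tangent planes along $\partial V$ (obtained by closing up via canonical trivializations at each puncture determined by the clean intersection components) plus correction terms given by half the dimensions of those clean intersection components. The rotational symmetry of $\widetilde W_4, \widetilde W_2, \widetilde W_0$ lets me split this Lagrangian loop as a direct sum of a two-dimensional ``slice'' component living over $K_{ef}$ and a four-dimensional ``rotational'' component tangent to the orbit of $K_+^j \times K_-^j$. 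The rotational factor contributes trivially because along each rotational direction the Lagrangians are parallel transported by the symmetry action and the boundary loop is constant. The slice factor reduces to a two dimensional Maslov index computation that one reads off directly from figure \ref{CorrectionGammaIIIbig(Floer)} (equivalently figure \ref{CorrectionGammaIIbig(mu2)}), where the three curves $\widetilde\Gamma_4, \widetilde\Gamma_2, \widetilde\Gamma_0$ bound a triangle with standard rotation data. Adding the clean intersection corrections from $\Sigma_{40}^j, \Sigma_{42}^j, \Sigma_{20}^j$ gives $\mathrm{ind}\, D(\overline\partial_J)(w) = 2$.

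The main obstacle will be the index computation: invoking and bookkeeping the Morse--Bott index formula rigorously requires care with the Sobolev weights defining the Banach bundle $\mathcal{B}$ in \S \ref{sectionFredholm}, the choice of canonical trivializations at the three punctures, and the half-integer shifts contributed by each clean intersection component. The linearized uniqueness argument for the kernel is comparatively routine once one notes that $\eta$ extends continuously across the punctures, but verifying this continuous extension rigorously (so that the Riemann map rigidity applies) will also need a short exponential decay estimate in the weighted Sobolev setting of the appendix.
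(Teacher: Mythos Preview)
Your overall strategy---show $\dim\ker D(\overline\partial_J)(w)=2$ and $\operatorname{ind} D(\overline\partial_J)(w)=2$, hence surjective---is exactly the paper's (Lemmas~\ref{kernal} and~\ref{index}), and your treatment of the two fiber components $\xi_k=d\Phi_k(\xi)$ via the linearized Stokes argument is identical to the paper's.

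The one substantive difference is in the base component $\eta=dP(\xi)$. You argue $\eta=0$ by ``rigidity of the three-pointed Riemann map,'' but uniqueness of the nonlinear Riemann map does not by itself force the linearized kernel to vanish. To make your argument work you would need to observe that $dP$ annihilates the tangent spaces of the clean-intersection tori $\Sigma_{ij}$ (since $P\circ\mu^{-1}\circ\sigma_{ef}$ is independent of $(e,f)$, as in Lemma~\ref{Pimage}), so that $\eta$ actually vanishes at all three punctures; then convert $\eta$ into a holomorphic vector field on the disk tangent to the boundary and vanishing at three marked points, and finally invoke that infinitesimal M\"obius automorphisms are determined by three boundary values. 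This is workable but delicate (the conversion involves $\psi'$, whose boundary behavior at the vertices depends on the corner angles of $R$). The paper instead treats $\eta$ as a section of the trivial line bundle with totally real boundary $F$ given by the tangent lines to $P(\widetilde W_{2k})$, computes the winding number $\mu(\rho)=0$ directly from figure~\ref{P(Gamma)}, and applies Lemma~\ref{winding} (from \cite[Lemma~11.5]{S08}): since $\mu(\rho)-|V^-|=0-2<0$, the kernel is zero. This is shorter and avoids the corner analysis.

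For the index, your outline matches the paper's in spirit (reduce via the rotational symmetry to a two-dimensional Maslov computation along $\widetilde\Gamma_4,\widetilde\Gamma_2,\widetilde\Gamma_0$), but two details are off. First, the correction in the gluing formula~(\ref{indexformula}) is the \emph{full} dimension $k=\dim T$ of the clean intersection, not half of it. Second, the paper does not invoke an abstract ``Morse--Bott index formula'' but rather caps each puncture with a half-plane $H$ carrying the explicit boundary path $T_k\oplus e^{\pi it/2}L_k^-$, glues to obtain a disk, and then uses a concrete chart $\varphi(\theta,x,y)=(\cos\theta\,e,\sin\theta\,f)$ in which $W_{2k}$ becomes $\bbR^2\times\Gamma_{2k}$; this splits off the rotational $\bbR^2$ factor trivially and reduces the Maslov index to the winding of the tangent lines of the plane triangle $\Delta$, which one reads off as $-1$ (in the $dy\wedge dx$ convention). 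The same is done for the half-plane caps, yielding $\operatorname{ind}=n+\mu=3-1=2$.
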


This follows from lemmas \ref{kernal} and \ref{index} below, which show that 
$$\dim Ker(D(\overline \partial_J)(w)=2 \text{ and } \index D(\overline \partial_J)(w)=2.$$
 
\begin{remark} \label{general} 
In general, if $\dim  N = 2k$ and $\dim  M = 4k-2$, then the $2$ here will be $2k-2$.  
This is because $ L_0 \cong S^{2k-1}$,  $ K_j \cong S^{k-1} \cong K_-^j \cong K_+^j$, and then 
$2k-2 = \dim \, \partial \mathcal{N}_{S^{2k-1}}(K_j^{\pm})$. 
\end{remark}

For the proof of lemma \ref{kernal} we will need a special case  of 
\cite[lemma 11.5]{S08}, as stated in lemma \ref{winding} below. 
We first set up some notation. Take $E = \bbC \times V \into V$ to be the trivial Hermitian line bundle over $V$.
Let $F \into \partial V$ be a totally real sub-bundle of $E|\partial V$.
We define the winding number of $F$ as follows.
Recall from \S \ref{deftriangleproduct} that $\zeta_0$, $\zeta_1$, $\zeta_2$ label the three punctures in counter clockwise order and $I_0$, $I_1$, $I_2$ label the boundary components of
$\partial V$ in counter clockwise order, where $\zeta_0$, $\zeta_1$ lie on the boundary of $I_0$.
We have fixed (incoming) strip-like ends 
$$\epsilon_0, \epsilon_1: (-\infty, 0] \times [0,1] \into V$$
parameterizing a neighborhood of $\zeta_0$ and $\zeta_1$, and 
we have an (outgoing) strip-like end at $\zeta_2$:
$$\epsilon_2: [0,\infty) \times [0,1] \into V.$$
We assume $F$ is nondegenerate in the sense that $F_{\epsilon_{\zeta_j}(0,s)}$ and $F_{\epsilon_{\zeta_j}(1,s)}$ are transverse for $|s| >>0$ (where $s \in [0,\infty)$ if $j=2$ or $s \in (-\infty, 0]$ if $j=0,1$). Now, we define a homotopy between $F_{\epsilon_{\zeta_j}(0,s)}$ and $F_{\epsilon_{\zeta_j}(1,s)}$ as follows.
For convenience of notation, let us rotate  $\epsilon_j^*(E)$ by a constant  so that 
$$F_{\epsilon_{\zeta_j}(0,s)} = \bbR, \, F_{\epsilon_{\zeta_j}(1,s)} = e^{i \sigma_j}\bbR$$
for some $\sigma_j \in (-\pi, 0)$. In that notation we take the homotopy which goes through
\begin{gather} \label{homotopy}
 e^{it\sigma_j}, \, t \in [0,1]
\end{gather}
 Consider the compactification of $V$ to $\widehat V \cong D^2$
where we glue on a copy of the upper half plane  at each puncture and use the 
homotopy (\ref{homotopy}) near each puncture. Let $\rho: S^1 \into \bbR P^1$ denote the map we obtain in this way and let $\mu(\rho) \in \bbZ$ denote its degree; this is called the winding number of $F$.
Let $|V^-|$ denote the number of incoming punctures of $V$, which in our case is 2. 
Let $\overline \partial$ denote the usual Cauchy-Riemann operator on $V \subset \bbC$, 
so $\overline \partial = \partial_s + i\partial_t$ in the coordinates $s+it \in \bbC$.
We regard $\overline \partial$ as operating on sections of $E$ with boundary in $F$.

\begin{lemma} \label{winding}  If $\mu(\rho) - |V^-| = \mu(\rho) - 2 < 0$ then $Ker(\overline \partial) =\{0\}$.\, $\square$
\end{lemma}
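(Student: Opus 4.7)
The plan is to proceed by a classical argument-principle (zero-counting) argument for holomorphic sections satisfying totally real boundary conditions, using boundary Schwarz reflection to handle the open boundary and a spectral analysis at the punctures. This is exactly the specialization of \cite[Lemma 11.5]{S08} to the trivial Hermitian line bundle, so the actual work is to trace through its ingredients in our simple setting. Suppose for contradiction that there is a non-zero $\xi \in Ker(\overline \partial)$. Then $\xi \colon V \to \bbC$ is holomorphic and its boundary values lie in the totally real real-line field $F$. Boundary Schwarz reflection then implies that the zeros of $\xi$ are isolated in $\overline V$, with interior zeros carrying positive integer multiplicity and boundary zeros carrying positive half-integer multiplicity. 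Let $N \geq 0$ denote the total zero count.

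Next I will determine the asymptotic behavior of $\xi$ near each puncture $\zeta_j$. In the rotated trivialization used to define $\mu(\rho)$, the boundary conditions on the strip-like end become $\bbR$ at $t=0$ and $e^{i\sigma_j}\bbR$ at $t=1$ with $\sigma_j \in (-\pi,0)$. The associated asymptotic operator $A_j = -i\partial_t$ on $[0,1]$ with these totally real boundary conditions is self-adjoint with spectrum $\{-\sigma_j/\pi + n : n \in \bbZ\}$, which is disjoint from $\bbZ$ by the sign assumption on $\sigma_j$. A standard separation-of-variables argument yields a leading-order expansion $\xi(\epsilon_j(s,t)) = c_j e^{-\lambda_j s}\phi_j(t) + (\text{lower order})$ with $\lambda_j$ an eigenvalue of $A_j$ and $\phi_j$ an eigenfunction. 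Since $\xi$ is a genuine element of the kernel in the weighted Sobolev space defining the Fredholm setup, the leading exponent $\lambda_j$ lies in the fundamental window $(0,1)$ at each incoming puncture and in $(-1,0)$ at the outgoing one.

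The main step is then a compactified winding-number computation. Using $\xi$ itself to define a homotopy between $F_{\epsilon_j(s,0)}$ and $F_{\epsilon_j(s,1)}$ on each strip-like end, via the real line spanned by the leading-order eigenfunction $\phi_j$, one produces a compactified loop $\rho_\xi \colon S^1 \to \bbR P^1$; the doubled-surface form of the argument principle then gives $2N = \mu(\rho_\xi)$. Comparing $\rho_\xi$ with the standard linear homotopy $\rho$ given by (\ref{homotopy}) puncture by puncture: at each of the two incoming punctures the $\xi$-homotopy differs from the linear homotopy by one additional half-rotation, because $\lambda_j \in (0,1)$ corresponds to a strictly positive fractional decay that shifts the phase across exactly one eigensheet, contributing $-1$ to $\mu(\rho_\xi) - \mu(\rho)$; at the outgoing puncture the two homotopies agree modulo $\bbZ$ and contribute $0$. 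Combining,
$$2N \;=\; \mu(\rho_\xi) \;=\; \mu(\rho) - |V^-| \;<\; 0$$
by the hypothesis of the lemma, contradicting $N \geq 0$. Therefore $\xi = 0$, as desired.

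The hard part will be the puncture-by-puncture book-keeping in the last step: one must show rigorously that at each incoming puncture the leading decay exponent actually lies in the fundamental window $(0,1)$, and that this produces exactly the $-1$ shift in $\mu(\rho_\xi) - \mu(\rho)$. This is where the hypothesis $\sigma_j \in (-\pi, 0)$ (built into the very definition of $\mu(\rho)$ through the choice of homotopy (\ref{homotopy})) is used in an essential way. Once this is pinned down, the total correction $-|V^-|$ appears automatically and the contradiction with the assumption $\mu(\rho) - |V^-| < 0$ is immediate; the rest of the argument is the classical degree count for holomorphic sections with totally real boundary conditions.
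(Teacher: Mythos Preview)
The paper does not give its own proof of this lemma: it is stated with a closing $\square$ and the line ``For the proof see \cite[lemma 11.5]{S08}.'' So there is nothing to compare against except the cited reference, and your sketch is precisely the argument from Seidel's book specialized to the trivial line bundle: isolated zeros via boundary Schwarz reflection, asymptotic expansion at the strip-like ends governed by the self-adjoint operator $-i\partial_t$ with totally real boundary conditions, and an argument-principle count that identifies twice the total zero multiplicity with a compactified winding number differing from $\mu(\rho)$ by exactly $-|V^-|$.

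Two small bookkeeping points in your write-up deserve tightening. First, the spectrum of $-i\partial_t$ with boundary conditions $\bbR$ at $t=0$ and $e^{i\sigma_j}\bbR$ at $t=1$ is $\{\sigma_j + k\pi : k \in \bbZ\}$, not $\{-\sigma_j/\pi + n\}$; your formula looks like a normalization slip. Second, your sign conventions for the leading exponent at incoming versus outgoing ends are inverted relative to the expansion $\xi \sim e^{-\lambda_j s}$ you wrote: decay at an incoming end ($s \to -\infty$) forces $\lambda_j < 0$, and at an outgoing end ($s \to +\infty$) forces $\lambda_j > 0$. These do not affect the logic of the argument, but they will matter when you actually pin down the $-1$ shift per incoming puncture that you flag as the hard part.
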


For the proof see \cite[lemma 11.5]{S08}. 

\begin{lemma}\label{kernal} 
For any $w \in \mathcal M$,
$dim(Ker D(\overline \partial_J)(w)) =2$. 
\end{lemma}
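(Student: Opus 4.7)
The plan is to linearize the splitting argument used in Proposition \ref{uniqueness}. Fix $w\in\mathcal{M}$; by Proposition \ref{uniqueness} we may identify $w$ with $\mu^{-1}\circ w_{ef}$ for some $e\in K_+$, $f\in K_-$, and by lemma \ref{image} the image of $w$ lies in $P^{-1}(R)$. Composing with the holomorphic trivialization $\Phi$ of (\ref{Phi}) writes $w$ as a triple $(\psi,e,f)\colon V\to R\times Y\times Y$ of holomorphic maps, where (after possibly replacing $e,f$ by their negatives, as in the proof of Proposition \ref{uniqueness}) the last two components are constant and $\psi=P\circ w\colon V\to R$ is the Riemann biholomorphism of lemma \ref{Riemannmap}. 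By lemma \ref{Lagrangian}, $\Phi$ carries $\widetilde W_i\cap P^{-1}(R)$ into $\gamma_i\times L\times L$, with $\gamma_i=P(\widetilde W_i)\cap R$, so the linearized boundary conditions split as products. Since $\Phi$ is biholomorphic, the linearized Cauchy-Riemann operator splits accordingly as $D(\overline\partial_J)(w)=D_0\oplus D_1\oplus D_2$, where $D_0$ acts on sections $\xi_0$ of $\psi^{*}TR\cong V\times\bbC$ with $\xi_0|_{I_i}$ tangent to $\gamma_i$ at $\psi(\zeta)$, and each $D_\ell$ ($\ell=1,2$) acts on sections of the trivial bundle $V\times\bbC$ with constant real-line boundary condition $T_eL$ or $T_fL$ along every arc. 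Thus $\ker D(\overline\partial_J)(w)=\ker D_0\oplus\ker D_1\oplus\ker D_2$, and it suffices to show $\dim\ker D_0=0$ and $\dim\ker D_\ell=1$ for $\ell=1,2$.

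For the fiber pieces $D_1$ and $D_2$, rotate $\bbC$ so that the boundary condition becomes $\bbR$. A kernel element $\xi_\ell$ is then holomorphic on $V$ with real boundary values on all three arcs. Because the three boundary arcs of $V$ are real-analytic, Schwarz reflection extends $\xi_\ell$ to a bounded holomorphic function on the Schwarz double of $V$, which is topologically a thrice-punctured sphere and hence biholomorphic to $\bbC P^1$ with three points removed. Boundedness near the punctures comes from the Morse-Bott convergence at the three ends, combined with lemma \ref{Lagrangian} which ensures that the limit tori $\Sigma_{ij}$ project under $\Phi_\ell$ into $L$. Riemann's removable-singularity theorem then forces the extension to be a constant holomorphic function on $\bbC P^1$, and the boundary condition pins this constant to $\bbR$. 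Hence $\dim\ker D_\ell=1$. Under the splitting, these two factors correspond exactly to the tangent vectors in $T_w\mathcal{M}$ obtained by varying $e$ along $K_+$ and $f$ along $K_-$, consistent with $\mathcal{M}$ being parametrized by $K_+\times K_-$.

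The remaining, and main, step is $\ker D_0=0$; this is the infinitesimal counterpart of the uniqueness clause of the Riemann mapping theorem (lemma \ref{Riemannmap}). Push $\xi_0$ forward via $\psi$ to obtain a bounded holomorphic vector field $\widetilde\xi_0$ on $R$ that is tangent to $\partial R$ along each smooth arc. The Morse-Bott condition at each puncture forces $\widetilde\xi_0$ to vanish at every vertex $v_j$ of $R$: the relevant limit torus $\Sigma_{ij}$ projects under $\Phi$ to the single point $v_j$, so the $TR$-component of its tangent space is zero, and the asymptotic convergence of $\xi_0$ at $\zeta_j$ must lie in this zero subspace. The plan is then to Schwarz-reflect $\widetilde\xi_0$ across each of the three real-analytic arcs of $\partial R$, yielding a holomorphic vector field on the double of $R^{*}=R\setminus\{v_1,v_2,v_3\}$. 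Topologically this double is a thrice-punctured sphere, hence biholomorphic to $\bbC P^1\setminus\{v_1',v_2',v_3'\}$; the vanishing of $\widetilde\xi_0$ at each $v_j$ makes the extension bounded near each puncture $v_j'$, so the removable-singularity theorem upgrades it to a global holomorphic section of $T\bbC P^1=\mathcal{O}(2)$. Such a section has exactly two zeros counted with multiplicity, but ours vanishes at three distinct points, forcing it to be identically zero and hence $\widetilde\xi_0\equiv 0$. The main obstacle will be the careful verification that Schwarz reflection of vector fields (not just of functions) is well-defined across each of the three real-analytic arcs and compatible at the three corners of $R$; the fact that $\widetilde\xi_0$ already vanishes at every corner keeps this step routine. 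Combining the three computations yields $\dim\ker D(\overline\partial_J)(w)=0+1+1=2$.
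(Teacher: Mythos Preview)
Your argument is correct and shares the paper's overall strategy --- split via the holomorphic trivialization $\Phi$ into a base component and two fibre components --- but the method you use on each summand is genuinely different from the paper's.

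For the fibre pieces $D_1,D_2$, the paper does not double and invoke Liouville; it simply observes that a kernel element $\varphi_\ell$ is a finite-energy holomorphic map into $T_eY$ with boundary in $T_eL$, and since the canonical one-form vanishes on $T_eL$, Stokes' theorem forces the energy to be zero, hence $\varphi_\ell$ is constant. This is the same mechanism as in Proposition~\ref{uniqueness} and is a one-line argument; your Schwarz-reflection route is valid but longer.

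For the base piece $D_0$, the paper regards $\varphi = DP\circ X$ as a section of a trivial Hermitian line bundle over $V$ with totally real boundary condition $F$ given by the tangent lines to $P(\widetilde W_i)$, and then appeals to Lemma~\ref{winding} (a special case of \cite[Lemma 11.5]{S08}): a direct inspection of figure~\ref{P(Gamma)} shows the winding number $\mu(\rho)=0$, and since $\mu(\rho)-|V^-|=0-2<0$ one gets $\ker=0$ immediately. Your Schottky-double-plus-$\mathcal{O}(2)$-zero-count is a nice classical alternative and avoids importing the winding-number lemma. One point to tighten: the three zeros on $\mathbb{CP}^1$ do not come merely from $\widetilde\xi_0(v_j)=0$; what actually forces a zero at each puncture is that in a uniformizing coordinate $w$ near the cone point of angle $2\alpha_j$ the vector field picks up a factor $w^{\,1-\alpha_j/\pi}$, which vanishes since $\alpha_j<\pi$. (Equivalently, in the Schwarz--Christoffel model $\eta(w)=\widetilde\xi_0(\Psi(w))/\Psi'(w)$ with $\Psi'(w)\sim(w-a_j)^{\alpha_j/\pi-1}\to\infty$.) With that clarification your count of three zeros against $\deg T\mathbb{CP}^1=2$ is airtight.

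In short: same decomposition, different key lemmas. The paper's proofs are shorter and stay within the Floer-theoretic toolkit already in place; yours trade that for a self-contained classical-complex-analysis argument.
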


\begin{proof}Fix $w \in \mathcal M$. By condition \ref{convergence} $w$ has a continuous extension to $D^2$, which we denote $\overline w:D^2 \into Z$.
Since  $J$ is the restriction of the  usual complex structure on $\bbC^4$, 
we have $D(\overline \partial_{J})(w)(X) = \overline \partial X$, 
where $\overline \partial$ is the usual Cauchy-Riemann operator acting on 
$C^{\infty}(\bbC, \bbC^4).$
Thus, each  $X \in Ker D(\overline \partial_J)(w) $ is a holomorphic map
$X: V \into \bbC^4$. 
By definition of $T_w(\mathcal{B})$, it has a continuous extension $\overline X: D^2 \into \bbC^4$. This  satisfies 
$$\overline X(\zeta) \in T_{\overline w(\zeta)}Z= \{(z_1,z_2,z_3, z_4) \in \bbC^4: \Sigma_i z_i \overline w_i(\zeta) =0\}\text{ for all } \zeta \in D^2,$$
$$\overline X(\zeta) \in T_{\overline w(\zeta)}(\widetilde W_{2k}),  \text{ for all } \zeta \in \overline I_k \subset \partial D^2, k=0,1,2. $$

Since $w \in \mathcal{M}(\widetilde W_4, \widetilde W_2,\widetilde W_0,J_Z)$, 
we have the holomorphic trivialization (\ref{Phi}) over $R =P(\overline w(D^2))$
$$\Phi: P^{-1}(R) \into R \times Y \times Y,$$
and projections (\ref{Phi_1,Phi_2}), 
$$\Phi_1,\Phi_2  : P^{-1}(R) \into  Y.$$
The rough idea for computing the kernal is to decompose a given $X$ using the trivialization $\Phi$:
$$X \mapsto (DP(X), (D \Phi_1)(X), (D \Phi_1)(X))$$ 
and then check that the first component is zero, while the last
two components are constant and each lie in a one dimensional linear space.
\\
\newline Define 
$$\overline \varphi: D^2 \into \bbC, \phantom{bbb} \overline \varphi(\zeta) = DP_{\overline w(\zeta)} (\overline X(\zeta)).$$ 
Then $\varphi= \overline \varphi|V: V \into \bbC$
is holomorphic and has a continuous extension to $D^2$, and 
\begin{gather} \label{boundaryconditions}
  \varphi (\zeta) \in T_{  \varphi(\zeta) }(P(W_{2k})), \text{ for each } \zeta \in I_k \subset \partial D^2,\, k=0,1,2.
\end{gather}

We will show $ \varphi$ is identically 0. 
\\
\newline
Recall $R = P(\overline w (D^2))$. The proof of lemma \ref{Pimage} shows that 
$$P(\widetilde W_{2k})\cap R= p(\widetilde \Gamma_{2k} \cap \Delta) \text{ where}$$
$$p(\theta,\lambda) = \sqrt{1+4\lambda^2}\cos2\theta + i2\lambda \sin2 \theta.$$
See figure \ref{P(Gamma)}. Now, let us regard 
$\varphi: V \into \bbC$ as a section of the trivial Hermitian line bundle $E = V \times \bbC$
satisfying the totally real boundary conditions $F \into \partial V$ given by (\ref{boundaryconditions}). Since $F_{\epsilon_j(s,0)}$ and $F_{\epsilon_j(s,1)}$ are transverse at each puncture for $|s| >>0$, we can apply lemma \ref{winding} above to $(E,F, \overline\partial)$.
Since $\varphi$ has a continuous extension to $D^2$, it has finite energy and therefore it lies in the domain of $\overline \partial$ (which is the Sobolev space $W^{1,p}(V,E,F)$, $p>2$). We claim that the winding number of $F$, i.e. $\mu(\rho) \in \bbZ$, is equal to zero. Thus by lemma \ref{winding}, $Ker(\overline \partial) =\{0\}$ and so $\varphi$ is identically zero. 
\\
\newline Indeed, $\mu(\rho)=0$ follows from  inspection of figure \ref{P(Gamma)}, where we use the homotopies at the punctures given by (\ref{homotopy}). Note that the strip-like ends label each puncture as follows: The  two boundary arcs at each of the two bottom vertices (i.e. the two punctures on the boundary of the arc $P(\mu^{-1}(W_2))$) will be labeled $1,0$ and $1,0$ in counter-clockwise order; the other vertex will be labeled $0,1$ in counter-clockwise order. (Roughly speaking, then, we have the following: The arc  $P(\mu^{-1}(W_0))$ contributes  $+1$ to the winding number;  the arc along $P(\mu^{-1}(W_2))$ of course contributes nothing to the winding number, but the two punctures between $P(\mu^{-1}(W_0))$, $P(\mu^{-1}(W_2))$ and between $P(\mu^{-1}(W_2))$, $P(\mu^{-1}(W_4))$ together contribute $-1$ to the winding number; finally, the arc $P(\mu^{-1}(W_4))$ and the puncture between $P(\mu^{-1}(W_4))$ and $P(\mu^{-1}(W_0))$ contribute nothing to the winding number.)
\\
\newline For the other two components, let
$$\overline \varphi_1(\zeta) = (D_{\overline w(\zeta)}\Phi_1)(\overline X(\zeta)), \,\phantom{bb} \overline \varphi_2(\zeta) = (D_{\overline w(\zeta)}\Phi_2)(\overline X(\zeta)), \text{ for each } \zeta \in D^2.$$
Then 
$$\varphi_1 = \overline\varphi_1|V, \phantom{b} \varphi_2 = \overline\varphi_2|V: V \into \bbC^2$$ 
are holomorphic maps
with continuous extensions to $D^2$ and, setting $\overline w_j = \Phi_j\circ \overline w$, 
we have  
\begin{gather*}
\overline \varphi_1(\zeta) \in T_{\overline w_1(\zeta)}(Y) \subset \bbC^2, \text{ for each }\zeta \in D^2, \text{ and}\\
\overline \varphi_1(\zeta) \in T_{\overline w_1(\zeta)}(\Phi_1(\widetilde W_{2k})) \subset T_{\overline w_1(\zeta))}(Y) \text{ for each }\zeta \in \overline I_k \subset \partial D^2,  k=0,1,2. 
\end{gather*}
Similarly for $\varphi_2$.
Now, the proof of lemma \ref{uniqueness} showed that 
$$\overline w_1 =e = (e_1,e_2) \in L, \overline w_2 = f =(f_1,f_2)\in L$$ 
are constant, where $L = Y \cap \bbR^2$. And Lemma \ref{Lagrangian} implies
$\Phi_1(\widetilde W_{2k}), \Phi_2(\widetilde W_{2k}) \subset L$, $k=0,1,2$.
Thus $\varphi_1$ is a holomorphic map from $V$ into 
$$T_e(Y) = \{(z_1,z_2) \in \bbC^2 : z_1e_1+ z_2e_2 =0\},\text{ with }$$
 $$\varphi_1(I_{2k}) \subset T_{e}(L) =  \{(x_1,x_2) \in \bbR^2 : x_1e_1+ x_2e_2 =0\}, k=0,1,2.$$
Further, $\int_V \varphi_1^*\omega < \infty$ because there are continuous extensions $\overline \varphi_k$.
Since $\Theta_{\bbC^2}| T_{e}(L)=0$, we conclude that $\varphi_1$ is constant, as in the proof of lemma \ref{uniqueness}. Similarly for $\varphi_2$. 
We conclude that the kernal of $D(\overline \partial)(w)$ is isomorphic 
to the the set of $X \in T_w B$ of the form 
$$X(\zeta)  = (D\Phi_{w(\zeta)})^{-1}(0, (E_1, E_2), (F_1, F_2)), \zeta \in V$$
where  $E_1e_1+ E_2e_2=0,$ $F_1e_1+ F_2e_2=0$. In particular the dimension is 2.
\end{proof}

\begin{lemma}\label{index} 
For any $w \in \mathcal M$, 
$index (D(\overline \partial)(w)) =2$. 
\end{lemma}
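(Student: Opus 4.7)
The plan is to compute $\mathrm{index}(D(\overline{\partial}_J)(w))$ by decomposing it, via the holomorphic trivialization $\Phi$ of lemma \ref{trivialization}, into three rank--$1$ Cauchy-Riemann operators, and then applying a Morse-Bott Riemann-Roch formula to each summand. Combined with lemma \ref{kernal}, the equality $\mathrm{index}=2$ will force $\dim\mathrm{Coker}(D(\overline{\partial}_J)(w))=0$, yielding surjectivity.

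First I would set up the splitting. By lemma \ref{image}, $P(\mu^{-1}(\overline{w}(D^2)))=R$, so the holomorphic trivialization $\Phi: P^{-1}(R) \to R \times Y \times Y$ induces a holomorphic trivialization of $w^*TZ$ over $V$. By lemma \ref{Lagrangian}, this carries the totally real boundary conditions $T_{w(\zeta)}\widetilde{W}_{2k}$ along each arc $I_k$ into a direct sum of totally real boundary conditions in the three line-bundle factors. Since the Cauchy-Riemann operator respects a holomorphic direct-sum splitting, the Fredholm index is additive:
$$\mathrm{index}(D(\overline{\partial}_J)(w)) = \mathrm{index}(D_P) + \mathrm{index}(D_1) + \mathrm{index}(D_2),$$
where $D_P$ and $D_1,D_2$ are the induced rank--$1$ operators in the $R$-direction and the two $Y$-directions respectively.

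Next I would compute each summand using a Morse-Bott Riemann-Roch formula (see the appendix, \S \ref{MorseBott}): for a rank--$1$ operator on $V$ with nondegenerate totally real boundary conditions, the index is the Maslov winding number of the boundary condition (with the homotopy convention (\ref{homotopy}) at each puncture), plus a correction accounting for the dimension of the clean intersection submanifold in that factor. For $D_P$, the boundary condition is the tangent bundle of the three arcs bounding the triangle $R\subset\bbC$ (see figure \ref{P(Gamma)}); the winding number was already shown to be $0$ in the proof of lemma \ref{kernal}, and the three vertices of $R$ are transverse intersections in $\bbC$, so there is no Morse-Bott correction; hence $\mathrm{index}(D_P)=0$. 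For each of $D_1, D_2$, lemma \ref{Lagrangian} shows the boundary condition is a (locally) constant real line through $\pm e$ or $\pm f$ in the fiber $\cong\bbC$, so the winding number vanishes; on the other hand, at each of the three punctures the clean intersection between the three tori $\Sigma_{40},\Sigma_{42},\Sigma_{20}$ has a $1$-dimensional piece in each $Y$-factor (coming from the rotational freedom in the $K_+$ or $K_-$ direction at the corresponding critical submanifold). Tracking the Morse-Bott correction through (\ref{homotopy}) gives $\mathrm{index}(D_1)=\mathrm{index}(D_2)=1$. Adding, $\mathrm{index}(D(\overline{\partial}_J)(w))=0+1+1=2$.

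As a consistency check, this matches the kernel computation of lemma \ref{kernal}: there the kernel split as $\varphi\equiv 0$ in the $P$-factor (contribution $0$) and $\varphi_1,\varphi_2$ each lying in a $1$-dimensional real constant subspace (contribution $1+1$). Thus $\dim\mathrm{Ker}=\mathrm{index}=2$ and the cokernel vanishes, proving regularity.

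The main obstacle, as also noted in remark \ref{general} (where the dimensions $2k-2$ enter for the general $\dim N=2k$ case), is verifying the rank--$1$ Morse-Bott Riemann-Roch formula with the correct orientation and puncture conventions (\ref{homotopy}); the geometric content, namely the splitting under $\Phi$ and the constancy in the $Y$-factors forced by lemma \ref{Lagrangian}, is already established.
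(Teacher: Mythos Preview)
Your strategy---split $D(\overline\partial_J)(w)$ via the holomorphic trivialization $\Phi$ into three rank--$1$ pieces and sum the indices---is sound and is the natural companion to the kernel argument in lemma~\ref{kernal}. But it is \emph{not} what the paper does, and the point where you wave your hands is exactly the point where the paper works hardest.

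The paper does not reuse $\Phi$. Instead it passes to $T^*S^3$ and trivializes $w^*T(T^*S^3)$ via a local symplectic chart $\varphi(\theta,x,y)=(\cos\theta\,e,\sin\theta\,f)$ on $S^3$, adapted to the rotational symmetry. In this trivialization the boundary subbundles become $\bbR^2\times T(\Gamma_{2k})$, i.e.\ a constant $\bbR^2$ factor (the $(e,f)$--directions) times the tangent line to $\Gamma_{2k}$ in the two--dimensional slice. The index is then computed by the Morse--Bott gluing formula~(\ref{indexformula}): cap each puncture with a half--plane $H$ (carrying the short positive rotation $e^{\pi i t/2}$ on the transverse factor and the identity on the $\bbR^2$ factor) to obtain a disk, apply the standard disk formula $n+\mu$, and do the same trick once more to compute $\index(H)$. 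Both Maslov numbers $\mu$ and $\mu'$ are then read off from the picture of $\Gamma_0,\Gamma_2,\Gamma_4$ (figure~\ref{CorrectionGammaIIbig(mu2)}); both equal $-1$ (with the $dy\wedge dx$ sign convention), and one gets $\index=n+\mu=3-1=2$.

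Two comments on your route. First, the line ``winding $0$, transverse vertices, no Morse--Bott correction, hence $\index(D_P)=0$'' is not a formula; the relation between the winding $\mu(\rho)$ of lemma~\ref{winding} and the Fredholm index on a punctured disk involves the in/out labelling of the punctures, and the homotopy convention~(\ref{homotopy}) (which takes $\sigma\in(-\pi,0)$) is \emph{not} the same as the capping convention $\widehat F^k_t=T_k\oplus e^{\pi i t/2}L_k^-$ used when gluing on $H$'s. So one cannot simply transplant $\mu(\rho)=0$ from the proof of lemma~\ref{kernal} into an index formula without tracking this discrepancy. Second, to actually pin down $\index(D_P)$ and $\index(D_{1,2})$ you would end up invoking the very gluing formula~(\ref{indexformula}) that the paper develops---at which point the two approaches essentially coincide, and the paper's choice to do the computation once in rank~$3$ via the chart $\varphi$ (rather than three times in rank~$1$ via~$\Phi$) is the more economical one.
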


\begin{proof}
Let $w \in \mathcal{M}$.   
For this proof we work in $T^*S^3=\{(u,v) \in \bbR^4 \times \bbR^4: |u|=1, u \cdot v =0\}$ rather than $Z$. 
Hence, we compose $w: V \into Z$ with $\mu:Z \into T^*S^3$ and denote the result by $w:V \into T^*S^3$ as well.
The Lagrangians $\widetilde W_k \subset Z$ become $W_k \subset T^*S^3$.
Also, we sometimes write index formulas involving $n$, 
where $\dim  M = 2n$ (in our case $n=3$, of course).
\\
\newline Fix a symplectic bundle isomorphism of  $E = w^*(T(T^*S^3)) \into V$ with $V \times \bbC^3$.
In this proof $\bbC$ is always equipped with $dy \wedge dx$, $x+iy \in \bbC$; this is
because $T^*S^3$ has symplectic structure $\Sigma_i dy_i \wedge dx_i$ in the local coordinates $(x,y) \mapsto \Sigma_i y_i dx_i$. We may assume that the Lagrangian boundary conditions $F_k = w^*(TW_{2k})\into I_k$ meet at angle $\pi/2$ by doing a small homotopy which does not affect the index. Let $H = \{z \in \bbC : Im\,  z \geq 0\}$
and let $E_k = \bbC^3 \times H \into H$, $k=0,1,2$.
For each $k$, $F_k \subset \bbC^3 \times  I_k$ is asymptotic at each end of $I_k$ to a fixed Lagrangian subspace. At each vertex $\zeta_k$, where $k=0,1,2$ is modulo 3, set 
$$F_k^- = \lim_{\zeta\rightarrow \zeta_{k-1}} (F_{k-1})_\zeta, \, \, F_k^+ = \lim_{\zeta\rightarrow \zeta_{k}} (F_{k})_\zeta.$$
Now since $F_k^-$ and $F_k^+$ intersect in an $n-1 = 2$ dimensional
subspace say $T_k = F_k^- \cap F_k^+$, we can form the splittings
$$F_k^- = T_k \oplus L_k^-, \, F_k^+ = T_k \oplus L_k^+,$$
where $\dim  L_k^{\pm}= 1$. 
We equip $E_k \into H$ with 
the Lagrangian boundary condition $\widehat F^k \into \partial H = \bbR$
which on $[0,1]\subset \bbR$ is given by 
$$\widehat F^k_t   = T_k \oplus e^{\frac{\pi i t}{2}}L_k^-, \, t \in [0,1]$$
and which is constant outside $[0,1]$. 
Given $\Omega \subset \bbC$, let $\overline \partial_\Omega$ denote the the standard $\overline \partial $ operator on $\Omega$.
Since the asymptotic data of $(E,F,\overline \partial_V)$ at the punctures $\zeta_k$ matches up with that of $(E_k, \widehat F_k, \overline \partial _H)$, $k=0,1,2$, we can glue these bundles and operators together (using the strip like ends). The result is 
the trivial bundle $\bbC^3 \times D^2  \into D^2$, equipped with a certain Lagrangian boundary condition 
\begin{gather*}
F \subset \bbC^3 \times \partial D^2, \text{ and } 
\overline \partial_{V} \# \overline \partial_{H} \#  \overline 
\partial_{H} \# \overline \partial_{H} \cong \overline \partial_{D^2}. 
\end{gather*}
The standard formula for the index of the right-hand side is $n+ \mu =3 +\mu$, where $\mu$ is the Maslov index of $F$ (see \cite[lemma 11.7]{S08}). 
This, together with the gluing formula (\ref{indexformula}), shows that 
$$\index(\overline \partial_{V}) + 3 \, \index(\overline\partial_{H}) -3(n-1) = n+ \mu.$$
Then, to compute $\index(\overline \partial_{H})$ we glue together two copies of 
$\overline \partial_{H}$ 
and apply the gluing formula again to get 
$$2\index(\overline \partial_{H}) -(n-1)  = n+ \mu'$$
where $\mu'$ denotes the Maslov index of the loop $F'$ of Lagrangian subspaces in $\bbC^3$  obtained by combining the path $\widehat F_k$ from $F_k^-$ to $F_k^+$ and 
from $F_k^+$ back to $F_k^-$. (This is the same for all $k$.)
To finish the proof we will compute $\mu$, $\mu'$ and show they are both $-1$; this will imply 
$$\index(\overline \partial_{V}) = n +  \mu = 3-1.$$

To compute $\mu$, $\mu'$, we  trivialize $E=  w^*T(T^*S^3)\into V$ by taking a 
chart $\varphi: U \into S^3$ such that $T^*(\varphi(U))$ contains the image of $ w$, 
and for which the Lagrangians $W_k$ become particularly simple.
According to Proposition \ref{uniqueness}, $w= w_{e_0 f_0}$ for some 
$$e_0 = (\cos x_0, \sin x_0, 0,0)\in S^3,\phantom{bbb}  f_0 = (0,0,\cos y_0, \sin y_0) \in S^3.$$ 
Let 
$$\varphi(\theta, x,y) = (\cos \theta e, \sin \theta f),$$ 
where $e = (\cos x, \sin x), 
f = (\cos y, \sin y)$. Here $|x-x_0|< \epsilon$, $|y-y_0| < \epsilon$, $\theta \in (-\epsilon, \pi/2-\epsilon)$ where $0< \epsilon< \pi/2$ and we assume 
that $(\epsilon, \pi/2-\epsilon)$ contains the edge of $\Delta$ lying in $\Gamma_2$.
$U \subset \bbR^3$ is the set of such $(x,y,\theta)$ and 
$\varphi$ is an embedding because  $\theta \neq n \pi/2$ 
for any $n \in \bbZ$. Now
$\varphi$ gives rise to a symplectic chart
$$\widehat \varphi : T^*U \into T^*\widetilde U \subset T^*S^3,$$ where $\widetilde U = \varphi(U)$ and  we think of $T^*U$ as a subset of $\bbC^3$. 
Since $w$ takes values in $T^*\widetilde U \subset T^*S^3$, $E \into V$ has a trivialization  $E\cong \bbC^3 \times V$ coming from the trivialization of $T(T^*U) \subset T(\bbC^3)$.
\\
\newline 
Consider $T^*K_{ef} \subset T^*S^3$ for any fixed  
$$e = (\cos x', \sin x', 0,0)\in S^3, \phantom{bbb} f = (0,0,\cos y', \sin y') \in S^3,$$ 
with $|x'-x_0| < \epsilon, |y'-y_0|< \epsilon$. Then $\widehat \varphi^{\,-1}(T^*K_{ef})$
is the set of 
\begin{gather*}
(x,y,\theta)+i(a,b,c) \in \bbC^3  \text{ satisfying}\\
x=x', y=y',  a=0,b=0,   \theta \in (\epsilon, \pi/2-\epsilon), c \in \bbR.
\end{gather*}
And $\widehat \varphi^{\,-1}(\Gamma_k)$ is given by the additional constraint
$(\theta,c) \in \Gamma_k$. Now since $W_{2k} = \cup_{e\in K_+,f \in K_-} \sigma_{ef}(\Gamma_k)$, $k=0,1,2$, it follows that
$$\widehat \varphi^{-1}(W_k) =   (\bbR^2 \times \Gamma_k) \cap (T^*U).$$
Therefore we have, for $\zeta \in I_k$, 
$$(F_k)_\zeta = \bbR^2 \times T_{\widetilde w(\zeta)}(\Gamma_{2k}),$$
where $\widetilde  w= \sigma_{ef}^{-1}\circ  w_{ef}: V \into \bbR/2\pi\bbZ\times \bbR$.
\\
\newline Thus we are reduced to computing the Maslov index $\mu$ of the loop
of Lagrangians given by $
T(\Gamma_0), T(\Gamma_2), T(\Gamma_4)$ in $T((-\epsilon, \pi/2-\epsilon) \times \bbR) \subset T(\bbC)$.
More precisely,  we must take the short homotopy between the two tangent spaces each vertex of $\Delta$. (This corresponds to gluing on $H$  with its Lagrangian boundary conditions.) 
To see in what direction the loop is traversed,
note that $\widetilde w(I_k) \subset \Gamma_{2k}$, $k=0,1,2$.
This means that as we traverse $\partial V$ via $t \mapsto e^{2\pi i t}$ in the counter-clockwise direction, $t \mapsto (\widetilde w| \partial V)(e^{2\pi i t})$ traverses the boundary of $\Delta \subset  \bbR/2\pi\bbZ \times \bbR $ in the clockwise direction (see figure \ref{CorrectionGammaIIbig(mu2)}). 
Inspection of figure \ref{CorrectionGammaIIbig(mu2)} shows that the resulting loop of Lagrangians
in $\bbC$ is homotopic to the counter-clockwise loop $t \mapsto e^{2\pi i t}\bbR$. This has Maslov index 1
with respect to $dx \wedge dy$; 
therefore $\mu=-1$ with respect to  $dy \wedge dx$.  To compute $\mu'$, for each vertex $v$, we homotope the two tangent spaces at $v$ slightly so they meet at angle $\pi/2$; then the short homotopy doubled up to give a loop in $\bbC$ from the first tangent space back to itself is also homotopic to $t \mapsto e^{2\pi i t}\bbR$ in $(\bbC, dy \wedge dx)$, so $\mu' =-1$ as well.\end{proof}

\section{Morse-Bott Lagrangian Floer homology}\label{MorseBott}

We define the Morse-Bott Floer homology groups in two 
special cases sufficient for $HF(V_4,V_0)$, $HF(V_4,V_2^j)$, 
$HF(V_2^j,V_0)$, the latter two being similar. In these special cases we 
also define the triangle product.  
After that we briefly explain
the above definitions  and the continuation map in the general case, and we make some remarks on  the analogous homology theory for Morse-Bott functions. 
In the last section we discuss the underlying Fredholm theory and give an index formula for gluing.
There are several technical results one would need to give a full 
treatment of the whole theory, notably exponential convergence of holomorphic strips at the ends; 
we give precise statements for some of these, but do not prove them here. 
\\
\newline
\emph{References.} The basic starting point for Morse-Bott Floer homology is 
\cite{Poz}. Our treatment is modeled on \cite{Fr}, which in turn uses the basic idea of
\cite{PSS}, \cite{Schwarz} to use holomorphic curves (or in his case gradient lines) with gradient lines attached as a way of encoding Morse cycles in the intersection components. 
 All the foundational issues regarding exponential convergence at the ends, regularity, gluing, and compactness are treated in \cite{BO} but in the case of holomorphic cylinders rather than strips. A similar model is used in \cite{B}, with similar results. 
For the case of holomorphic strips, \cite{RS} proves exponential convergence in the transverse case, and  \cite{BC}  addresses all the other issues in a setup  essentially equivalent to the case $L_0 = L_1 =L$ (but their situation is more complicated because they work outside the exact setting). See also \cite[\S 8l]{S08}, Ch.II (8l), which sketches 
the theory in a TQFT context, and of course \cite{FOOO} treats  the Morse-Bott case as well.

\subsection{Conventions}\label{conventions}
An almost complex structure $J$ is compatible with the symplectic form $\omega$ if  $\omega(v,Jv) >0, \, v \neq 0.$
The  symplectic structure on $T^*N$ will be $\Sigma_j dy_j \wedge dx_j$
in the standard local coordinates $(x,y) \mapsto \Sigma_j y_j dx_j,$ and for its primitive 1-form
we will take $\Sigma_j y_j dx_j$. 
 Given $H: [0,1] \times M \into \bbR$, the Hamiltonian vector field $(X_H)_t$ is defined by
$\omega(v,(X_H)_t) = dH_t(v).$
Notice that if $H: T^*N \into \bbR$ is of the form $H(x,y) = h(x)$ for some $h \in C^\infty(N)$,
then $X_{-H}$ has flow $\phi_t^{-H}$ such that $\phi_1^{-H}(N) = \Gamma(dh)$. \label{-H}

\subsection{Basic Floer theory notation} \label{notation} Let $(M,\omega, \theta)$ be an exact symplectic manifold, $\omega = d\theta$.
We say the boundary of $M$ is of contact type if $\theta|\partial M$ is a contact form for $\partial M$, and we say $\partial M$ convex if the Liouville vector field  $X_\theta$ defined by $\omega(X_\theta, \cdot) = \theta$
is such that $-X_\theta$ points strictly \emph{inwards} (towards $M$) along $\partial M$. 
We will say that an almost complex structure $I$ on $M$ is of contact type near the boundary 
if it is invariant under $-X_\theta$ and satisfies $J(R_\theta) = -X_\theta$, 
where $R_\theta$ is the Reeb vector field of $(\partial M, \theta)$ in a collar neighborhood
of $\partial M$ (see \cite[p. 94]{S08}). This implies that $I$ makes the boundary of $M$ $I-$convex, which means
any $I-$holomorphic map $w:\Sigma \into M$ cannot meet  $\partial M$ at an interior point of  $\Sigma$, unless it is constant (again, see \cite[p. 94]{S08}, or \cite[lemma 2.4]{M}. 
\\
\newline
Let $\mathcal{J}(M)$ denote the space of $\omega$-compatible
almost complex structures on 
$M$. Fix $I \in \mathcal{J}(M)$  which makes $\partial M$ 
$I$-convex.  Given any smooth manifold $\Sigma$, let $\mathcal{J}(\Sigma,M,I)$ 
denote the space of smooth families $J_\zeta \in \mathcal{J}(M)$, $\zeta \in \Sigma$, such that there is a neighborhood $U$ of  $\partial M$ with $J_\zeta(p) =I$ for all $p \in U$, $\zeta \in \Sigma$.
Set $\mathcal{H}= C^{\infty}([0,1] \times M, \bbR).$ Given $H \in \mathcal{H}$, let $(X_H)_t$, $t \in [0,1]$ be the Hamiltonian vector field of $H$, 
and let $\phi^H_t$, $t \in [0,1]$, denote isotopy given by $X_H$.
Given two exact Lagrangian submanifolds $L_0, L_1$, let
$\mathcal{M}(L_0,L_1,J,H)$ 
denote the space of
$u \in C^{\infty}(\bbR\times [0,1],M)$
which satisfy
$$u(\{0\}\times\bbR) \subset L_0, u(\{1\}\times \bbR) \subset L_1,$$
$$\overline{\partial}_{(H, J)}(u) = \partial_s u +J_t (\partial_t u - 
X^H_t(u)) = 0, \phantom{bbb}\int_\bbR \int_0^1 \omega( \partial_s u, J_t \partial_s u) dt ds < 
\infty.$$
We have the usual equivalence of moduli spaces:
\begin{equation} \label{isomorphism}
\mathcal{M}(L_0,L_1,H,J) \into 
\mathcal{M}(L_0,(\phi^H_1)^{-1}(L_1),0,\{(\phi^H_t)^*J_t\})
\end{equation}
given by $u \mapsto \widetilde u$, where
$\widetilde u(s,t)= (\phi^H_t)^{-1}(u(s,t))$. 
For $j=0,1$, let $f_{L_j}\in C^\infty(\widetilde L_j)$,  be such that $\theta|L_j = df_{L_j}$. The action functional for $(L_0,L_1,H)$ is 
\begin{gather}
A(y) = -\int y^*\theta +  \int_0^1 H(y(t))dt+ f_{L_1}(y(1)) -  f_{L_0}(y(0)), \label{action}\\
y \in C^\infty([0,1],M), \, y(0) \in L_0, \, y(1) \in L_1. \notag
\end{gather}
For each $u \in \mathcal{M}(L_0,L_1,J,H)$, condition \ref{convergence} below, plus (\ref{isomorphism}),
implies that 
$$\lim_{s \rightarrow -\infty} u(s,t) = y_0(t), \lim_{s \rightarrow -\infty} u(s,t) = y_1(t)$$ exist uniformly in $t\in [0,1]$. The action functional satisfies
\begin{gather*}
\int_{\bbR} \int_{[0,1]} |\partial_s u|_{g_t}^2dtds = A(y_1) - A(y_0), 
\text{where }g_t(v,w) = \omega(v,J_tw)
 \end{gather*}
In the two special cases below, and in the general case, we always
pick $H$ so that  $\phi^H_1(L_0)$ and $L_1$ have Morse-Bott 
intersection. ($L_0,L_1$ have Morse-Bott (or clean) intersection if 
$T_x(L_0 \cap L_1) = T_xL_0 \cap T_xL_1, x \in L_0\cap L_1.$
If $L_1 = \Gamma(df) \subset T^*L_0$, this means $f$ 
is Morse-Bott.) Then we set
$$Y= L_0 \cap (\phi^H_1)^{-1}(L_1),$$
and choose
a Morse-Smale pair $(f,g)$ on $Y$; we denote the restriction to the 
component $C\subset Y$ by $(f_C, g_C)$.
 
\subsection{A convergence condition} Here we state without proof a basic convergence condition. 

\begin{condition}\label{convergence}
Assume $L_0 \cap L_1$ is Morse-Bott. Then, for any
$$u \in \mathcal{M}(L_0,L_1,J), \text{ we have }$$
$$\lim_{s \rightarrow \pm \infty} u(s,t) = p_{\pm} \text{ (uniformly in 
$t$)}$$
for some $p_{\pm} \in L_0 \cap L_1$.
\end{condition}

Let
$$ev_{\pm}: \mathcal{M}(L_0,L_1,J) \into L_0 \cap L_1$$
denote the evaluation maps $ev_{\pm}(u) = \lim_{s\rightarrow \pm \infty} 
u(s,t)$. 
Given components $C_-, C_+$ of $L_0 \cap L_1$, set
$$\mathcal{M}(L_0,L_1,J; C_{-}, C_{+}) = ev_{-}^{-1}(C_{-}) \cap 
ev_{+}^{-1}(C_+).$$ 
If $p_- \in C_-, p_+ \in C_+$, similar notation will denote $ev_{-}^{-1}(p_{-}) \cap 
ev_{+}^{-1}(p_+)$. 

\subsection{Definition of $\partial$ for $CF(V_4,V_2^j)$, $CF(V_2^j, V_0)$ (special case I)}\label{specialcaseI} 
Using the isomorphism (\ref{isomorphism}) we may reduce to the case $H=0$.
So assume $H=0$, and $L_0 \cap L_1$ is Morse-Bott. Further, Assume $L_0 \cap L_1$ consists 
of exactly two components 
$C_{-}, C_{+}$, where the action functional for $(L_0, L_1, H=0)$ 
satisfies
\begin{gather}\label{ActionI}
A(C_+) >A(C_-).
\end{gather}
This is sufficient to define $HF(V_4,V_2^j)$ and $HF(V_2^j,V_0)$; in those 
cases
$C_+ = \Sigma_{40}^j, C_- = K_+^j$ and $C_+ = \Sigma_{20}^j, C_- = K_+^j$, 
respectively. One can see $A$ satisfies $A(C_-) < A(C_+)$ in both cases 
because $A_{20} = f_{20}$ and $A_{42} = -f_{42}$  
(see the end of  \S \ref{localstrips}).
\\
\newline We have two evaluation maps
$$ev_{\pm}: \mathcal{M}(L_0,L_1,J) \into C_{\pm}.$$
There are two types of configurations we consider.
If $x,y \in C_{\pm}$,  we set
$$\mathcal{M}(x,y) = U(x) \cap S(y)$$
If $x \in Crit(f_{C_+}), y \in Crit(f_{C_-})$,   
$$\mathcal{M}(x,y) = ev_{-}^{-1}(U(x)) \cap ev_{+}^{-1}(S(y)).$$
(If $x \in Crit(f_{C_-}), y \in Crit(f_{C_+})$ then $\mathcal{M}(x,y)=\emptyset$, because of (\ref{ActionI}).) See figure \ref{figureCascades}. 
\begin{figure}
\begin{center}
\includegraphics[width=3in]{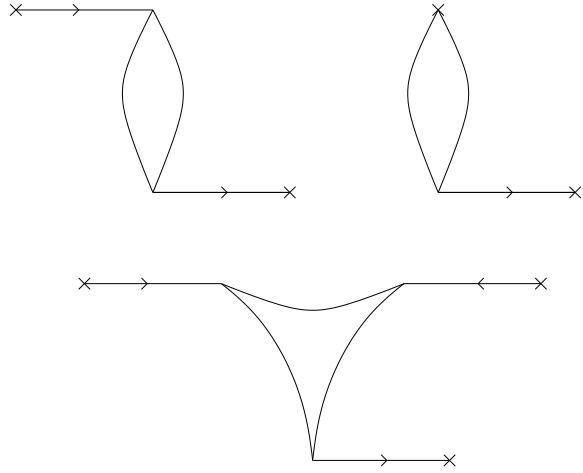} 
\caption{Schematic of Floer trajectories with gradient lines attached in special cases I, II (top left, right) 
and a holomorphic triangle with gradient lines attached.}
\label{figureCascades}
\end{center}
\end{figure}
We call $J$ regular if the linearized $\overline 
\partial$ operator 
is surjective. We call $((f,g); J)$ regular if 
$J$ is regular, and if the evaluation map
$$ev_{+} \times ev_{-}: \mathcal{M}(L_0,L_1,J) \into C_{-} \times C_{+}$$
is transverse to $U(x) \times S(y)$ for all $(x,y) \in Crit(f_{C_-}) 
\times Crit(f_{C_+})$.
Given fixed $(f,g)$ as above, we have that for generic $J$
the data $((f,g); J)$ is regular. (See \cite[pages~66-67]{Fr}, Appendix A.) 
For regular data $\mathcal{M}(x,y)$ is a smooth manifold. It has a free 
$\bbR$ action and we denote the quotient by
$\mathcal{M}^*(x,y)$.
Define
$\mathcal{M}^*(x,y) = \emptyset$ if $x =y$. For $d\geq 0$, let 
$\mathcal{M}^*_d(x,y)$ denote the union of the components of dimension $d$. 
We define the differential by
$$\partial x = \Sigma_y \# \mathcal{M}^*_0(x,y) \, y$$
Here $\#S$ denotes the number of elements mod 2 of a finite set $S$.
$\mathcal{M}^*_d(x,y)$ will be compact because of the usual compactness statements for holomorphic strips and gradient lines. One can see $\partial^2=0$  by looking at $\partial \mathcal{M}^*_1(x,y)$ (and applying gluing theorems): The holomorphic strip cannot break because there are only two intersection components, but either of the two gradient lines can break, or the holomorphic strip could collide with a critical point of $f$. (See also sketch of $\partial^2 =0$ in \S \ref{generalMorseBott}.)

\subsection{Definition of $\partial$ for $CF(V_4,V_0)$ (special case II)}
Next we discuss the case where $H=0$ and $L_0 \cap L_1$ is Morse-Bott and 
consists
of several isolated points $p_1, \ldots p_l$, together with several 
positive dimensional components $C_1, \ldots, C_k$ such that the 
action functional for $(L_0, L_1,H=0)$ satisfies
\begin{gather} \label{AII}
A(p_i) >  A(C_1) = \ldots = A(C_k),
\end{gather}
for all $i=1, \ldots, l$. This is sufficient to define $HF(V_4, V_0)$;  
in that case $C_j =\Sigma_{40}^j$ and the $p_i$ are the 
isolated critical points of $f_{40} : N_{40} \into \bbR.$
We know $A = A_{40}$  satisfies (\ref{AII})
because  $A_{40} = -f_{40}$ (see the end of  \S \ref{localstrips}).
\\
\newline There are three types of configurations we consider.
If $x =p_i$ for some $i$ and $y =p_j$ for some $j$
then we set
$$\mathcal{M}(x,y) = \mathcal{M}(L_0,L_1,J; p_i, p_j).$$
If $x,y \in C_j$ for some $j$, then we set
$$\mathcal{M}(x,y) = U(x) \cap S(y).$$
holomorphic strips. (If $x \in C_i$, $y \in 
C_j$, and $C_i \neq C_j$, then there can be no holomorphic strip 
asymptotic to $x$ and $y$ because the action functional satisfies $A(C_i) 
= A(C_j)$.)
If $x =p_i$ for some $i$ and $y \in C_j$ for some $j$
then we set
$$\mathcal{M}(x,y) = ev_{-}^{-1}(p_i) \cap ev_{+}^{-1}(S(y)).$$
(Note that  $ev_{-}^{-1}(U(y)) \cap ev_+^{-1}(p_i)$ is empty 
because of (\ref{AII}).)
 Regularity for $J$ and  $((f,g),J)$ are defined in the same  way
as the last section and the differential is defined by the same 
formula as before. For $\partial^2 =0$, the first two types of configurations are exactly as in the usual Floer 
homology and Morse homology, so $\mathcal{M}^*_1(x,y)$ compactifies in the familiar way in those cases. 
In the third type of configurations either the Holomorphic strip can break, or the gradient line can break, 
or the holomorphic strip can collide with a critical point of $f$. (See also the sketch of $\partial^2=0$ in \S \ref{generalMorseBott}.)

\subsection{Definition of the triangle product for $\widetilde V_4, \widetilde V_2^j, \widetilde V_0$} \label{deftriangleproduct}

Recall that $\widetilde V_4 \cap \widetilde V_2^j$, $\widetilde V_2^j \cap \widetilde V_0$, $\widetilde V_4 \cap \widetilde V_0$, $j=1, 
\ldots, k$ are all of Morse-Bott type, so we have set $H=0$ in each case. 
Assume we have picked regular data: 
\begin{gather}\label{data}
((h_{42}^j, g_{42}^j), \widetilde J_{42}^j), \, ((h_{20}^j, g_{20}^j), \widetilde J_{20}^j), \, 
((h_{40}, g_{40}),\widetilde J_{40})
\end{gather}
for $(\widetilde V_4, \widetilde V_2^j)$, $(\widetilde V_2^j, \widetilde V_0)$, $(\widetilde V_4, \widetilde V_0)$, respectively. In fact we assume that
$$\widetilde J_{42}^1 = \ldots = \widetilde J_{42}^k, \text{ and } \widetilde J_{20}^1 = \ldots = \widetilde J_{20}^k,$$
and we drop the $j$'s from the notation and denote these $\widetilde J_{42}$,
$\widetilde J_{20}$. Now we want to define, for each $j$, the triangle product
$$\mu_2: HF(\widetilde V_4, \widetilde V_2^j) \otimes HF(\widetilde V_2^j,\widetilde V_0) \into HF(\widetilde V_4, \widetilde V_0).$$
For that, let $D^2$ be the closed unit disk in $\bbC$ with the standard 
complex structure and let $\zeta_0, \zeta_1,\zeta_2$ denote three distinct 
boundary points
labeled in counter-clockwise order, and set
$$V= D^2 \setminus \{ \zeta_0, \zeta_1,\zeta_2 \}.\label{V}$$
Denote by $j_V$ the restriction of the standard complex structure.
Let $I_0$,$I_1$,$I_2$ be the three
boundary components of $V$ labeled in counter-clockwise order with
$\zeta_0$ and $\zeta_1$ on the boundary of $I_0$. We equip $V$ with 
strip-like ends
near puncture, which means we fix proper holomorphic embeddings of the 
half-strip
parameterizing disjoint neighborhoods of each puncture. We regard 
$\zeta_1, \zeta_2$ as  ''incoming'' points and $\zeta_0$ as an ''outgoing''
point. This means that the strip-like ends for $\zeta_0,\zeta_1, \zeta_2$ 
are respectively maps defined on the negative and positive half strips as 
follows
$$\epsilon_0: Z_{+} = [0,+\infty) \times [0,1] \into V, \, \epsilon_1, \epsilon_2: Z_{-} = (-\infty,0]\times [0,1] \into V.$$
Let
 $J \in \mathcal{J}(V,M,I)$. Assume that $J$ is compatible with $\widetilde J_{40}, \widetilde J_{42}, \widetilde J_{20}$
in the sense that
\begin{align} \label{Jcompatible}
 J_{\epsilon_0{(s,t)}} = (\widetilde J_{40})_t, \,  
J_{\epsilon_1{(-s,t)}} = (\widetilde J_{42})_t, \,   
 J_{\epsilon_2{(-s,t)}} = (\widetilde J_{20})_t. 
\end{align}
for all $j$, and all $s>0$, $t \in [0,1]$. 
Let $\mathcal{M}(\widetilde V_4, \widetilde V_2^j, \widetilde V_0, J)$ 
denote the space of
$w\in C^{\infty}(V,M)$ satisfying
$$w(I_0) \subset  \widetilde V_0, w(I_1) \subset \widetilde V_2^j, w(I_2) \subset \widetilde V_4,$$
$$Dw(\zeta) j_V = J_\zeta Dw(\zeta), \text{ for all } \zeta \in V, \, \text{ and }\int_V w^* \omega < \infty.$$
By assumption \ref{convergence}, we have evaluation maps
$$ev_j: \mathcal{M}(\widetilde V_4, \widetilde V_2^j, \widetilde V_0, J) \into Y_j,\, j =0,1,2, \text{ where}$$
$$Y_0= \widetilde V_4 \cap \widetilde V_0, \, Y_1 = \widetilde V_4 \cap \widetilde V_2^j, \, Y_2 = \widetilde V_2^j \cap \widetilde V_0,
\text{ and } ev_j(w) = \lim_{\zeta \rightarrow \zeta_j} w(\zeta).$$
Given components $C_j \subset Y_j$, we set
$$ \mathcal{M}(\widetilde V_4, \widetilde V_2^j, \widetilde V_0, J; C_0, C_1, C_2) = ev_0^{-1}(C_0)\cap 
ev_1^{-1}(C_1) \cap ev_2^{-1}(C_2).$$
Now fix $j$ and set
$$C_0 = \Sigma_{40}, C_1 = \Sigma_{42}^j, C_2 = \Sigma_{20}^j.$$
We now specialize further and assume that $J_\zeta|D(T^*\widetilde V_2^j) = 
J_{\bbC}$, for all $\zeta \in V$, $j=1, \ldots, k$. (Recall that 
$D(T^*\widetilde V_2^j) \subset M_0 \subset M$, and 
there is positive distance from $D(T^*\widetilde V_2^j)$ to $\partial M$, so we can also have $J_\zeta = I$ near $\partial M$.)
The condition $J_\zeta|D_r^{g_2}(T^*\widetilde V_2^j) = J_{\bbC}$  means 
that
$\widetilde J_{40}, \widetilde J_{42},\widetilde J_{20}$ have to satisfy a related condition, but let 
us assume that is so. Then, Proposition \ref{containment} implies that 
every $w \in \mathcal{M}(
\widetilde V_4, \widetilde V_2^j,\widetilde V_0; J)$ necessarily satisfies
$$ev_j(w) \in C_j, \, j=0,1,2.$$
Given $x_0 \in Crit(f_{40}|C_0)$, $x_1\in Crit(f_{42}^j|C_2)$, $x_2 \in 
Crit(f_{20}^j|C_0)$, we let
$$\mathcal{M}(x_0, x_1,x_2) = ev_0^{-1}(S(x_0)) \cap ev_1^{-1}(U(x_1)) \cap 
ev_2^{-1}(U(x_2)).$$
We call $J$ \emph{regular} if the linearized $\overline \partial$ operator is 
surjective. Letting $\star$ stand for the data (\ref{data}), we call $(J,\star)$ regular if 
$J$ is regular, the data (\ref{data}) is regular, and 
$$ev_0 \times ev_1 \times ev_1:
\mathcal{M}(\widetilde V_4, \widetilde V_2^j, \widetilde V_0, J) \into Y_0 \times Y_1 \times Y_1$$ is 
transverse to
$$U(x_2) \times U(x_1) \times S(x_0)$$
for all $x_0 \in Crit(f_{40}|C_0)$, $x_1\in Crit(f_{42}^j|C_2)$, $x_2 \in 
Crit(f_{20}^j|C_0)$. 
In that case $\mathcal{M}(x_0,x_1,x_2)$ is a smooth manifold and we denote 
the union of the components of dimension $d \geq 0$ by 
$\mathcal{M}_d(x_0,x_1,x_2)$.
We define
$$\mu_2(x_1 \otimes x_2) = \Sigma_{x_0} \, \#\mathcal{M}_0(x_0,x_1,x_2)x_0 $$
One can see that $\mu_2$ descends to homology in the usual way: In this 
particular case, a sequence $w_n \in \mathcal{M}_1(\widetilde V_4,\widetilde 
V_2^j, \widetilde V_0; J)$ 
necessarily breaks at one of the gradient trajectories in $C_0, C_1,$ or 
$C_2$; a holomorphic strip cannot break off
for reasons we explain in the next paragraph. 
\\
\newline In general $\mu_2$ is more complicated: it involves finite 
combinations of gradient lines and holomorphic strips attached at 
each vertex. (See \S \ref{generalMorseBott} for the 
precise definition in the case of the differential.) This is 
because a one parameter family of holomorphic triangles can in 
principle split up in that way (without reaching the boundary of the moduli space). This, however, does 
not come up for the particular Lagrangians $(\widetilde V_4, \widetilde V_2^j, \widetilde V_0)$
because, due to the way the holomorphic strips arise by splitting 
off, they must be attached at the vertices in a way which respects the
ingoing or outgoing nature of the vertex: the $-\infty$ (resp. $+\infty$) 
end of the strip must attach to an outgoing (resp. incoming) vertex.
But in our case there is no 
$u \in \mathcal{M}(\widetilde V_4, \widetilde V_2^j, \widetilde J_{42})$
which satisfies
$ev_{-}(u) \in \Sigma_{42}^j$, and  $ev_{+}(u) \in K_+^j,$
because $A(\Sigma_{42}^j) > A(K_-^j)$, and similarly at the other two vertices.

\subsection{A sketch of the general case and the continuation map}\label{generalMorseBott}

In this section we sketch the definition of the differential, the triangle 
product and the continuation map in the general case. 
In particular we explain 
why the Morse-Bott theory is isomorphic to the usual theory. Such an 
isomorphism is given by the continuation map, where in the target one 
chooses $H$ so that the Lagrangians are transverse, and $(f,g)$ are 
constant on the resulting finite union of points. Roughly speaking, the main new feature in the general case is that one has to count
configurations of holomorphic strips and gradient lines of arbitrary 
finite length; these show up in all three definitions.
\\
\newline 
We have evaluation maps
$$ev_{\pm}: \mathcal{M}(L_0,L_1,J,H) \into L_0 \cap 
(\phi^H_1)^{-1}(L_1),\phantom{bbb}ev_{\pm}(u) = \lim_{s\rightarrow \pm \infty} \widetilde u(s,t),$$
where we have used the isomorphism $u \mapsto
\widetilde u$ given by (\ref{isomorphism}).
\\
\newline Given critical points $x,y \in Crit(f)$ and $k\geq 0$, we define 
$\mathcal{M}(x,y;k)$, the space of \emph{Floer trajectories with $k$ cascades} from $x$ to $y$, as follows.
For $k=0$, $\mathcal{M}(x,y;0) = U(x) \cap (y) \subset Y$.
For $k\geq 1$, let $\mathcal{M}(x,y;k)$
be the space of nonconstant tuples
$$(u_1, \ldots, u_k) \in \mathcal{M}(L_0,L_1,J,H)^k$$
such that
$$ev_{-}(u_1) \in U(x), ev_{+}( u_n) \in S(y),$$
and such that for each $j$, $1\leq j \leq k-1$, there exists a 
nonconstant
$$\gamma_j \in C^{\infty}(\bbR, Y) \text{ with }
\gamma_j'(s) = -(\nabla_g f)(\gamma_j(s))$$
such that
$$ev_{+}(u_j) = \gamma_j(0), \, ev_{-}(u_{j+1}) = 
\gamma_j(t_j)$$
for some $t_j \geq 0$. 
\\
\newline The reason we need such a complicated moduli space is the 
following. If one starts with a one parameter family of Floer trajectories 
with exactly one cascade $u$, then $u$ can break at several 
intersection components of $L_0 \cap (\phi_1^H)^{-1}(L_1)$ producing 
several new cascades $u_1, \ldots u_k$; then these $u_j$ can slide
apart
along gradient lines of $f$ and the result is  a typical element of the 
moduli space. All this can happen in a single smooth family: When $u$ breaks and slides, one does not 
reach the boundary of the moduli space; the only way that happens is if
one of the $u_j$ runs into a critical point of $f$, or a gradient line breaks.
\\
\newline $(J,H)$ is called regular if the linearized $\overline \partial_{(J,H)}$ operator is
surjective. 
$((f,g); (J,H))$ is called regular if $(J,H)$ is regular and if the product of evaluation maps
$$EV = ev^1_- \times (ev^2_- \times ev^2_+) \times \ldots (ev^{k-1}_- 
\times ev^{k-1}_+) \times ev^k_+,$$
$$EV: \mathcal{M}(L_0,L_1,J,H)^k \into Y \times (Y\times 
Y)^{k-1} \times Y,$$
is transverse to a suitable subset of the target involving gradient lines of $(f,g)$. 
If $H,(f,g)$ are fixed, $((f,g); (J,H))$ is regular for a generic choice of $J$.
\\
\newline As we just described, we actually expect all of these 
spaces ${\mathcal{M}}(x,y;k)$ to fit together into a single smooth 
manifold (with corners)
$${\mathcal{M}}(x,y)= \cup_{k\geq 0} {\mathcal{M}}(x,y;k).$$
To see this one proves suitable gluing theorems in the case where $ev_{+}(u_j) = ev_{-}(u_{j+1})$
for some subset of $j$'s. 
\\
\newline Note that $\mathcal{M}(x,y;0)$ has a free $\bbR$
action if $x \neq y$; we denote the quotient by
$ {\mathcal{M}}^*(x,y;0)$ and we set ${\mathcal{M}}^*(x,y;0) 
= \emptyset$ if $x=y$. Meanwhile, for $k\geq 1$, $\mathcal{M}(x,y;k)$ has 
a free $\bbR^k$ action for $k\geq 1$ and we denote the quotient by 
$\mathcal{M}^*(x,y;k)$. Set
$$ {\mathcal{M}}^*(x,y) = \cup_{k\geq 0} {\mathcal{M}}^*(x,y;k).$$
 We denote the union of the components of dimension $d$ by $ {\mathcal{M}}^*_d(x,y)$ 
Given regular data $((H,J);(f,g))$, we define the ungraded Morse-Bott 
Floer chain complex $CF(L_0, L_1, (H,J);(f,g) )$ with $\bbZ/2$ coefficients to be freely generated 
by the critical points of $f$, and we define the differential by
$$\partial x = \Sigma_y \# {\mathcal{M}}^*_0(x,y) y.$$
To sketch why $\partial^2=0$, we briefly describe the compactification of ${\mathcal{M}}^*_1(x,y;k)$. 
The family can break in one of three ways:
First, one of the holomorphic strips can collide with a critical point of $f$;
second, 
a gradient line $\gamma_j$ can break at some critical point; 
third, a holomorphic strip $u_j$ can break right at a critical point of $f$
 (i.e. in the limit we have two strips $u_-^j, u_+^j$ with $ev_+(u_-^j) = ev_-(u_{j}^+) = p \in Crit(f)$).
\\
\newline The definition of the triangle product is the same as before except that at each vertex of the holomorphic triangle, instead of just a single gradient line, one should have a Floer trajectory with cascades of arbitrary finite length. This shows up because in a one parameter family of triangles, one has at each vertex the same behavior as for holomorphic strips.
\\
\newline 
We define the continuation map in two scenarios. First, if we want to change 
the Morse-Bott $(h,g)$ (and keep $(J,H)$ fixed), 
we use the usual $s$-dependent 
gradient trajectories as in Morse homology. Here the continuation map $\phi$
is defined by counting holomorphic strips with cascades, except one of the gradient lines is allowed to be $s$ dependent.
Second, if we want to change $(J,H)$ we now have
two regular  pieces of data: $((J^\alpha, H^{\alpha}); (f^{\alpha},g^{\alpha}))$
and  $((J^\beta, H^{\beta}); (f^{\beta},g^{\beta}))$ (for fixed $L_0, L_1$). 
We choose  a  homotopy  $(H^{\alpha \beta}, J^{\alpha \beta})$ from $(J^\alpha, H^{\alpha})$ to 
$(J^\beta, H^{\beta})$. Then the main moduli space defining $\phi^{\alpha \beta}$,  denoted
$\mathcal{M}^{\alpha\beta}(x,y)$, consists of configurations of the form:
a $((J^\alpha, H^{\alpha}); (f^{\alpha},g^{\alpha}))$-Floer trajectory with  cascades, followed by an $s$-dependent $(J^{\alpha,\beta}, H^{\alpha,\beta})$-holomorphic strip, followed by   $((J^\beta, H^{\beta}); (f^{\beta},g^{\beta}))$-Floer trajectory with cascades. Then $\phi^{\alpha \beta}(x) = \Sigma_y \#\mathcal{M}_0(x,y)y$.
\\
\newline We sketch proofs of the basic properties of $\phi^{\alpha \beta}$. 
First it is easy to check that $\partial^\alpha \phi^{\alpha\beta} - \phi^{\alpha\beta} \partial^\beta =0$ by looking at $\partial \mathcal{M}_1(x,y)$.
Also, to see that $\phi^{\alpha \beta}$ is independent of the chosen homotopies, the 
usual proof suffices: One chooses a homotopy of homotopies and uses this to define a moduli space which gives a chain homotopy between the two continuation maps. (See \cite[lemma 6.3]{SZ}.)
To see that $\phi^{\alpha\alpha} = id$ one chooses constant homotopies. Then, 
the $s$-dependent holomorphic strips (or the $s$-dependent gradient lines) must be constant, because otherwise they are not isolated. 
After that one is left with a Floer trajectory with cascades from say $x$ to $y$. 
Using a suitable notion of Maslov index $\mu$, one can express the dimension as $\mu(y)-\mu(x) =0$, but a Floer trajectory with cascades will strictly increase the index (as in Morse theory), unless it is constant.
To see that  $\phi^{\alpha\beta} \circ \phi^{\beta\gamma} = \phi^{\alpha\gamma}$ in homology,
we make the following argument. We suppress all the data except the $J$'s.
For $R\geq 0$,  define $J^{\alpha\gamma}_R$ by gluing together $J^{\alpha\beta}$ and $J^{\beta\gamma}$ so that  
\begin{gather*}
J^{\alpha\gamma}_R(s,t) = J^{\alpha\beta}(s-R,t)\text{ for }s \leq -1 \\
J^{\alpha\gamma}_R(s,t) = J^{\alpha\beta}(s+R,t) \text{ for }s \geq 1.
\end{gather*}
Then set
$$ \widehat {\mathcal{M}}(x,y) = \{(R,u) :R\geq 0, u \in \mathcal{M}(x,y;J^{\alpha,\gamma}_R) \}.$$
Define $G(x) = \Sigma_y \# \widehat {\mathcal{M}}_0(x,y)y$. $G$ will be a  chain homotopy between 
$\phi^{\alpha\gamma}$ and $\phi^{\gamma\beta} \circ \phi^{\beta \alpha}$.
The boundary of the one dimensional part of $\widehat {\mathcal{M}}(x,y)$ has points arising in three ways.
First $(R_n,u_n)$ can be such that $R_n \into0$. Then $u_n \into u \in \mathcal{M}(x,y;J^{\alpha,\gamma}_0)$,
which contributes to $\langle \phi^{\alpha\gamma}(x),y\rangle $ 
(here, if $z = \Sigma_i c_i y_i$ then $\langle z, y_i \rangle = c_i$).
Second $R_n$ can converge to some finite number $0<R_0<\infty$, and then the $\alpha$ or $\beta$ cascades in the configurations $u_n$ can break. This results in a contribution to a term of the form 
$\langle (\partial^\alpha \circ G + G \circ \partial^\beta)(x), y\rangle$.
Finally if $R_n \into \infty$ then $u_n \into u$, where $u$ can be obtained uniquely by gluing 
together a pair 
$$(u^{\alpha\beta}, u^{\beta\gamma}) \in \mathcal{M}(x,z;J^{\alpha\beta}) \times 
\mathcal{M}(z,y;J^{\beta\gamma})$$ 
for some $z$. Such $u$ contribute terms of the form 
$\langle \phi^{\gamma\beta} \circ \phi^{\beta \alpha}(x), y \rangle $.
In total we conclude that
$$ \phi^{\alpha\gamma} - \phi^{\gamma\beta} \circ \phi^{\beta \alpha} =
 \partial^\alpha \circ G + G \circ \partial^\beta. $$

\subsection{The Morse-Bott complex} \label{MorseBotthomology} Let $f: N \into \bbR$ be  a Morse-Bott function and let $g$ be Riemannian metric. Fix a Morse-Smale pair $(h,g_0)$ on $Crit(f)$. For generic $g$ one can define a chain complex whose differential 
counts flow lines of $f$ with cascades; these are defined in the same way as above, except that instead of holomorphic strips one has negative gradient lines of $f$. 
We call the resulting complex the Morse-Bott complex. 
By using the continuation map one can see that its homology is isomorphic to the usual Morse-homology. 
(See \cite{Fr} Appendix A for details.) 
\\
\newline \emph{A Regularity criterion.} The definition of regularity for $((f,g);(h, g_0))$ is the same as above. If $f$ is Morse it is well known that the linearized operator 
$$X \mapsto \frac{ dX}{dt} + (Hess f)(X)$$
is surjective along at a solution to the negative gradient equation if and only if the corresponding stable and unstable manifolds are transverse in $N$. The same argument in the  Morse-Bott case shows that regularity for $(f,g)$ is equivalent to 
$S(C_+)$ and $U(C_-)$ being transverse in $N$, where $C_\pm$ are critical components of $f$ and we are linearizing at a gradient line $\gamma: \bbR \into N$ satisfying $\gamma(\pm \infty) \in C_{\pm}$.  

\subsection{An exponential convergence condition} \label{sectionexponentialconvergence} Because of nondegenerate asymptotics, the Fredholm theory in  \S \ref{sectionFredholm}  requires
that we work with Sobolev spaces with exponential weights at the 
punctures. Therefore we need the convergence in condition \ref{convergence} to be exponential in suitable coordinates provided by Proposition 3.4.1 from \cite{Poz}. This states that 
near each $p \in L_0 
\cap L_1$ there is a symplectic chart
$\varphi: U \rightarrow B \subset  \bbC^n$, where $B$ is a ball centered at 0, 
such that 
$\varphi(L_0 \cap U)= \bbR^n \cap B$ and $\varphi(L_1 \cap U) =( (\bbR^k \times 0) + i(0 \times \bbR^{n-k})) \cap B$. Now suppose $u$ and $p_{+}$ are as in condition \ref{convergence}. Choose 
a chart $\varphi$ as above for $p=p_+$. For $s>0$ sufficiently large we write
$$\varphi(u(s,t)) = (x_1(s,t),x_2(s,t)) + i (y_1(s,t),y_2(s,t)).$$
Say $\varphi(p) = (x_1^0, 0) +i(0,0).$ Then the exponential decay condition is:

\begin{condition}\label{exponentialconvergence} There is an $r>0$ 
satisfying the following. For any $u$ as in condition \ref{convergence},  and for any multi-index $I$, $|I|\geq 0$,
there is a constant $C_I>0$  such that, for $s>0$ sufficiently large,
$$|\partial^I[(x_1(s,t),x_2(s,t)) + i (y_1(s,t),y_2(s,t)) -(x_1^0, 0) 
-i(0,0))] |  \leq C_I e^{-rs},$$
where $\partial^I = \partial_t^a\partial_s^b$, if $I = (a,b)$, $a,b \geq 0$. 
For $\lim_{s\rightarrow -\infty} u(s,t) = p_-$ we have a corresponding statement where, for $s<0$, the above expression is bounded by $C_I e^{rs}$.
\end{condition}

\subsection{Fredholm theory and a gluing formula} \label{sectionFredholm}

In this section 
we specify suitable 
exponentially weighted Sobolev spaces for which the linearized 
$\overline \partial$ operator is Fredholm; this makes sense in view of 
condition \ref{exponentialconvergence}. (Our approach follows \cite{B} section
5.1.) Then we state a gluing formula for the Fredholm index. There we follow to some extent \cite[\S 8h, 8i, 11c]{S08}. Our formula is similar to (11.17) in 11c.
\subsubsection{Fredholm theory for holomorphic triangles and strips}  
We focus on the case of holomorphic triangles;
the whole discussion applies equally well to holomorphic strips. 
We use notation from  \S \ref{deftriangleproduct}. 
Let $V$ be a disk with three boundary punctures; we fix strip like ends $\epsilon_j$, $j=0,1,2$ for $V$ as 
in \S \ref{deftriangleproduct}. Let $C_0, C_1,C_2$ be intersection components of 
$\widetilde V_4\cap \widetilde V_0$, $\widetilde V_2^j\cap \widetilde V_0$, $\widetilde V_2^j\cap \widetilde V_4$ respectively.
Fix $p>2$ and $0<d<r$, where $r$ is from condition \ref{exponentialconvergence}. 
For $k\geq 1$, let
$$\mathcal{B}= \mathcal{B}_k^{p,d}(\widetilde V_4, \widetilde V_2^j, \widetilde V_0; C_0, C_1, C_2)$$
denote the Banach space of maps $w: V \into M$ which are locally in 
$L_k^p \subset C^0$ and which satisfy $\lim_{\zeta \rightarrow \zeta_j} w(\zeta) = p_j$ for some $p_j \in C_j$ and
$$(w_j(s,t) -x_j) \in L^{p,d}_k = \{f(s,t) : f(s,t) e^{d|s|/p} \in L^p_k\}$$
for $|s|$ sufficiently large.  Here, $w_j(s,t) = (\varphi_j \circ w \circ \epsilon_j)(s,t)$, and 
$x_j = \varphi_j(p_j)$, where $\varphi_j$ is a chart near $p_j$, as in condition \ref{exponentialconvergence}.
Fix $J \in \mathcal{J}(V,M,I)$ satisfying (\ref{Jcompatible}). Condition \ref{exponentialconvergence} implies 
$$\mathcal{M}= \mathcal{M}(J, \widetilde V_4, \widetilde V_2^j, \widetilde V_0; C_0,C_1,C_2) \subset \mathcal{B}^{p,d}_k$$ 
for all $p>2$, $k\geq 1$, $0<d<r$.
Let $\mathcal{E}\into \mathcal{B}$ be the Banach bundle which has fiber
$$\mathcal{E}_w = L^{p,d}_{k-1}(V,\Lambda^{0,1}(w^*TM)),$$ 
where this has the same definition as for $\mathcal{B}$.
Let $\overline{\partial}_J: \mathcal{B} \into \mathcal{E}$ denote the Cauchy-Riemann 
operator corresponding to $J$. By elliptic regularity $\overline{\partial}_J^{\, -1}(0) \subset \mathcal{B}$
consists of smooth solutions, and coincides with $\mathcal{M}$. 
At each $w \in \mathcal{M}$ we have the linearization 
$$ D(\overline{\partial}_J)_w: T_w \mathcal{B} \into \mathcal{E}_w.$$
We identify 
$$ T_w \mathcal{B} \cong \bbR^N \oplus 
L^{p,d}_k(V, w^*TM,F),$$ 
$N = \Sigma_j \dim C_j$, and 
$L^{p,d}_k(V,w^*TM,F)$ denotes the Banach space (defined as before) of 
sections $X: V \into w^*TM$ with Lagrangian boundary conditions $X(I_k) \subset F|I_k$, $k=0,2,4$, where
$F|I_0 = (w|I_0)^*(T\widetilde V_{0})$, $F|I_1 = (w|I_1)^*(T\widetilde V_{2}^j)$, $F|I_2 = (w|I_2)^*(T\widetilde V_4)$.
Here, the $\bbR^N$ factor corresponds to choosing 
a basis of solutions to
$$i\partial_t v + S_j(t)v =0, \, v(0) \in \bbR^n, \, v(1) \in (\bbR^k \times\{0\}) \oplus i(\{0\} \times \bbR^{n-k})$$
on $[0,1]$, say $v^j_k,\,  k=1, \ldots, \dim  C_j$.
Take  a monotone bump function $\rho_j(s,t)= \rho_j(s)$ defined on each strip like end which is 0 near $s=0$ and equal to 1 for large $|s|$. Then, 
$\rho_j(s)v^j_k(t)$ span the subspace of $T_u \mathcal{B}$ corresponding to $\bbR^N$ above.
Near a puncture $\zeta_j$,  $D(\overline{\partial}_J)_w$  has the form
$$\partial_s + i \partial_t + S_j(s,t)$$
where $s>0$ or $s<0$ depending on $j$, and  
$S_j(s,t)$ is a smooth family of  matrices in $\bbC^{n \times n}$, with
$$S_j(t) = \lim_{s \into \pm \infty} S_j(s,t)$$
symmetric, see \cite[p. 10]{RS}. 

\begin{lemma}\label{lemmaFredholm} The operator
$$D(\overline{\partial}_J)_u: \bbR^N \oplus L^{p,d}_k(u^*TM) \into 
\mathcal{E}_u$$
is Fredholm,
\end{lemma}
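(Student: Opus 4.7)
The plan is to reduce the claim to the standard Fredholm theorem for Cauchy--Riemann type operators with \emph{nondegenerate} asymptotics (as in \cite{RS}, \cite{S08}), via two steps: conjugation by an exponential weight to make the asymptotic operators invertible, and a finite-dimensional correction that accounts for the Morse--Bott kernel directions.

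First I would analyze the asymptotic operators near each puncture $\zeta_j$. By the choice of Darboux chart from condition \ref{exponentialconvergence}, on a strip-like end $D(\overline{\partial}_J)_w$ takes the form $\partial_s + i\partial_t + S_j(s,t)$ with $S_j(s,t) \to S_j(t)$, and the asymptotic operator $A_j = -i\partial_t - S_j(t)$ acts on $L^2([0,1],\bbC^n)$ with the totally real boundary conditions coming from $(\bbR^k \times 0) \oplus i(0 \times \bbR^{n-k})$. Because the intersection $L_0 \cap L_1$ along $C_j$ is clean, $A_j$ is self-adjoint with discrete real spectrum, and $\ker A_j$ is precisely the $\dim C_j$-dimensional space spanned by the $v^j_k$ (these are the directions tangent to the intersection component). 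In particular there is a spectral gap: a constant $r_j > 0$ such that $\mathrm{Spec}(A_j) \cap (-r_j, r_j) = \{0\}$. I would choose $r$ in condition \ref{exponentialconvergence} to be smaller than $\min_j r_j$; this is exactly the exponent controlling the decay of finite-energy solutions at the punctures.

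Next I would pass to the weighted space. Conjugation by the weight $e^{d|s|/p}$ on each strip-like end identifies $L^{p,d}_k$ with the ordinary $L^p_k$, and transforms the operator on an outgoing end into $\partial_s + i\partial_t + S_j(s,t) + d/p$, and on an incoming end into $\partial_s + i\partial_t + S_j(s,t) - d/p$. The asymptotic operators become $A_j \pm d/p$, which for $0 < d < pr$ (and in particular for our $0<d<r$) are invertible. By the standard Fredholm theorem for Cauchy--Riemann operators on a punctured Riemann surface with totally real boundary conditions and \emph{nondegenerate} (invertible) asymptotic operators, the conjugated linearization
\[
\widetilde{D} \colon L^p_k(V, w^*TM, F) \longrightarrow L^p_{k-1}(V,\Lambda^{0,1}(w^*TM))
\]
is Fredholm. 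Translating back through the weight, this says that the restriction
\[
D(\overline{\partial}_J)_w|_{L^{p,d}_k(V, w^*TM, F)} \colon L^{p,d}_k(V, w^*TM, F) \longrightarrow \mathcal{E}_w
\]
is Fredholm.

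Finally I would incorporate the $\bbR^N$ factor. Each element $\rho_j(s) v^j_k(t) \in T_w \mathcal{B}$ does \emph{not} lie in the weighted space (it does not decay), but when we apply $D(\overline{\partial}_J)_w$ to it, the only contribution comes from $\rho_j'(s) v^j_k(t)$ plus a term controlled by $(S_j(s,t) - S_j(t))v^j_k$; both have compact support on the strip-like end (the first because $\rho_j'$ is compactly supported, the second because $S_j(s,t) - S_j(t)$ decays exponentially by condition \ref{exponentialconvergence}), so they lie in $L^{p,d}_{k-1}$. Hence the full domain $\bbR^N \oplus L^{p,d}_k(V, w^*TM, F)$ maps to $\mathcal{E}_w$, and the full operator differs from $\widetilde{D}$ (viewed on this enlarged domain by composing with projection to the second summand) by the bounded linear map from the finite-dimensional $\bbR^N$. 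Adding a bounded operator with finite-dimensional domain preserves the Fredholm property, so $D(\overline{\partial}_J)_w$ is Fredholm, as claimed. The main subtlety is making precise the spectral gap analysis of $A_j$ in the clean-intersection setting and verifying that the same weight $d$ controls both the decay of actual solutions and the shift of the asymptotic spectrum; once those are aligned the rest is a direct application of the nondegenerate Fredholm package.
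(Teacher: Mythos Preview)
Your proposal is correct and follows essentially the same approach as the paper: reduce to the restriction on $L^{p,d}_k$ by handling the finite-dimensional $\bbR^N$ summand separately, then conjugate by the exponential weight to shift the asymptotic operators off zero and invoke the standard nondegenerate Fredholm theorem. Your version is more detailed (making explicit the spectral gap for the clean-intersection asymptotic operator and verifying that the $\rho_j v^j_k$ land in $\mathcal{E}_w$), but the argument is the same.
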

\begin{proof} First, it suffices to prove that the restriction to
$L^{p,d}_k(u^*TM)$ is Fredholm, because we are throwing away only a 
finite dimensional space. Next, consider the isomorphisms
$$\varphi: L^{p,d}_{k}(u^*TM) \into L^{p}_{k}(u^*TM)$$
$$\varphi': L^{p,d}_{k-1}(\Lambda^{0,1}(u^*TM)) \into 
L^{p}_{k-1}(\Lambda^{0,1}(u^*TM))$$
given by multiplication by $e^{d|s|/k}$ on the strip-like ends and by 1 elsewhere. (Note that 
for $s=0$, $e^{d|s|/k} =1$, so this makes sense.) Now the 
restriction $D' = D(\overline{\partial}_J)_u|L^{p,d}_k(u^*TM)$ has the 
form $\partial_s + i\partial_t + S_j(s,t)$ near each puncture, so
$\varphi \circ D' \circ \varphi^{-1}: L^{p}_{k}(u^*TM) \into 
L^{p}_{k-1}(\Lambda^{0,1}(u^*TM))$ has the form
$$\partial_s + i \partial_t + S_j(s,t) \pm d/p.$$
Because of the perturbation $\pm d/p$, this is a standard Cauchy-Riemann type operator with 
nondegenerate asymptotics, hence this operator is Fredholm, and so the 
original operator is too.
\end{proof}

\subsubsection{A gluing formula} 
Take two Riemann surfaces $S_1, S_2$, which we suppose for simplicity are of the form 
$D^2$ minus some boundary punctures. We assume $S_1,S_2$ are  equipped with strip-like ends.
Suppose that  $E_j\into S_j$ is  a symplectic vector bundle and $F_j \into \partial S_j$
is a Lagrangian sub-bundle of $E_j|\partial S_j$.
We assume that, in a suitable symplectic trivialization of $E_j$,  $F_j$ is asymptotic at each puncture $\zeta$ to fixed Lagrangian subspaces $F_\zeta^-, F_\zeta^+ \subset \bbC^n$, where  $F_\zeta^-, F_\zeta^+$ is the counter-clockwise order along $\partial S_j$. Suppose now that each $S_j$ is equipped with a Cauchy-Riemann type operator
$\overline \partial_j$, which is of the form 
$$\partial_s + i \partial_t + S_\zeta^j(s,t)$$
in the trivialization restricted to the strip-like end.
Let $\zeta_+$ be an out going puncture of $S_1$, and $\zeta_-$ an incoming 
puncture of $S_2$. Given $l> 0$ can glue together $S_1 \setminus 
\epsilon_+([0,1]\times(l,\infty))$ and $S_2\setminus 
\epsilon_+([0,1]\times(-\infty, l))$ by identifying $ 
\epsilon_+([0,1]\times[0,l]) $ with $ \epsilon_-([0,1]\times[-l,0]) $, via 
$\epsilon_+(t,s)
\mapsto \epsilon_-(t,-s)$; we denote the result by $S_1 \#_l S_2$.
Assume that the asymptotic Lagrangian boundary conditions match up, $(F_1)_{\zeta}^{\pm} = (F_2)_{\zeta}^{\mp}$,  and the Cauchy-Riemann operators agree asymptotically:
$$ S_{\zeta^+}^1(t)= \lim_{s \rightarrow \infty} S_{\zeta^+}^1(s,t) =\lim_{s \rightarrow -\infty} S_{\zeta^-}^2(s,t)= S_{\zeta^-}^1(t).$$
Then one can glue all the data over $S_1 \#_l S_2$ for $l>>0$ and get
$E_1 \# E_2$, $F_1 \#F_2$, and $\overline \partial_1 \# \overline\partial_2$.
(To do this we should assume that $F_1$, $F_2$, $S_{\zeta^+}^1(s,t)$, $S_{\zeta^-}^2(s,t)$
are asymptotically constant, which can be arranged by a compact perturbation.)
In the transverse case (i.e. if $(F_j)_{\zeta}^{-} \cap (F_j)_{\zeta}^{+}$ is transverse), there is a gluing formula 
$$\index(\overline \partial_1 \# \overline \partial_2) = \index(\overline 
\partial_1) + \index(\overline \partial_2).$$
To prove it, one first reduces  to the case where the operators are surjective
by a finite dimensional stabilization argument.
Then one proves an isomorphism
$$Ker(\overline \partial_1 \# \overline \partial_2) \cong Ker(\overline 
\partial_1) \oplus Ker(\overline \partial_2)$$
by taking a pair $(X_1,X_2)$ on the right hand side, patching them 
together to get an approximate solution on the left hand side and then 
orthogonally projecting to yield an exact solution. 
(See \cite{Schwarzthesis} for example.)
\\
\newline In the case where the Lagrangians intersect in a Morse-Bott 
fashion,  
we can try the same type of argument, but in this case if $(X_1,X_2) \in 
Ker(\overline \partial_1) \oplus Ker(\overline \partial_2)$ then
$(X_1,X_2)$ must agree asymptotically 
in order for the patched together element to be an approximate 
solution
to $(\overline \partial_1 \# \overline \partial_2)(X) = 0$.
Therefore one gets an isomorphism of the following type.
Let $T = (F_1)_{\zeta^-}^{-} \cap (F_1)_{\zeta^-}^{+}= (F_1)_{\zeta^+}^{-} \cap (F_1)_{\zeta^+}^{+}$,
and let $\bbR^k \subset \bbR^{N_+}$, $\bbR^k \subset \bbR^{N_-}$ denote the corresponding subspaces, where $\bbR^{N_{\pm}}$ is as in lemma \ref{lemmaFredholm}.
Let $\Delta \subset \bbR^{N_+} \times \bbR^{N_-}$ denote the codimension $k$
subspace of elements $(x,y)$ whose components $(x',y')$ in $\bbR^k \times \bbR^k \subset \bbR^{N_+} \times \bbR^{N_-}$ are equal.
The patching argument shows that
$$Ker(\overline \partial_1 \# \overline \partial_2) \cong Ker(\overline 
\partial_1 \oplus \overline \partial_2)|(\Delta \oplus L^{p,d}_k \oplus 
L^{p,d}_k)$$
and from this one concludes that
\begin{equation}
\label{indexformula} \index (\overline \partial_1 \# \overline \partial_2) 
= \index(\overline \partial_1) + \index( \overline \partial_2) - k.
\end{equation}

\vspace{0.5 cm}

\end{document}